\DeclareMathAlphabet{\mathpzc}{OT1}{pzc}{m}{it}
\numberwithin{equation}{section}
\numberwithin{equation}{section}
\newtheorem{theorem}{Theorem}[section]
\newtheorem{lemma}[theorem]{Lemma}
\newtheorem{dfn}[theorem]{Definition}\newtheorem{Def}[theorem]{Definition}
\newcommand{\BMS}{\operatorname{BMS}}
\DeclareMathAlphabet{\mathpzc}{OT1}{pzc}{m}{it}
\newtheorem{thm}[theorem]{Theorem}\newtheorem{Thm}[theorem]{Theorem}
\newtheorem{Rmk}[theorem]{Remark}
\newtheorem{Prop}[theorem]{Proposition}\newtheorem{prop}[theorem]{Proposition}
\newtheorem{cor}[theorem]{Corollary}
\newtheorem{lem}[theorem]{Lemma}\newtheorem{Lem}[theorem]{Lemma}
\newcommand{\be}{begin{equation}}
\newcommand{\bH}{\mathbb H}
\newcommand{\e}{{\e}}
\newcommand{\z}{\mathbb{Z}}
\renewcommand{\c}{\mathbb{C}}
\newcommand{\br}{\mathbb{R}}
\newcommand{\R}{\mathbb{R}}
\newcommand{{\grinv}}{{\Cal G}^{-r}}
\newcommand{\ba}{\backslash}
\newcommand{\G}{\Gamma}
\newcommand{\Cal}{\mathcal}
\newcommand{\SL}{\operatorname{SL}}
\newcommand{\bp}{\begin{pmatrix}}
\newcommand{\ep}{\end{pmatrix}}
\renewcommand{\bp}{{\rm bp}}
\newcommand{\SO}{\operatorname{SO}}
\newcommand{\SU}{\operatorname{SU}}
\newcommand{\T}{\operatorname{T}}
\newcommand{\bi}{\begin{itemize}}
\newcommand{\ei}{\end{itemize}}
\newcommand{\ben}{\begin{enumerate}}
\newcommand{\een}{\end{enumerate}}
\newcommand{\PSL}{\op{PSL}}
\newcommand{\RFM}{\op{RF}{M}}
\newcommand{\RF}{\op{RF}}
\newcommand{\GG}{\Gamma\ba G}
\newcommand{\op}{\operatorname}
\renewcommand{\be}{\begin{equation}}
\newcommand{\ee}{\end{equation}}
\newcommand{\C}{\mathcal C}
\newcommand{\oxH}{\overline{xH}}
\newcommand{\Inv}{\op{Inv}}
\newcommand{\hull}{\op{hull}}
\newcommand{\cal}{\mathcal}
\renewcommand{\e}{\varepsilon}
\newcommand{\La}{\Lambda}
\newcommand{\core}{\op{core}}
\newcommand{\bS}{\mathbb S}\newcommand{\mT}{\mathsf T}
\newcommand{\mC}{\mathcal C}
\newcommand{\FM}{\op{F} M}
\newcommand{\cl}{\overline}
\renewcommand{\R}{\mathcal R}
\newcommand{\RFPM}{\RF_+M}
\newcommand{\BR}{\operatorname{BR}}
\newcommand{\mS}{\mathscr{S}}
\newcommand{\mG}{\mathscr{G}}\newcommand{\mH}{\mathscr{H}}
\renewcommand{\epsilon}{\e}
\title[Discrete subgroups of $\SL_2(\c)$]{Dynamics for Discrete subgroups of $\SL_2(\c)$}
\author{Hee Oh}
\address{Mathematics department, Yale university, New Haven, CT 06511 and Korea Institute for Advanced Study, Seoul, Korea}
\email{hee.oh@yale.edu}
\thanks{Supported in part by NSF Grant \#1900101.}
\begin{document}\maketitle
\small{\it\quad\quad\quad\quad  Dedicated to Gregory Margulis with affection and admiration}

\begin{abstract} Margulis wrote in the preface of his book {\it Discrete subgroups of semisimple Lie groups} \cite{Ma3}: 
``A number of important topics have been omitted. The most significant of these is the theory of Kleinian groups and Thurston's theory of 3-dimensional manifolds: these two theories can be united under the common title {\it Theory of discrete subgroups of $\SL_2(\c)$}".

In this article, we will discuss a few recent advances regarding this missing topic from his book, which were influenced by his earlier works.
\end{abstract}

\tableofcontents
\section{Introduction}
A discrete subgroup of $\PSL_2(\c)$ is called a Kleinian group. In this article, we discuss dynamics of unipotent flows on 
the homogeneous space $\Gamma\ba \PSL_2(\c)$ for a Kleinian group $\Gamma$ which is not necessarily a lattice of $\PSL_2(\c)$.
Unlike the lattice case, the geometry and topology of the associated hyperbolic $3$-manifold $M=\Gamma\ba \bH^3$ influence
both topological and measure theoretic rigidity properties of unipotent flows. 

Around 1984-6, Margulis settled  the Oppenheim conjecture by proving that every bounded $\SO(2,1)$-orbit
in the space $\SL_3(\z)\ba \SL_3(\br)$ is compact (\cite{Ma1}, \cite{Ma}).  His proof  was topological, using minimal sets and the polynomial divergence property of unipotent flows.  With Dani (\cite{DM1}, \cite{DM2}), he also gave a classification of orbit closures for a certain family of one-parameter unipotent
subgroups of $\SL_3(\br)$. Based on Margulis' topological approach, Shah \cite{Sht}
 obtained a classification of orbit closures for the action of any connected closed subgroup generated by unipotent elements in the space $\Gamma\ba \PSL_2(\c)$ when $\Gamma$ is a lattice. This result in a much greater generality, as conjectured by Raghunathan, was proved by Ratner  using
 her measure rigidity theorem (\cite{Ra}, \cite{Ra1}).
 
 The relation between invariant measures and orbit closures for unipotent flows is not as tight in the infinite volume case as it is in
 the finite volume case. Meanwhile, the topological approach in the orbit closure classification can be extended to the class of rigid acylindrical hyperbolic $3$-manifolds, yielding the complete classification of orbit closures for the action of any connected closed subgroup generated by
 unipotent elements. This  was done jointly with McMullen and Mohammadi (\cite{MMO}, \cite{MMO2}). Much
 of this article  is devoted to explaining these results, although we present slightly different viewpoints in certain parts of the proof. Remarkably, this approach can handle the entire quasi-isometry class of rigid acylindrical hyperbolic $3$-manifolds,
 as far as the action of the subgroup $\PSL_2(\br)$ is concerned \cite{MMO1}. An immediate geometric consequence 
 is that for any convex cocompact acylindrical hyperbolic $3$-manifold $M$, any geodesic plane is either closed or dense
 inside the interior of the convex core of $M$; thereby
 producing the first continuous family of locally symmetric manifolds for which such a strong rigidity theorem for geodesic planes holds.
 This result extends to geometrically finite acylindrical hyperbolic $3$-manifolds as shown in joint work with Benoist  \cite{BO}.
 We also present a continuous family of quasifuchsian $3$-manifolds containing
 geodesic planes with wild closures \cite{MMO1}, which indicates the influence of
 the topology of the associated $3$-manifold in the rigidity problem at hand.
 
We call a higher dimensional analogue of a rigid acylindrical hyperbolic $3$-manifold
{ a convex cocompact hyperbolic $d$-manifold with Fuchsian ends}, following Kerckhoff and Storm \cite{KS}. For these manifolds $\Gamma\ba \bH^d$,
in joint work with Lee \cite{LO}, we have established a complete classification of orbit closures in $\Gamma\ba \SO^\circ (d,1)$
 for the action of
any connected closed subgroup
of $\SO^\circ (d,1)$ generated by unipotent elements. The possibility of accumulation on closed orbits of intermediate subgroups
presents new challenges, and the avoidance theorem and the induction arguments involving equidistribution statement
are major new ingredients in higher dimensional cases (Theorems \ref{hard} and \ref{mainth}).
We note that these manifolds do not admit any non-trivial local deformations for $d\ge 4$ \cite{KS}.


\bigskip

\noindent{\bf Acknowledgement} This survey article is mostly based on the papers
 \cite{MMO}, \cite{MMO1}, \cite{MMO2}, \cite{BO}, and \cite{LO}. I am grateful to my co-authors Curt McMullen, Amir Mohammadi, Yves Benoist and Minju Lee.
I would like to thank
 Yair Minsky and Amir Mohammadi  for helpful  comments on the preliminary version of this article.

\section{Kleinian groups}
 We give a brief introduction to Kleinian groups, including some basic notions and examples.
General references for this section
include \cite{Ra}, \cite{Mar}, \cite{MR}, \cite{MT}, \cite{Se}, \cite{Ha} and \cite{Ca}. In particular,
all theorems stated in this section with no references attached can be found  in \cite{Mar} and \cite{MT}.

We will  use the upper half-space model for hyperbolic $3$-space:
 $$\bH^3=\{(x_1, x_2, y):y>0\}, \quad ds=\frac{\sqrt{dx_1^2+dx_2^2+dy^2}}{y} .$$

In this model of $\bH^3$, a geodesic  is  either a vertical line or a vertical semi-circle. 
The geometric boundary of $\bH^3$ is given by the Riemann sphere $\bS^2=\hat{\c}$, when we identify
the plane $(x_1, x_2, 0)$ with the complex plane $\c$.

The group $G:=\op{PSL_2}(\mathbb C)$ acts on $\hat \c$ by M\"obius transformations: 

$$\begin{pmatrix} a& b\\ c& d\end{pmatrix} z=\frac{az+b}{cz+d} \quad \text{ with $a,b,c,d\in \c$ such that $ad-bc=1$} .$$

This action of $G$ extends to an isometric action on $\bH^3$ as follows: each $g\in G$
can be expressed as a composition $\Inv_{ C_1}\circ  \cdots \circ  \Inv_{ C_k}$, where $\Inv_{C}$ denotes the inversion with respect to a circle
$C\subset \hat \c$.\footnote{ If $C=\{z: |z-z_0|=r\}$, then $\Inv_C(z)$ is the unique point on the ray
$\{t z:t\ge 0\}$, satisfying the equation 
$|z-z_0| \cdot |\Inv_C(z)-z_0|=r^2$ for all $z\ne z_0$, and $\Inv_C(z_0)=\infty$.}
If we set $\Phi (g)=\Inv_{\hat C_1} \circ \cdots \circ  \Inv_{\hat C_k}$ where
 $\Inv_{\hat C}$ denotes the inversion with respect to the
 sphere $\hat C$ in $\br^3$
which is orthogonal to $\c $ and $\hat C \cap \c =C$, then $\Phi(g)$ preserves $(\bH^3, ds)$. 
Moreover, the Poincar\'e extension theorem says that $\Phi$
is an isomorphism between the two real Lie groups:
 $$\op{PSL}_2(\mathbb C)=\op{Isom}^+(\bH^3),$$ 
where  $ \op{PSL_2}(\mathbb C)$ is regarded as a $6$-dimensional real Lie group
and $\op{Isom}^+(\bH^3)$ denotes the group of all orientation preserving isometries of $\bH^3$.


\begin{Def} A discrete subgroup $\Gamma$ of $G$ is called a Kleinian group.
\end{Def}
For a  (resp. torsion-free) Kleinian group $\Gamma$, the quotient $\Gamma\ba \bH^3$ is a hyperbolic orbifold (resp. manifold).
Conversely, any  complete hyperbolic $3$-manifold $M$ can be presented as a quotient 
$$M=\Gamma \ba \bH^3$$ for a torsion-free Kleinian group $\Gamma$.
The study of hyperbolic manifolds is therefore directly related to the study of Kleinian groups.

Throughout the remainder of the article, we assume that a Kleinian group $\Gamma$ is non-elementary  i.e.,
$\Gamma$ does not contain an abelian subgroup of finite index. By Selberg's lemma, every Kleinian group has
a torsion-free subgroup of finite index. We will henceforth treat the torsion-free condition loosely.

\subsection{Lattices} The most well-studied Kleinian groups are lattices of $G$: a Kleinian group $\Gamma <G$ is a lattice if $M=\Gamma\ba \bH^3$
has finite volume. When $M$ is compact, $\Gamma$ is called a uniform or cocompact lattice. 
If $d>0$ is a square-free integer, then $\PSL_2(\z [\sqrt{- d}])$ is a non-uniform lattice of $G$.
More lattices, including uniform ones, can be constructed by number theoretic methods using
 the Lie group isomorphism
 $G\simeq \SO^\circ (3,1)$.

Let  $Q(x_1, x_2, x_3, x_4)$ be a quadratic form with  coefficients over a totally real number field $k$ of degree $n$
such that $Q$ has signature $(3,1)$ and
for any non-trivial embedding  $\sigma: k\to \br$, $Q^\sigma$ has signature $(4,0)$ or $(0,4)$; the orthogonal group
$\SO(Q^\sigma)$ is thus compact.

Then for $G= \SO^\circ (Q)$ and for the ring $\mathfrak o$ of integers of $k$, the subgroup
\be \label{ex} \Gamma:= G\cap \SL_4(\mathfrak o)\ee
 is a lattice in $G$ by a theorem of Borel and Harish-Chandra \cite{BHC}. 
Moreover, if $Q$ does not represent $0$ over $k$ (which is always the case if the degree of $k$ is bigger than $1$),  then
$\Gamma$ is a uniform
 lattice in $G$ by the Godement's criterion.
These examples contain all {\it arithmetic} lattices (up to a commensurability) which 
contain cocompact Fuchsian subgroups, that is, uniform lattices of $\SO^\circ(2,1)\simeq\PSL_2(\br)$ \cite{MR}.

Take two arithmetic non-commensurable hyperbolic $3$-manifolds $N_1$ and $N_2$ which 
 share a common properly imbedded closed geodesic surface $S$, up to an isometry. 
 We cut each $N_i$ along $S$, which results in one or two connected components.
Let  $M_i$ be the metric completion of a component of $N_i-S$, which has geodesic boundary isometric to
  one or two copies of $S$.
We now glue one or two copies of $M_1$ and $M_2$ together  along their geodesic boundary and get a
a connected finite-volume hyperbolic $3$-manifold with no boundary. The resulting $3$-manifold is a
non-arithmetic hyperbolic $3$-manifold,  and its fundamental group is  an example of the so-called 
hybrid  lattices constructed by Gromov and Piatetski-Schapiro \cite{GPS}.

Mostow rigidity theorem  says that any two isomorphic lattices of $G$ are conjugate to each other. Since a lattice is finitely presented, it follows that
 a  conjugacy class of a lattice  is determined by its presentation. Hence, despite the presence of non-arithmetic lattices in $G$,
  there are only countably many lattices of $G$ up to conjugation, or equivalently, there are only countably many hyperbolic manifolds of finite volume, up to isometry.


\subsection{Finitely generated Kleinian groups} 
 We will mostly focus on {\it finitely generated} Kleinian groups. When studying a finitely generated Kleinian group $\Gamma$, the associated limit set and the convex core play fundamental roles. 

Using the M\"obius transformation action of $\Gamma$ on $\bS^2$, we define:
\begin{Def} The limit set $\Lambda\subset \bS^2$ of $\Gamma$ is  the set of all accumulation points
of $\Gamma(z)$ for $z\in \bH^3\cup \bS^2$. 
\end{Def}
This definition is independent of the choice of $z\in \bH^3\cup \bS^2$, and  $\Lambda$ is a minimal $\Gamma$-invariant closed subset of
$\bS^2$.

\begin{Def} The convex core of $M$ is the convex submanifold of $M$ given by 
 $$\op{core} M :=  \Gamma\ba \op{hull} \Lambda \; \subset  \; M=\Gamma\ba \bH^3 $$
 where $\op{hull} \Lambda \subset \bH^3$ is the smallest convex subset containing all geodesics connecting two points in $\Lambda$.
\end{Def}

 \begin{figure}\label{SS}
 \includegraphics[height=4cm]{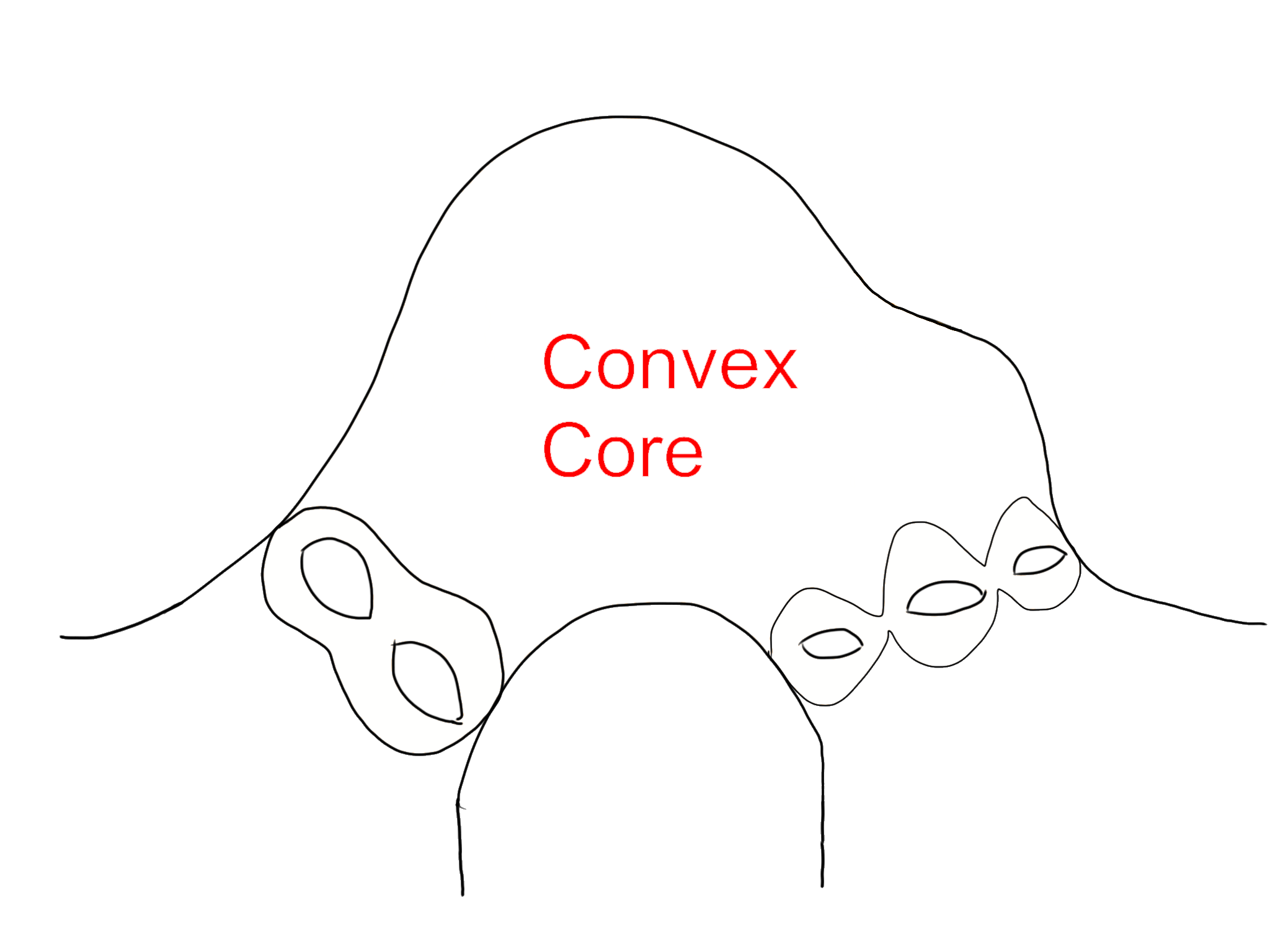}
\caption{The convex core}
\end{figure}

  If $\text{Vol}(M)<\infty$, then $\Lambda=\bS^2$ and hence $M$ is equal to its convex core.

\begin{Def}\label{def}
\begin{enumerate}
\item
A Kleinian group $\Gamma$ is called {\it geometrically finite}  if the unit neighborhood of $\core{M}$ has finite volume.
\item A  Kleinian group $\Gamma$ is called {\it convex cocompact}
 if  $\core{M}$ is compact, or equivalently, if $\Gamma$ is geometrically finite without any parabolic elements.
\end{enumerate} \end{Def}
An element $g\in G$ is either hyperbolic (if it is conjugate to a diagonal element whose entries
have modulus  not equal to $1$), elliptic (if it is conjugate to a diagonal element whose entries
have modulus $1$) or parabolic (if it is conjugate to a strictly upper triangular matrix).
By discreteness, an element of a torsion-free Kleinian group is either hyperbolic or parabolic.

Geometrically finite  (resp. convex cocompact)  Kleinian groups are natural generalization of (resp. cocompact) lattices of $G$.
Moreover, the convex core of a geometrically finite hyperbolic manifold admits a thick-thin decomposition: there exists a constant
$\e>0$ such that $\core M$ is the union of a compact subset of injectivity radius at least $\e>0$
 and finitely many cusps.
In the class of geometrically finite groups, lattices are  characterized by the property that their limit sets are the whole of $\bS^2$, and the limit sets of
other geometrically finite groups have Hausdorff dimension strictly smaller than $2$ 
(\cite{Su},\cite{Su2}). 

The group $G=\PSL_2(\c)$ can be considered as a real algebraic subgroup, more precisely, the group of real points of an algebraic group 
$\bf G$ defined over $\br$. A subset $S\subset G$ is called {\it Zariski dense} if $S$ is not contained in any proper real algebraic subset of $G$.
The Zariski density of a Kleinian group $\Gamma$ in
$G$ is equivalent to the property that its limit set $\Lambda$ is not contained in any circle of $\bS^2$.
When $\Lambda $ is contained in a circle, $\Gamma$ is conjugate to a discrete subgroup of $\PSL_2(\br)$; such Kleinian groups are referred to
as {\it{Fuchsian}} groups.
Geometrically finite Kleinian groups are always finitely generated, but the converse is not true in general; see \eqref{gi}.


\subsection{Examples of geometrically finite groups} 
Below we give examples of three different kinds of geometrically finite groups which are relevant to subsequent discussion.
Their limit sets are respectively totally disconnected, Jordan curves, and Sierpinski carpets.
We note that a geometrically finite non-lattice Zariski dense Kleinian group $\Gamma$
is determined by its limit set $\Lambda$ up to commensurability, more precisely, $\Gamma$ is a subgroup of finite index
in the {\it discrete} subgroup $\op{Stab}(\Lambda)=\{g\in G: g(\Lambda)=\Lambda\}$.

\subsubsection{Schottky groups}
The simplest examples of geometrically finite groups are Schottky groups.
A subgroup $\Gamma <G$ is called (classical) Schottky if
$\Gamma$ is generated by hyperbolic elements $g_1, \cdots, g_k\in G$, $k\ge 2$, satisfying that
 there exist mutually disjoint closed round disks $B_1, \cdots, B_k$ and $ B_1', \cdots, B_k'$ in $\bS^2$
such that each $g_i$ maps the exterior of $B_i$ onto the interior of $B_i'$.

If $g_1, \cdots, g_k$ are hyperbolic elements of $G$ whose fixed points in $\bS^2$ are mutually disjoint,
then $g_1^N, \cdots, g_k^N$ generate a Schottky group for all $N$ large enough.
A Schottky group $\Gamma$ is discrete and free; the common exterior  of  the hemi-spheres bounded by $B_i$, $B_i'$
is a fundamental domain $F$ of $\Gamma$. Since the limit set of $\Gamma$, which is totally disconnected,
 is contained in the union of interiors of $B_i$ and $B_i$'s,
it is easy to see that the intersection of the hull of $\Lambda$ and the fundamental domain $F$ is a bounded subset of $F$. Hence $\Gamma$
 is a convex cocompact subgroup. 
 Its convex core is the  handle body of genus $k$; in particular, the boundary of $\core M$ is a closed surface of genus $k$.
 
 Any  Kleinian group $\Gamma$ contains a Schottky subgroup  which has the same Zariski closure.
 If $\Gamma$ is Zariski dense, take any two hyperbolic elements $\gamma_1$ and $\gamma_2$ of $\Gamma$
 with disjoint sets of fixed points.
 Suppose that  all of four fixed points lie in a circle, say, $C\subset \bS^2$; note that $C$ is uniquely determined. 
Since the set of  fixed points of hyperbolic elements of $\Gamma$ forms a dense subset
 of $\Lambda $,  there exists a hyperbolic element $\gamma_3\in \Gamma$
 whose fixed points are not contained in $C$. Now, for any $N\ge 1$, the subgroup generated by
 $\gamma_1^N, \gamma_2^N, \gamma_3^N$ is Zariski dense, as its limit set cannot be contained in a circle.
 By taking $N$ large enough, we get a Zariski dense Schottky subgroup of $\Gamma$. 
  This in particular implies that any Kleinian group contains a convex cocompact subgroup, which is as large as itself in the algebraic sense.

\subsubsection{Fuchsian groups and deformations: quasifuchsian groups}
An orientation preserving homeomorphism $f:\mathbb S^2\to \mathbb S^2$ is called $\kappa$-quasiconformal
if for any $x\in \bS^2$,
$$\limsup_{r\to 0}\frac{\sup \{|f(y)-f(x)|: |y-x|=r\} }{\inf  \{|f(y)-f(x)|: |y-x|=r\}} \le \kappa .$$
The $1$-quasiconformal maps are precisely conformal maps \cite[Sec.2]{Mar}.
The group $G=\PSL_2(\c)$ is precisely the group of all conformal automorphisms of $\mathbb S^2$.

A Kleinian group $\Gamma$ is called {\it quasifuchsian} if it is a quasiconformal deformation of a (Fuchsian) lattice of $\PSL_2(\br)$, i.e.,
there exists a quasiconformal map $f$ and a lattice $\Delta <\PSL_2(\br)$ such that
$\Gamma= \{f \circ \delta\circ  f^{-1}: \delta\in \Delta\}$.
Any quasi-conformal deformation of a geometrically finite group is known to be geometrically finite; so a quasifuchsian group
is geometrically finite.

A quasifuchsian group is also characterized as a finitely generated Kleinian group whose limit set $\Lambda$ is a Jordan curve and which preserves each component of $\bS^2-\Lambda$. If $\Omega_{\pm}$ are components of $\bS^2-\Lambda$, then
$S_{\pm}:=\Gamma\ba \Omega_{\pm}$ admits a hyperbolic structure by the uniformization theorem, and
the product $\op{Teich}(S_+)\times  \op{Teich}(S_-)$ of Teichmuller spaces
gives a parameterization of all quasifuchsian groups which are quasiconformal deformations of a fixed lattice of $\PSL_2(\br)$.

 \begin{figure}\label{SSS3}
 \includegraphics[height=6cm]{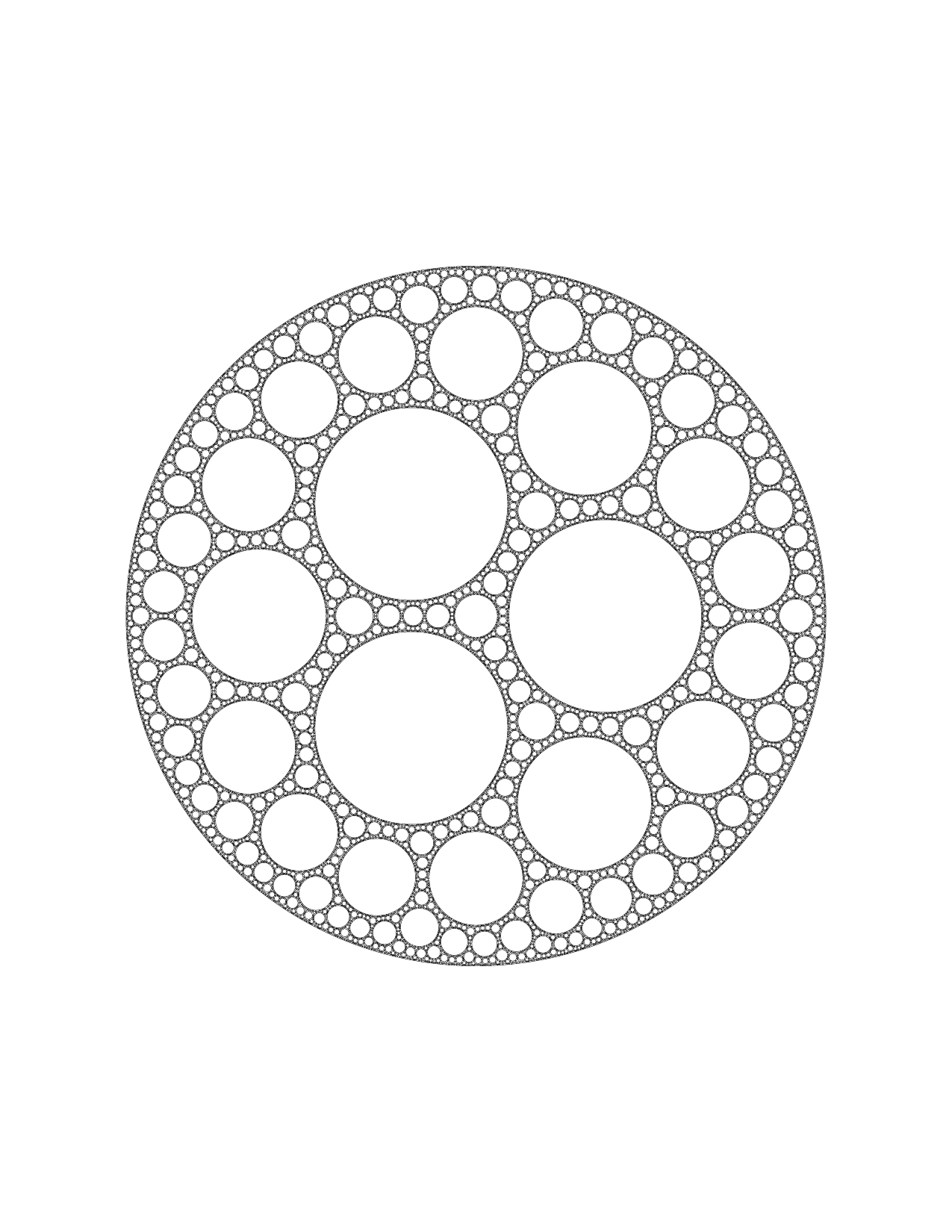} 
\caption{Limit set of a rigid acylindrical group, by McMullen}
\end{figure}

\subsubsection{Rigid acylindrical  groups and their deformations}\label{sell}
A Kleinian group $\Gamma<G$ is called {\it rigid acylindrical} if 
the convex core of the associated hyperbolic manifold $M=\Gamma\ba \bH^3$ is a
compact manifold with non-empty interior and with totally geodesic boundary.
If $\core M$ has empty boundary, then $M$ is compact and hence $\Gamma$ is  a uniform lattice. 
Rigid acylindrical non-lattice groups are characterized as
 convex cocompact Kleinian groups   whose limit set satisfies that
  $$\bS^2 -\Lambda=\bigcup B_i$$
where $B_i$'s are round disks  with mutually disjoint closures.

If $M$ is a rigid acylindrical hyperbolic $3$-manifold of infinite volume then the double
of $\core M$ is a closed hyperbolic $3$-manifold; hence
 any rigid acylindrical group is a subgroup of a uniform lattice of $G$, which contains a co-compact Fuchsian lattice
 $\pi_1(S)$ for a component $S$ of $\partial \core M$.
Conversely, if $\Gamma_0$ is a torsion-free uniform  lattice of $G$ such that $\Delta:=\Gamma_0\cap \PSL_2(\br)$ is a uniform lattice in
$\PSL_2(\br)$, then $M_0=\Gamma_0\ba \bH^3$ is a closed hyperbolic $3$-manifold which contains a properly immersed totally geodesic
surface $\Delta\ba \bH^2$. By passing to a finite cover of $M_0$, $M_0$ contains a properly embedded totally geodesic surface, say $S$ \cite[Theorem 5.3.4]{MR}.
Now the metric completion of a component of $M_0-S$ is a compact hyperbolic $3$-manifold with
totally geodesic boundary, and  its fundamental group, which
injects to $\Gamma_0=\pi_1(M_0)$, is
 a rigid acylindrical Kleinian group.


Rigid acylindrical Kleinian groups admit a huge deformation space, comprised of convex cocompact acylindrical groups.
We begin with the notion of acylindricality for a compact $3$-manifold.
 Let $D^2$ denote a closed $2$-disk and let $C^2=S^1\times [0,1]$ be a cylinder.
A compact $3$-manifold $N$ is called {\it acylindrical}
\begin{enumerate}
\item if $\partial N$ is incompressible, i.e., any continuous map $f:(D^2, \partial D^2)\to (N, \partial N)$
can be deformed  into $\partial{N}$ or equivalently if the inclusion $\pi_1(S)\to \pi_1(N)$
is injective for any component $S$ of $\partial N$; and
\item if any essential cylinder of $N$ is boundary parallel, i.e., any
continuous map  $f:(C^2, \partial C^2)\to (N, \partial {N})$, injective on $\pi_1$, can be deformed into
  $\partial{N}$.
\end{enumerate}

\begin{figure} \label{sier} \begin{center}
 \includegraphics [height=3cm]{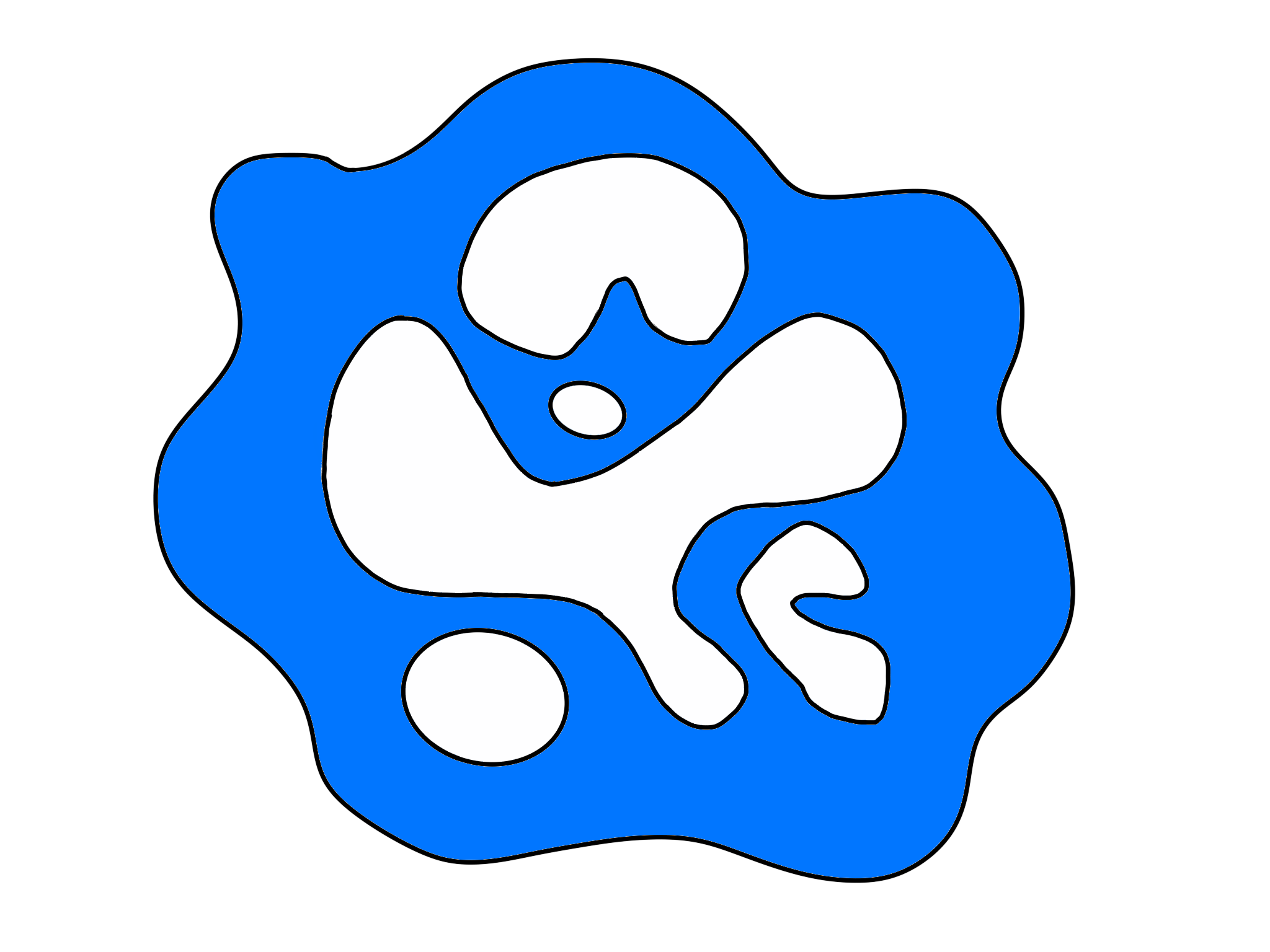} 
\end{center}\caption{Sierpinski carpet}
\end{figure}


A convex cocompact hyperbolic $3$-manifold $M$ is called {\it acylindrical} if its convex core is  acylindrical.
When $M$ has infinite volume, it is also described by
the property that its limit set is a Sierpinski carpet: $\bS^2 -\Lambda=\bigcup B_i$
 is  a dense union of Jordan disks $B_i$'s  with mutually disjoint closures and with  $\op{diam}(B_i)\to 0$. By Whyburn \cite{Wh},
 all Sierpinski carpets are known to be homeomorphic to each other.
 We refer to a recent preprint \cite{Zh} for a beautiful picture of the limit set of a convex cocompact (non-rigid) acylindrical group.

Any  convex cocompact acylindrical Kleinian group $\Gamma$  is a quasi-conformal deformation of a unique rigid acylindrical
Kleinian group $\Gamma_0$, and its quasi-conformal class is parametrized by
the product  $\prod_i {\op {Teich}}(S_i)$ where $S_i$'s are components of $\partial \core (\Gamma_0\ba \bH^3)$ (\cite{Th} and \cite{M}).
In terms of a manifold,
any  convex cocompact acylindrical hyperbolic  $3$-manifold is  quasi-isometric  to a unique  {\it rigid} acylindrical hyperbolic
 $3$-manifold $M$, and  its quasi-isometry class is parametrized by
 $\prod_i {\op {Teich}}(S_i)$.

The definition of acylindricality can be extended to geometrically finite groups with cusps using the notion of a compact core.
If $M$ is a hyperbolic $3$-manifold with finitely generated $\pi_1(M)$,
then there exists a a compact connected submanifold $\mathcal C\subset M$ (with boundary)
such that the inclusion $\mathcal C\subset  M$ induces an isomorphism $\pi_1(\cal C)\simeq  \pi_1(M)$; such $\mathcal C$ exists
uniquely, up to homeomorphism, and is called the compact core of $M$.
 Now a geometrically finite hyperbolic $3$-manifold $M$ is called {acylindrical} if its compact core  is an
acylindrical compact $3$-manifold.



\subsection{Thurston's geometrization theorem}
The complement $\Omega:=\bS^2-\Lambda$ is called the set of discontinuity.
Let $\Gamma$ be a finitely generated Kleinian group.
Ahlfors finiteness theorem says that $\Gamma\ba \Omega$ is a union of finitely many closed Riemann surfaces with at most a finite number of punctures.
The Kleinian manifold associated to $\Gamma$ is defined by {\it adding} $\Gamma\ba \Omega$ to $\Gamma\ba \bH^3$ on the conformal boundary at infinity:
$$\mathcal M(\Gamma)=\Gamma\ba \bH^3\cup \Omega, \quad \partial \mathcal M(\Gamma)=\Gamma\ba \Omega.$$

The convex cocompactness of  $\Gamma$ is equivalent to the compactness of $\mathcal M(\Gamma)$. If $\Gamma$
is geometrically finite with cusps, then $\mathcal M(\Gamma)$ is compact except possibly for a finite number of rank one and rank two cusps.
We denote by $\mathcal M_0(\Gamma)$ the compact submanifold of  $\mathcal M(\Gamma)$
obtained by removing the interiors of  solid pairing tubes corresponding to rank one cusps 
and solid cusp tori corresponding to rank two cusps (cf. \cite{Mar}).

The following is  a special case of Thurston's geometrization theorem under the extra non-empty boundary condition  (cf. \cite{Ka}):
\begin{Thm} Let $N$ be a compact irreducible\footnote{every $2$-sphere bounds a ball}
 orientable atoroidal\footnote{ any $\mathbb Z^2$ subgroup comes from boundary tori} $3$-manifold with non-empty boundary. Then
$N$ is homeomorphic to
$\mathcal M_0(\Gamma)$
for some geometrically finite Kleinian group $\Gamma$.
\end{Thm}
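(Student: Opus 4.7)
The plan is to follow Thurston's hyperbolization program via a hierarchy argument, with the atoroidal and nonempty boundary hypotheses being what make the Haken approach feasible. First, I would invoke standard 3-manifold topology to put $N$ in a form amenable to induction. Since $N$ is compact, irreducible, orientable, atoroidal, with nonempty boundary, the loop theorem together with the hypothesis of nonempty boundary makes $N$ Haken: either $\partial N$ is already incompressible, or we can compress it to produce an essential surface inside $N$. I would also strip off any essential tori in $\partial N$ (here the atoroidal assumption keeps the combinatorics tame) to arrange that the part of $\partial N$ we hyperbolize corresponds exactly to the prospective conformal boundary $\Gamma\backslash\Omega$, with tori retained as prospective rank-two cusps and annular ``pared'' loci as rank-one cusps.

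Next I would run the Haken hierarchy: successively cut $N$ along essential surfaces $S_1,S_2,\dots$ until the pieces are $3$-balls, which trivially carry hyperbolic structures (on their interiors, with the boundary supplying the conformal data). The inductive step is the content of Thurston's theorem: given a geometrically finite hyperbolic structure on $N_k := N\setminus(S_1\cup\cdots\cup S_k)$, one must show the identification along $S_{k+1}$ can be realized by a geometrically finite structure on $N_{k+1}$. This reduces, via Maskit combination and the Ahlfors--Bers deformation theory, to finding a fixed point of Thurston's skinning map $\sigma: \operatorname{Teich}(\partial\mathcal{M}_0) \to \operatorname{Teich}(\partial\mathcal{M}_0)$ composed with the involution induced by the gluing pattern on $S_{k+1}$. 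The skinning map records, for each conformal structure on the boundary, the conformal structure induced on the covering corresponding to $S_{k+1}$ by the associated geometrically finite hyperbolic structure. A genuine hyperbolic structure on $N_{k+1}$ corresponds to a fixed point of this composition.

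The main obstacle is establishing the existence of this fixed point, and it splits into two regimes according to the characteristic submanifold decomposition of each piece. For the acylindrical pieces of the hierarchy, Thurston's bounded image/contraction theorem shows $\sigma$ has precompact image in $\operatorname{Teich}$, and one gets a fixed point via a Brouwer-type argument (plus a careful analysis to keep everything geometrically finite, not just algebraic). For the pieces containing essential annuli and for pseudo-Anosov gluings, one needs the double limit theorem of Thurston to produce algebraic limits whose geometric limits remain geometrically finite and realize the prescribed topology; compactness in the appropriate space of representations, combined with the atoroidal hypothesis ruling out accidental $\mathbb{Z}^2$-subgroups in the limit, is what prevents degeneration. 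I would expect the largest technical burden to lie precisely here: controlling the geometric limits, ensuring no extra parabolics appear beyond those prescribed by the pared locus, and verifying that the resulting Kleinian group $\Gamma$ is geometrically finite with $\mathcal{M}_0(\Gamma)\cong N$. The final identification of the homeomorphism type uses Waldhausen's theorem, since both $N$ and $\mathcal{M}_0(\Gamma)$ are Haken with isomorphic fundamental groups and compatible peripheral structures.
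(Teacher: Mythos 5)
The paper does not prove this statement at all: it is quoted as a special case of Thurston's geometrization (hyperbolization) theorem for Haken manifolds, with a pointer to Kapovich's book [Ka], so there is no in-paper argument to compare against. Your outline is essentially the canonical proof from that literature (Thurston, Morgan, Kapovich, Otal): Haken hierarchy on the pared manifold, induction via Maskit combination and Ahlfors--Bers theory, fixed point of the skinning map, bounded image theorem in the acylindrical regime, and control of algebraic/geometric limits and parabolics so that the resulting group is geometrically finite with the prescribed pared structure, finishing with Waldhausen. So the architecture you describe is the right one and matches the source the paper cites.

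That said, as a proof it only names, rather than supplies, the deep ingredients, and a couple of attributions are slightly off. The double limit theorem belongs to the fibered case of hyperbolization (surface bundles over the circle, which here can only occur with toral boundary); the genuinely hard non-acylindrical steps along a hierarchy with nonempty higher-genus boundary are handled not by the double limit theorem but by the characteristic submanifold (window) theory together with relative versions of the skinning-map fixed point theorem, since in the presence of essential annuli the skinning map need not have bounded image and one must quotient out or treat separately the I-bundle/Seifert pieces. Also, when $\partial N$ is compressible you should pass to a compression body decomposition (loop theorem) and hyperbolize the pieces with incompressible boundary, then recombine by Klein--Maskit combination along spheres and disks; and throughout, the pared locus (annuli and tori destined to become rank-one and rank-two cusps, which is exactly what $\mathcal M_0(\Gamma)$ encodes) must be carried through every cut-and-glue step, with the ``no accidental parabolics'' and geometric finiteness statements proved at each stage, not only at the end. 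None of this contradicts your plan, but these are precisely the places where the actual proof lives, so your proposal should be read as a correct roadmap to the cited theorem rather than an argument for it.
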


We remark that if $\partial N$ is incompressible and $N$ does not have any essential cylinders, then $\Gamma$ is a geometrically finite acylindrical group.

By applying Thurston's theorem to the compact core of
$\Gamma\ba \bH^3$, we deduce that every finitely generated Kleinian group $\Gamma$ is isomorphic to a geometrically finite group.

\subsection{Density of geometrically finite groups} 
The density conjecture of Bers, Sullivan and Thurston says that
most of Kleinian groups are geometrically finite. This is now a theorem whose proof combines the work
of many authors with the proof in full generality
due to Namazi-Souto and Ohshika (we refer to  \cite[Sec. 5.9]{Mar}
for more details and background).
\begin{thm} [Density theorem] The class of  geometrically finite Kleinian  groups is  open and dense 
 in the space of all finitely generated Kleinian groups. \end{thm}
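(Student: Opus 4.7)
The statement splits into openness and density, with the latter being by far the deeper half. Openness follows from stability of geometrically finite structures under small algebraic deformations: if $\Gamma_0$ is geometrically finite, then $\core M_0$ has the standard thick-thin decomposition into a compact piece and finitely many cusps, and nearby discrete faithful representations (with matching parabolic locus) produce manifolds whose convex cores admit the same decomposition. This is essentially Marden's isomorphism theorem applied inside the quasi-conformal deformation space; the subtle point is handling parabolics, which can only split off in a controlled way.

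For density, let $\Gamma$ be an arbitrary finitely generated Kleinian group; by Selberg's lemma we may assume $\Gamma$ is torsion-free. The plan proceeds in three steps. First, apply the tameness theorem of Agol and Calegari--Gabai: $M=\Gamma\ba \bH^3$ is homeomorphic to the interior of a compact $3$-manifold $\bar M$. Combined with Bonahon's theorem, each end of $M$ is either geometrically finite (contributing a conformal structure on the corresponding component of $\partial \bar M$) or simply degenerate (contributing an ending lamination on that component). Second, invoke the ending lamination theorem of Minsky and Brock--Canary--Minsky: the isometry class of $M$ is determined by its end invariants, and the correspondence is continuous in an appropriate sense via Minsky's bi-Lipschitz model manifold.

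Third, approximate. Each ending lamination $\lambda$ can be approximated in the space of laminations by simple closed curves $c_n \to \lambda$, and one constructs a sequence of geometrically finite representations $\rho_n:\Gamma \to G$ realizing the approximating end invariants (the original conformal structures at geometrically finite ends, together with structures pinched along $c_n$ at degenerate ends). One then argues $\rho_n$ converges algebraically to the inclusion of $\Gamma$. The main obstacle is precisely this last step: converting convergence of end invariants into algebraic convergence of representations. This is the technical heart of the work of Namazi--Souto, who construct model manifolds by drilling and gluing convex cocompact pieces along the approximating curves, and of Ohshika, who uses realizability of ending laminations together with the ending lamination theorem to identify the algebraic limit. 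The delicate point is controlling bi-Lipschitz distortion between model and hyperbolic manifolds uniformly over arbitrarily long portions of the degenerate ends, so that geometric limits of the $\rho_n$ cannot escape to infinity in Teichm\"uller space nor collapse onto a smaller quotient.
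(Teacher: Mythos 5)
The paper does not actually prove this theorem: it is quoted as the Bers--Sullivan--Thurston density conjecture, now a theorem ``whose proof combines the work of many authors with the proof in full generality due to Namazi--Souto and Ohshika,'' with a pointer to Marden's book for background. So there is no in-paper argument to compare against, and your sketch is an outline of exactly the literature being cited: tameness (Agol, Calegari--Gabai), Bonahon and the classification of ends, the Ending Lamination Theorem of Minsky and Brock--Canary--Minsky, and the Namazi--Souto/Ohshika construction of geometrically finite approximants realizing approximate end invariants. As a proof, however, your text is citation-level rather than self-contained: the step you yourself single out as the heart of the matter --- upgrading convergence of end invariants to algebraic convergence of the representations $\rho_n$ to the inclusion, without losing discreteness, faithfulness, or the identification of the limit --- is precisely what those papers prove, and you supply no argument for it. In that respect your proposal sits at the same level of detail as the survey's one-sentence attribution.

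The one place where you assert something that genuinely needs repair is openness. The parenthetical ``with matching parabolic locus'' conceals the real difficulty: in the algebraic topology, nearby discrete faithful representations need not have matching parabolic locus, and geometric finiteness is in fact not an open condition without a minimal-parabolicity or type-preserving restriction. For instance, a geometrically finite punctured-torus group with an accidental parabolic (a cusp group on the boundary of the Maskit slice) is an algebraic limit of totally degenerate, hence geometrically infinite, groups. The correct statement, due to Marden and Sullivan, is that the interior of $AH(\Gamma)$ consists of the geometrically finite (minimally parabolic) representations, and the density theorem is the assertion that $AH(\Gamma)$ is the closure of that interior --- which is exactly how the paper rephrases the theorem later in the same subsection, using its type-preserving definition of $AH(\Gamma)$. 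So either you work in the type-preserving setting, where Marden's quasiconformal stability does give openness, or you must weaken ``open'' to the interior statement; as written, your openness paragraph assumes the point at issue.
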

In order to explain the topology used in the above theorem, let $\Gamma$ be a finitely generated Kleinian group. 
By Thurston's geometrization theorem, there exists a geometrically finite Kleinian group $\Gamma_0$ and an isomorphism $\rho:\Gamma_0\to\Gamma$. In fact, a more refined version gives that
$\rho$ is type-preserving, i.e., $\rho$ maps a parabolic element to a parabolic element. Fix a finite generating set $\gamma_1, \cdots, \gamma_k$ of $\Gamma_0$. The density theorem says there exists
a sequence of geometrically finite groups $\Gamma_n<G $, and isomorphisms $\rho_n: \Gamma_0\to \Gamma_n$ such that
$\rho_n $ converges to $\rho$ as $n\to \infty$, in the sense that 
$\rho(\gamma_i)=\lim_{n} \rho_n (\gamma_i)$ for each $i=1, \cdots, k$.

Here is an alternative way to describe the density theorem:
 Fix a geometrically finite Kleinian group $\Gamma$ with a fixed
set of generators $\gamma_1, \cdots, \gamma_k$ and relations $\omega_1, \cdots, \omega_r$.
Define $$\mathfrak R(\Gamma):=\{\rho:\Gamma\to G \text{ homomorphism}\}/\sim$$
with the equivalence relation given by conjugation by elements of $G$.
The set $\mathfrak R(\Gamma)$ can be identified with the algebraic variety
 $\{(g_1, \cdots, g_k)\in G\times \cdots \times G: \omega_i(g_1, \cdots, g_k)=e\;\;\text{ for $1\le i\le r$} \}/\sim$
where $\sim$ is given by conjugation by an element
of $G$ under the diagonal embedding. This defines a topology on $\mathfrak R(\Gamma)$, called
the algebraic convergence topology.

The discrete locus  is then defined by
the subcollection of discrete and faithful representations:
$$AH(\Gamma):=\{\rho\in \mathfrak R(\Gamma): \text{ type preserving isomorphism to a Kleinian group}\}.$$
Then $AH(\Gamma)$ is a closed subset, which parametrizes
hyperbolic structures on $\Gamma\ba \bH^3$.
The interior of $AH(\Gamma)$ consists of geometrically finite
Kleinian groups, and the density theorem says that 
$$\overline{\text{Int}AH(\Gamma)}= AH(\Gamma).$$

When $\Gamma$ is a lattice in $G$, $AH(\Gamma)$ is a single point by Mostow rigidity theorem.
For all other geometrically finite Kleinian groups, $AH(\Gamma)$ is {\it huge};
the quasiconformal deformation space of $\Gamma$
given by $$\mathfrak T (\Gamma)=\{\rho\in AH(\Gamma): \rho\text{ is induced by
a quasiconformal deformation of $\Gamma$}\}$$
is a connected component of the interior of $AH(\Gamma)$ and is a complex analytic manifold
of dimension same as the dimension of $\op{Teich}(\Gamma\ba \Omega)$, i.e.,
$\sum_{i=1}^m (3g_i +n_i -3)$ where $g_i$ is the genus of the $i$-th component of
$\Gamma\ba \Omega =\partial \mathcal M(\Gamma)$ and $n_i$ is the number of its punctures \cite[Thm. 5.13]{Mar}.
Moreover when $\Gamma$ is rigid acylindrical,
the interior of $AH(\Gamma)$, modulo the orientation (in other words,
modulo the conjugation by elements of $\op{Isom}(\bH^3)$, rather than by elements of $G=\op{Isom}^+(\bH^3)$),
is connected, and hence is equal to $\mathfrak T(\Gamma)$; this can be deduced from \cite{CM}, as explained to us by Y. Minsky.
Therefore $\op{Int} AH(\Gamma)/\pm =\mathfrak T(\Gamma)=\op{Teich}(\Gamma\ba \Omega)$.

\subsection{Examples of geometrically infinite groups} \label{gi}
Not every finitely generated Kleinian group is geometrically finite. An important class of finitely generated geometrically infinite Kleinian groups is given by the fundamental groups of $\mathbb Z$-covers of closed hyperbolic $3$-manifolds.
The virtual fibering theorem, proved by Agol, building upon the previous work of Wise, says that
 every closed hyperbolic $3$-manifold
is a surface bundle over a circle, after passing to a finite cover \cite[Sec 6.4]{Mar}. This implies that, up to passing to a subgroup of finite index,
 any uniform lattice  $\Gamma_0$ of $G$ contains
 a normal subgroup $\Delta$ such that $\Gamma_0/\Delta\simeq \z$ and $\Delta$ is a surface subgroup, i.e., isomorphic to the fundamental group of a closed hyperbolic surface. Note that $\Delta$ is  finitely generated (being a surface subgroup) but geometrically infinite; as no normal subgroup of a geometrically finite group of infinite index is geometrically finite. 
  In fact, any finitely generated, geometrically infinite, subgroup of a uniform lattice of $G$ arises in this way, up passing to a subgroup of finite index  (cf. \cite{Ca}). These manifolds give examples of degenerate hyperbolic $3$-manifolds with $\Lambda=\bS^2$. We mention that there are also degenerate hyperbolic manifolds with $\Lambda\ne \bS^2$.



\section{Mixing and classification of $N$-orbit closures}
Let  $\Gamma <G=\PSL_2(\c)$ be a Zariski dense geometrically finite  Kleinian group, and  
$M:=\Gamma\ba \bH^3$  the associated hyperbolic $3$-manifold. 
We denote by $$\pi:\bH^3\to M=\Gamma\ba \bH^3$$ the quotient map.

We fix $o\in \bH^3$ and  a unit tangent vector $v_o\in \T_o(\bH^3)$ so that
$K=\SU(2)$ and $M_0=\{\op{diag}(e^{i\theta}, e^{-i \theta}):\theta\in \br\}$
are respectively the stabilizer subgroups of $o$ and $v_o$. The action of $G$ on $\bH^3$ induces  identifications $G/K \simeq \bH^3 $, 
$G/M_0 \simeq \T^1(\bH^3)$, and  $G\simeq \op{F}(\bH^3)$, where $\T^1(\bH^3)$ and $\op{F}(\bH^3)$
denote respectively the unit tangent bundle and the oriented frame bundle over $\bH^3$.

\begin{figure} 
  \includegraphics [height=5.5cm]{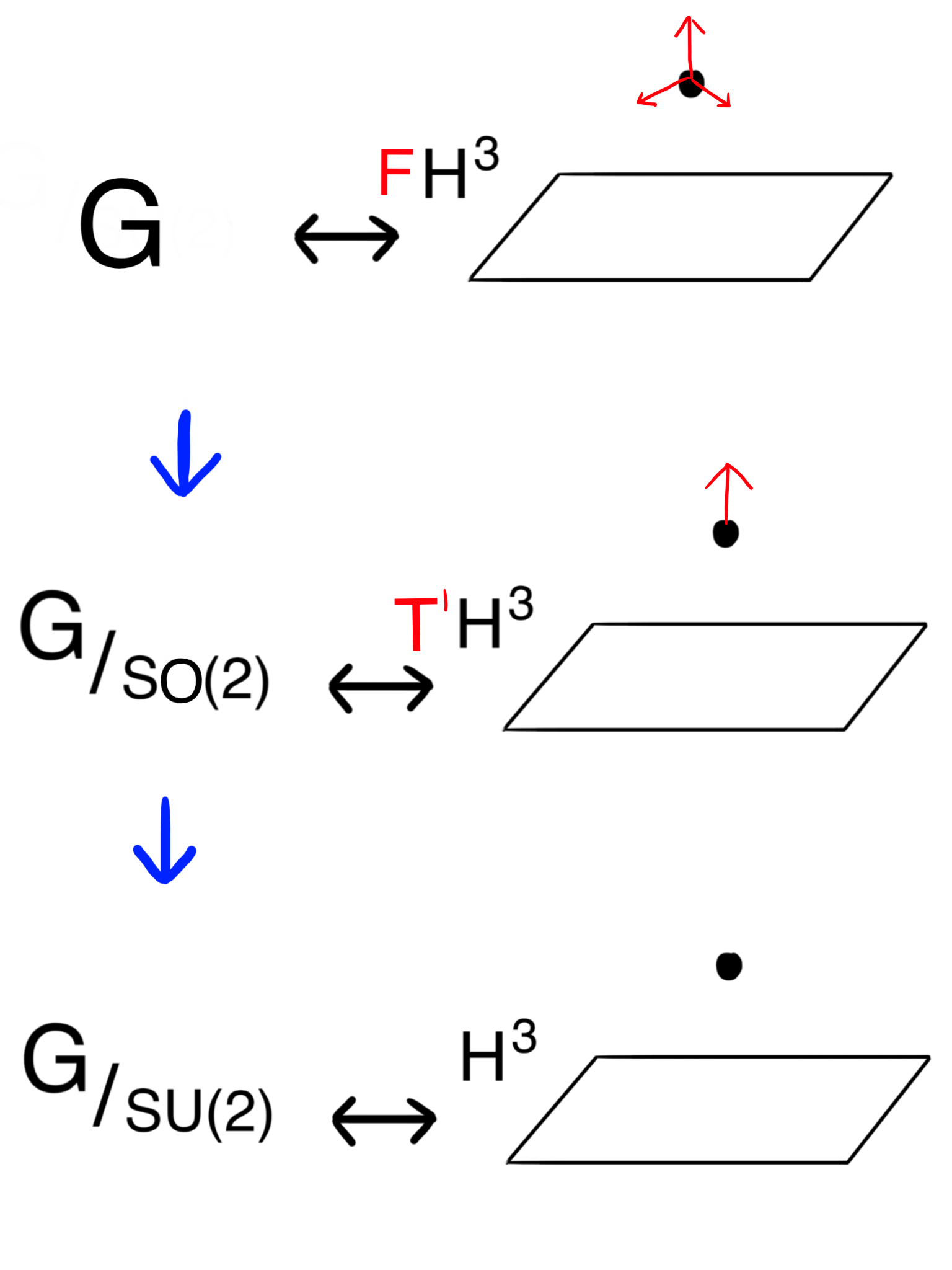}
   \caption{Frame bundle of $\bH^3$}
\end{figure}

Thus we may understand the oriented frame bundle $\FM$ as the homogeneous space $\Gamma\ba G$. Denote by 
$$p: \Gamma\ba G\to M$$ the base-point projection map.

Unless $\Gamma$ is a lattice, the $G$-invariant measure on $\Gamma\ba G$ is infinite, and
dissipative for natural geometric flows such as the geodesic flow and horospherical flow.
 Two locally finite measures on $\Gamma\ba G$, called the Bowen-Margulis-Sullivan measure, and the Burger-Roblin measure,
play important roles, and they are defined using the Patterson-Sullivan density on the limit set of $\Gamma$.

\subsection{Patterson-Sullivan density} We denote by $\delta$ the critical
exponent of $\G$, i.e., the infimum over all $s\ge 0$ such that the Poincare series
$\sum_{\gamma\in \G}e^{-s d(o, \gamma(o))}$ converges. 
As $\G$ is geometrically finite,
 $\delta$ is equal to the Hausdorff dimension of $\Lambda$ \cite{Su}.

 Bishop and Jones proved that $\delta$ is strictly bigger than $1$, unless $\La$  is totally disconnected or
 contained in a circle \cite{BJ}. As  $\Gamma$ is assumed to be Zariski dense, we have:
\begin{thm} 
If $\Lambda$ is connected, then $\delta>1$.
\end{thm}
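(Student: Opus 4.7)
The plan is to deduce the theorem directly from the Bishop--Jones dichotomy quoted immediately above it, by ruling out both exceptional configurations under the standing hypotheses that $\Gamma$ is Zariski dense, non-elementary, and now also has connected limit set.

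First I would dispose of the "contained in a circle" alternative: the excerpt already records that the Zariski density of $\Gamma$ in $G = \PSL_2(\mathbb{C})$ is equivalent to $\Lambda$ not lying on any round circle in $\bS^2$ (otherwise $\Gamma$ would be conjugate into $\PSL_2(\bbr)$, a proper real algebraic subgroup of $G$). Since $\Gamma$ is Zariski dense by assumption, this alternative does not occur.

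Second I would rule out the "totally disconnected" alternative using connectedness of $\Lambda$ together with the fact that $\Lambda$ contains more than one point. Because $\Gamma$ is non-elementary, $\Lambda$ is the minimal $\Gamma$-invariant closed subset of $\bS^2$ and is in fact a perfect set, so in particular $|\Lambda| > 1$. A connected topological space with more than one point is never totally disconnected, so this case is also excluded.

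With both exceptional cases eliminated, the Bishop--Jones theorem (\cite{BJ}) applies and yields $\delta > 1$, completing the argument. There is no real obstacle here beyond making the two rulings-out explicit; the only point that deserves slight care is the observation that $\Lambda$ has more than one point (so that connectedness genuinely contradicts total disconnectedness), which is immediate from non-elementarity.
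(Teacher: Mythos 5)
Your argument is exactly the paper's: the theorem is stated as an immediate consequence of the Bishop--Jones result, with Zariski density excluding the circle case and connectedness (together with $|\Lambda|>1$ from non-elementarity) excluding total disconnectedness. The proposal is correct and matches the intended proof.
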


Recall that  for $x,y\in \bH^3$ and $\xi\in \bS^2$, the Busemann function $\beta_{\xi}(x,y)$ is given by
$\lim_{t\to \infty} d(x, \xi_t) -d(y, \xi_t)$
where $\xi_t$ is a geodesic ray toward $\xi$.

\begin{dfn} \rm A
$\G$-invariant conformal density of dimension $s\ge 0$ is
a family $\{\mu_x: x\in \bH^3\}$ of finite  measures on  $\bS^2$ satisfying:
\begin{enumerate}
 \item For any $\gamma\in \G$ and $x\in \bH^3$, $\gamma_*\mu_x=\mu_{\gamma(x)}$;
\item For all $x,y\in \bH^3$ and $\xi\in \bS^2$, $\frac{d\mu_x}{d\mu_y}(\xi)=e^{s \beta_{\xi}(y,x)}$.
\end{enumerate}
\end{dfn}

\begin{thm}[Patterson-Sullivan] 
There exists a $\G$-invariant conformal density $\{\nu_x: x\in \bH^3\}$ of dimension $\delta$, unique up to a scalar multiple.
\end{thm}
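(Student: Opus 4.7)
The plan is to follow Patterson's original construction for existence, and then invoke Sullivan's divergence-type dichotomy together with ergodicity for uniqueness. Fix a base point $o\in\bH^3$ and consider the Poincar\'e series $P_s(x,y)=\sum_{\gamma\in\Gamma}e^{-sd(x,\gamma y)}$, which by definition converges for $s>\delta$ and diverges for $s<\delta$.

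For existence, first I would form, for each $s>\delta$, the atomic probability measure
\[
\mu_{x,s}=\frac{1}{P_s(o,o)}\sum_{\gamma\in\Gamma}e^{-sd(x,\gamma o)}\,\delta_{\gamma o}
\]
on the compactification $\overline{\bH^3}=\bH^3\cup\bS^2$. If $P_\delta(o,o)=\infty$ (divergence type), then any weak-$*$ subsequential limit $\nu_x$ of $\mu_{x,s}$ as $s\to\delta^+$ is automatically supported on $\bS^2$, since the mass assigned to any compact $K\subset\bH^3$ tends to $0$. If $P_\delta(o,o)<\infty$, I would apply Patterson's trick: insert a slowly varying factor $\phi(d(x,\gamma o))$ with $\phi:\br_{\ge0}\to\br_{>0}$ chosen so that the modified series diverges at $s=\delta$ while $\phi(t+c)/\phi(t)\to 1$ uniformly for bounded $c$, which ensures that the conformal ratios are not distorted in the limit. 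Equivariance $\gamma_*\nu_x=\nu_{\gamma(x)}$ is immediate from re-indexing the defining sum by $\gamma'\mapsto\gamma\gamma'$, and the conformal relation $d\nu_x/d\nu_y(\xi)=e^{\delta\,\beta_\xi(y,x)}$ follows by termwise comparison, since $d(x,\gamma o)-d(y,\gamma o)\to\beta_\xi(x,y)=-\beta_\xi(y,x)$ whenever $\gamma o\to\xi$.

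For uniqueness, the key input is Sullivan's theorem that a geometrically finite Kleinian group is of $\delta$-divergence type, so $P_\delta(o,o)=\infty$. A Borel--Cantelli argument using the conformal relation together with the shadow lemma then shows that any $\Gamma$-invariant conformal density $\{\mu_x\}$ of dimension $\delta$ assigns full mass to the conical limit set $\Lambda^c\subset\Lambda$. Given two such densities $\nu$ and $\nu'$, the measures $\nu_o$ and $\nu'_o$ are mutually absolutely continuous on $\Lambda^c$, and the conformal axiom forces the Radon--Nikodym derivative $f=d\nu'_o/d\nu_o$ to descend to a $\Gamma$-invariant Borel function on $\bS^2$. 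By the Hopf--Tsuji--Sullivan dichotomy, divergence type implies ergodicity of the $\Gamma$-action on $(\Lambda,\nu_o)$, so $f$ is $\nu_o$-a.e.\ constant, giving $\nu'=c\,\nu$ for some $c>0$.

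The main obstacle is the uniqueness half. One must establish that geometric finiteness forces divergence at $s=\delta$ and then derive ergodicity of $\Gamma$ on the boundary. This rests on Sullivan's shadow-lemma analysis: the $\nu_o$-measure of the shadow on $\bS^2$ of a ball of fixed radius about $\gamma o$ is comparable to $e^{-\delta d(o,\gamma o)}$, with uniform constants away from the cusp region and sharper estimates in the parabolic thin parts coming from geometric finiteness. These shadow estimates simultaneously yield the divergence of the Poincar\'e series at $\delta$ and the Hopf dichotomy for the $\Gamma$-action on the boundary. By comparison, the existence half is fairly routine once Patterson's modifying function $\phi$ is constructed.
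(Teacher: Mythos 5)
The paper does not prove this statement at all—it is quoted as a classical result with attributions to Patterson \cite{Pa} and Sullivan \cite{Su}—and your sketch is exactly the standard argument from those sources: Patterson's weighted orbital measures with the slowly varying correction for existence, and Sullivan's shadow lemma, divergence type of geometrically finite groups, and the resulting ergodicity/mutual absolute continuity on the conical limit set for uniqueness. The outline is correct and matches the cited classical proof, so there is nothing to add beyond the routine verifications (weak-$*$ limit of the conformal cocycle, non-atomicity at bounded parabolic fixed points) that you have already indicated.
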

We call this Patterson-Sullivan density. Denoting by $\Delta$ the hyperbolic Laplacian on $\bH^3$,
the Patterson-Sullivan density is closely related to the bottom of the spectrum of $\Delta$ for its action on smooth
functions on $\G\ba \bH^3$.  The function $\phi_0$ defined by $$\phi_0(x):=|\nu_x|$$ for each $x\in \bH^3$
is $\Gamma$-invariant, and hence we may regard $\phi_0$ as a function on the manifold $\Gamma\ba \bH^3$.
It is the unique function (up to a constant multiple)
satisfying  $\Delta \phi_0 =\delta (2-\delta)\phi_0$;
so we call $\phi_0$ the base eigenfunction.

Set $\nu:=\nu_o$ and call it the Patterson-Sullivan measure (viewed from $o$).
When $\Gamma$ is convex-cocompact,
the Patterson-Sullivan measure $\nu_o$
is simply proportional to the $\delta$-dimensional Hausdorff measure on $\Lambda$ in the spherical metric of $\bS^2$.

\subsection{Mixing of the BMS measure}
Consider the following one-parameter subgroup of $G$:
 $$ A:=\left\{a_t=\begin{pmatrix}  e^{t/2} & 0\\ 0& e^{-t/2}\end{pmatrix}  :t\in \br\right\}.$$
The right translation action of $A$ on  $\op{F} \bH^3= G$ induces  the frame flow:
if $g=(e_1, e_2,e_3)$, then $g a_t$ for $t>0$ is the frame given by translation in  direction of $e_1$ by hyperbolic distance $t$.
Let $v_o^{\pm}\in \bS^2$ denote  the forward and backward end points of the geodesic given by $v_o$ respectively.
In the upper half space model of $\bH^3$,  choosing $v_o$ to be the upward normal vector at $o=(0,0,1)$,
we have $v_o^+=\infty$ and $v_o^-=0$.

For $g\in G$, we define $$g^+= g(v_o^+) \in \bS^2  \quad \text{and}\quad  g^- =g(v_o^-) \in \bS^2 .$$
 The map $g\mapsto (g^+, g^-, s=\beta_{g^-}(o, g))$
 induces a homeomorphism between $\T^1(\bH^3)$ and $(\bS^2\times \bS^2 -\text{diagonal}) \times \br$; called the Hopf-parametrization.

We define a locally finite measure $\tilde m^{\BMS}$ on $\T^1(\bH^3)=G/M_0$  as follows:
\begin{align*}
d \tilde m^{\BMS}(g)& =
e^{\delta \beta_{g^+}(o, g)}\; e^{\delta \beta_{g^-}(o,g) }\;d\nu(g^+) d\nu(g^-) ds
\end{align*}
where  $ds$ is the Lebesgue measure on $\br$.

Denote by $m^{\BMS}$  the unique $M_0$-invariant measure on $\Gamma\ba G$ which is
 induced by $\tilde m^{\BMS}$; we call this the Bowen-Margulis-Sullivan measure (or the BMS measure for short).
 
 Sullivan showed that $m^{\BMS}$ is a {\it finite} $A$-invariant measure. The following is due to Babillot \cite{Ba}  for $M_0$-invariant functions 
  and to Winter \cite{Win} for general functions:
\begin{thm}\label{im} The frame flow on $(\Gamma\ba G, m^{\BMS})$ is mixing, that is,
for any $\psi_1, \psi_2\in L^2(\G\ba G, m^{\BMS})$,
$$\lim_{t\to\infty}\int_{\G\ba G} \psi_1(ga_t)\psi_2(g)\; dm^{\BMS}(g)=\frac{1}{|m^{\BMS}|} m^{\BMS}(\psi_1)\cdot m^{\BMS}(\psi_2) .$$
\end{thm}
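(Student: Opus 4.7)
The plan is to follow Babillot's strategy for $M_0$-invariant observables and then extend to all of $L^2(\Gamma\ba G, m^{\BMS})$ via an $M_0$-spectral decomposition, following Winter. Two ingredients drive the argument: (i) ergodicity of the right $A$-action on $(\Gamma\ba G/M_0, m^{\BMS})$, which comes from the Hopf--Tsuji--Sullivan dichotomy; and (ii) a non-arithmeticity statement for the joint length-and-holonomy spectrum of closed geodesics in $M$, which follows from the Zariski density of $\Gamma$ in $G=\PSL_2(\c)$.

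For the $M_0$-invariant case, I would work on $T^1 M = \Gamma\ba G/M_0$. Since $\Gamma$ is geometrically finite, the Poincar\'e series $\sum_\gamma e^{-\delta d(o,\gamma o)}$ diverges at the critical exponent $\delta$, so Hopf--Tsuji--Sullivan gives ergodicity of the geodesic flow with respect to $m^{\BMS}$. In the Hopf parametrization $m^{\BMS}$ has the explicit product form $e^{\delta\beta_{g^+}(o,g)} e^{\delta \beta_{g^-}(o,g)} d\nu(g^+) d\nu(g^-) ds$ and $a_t$ acts by translation in the $s$-variable. If mixing failed, a weak-$*$ subsequential limit of the correlations would, after normalization, produce a nontrivial $L^2$ eigenfunction $\psi$ of $a_t$ with eigenvalue $e^{i\theta t}$ for some $\theta\neq 0$; the product structure forces $\psi(g) = F(g^+,g^-) e^{i\theta s}$. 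Evaluating $\Gamma$-invariance along a closed geodesic then yields $e^{i\theta\ell(\gamma)} = 1$ for every hyperbolic $\gamma\in\Gamma$, contradicting non-arithmeticity of the length spectrum (which holds because Zariski density of $\Gamma$ in $\PSL_2(\c)$ forces hyperbolic elements with incommensurable translation lengths).

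To extend to all of $L^2(\Gamma\ba G, m^{\BMS})$, I would use that $M_0$ centralizes $A$, so the right $M_0$-action commutes with $a_t$. Decompose
$$L^2(\Gamma\ba G, m^{\BMS}) = \bigoplus_{n\in\z} \cH_n, \qquad \cH_n = \{\psi : \psi(gm) = \chi_n(m)\psi(g) \text{ for all } m\in M_0\}.$$
Each $\cH_n$ is $a_t$-invariant, and the $n=0$ component is handled above. For $n\ne 0$, if mixing failed inside $\cH_n$, the same weak-$*$ limit argument would produce an $a_t$-eigenfunction in $\cH_n$, and $\Gamma$-invariance evaluated along a hyperbolic $\gamma$ with translation length $\ell(\gamma)$ and $M_0$-holonomy $m(\gamma)$ would force $e^{i\theta\ell(\gamma)}\chi_n(m(\gamma)) = 1$ for all such $\gamma$. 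I would then invoke a strengthened non-arithmeticity statement: the closure of $\{(\ell(\gamma), m(\gamma)):\gamma \in \Gamma\text{ hyperbolic}\}$ in $\br\times M_0$ is not contained in the kernel of any nontrivial character, which again rests on Zariski density of $\Gamma$ in the complex group $\PSL_2(\c)$.

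The main obstacle is input (ii). For $n=0$ it is the classical non-arithmeticity of the length spectrum and is routine in this setting. The substantive point, and the heart of Winter's extension of Babillot's theorem, is the $n\neq 0$ case: one must rule out any coincidental cancellation between the translation-length spectrum and the $M_0$-holonomy spectrum along closed geodesics. The fact that $\Gamma$ is Zariski dense in the complex group $G=\PSL_2(\c)$ (and not merely in a real form such as $\PSL_2(\br)$) is essential here, since otherwise all holonomies could lie in a proper closed subgroup of $M_0$ and the eigenfunction obstruction could not be ruled out.
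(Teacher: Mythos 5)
The paper does not actually prove Theorem \ref{im}: it quotes it from Babillot \cite{Ba} (for $M_0$-invariant functions) and Winter \cite{Win} (for general functions), and your outline is a sketch of exactly those two works, so at the level of strategy you are on the paper's route. As a proof, however, your first and central step has a genuine gap: for a finite-measure-preserving ergodic flow, failure of mixing does \emph{not} produce an $L^2$ eigenfunction --- weakly mixing but non-mixing flows exist --- so the implication ``not mixing $\Rightarrow$ there is $\psi$ with $\psi\circ a_t=e^{i\theta t}\psi$'' is not a spectral formality. Producing the eigenfunction (equivalently, the arithmeticity of the relevant spectrum) from a non-vanishing weak-$*$ limit of correlations is precisely the content of Babillot's theorem: it uses the quasi-product form of $m^{\BMS}$ in the Hopf parametrization together with a Hopf-type argument showing that subsequential weak limits of $\psi\circ a_{t_n}$ are invariant along strong stable (and, after a second limit, strong unstable) leaves, and only then does the dichotomy ``mixing, or eigenfunction/arithmetic spectrum'' appear. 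Your sentence ``the product structure forces $\psi(g)=F(g^+,g^-)e^{i\theta s}$'' describes the form of an eigenfunction once it exists, not the reason one exists.

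A second gap is the passage from the eigenfunction to the arithmetic constraint. The eigenfunction is only an a.e.-defined $L^2$ function, and a single closed geodesic (more precisely its $AM_0$-orbit in $\Gamma\ba G$) is $m^{\BMS}$-null, so ``evaluating $\Gamma$-invariance along a closed geodesic'' is not available as stated; in \cite{Ba} and \cite{Win} this step runs through the conformal structure of the Patterson--Sullivan density and cross-ratio/holonomy identities on the limit set (Dal'bo's non-arithmeticity in the $M_0$-invariant case), not a pointwise evaluation. Finally, for $n\ne 0$ the statement you invoke --- that no nontrivial character of $\br\times M_0$ annihilates all pairs $(\ell(\gamma),m(\gamma))$, equivalently that the group these pairs generate is dense --- is the real crux of Winter's extension (note it also contains ergodicity of the frame flow, i.e.\ the case $\theta=0$, $n\ne0$); it does hold for Zariski dense $\Gamma<\PSL_2(\c)$, but it is a theorem in its own right (proved via Brin--Pesin transitivity groups, or via Benoist-type density results for Jordan projections and holonomies), not something that can be left at ``rests on Zariski density.'' With these three points supplied --- which are exactly the content of the cited works --- your outline becomes the standard proof.
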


We define  the   renormalized
frame bundle  of $M$ as:
$$\RFM=\{[g]\in \GG: g^{\pm}\in \Lambda\}.$$

This  is a closed $A$-invariant subset of $\Gamma\ba G$
which is precisely the support of $m^{\BMS}$, and an immediate consequence of Theorem \ref{im} is the topological mixing
of the $A$-action on $\RFM$: for any two open subsets $\cal O_1, \cal O_2$ intersecting $\RFM$,
$\cal O_1 a_t\cap \cal O_2\ne \emptyset$ for all sufficiently large $|t|$.

\subsection{Essential unique-ergodicity of the BR measure}
We denote by $N:=\{ g\in G: a_{-t}  g a_{t}\to e\text{ as $t\to +\infty$}\}$
the contracting horospherical subgroup for the action of $A$, which is explicitly given as
$$ N =\left\{ u_t=\begin{pmatrix} 1 & t\\ 0 &1\end{pmatrix}:t \in \c\right\}.$$

The projection $\pi(gN)$ in $\bH^3$ is a Euclidean sphere tangent to $\bS^2$ at $g^+$
and $gN$ consists of frames $(e_1, e_2, e_3)$ whose last two vectors $e_2, e_3$ are tangent to $\pi(gN)$.
That $N$ is a contracting horospherical subgroup means geometrically that $\pi(gNa_t)$ for $t>0$ is a Euclidean
sphere based at $g^+$
but shrunk toward $g^+$ by the hyperbolic distance $t$.

We define  $\tilde m^{\BR}$ on $G/M_0=\T^1(\bH^3)$  as follows:
$$d \tilde m^{\BR}(g) =
e^{\delta \beta_{g^+}(o, g)}\; e^{2 \beta_{g^-}(o,g) }\;d\nu(g^+) dg^- ds $$
where $dg^-$  is the  Lebesgue measures on $\bS^2$.
We denote by $m^{\BR}$ the unique $M_0$-invariant measure on $\Gamma\ba G$ which is
 induced by  $\tilde m^{\BR}$. We call this measure the Burger-Roblin measure (or the BR measure for short).
If $\Gamma$ is a lattice, $m^{\BR}$ is simply the $G$-invariant measure. Otherwise
 $m^{\BR}$ is an infinite, but locally finite, Borel $N$-invariant measure whose support is given by
 \begin{equation*}\label{rfpm} \RF_+M:=\{[g]\in \GG: g^{+}\in \Lambda\}=\RFM \cdot N .\end{equation*}

The projection of the BR measure to $M$ is an absolutely continuous measure on 
$M$ with Radon-Nikodym derivative given by $\phi_o$:
if $f\in C_c(\Gamma\ba G)$ is $K$-invariant, then
$$m^{\BR}(f)= \int_{\Gamma\ba G}  f(x) \phi_o(x) 
\, dx$$
where $dx$ is a $G$-invariant measure on $\Gamma\ba G$.
Using Theorem \ref{im}, Roblin and Winter showed the following measure classification of $N$-invariant locally finite measures, extending an earlier work of Burger \cite{Bu}:
\begin{thm} (\cite{Ro}, \cite{Win}) \label{im2}
 Any locally finite $N$-ergodic invariant measure on $\Gamma\ba G$
 is either supported  on a closed $N$-orbit or proportional to $m^{\BR}$.
\end{thm}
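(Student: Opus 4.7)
The plan is to follow the Roblin--Winter strategy, deducing the measure classification from the mixing theorem (Theorem \ref{im}) together with the product structure of the BMS and BR measures. Let $\mu$ be a locally finite, $N$-invariant, $N$-ergodic Borel measure on $\Gamma\ba G$ that is not supported on a closed $N$-orbit; the goal is to show $\mu\propto m^{\BR}$.

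First, I would identify the support of $\mu$. A point $x\in \Gamma\ba G$ has closed $N$-orbit exactly when $x^+$ is a parabolic fixed point of $\Gamma$. Since by hypothesis $\mu$-a.e.\ $N$-orbit is non-closed, a cusp-excursion analysis, using that $\Gamma$ has only finitely many cusps with controlled horoball neighborhoods (geometric finiteness), forces $\mu$-a.e.\ $N$-orbit to return infinitely often to the compact part of $M$. Together with $N$-invariance this implies $x^+\in \Lambda$, so $\supp\mu\subseteq \RF_+M$, and moreover $\mu$ is conservative for the $A$-action.

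Second, I would use the normalization $a_{-t}u_s a_t=u_{e^{-t}s}$, whose Jacobian on $N$ is $e^{-2t}$, to convert $N$-averages at scale $e^t$ into $A$-translates of unit-scale $N$-averages. Explicitly, for $\psi\in C_c(\Gamma\ba G)$,
\[
\frac{1}{e^{2t}}\int_{B_N(e^t)}\psi(xu)\,du \;=\; \int_{B_N(1)}\psi(xa_{-t}u\,a_t)\,du,
\]
where $B_N(r)$ is the Euclidean ball in $N\cong\c$ of radius $r$. The right-hand side is an integral along a unit $N$-plaque based at $xa_{-t}$. Since $m^{\BR}$ is obtained from $m^{\BMS}$ by thickening in the $N$-direction by Lebesgue measure, a standard thickening argument based on mixing of the $a_t$-action on $(\Gamma\ba G, m^{\BMS})$ yields, for $\mu$-generic $x$,
\[
\int_{B_N(1)}\psi(xa_{-t}u\,a_t)\,du \;\longrightarrow\; c(x)\,m^{\BR}(\psi)
\]
as $t\to\infty$, where $c(x)>0$ depends only on the local transversal density of $\mu$ at $x$.

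Third, the Hopf ratio ergodic theorem applied to the conservative $\br^2$-action of $N$ on $(\Gamma\ba G,\mu)$ gives, for $\mu$-a.e.\ $x$ and any $\psi_1,\psi_2\in C_c(\Gamma\ba G)$ with $\mu(\psi_2)\neq 0$,
\[
\frac{\int_{B_N(r)}\psi_1(xu)\,du}{\int_{B_N(r)}\psi_2(xu)\,du} \;\longrightarrow\; \frac{\mu(\psi_1)}{\mu(\psi_2)}.
\]
Matching this with the limit from the previous step forces $\mu(\psi_1)/\mu(\psi_2)=m^{\BR}(\psi_1)/m^{\BR}(\psi_2)$ for all such test functions, and hence $\mu$ is a positive scalar multiple of $m^{\BR}$. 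The main obstacle is the first step: controlling $\mu$ in the cusps, where the infinite-volume setting precludes a direct appeal to standard non-divergence theorems, so one must exploit the finite cusp decomposition supplied by geometric finiteness to run an explicit horoball analysis.
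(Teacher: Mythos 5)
Since the paper only cites \cite{Ro} and \cite{Win} for this statement, the benchmark is the Roblin--Winter argument it alludes to: mixing of the frame flow with respect to $m^{\BMS}$ (Theorem \ref{im}), a thickening/equidistribution step for expanding horospherical pieces, and a comparison with the unknown measure. Your outline is in that circle of ideas, but as written it has two concrete defects. First, the conversion identity is backwards: since $a_{-t}u_sa_t=u_{e^{-t}s}$, your right-hand side equals $e^{2t}\int_{B_N(e^{-t})}\psi(xu)\,du$, a \emph{shrinking}-plaque average which converges to $\psi(x)$, not an expanding one. The correct identity is $e^{-2t}\int_{B_N(e^t)}\psi(xu)\,du=\int_{B_N(1)}\psi(xa_t\,u\,a_{-t})\,du$, i.e.\ the $a_{-t}$-translate of the unit plaque based at the \emph{moving} point $xa_t$. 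Second, and more seriously, the asserted limit $\int_{B_N(1)}\psi(\cdot\,u\,a_{-t})\,du\to c\,m^{\BR}(\psi)$ is false when $\delta<2$: for $\psi\in C_c(\Gamma\ba G)$ the Lebesgue-normalized expanding average tends to $0$. What mixing plus thickening actually gives is convergence of $e^{(2-\delta)t}\int_{B_N(1)}\psi(y\,u\,a_{-t})\,du$ to a limit proportional to $\mu^{\PS}_y(B_N(1))\cdot m^{\BR}(\psi)$, where the density is the leafwise Patterson--Sullivan mass at the basepoint $y$ (not ``the local transversal density of $\mu$''). To salvage your ratio argument you must (i) show $\mu$-a.e.\ $x$ has $x^+$ a \emph{radial} limit point — here geometric finiteness enters through $\Lambda=\Lambda_{r}\cup\Lambda_{p}$, and note your Step 1 claim is wrong as stated: $xN$ is closed not only when $x^+$ is parabolic but also whenever $x^+\notin\Lambda$, so that case must be disposed of separately; (ii) invoke a version of the equidistribution theorem uniform over basepoints $xa_{t_j}$ lying in a fixed compact set; and (iii) observe that the factors $e^{(2-\delta)t}$ and the PS-densities cancel in the ratio of the $\psi_1$- and $\psi_2$-averages. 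None of these steps appears in your sketch.

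There are two further gaps of principle. The Hopf ratio theorem you quote is a one-parameter result; for the $\br^2$-action of $N$ along Euclidean balls you need a multiparameter ratio ergodic theorem for conservative infinite-measure-preserving actions (Hochman-type, along balls), and you must first establish conservativity of $\mu$ under $N$ — ``conservative for the $A$-action'' is not meaningful since $\mu$ is not $A$-invariant. Ruling out a totally dissipative ergodic $\mu$ carried by a \emph{non-closed} orbit with sparse return times (which would be locally finite yet neither $m^{\BR}$ nor supported on a closed orbit) is genuinely part of the content of the theorem, and it is exactly the place where your appeal to a ``cusp-excursion analysis'' would have to be turned into an actual argument using the structure of $\Lambda_{r}\cup\Lambda_{p}$ and the finitely many cusps. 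With these repairs the scheme can be made to work — it then resembles later treatments of horospherical measure rigidity more than Roblin's original transverse-measure argument and Winter's frame-bundle extension — but as written the key analytic step and the reduction to it both fail.
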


\subsection{Closures of $N$-orbits} If $x\notin \RFPM$, then $xN$ is a proper immersion of $N$ to $\Gamma\ba G$ via the map $n\mapsto xn$, and hence
$xN$ is closed.
In understanding the topological behavior of $xN$ for $x=[g]\in \RFPM$, the relative location of $g^+$ in the limit set becomes relevant.
The hypothesis that $\Gamma$ is geometrically finite implies that
any $\xi \in \Lambda$ is either radial (any geodesic ray $\xi_t \in M$  converging to $\xi$ accumulates on a compact subset)
or parabolic (it is fixed by some parabolic element of $\Gamma$).  Since this property is $\Gamma$-invariant, we will
say that $x^+$ is radial (resp. parabolic) if $g^+$ is for $x=[g]$. When $\Gamma$  is convex cocompact, $\Lambda$ consists only of radial limit points.

The topological mixing of the $A$-action on $\RFM$ implies
 the following dichotomy for the closure of an $N$-orbit:
\begin{thm} (\cite{F}, \cite{Win}) \label{nc} For $x\in \RFPM$,
 $xN$ is  closed (if $x^+$ is parabolic) or dense in $\RFPM$ (if $x^+$ is radial).
\end{thm}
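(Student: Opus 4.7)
The plan is to handle the two cases of the dichotomy separately. For the parabolic case, I would write $x = [g]$ and analyze the stabilizer $\Gamma_{g^+}$, a non-trivial discrete subgroup of $\Gamma$. After conjugation by $g$, it fixes $v_o^+=\infty$ and hence lies in the upper triangular subgroup $M_0 A N$; its parabolic elements (all of it in the torsion-free case) lie in $M_0 N$, having trivial $A$-component. Modding out by the compact $M_0$ yields a discrete subgroup of $N$: a rank-$2$ lattice when the cusp at $g^+$ has maximal rank, a rank-$1$ discrete subgroup otherwise. Geometric finiteness then supplies a precisely invariant horoball based at $g^+$ on which the $\Gamma$-action reduces to the $\Gamma_{g^+}$-action; this would force the orbit $\Gamma g N$ to be a properly embedded closed submanifold of $\Gamma\backslash G$, a compact torus in the rank-$2$ case and a closed cylinder in the rank-$1$ case.

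For the radial case, I would set $E := \overline{xN}$, which is closed and $N$-invariant. Since $\RFPM = \RFM\cdot N$, it would suffice to prove $E\supset\RFM$. The radial hypothesis supplies $t_n\to\infty$ and $y\in\RFM$ with $xa_{t_n}\to y$. For any target $z\in\RFM$ and open neighborhoods $U_n\ni y$ (shrinking to $y$) and $V\ni z$, topological mixing of $A$ on $\RFM$ (immediate from Theorem \ref{im}) would provide $s_n>0$, arbitrarily large, and $w_n\in U_n\cap\RFM$ with $w_n a_{s_n}\in V\cap\RFM$. Taking $n$ large enough that $xa_{t_n}\in U_n$, I would write $w_n = xa_{t_n}\,h_n$ with $h_n\to e$, and decompose $h_n$ via the Bruhat-type factorization near the identity as $h_n = n_n q_n$, with $n_n\in N$ and $q_n\in M_0 A N^-$ both tending to $e$ (here $N^-$ denotes the horospherical subgroup opposite to $N$). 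Using the identity $N a_{t_n} = a_{t_n} N$, one obtains $xa_{t_n}n_n = xn_n'\, a_{t_n}$ with $n_n' := a_{t_n} n_n a_{-t_n}\in N$, and hence
\[
w_n a_{s_n} \;=\; xn_n'\cdot a_{t_n} q_n a_{s_n}\;\in\; V.
\]
Since $a_t$ contracts $N^-$ and acts in a bounded way on $M_0 A$, the parameters can be arranged so that $a_{t_n} q_n a_{s_n}$ remains bounded and converges along a subsequence to an element of $M_0 A$; a final small $N$-adjustment would then produce $n_k\in N$ with $xn_k\to z$.

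The hard part will be balancing the three scales—the shrinking perturbation $h_n$, the recurrence time $t_n$, and the mixing time $s_n$—so that $n_n'$ stays in a bounded region of $N$ while the transverse error $a_{t_n} q_n a_{s_n}$ converges to a bounded correction. This is the Margulis-style thickening/shadowing trick adapted to the geometrically finite setting; its execution depends crucially on $w_n, xa_{t_n}, y\in\RFM$, which constrains the transverse component $q_n$ via the Patterson--Sullivan structure of $\RFM$ and allows it to be absorbed by the forward $a_t$-dynamics. Once $z\in E$ is obtained for every $z\in\RFM$, the $N$-invariance of $E$ upgrades this to $E\supset\RFPM$, completing the proof.
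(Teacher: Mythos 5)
Your overall strategy for the radial case --- radial recurrence $xa_{t_n}\to y$, two-sided topological mixing of the frame flow on $\RFM$ (Theorem \ref{im}), and a factorization of the small discrepancy into an $N$-part (absorbed into the orbit) and a transverse $M_0AN^-$-part (to be killed by conjugation) --- is exactly the route the paper points to when it says the dichotomy follows from topological mixing, as in Ferte and Winter. But as written the key step fails because of the sign of the mixing time. You take $s_n>0$ arbitrarily large, and you also have $t_n\to+\infty$; writing $q_n=c_nr_n$ with $c_n\in M_0A$ and $r_n\in N^-$, one gets $a_{t_n}q_na_{s_n}=c_n\,(a_{t_n}r_na_{-t_n})\,a_{t_n+s_n}$, whose $A$-factor $a_{t_n+s_n}$ tends to infinity; so this quantity cannot ``remain bounded,'' no matter how the scales are balanced, and no Patterson--Sullivan input changes this. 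Moreover, even if it were bounded, a bounded limit $b\in M_0A$ would only give $xn_n'\to zb^{-1}$, and a ``small $N$-adjustment'' cannot remove a bounded $M_0A$ discrepancy. The repair is to use the mixing in the backward direction, matched to the radial return time: by Theorem \ref{im} one has $Va_{t_{m}}\cap U_n\ne\emptyset$ for all large $m$, so there is $v_n\in V$ with $v_na_{t_m}=xa_{t_m}h_n$, $h_n\to e$; decomposing $h_n=n_n c_n r_n\in NM_0AN^-$ and conjugating by $a_{t_m}$, the (possibly huge) factor $a_{t_m}n_na_{-t_m}$ stays in $N$, while $c_n(a_{t_m}r_na_{-t_m})\to e$, so $v_n=xn_n'\,c_n\tilde r_n$ and hence $xn_n'\to z$ directly, with no leftover to absorb. (You should also justify that the accumulation point $y$ lies in $\RFM$ so that the neighborhoods $U_n$ meet $\RFM$; this is standard for radial $x^+$ but is used.)

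In the parabolic case your mechanism is also not quite the right one: a precisely invariant horoball at $g^+$ exists for any discrete group by the Margulis lemma and does not by itself force $\Gamma gN$ to be closed (a parabolic fixed point of a general Kleinian group can simultaneously be a horospherical limit point, in which case the orbit is not closed). What geometric finiteness actually provides is that $g^+$ is a bounded parabolic point, so the orbit $\Gamma(o)$ avoids some horoball based at $g^+$; this is what makes the map $(N\cap g^{-1}\Gamma_{g^+}g)\ba N\to\Gamma\ba G$ proper in the rank-one case (in the rank-two case compactness of the quotient torus already suffices). With these two corrections your argument becomes the standard mixing proof the paper cites.
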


\section{Almost all results on orbit closures}
Let  $\Gamma <G=\PSL_2(\c)$ be a Zariski dense geometrically finite  Kleinian group, and  
$M:=\Gamma\ba \bH^3$  the associated hyperbolic $3$-manifold. 

We are mainly interested in the action of the following two subgroups on $\Gamma\ba G$:
 $$H:=\PSL_2(\br) \text{ and }$$
\be \label{uuu} U :=\{u_t=\begin{pmatrix} 1 & t \\ 0 & 1\end{pmatrix}:t\in \br\}.\ee

Any one-parameter unipotent subgroup of $G$ is conjugate to $U$, and
any connected closed subgroup of $G$ generated by unipotent one-parameter subgroups is
conjugate to either $N$, $H$ or $U$. 
We also note that the subgroups $N$, $H$ and $U$ are normalized by the subgroup $A$, which is an important  point for the following discussion,
as the measures $m^{\BMS}$ and $m^{\BR}$ are invariant and quasi-invariant under $A$ respectively.

The first question is whether there exist almost all results for the closures of these orbits for appropriate measures.

We recall:
\begin{thm}[Moore's ergodicity theorem]
Let $\Gamma<G$ be a lattice. For any unbounded subgroup  $W$ of $G$, 
 $xW$ is dense in $\Gamma\ba G$ for almost all $x\in \Gamma\ba G$.
\end{thm}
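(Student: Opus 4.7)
The plan is to reduce the density statement to ergodicity of the $W$-action on $(\Gamma\backslash G,\mu)$, where $\mu$ is the $G$-invariant probability measure (finite since $\Gamma$ is a lattice), and then establish ergodicity via decay of matrix coefficients. The measure-theoretic reduction is standard: fix a countable base $\{V_n\}$ of open sets for $\Gamma\backslash G$, and for each $n$ set
\[
E_n := \{ x \in \Gamma\backslash G : xW \cap V_n = \emptyset \}.
\]
Each $E_n$ is $W$-invariant since $xwW = xW$ for any $w \in W$. Its complement contains the open set $V_n$ (each $x \in V_n$ lies in its own orbit), so $\mu(E_n^c)>0$; once ergodicity is known, this forces $\mu(E_n)=0$, and intersecting over $n$ gives that almost every $W$-orbit meets every $V_n$, hence is dense.

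The core of the argument is therefore the ergodicity of the $W$-action for the Haar measure on $\Gamma\backslash G$. I would work with the Koopman representation of $G$ on $L^2(\Gamma\backslash G)$ and its $G$-invariant orthogonal decomposition $L^2 = \c \cdot 1 \oplus L^2_0$, where $L^2_0$ has no nonzero $G$-invariant vectors (this uses that $\Gamma$ is a lattice, hence the constants exhaust the invariants). Suppose $f \in L^2(\Gamma\backslash G)$ is $W$-invariant, and write $f = c + f_0$ with $f_0 \in L^2_0$; then $f_0$ is also $W$-invariant, so
\[
\langle \pi(w) f_0, f_0\rangle = \|f_0\|^2 \qquad \text{for every } w \in W.
\]
Since $W$ is unbounded, I can choose a sequence $w_n \in W$ with $w_n \to \infty$ in $G$.

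The decisive ingredient, and the step I expect to be the main obstacle if one wanted to prove the theorem from scratch, is the Howe--Moore vanishing theorem for $G = \PSL_2(\c)$: every unitary representation of $G$ with no nonzero invariant vectors has matrix coefficients vanishing at infinity. Granting this, $\langle \pi(w_n)f_0, f_0\rangle \to 0$, forcing $\|f_0\|^2 = 0$ and hence $f = c$ is constant. Thus the only $W$-invariant $L^2$-functions are constants, which is equivalent to ergodicity of $W$ on $(\Gamma\backslash G,\mu)$. Combined with the first paragraph, this yields density of a.e.\ $W$-orbit. The Howe--Moore step ultimately rests on the Mautner phenomenon: in any unitary $G$-representation, vectors fixed by a nontrivial unipotent subgroup are fixed by the normalizing $A$, and then by all of $G = \PSL_2(\c)$ since $G$ is generated by the unipotents conjugate to subgroups normalized by $A$. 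This is where the simplicity and connectedness of $G$ are genuinely used, and it is the one part of the proof that is not purely soft.
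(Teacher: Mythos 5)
Your argument is correct, but there is nothing in the paper to measure it against: the statement appears there only as a recalled classical result (Moore's ergodicity theorem, cited to \cite{Moo}) and is used as a black box, with no proof supplied. What you wrote is the standard modern route. The reduction from ergodicity to almost-everywhere density via a countable base $\{V_n\}$ is fine; the only point worth a sentence is measurability of $E_n=\{x: xW\cap V_n=\emptyset\}$, which follows because $V_n$ is open, so one may replace $W$ by a countable dense subset $W_0$ (then $E_n=\bigcap_{w\in W_0}V_n^{c}w^{-1}$ is closed), and $W$-invariance plus $\mu(V_n)>0$ does the rest. For ergodicity, your Howe--Moore argument is the now-standard one: since $W$ is unbounded it contains a sequence $w_n\to\infty$ in $G$, and vanishing of matrix coefficients on $L^2_0(\Gamma\ba G)$ kills any $W$-invariant vector orthogonal to the constants; this in particular covers arbitrary unbounded subgroups, exactly as the paper's formulation requires. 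You correctly flag that the genuine content sits in the Howe--Moore/Mautner step, which you grant rather than prove; that is consistent with the level at which the paper itself invokes the theorem (Moore's original proof proceeds via the Mautner phenomenon and a direct analysis of unitary representations of $\SL_2$, of which the Howe--Moore vanishing theorem is the later, more flexible packaging). So: correct proof, standard approach, no gap beyond the explicitly acknowledged appeal to Howe--Moore.
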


When $\Gamma$ is geometrically finite but not a lattice in $G$, no  orbit of a proper connected subgroup $W$
 is dense in $\Gamma\ba G$. Moreover, it is easy to verify that if  $\partial(gW)\subset \bS^2$  does not intersect $\La$,  then
 the map $W\to [g]W\subset \Gamma\ba G$ given by $w\mapsto [g]w$ is a proper map, and hence $[g]W$ is closed
 \footnote{For a subset $S\subset G$,
 we use the notation $\partial S$ to denote $\overline{\pi(S)}\cap \bS^2$ under the projection $\pi: \op{F}\bH^3 \to \bH^3\cup \bS^2$.}.

Hence if $W$ has the property that $\partial (gW)=(gW)^+$, for instance, if $W=H$ or $U$,
then the non-trivial  dynamics of the action of $W$ on $\Gamma\ba G$ exists only inside
the closure of ${\RFPM }\cdot W$. 

We will see that $\RFPM\cdot  H$ is always closed;
it is useful to understand the geometric description of $\RFPM \cdot H$ in order to understand its closedness.

\subsection{Geodesic planes and almost all $H$-orbits} A  geodesic plane in $\bH^3$ is a totally geodesic imbedding of $\bH^2$, which is simply
 either a vertical plane or a vertical hemisphere in the upper half space model.

\begin{figure}
 \begin{center}
    \includegraphics[height=6cm]{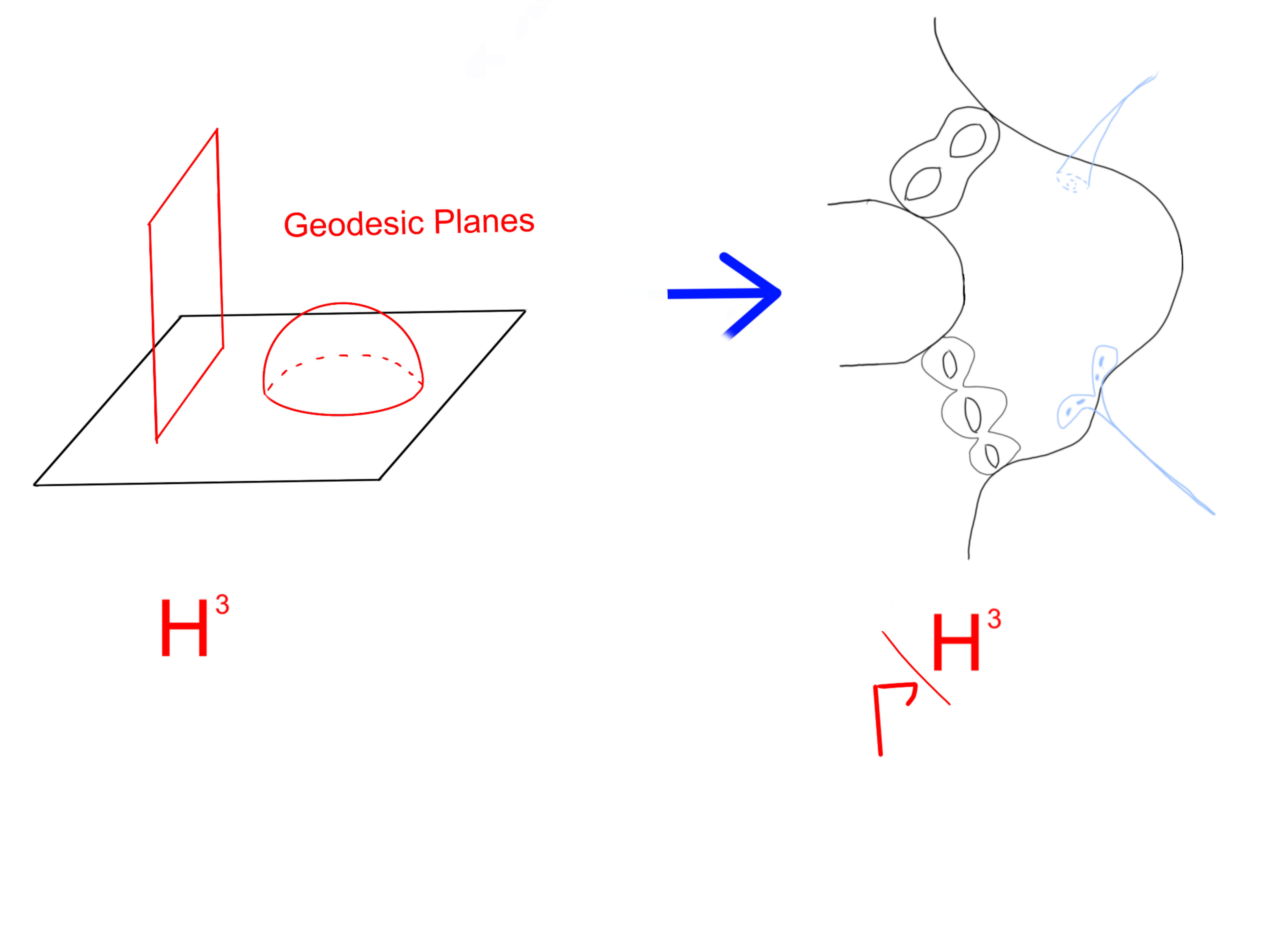}
 \end{center}\caption{Geodesic planes in $M$}
\end{figure}

Let $\mathcal P$ denote the set of all oriented geodesic planes
of $\bH^3$, and  $\mathcal C$ the set of all oriented circles in $\bS^2$.
The map $P\mapsto \partial P$ gives an isomorphism between $\mathcal P$ and $\mathcal C$.

On the other hand, the map
$$gH \mapsto P_g:=\pi(gH)$$
 gives an isomorphism between the quotient space $G/H$ and  the set $\mathcal P$, whose inverse can be described as follows:
for  $P\in \mathcal P$, the set of frames $(e_1, e_2, e_3)$ based in $P$
such that $e_1$ and $e_2$ are tangent to $P$ and $e_3$ is given by the orientation of $P$ is
precisely a single $H$-orbit.  Consequently, the map $$gH\to C_g:=\partial P_g$$
gives an isomorphism between $G/H$ and $\mathcal C$.

\begin{Def} An oriented  geodesic plane $P\subset M$ is a totally geodesic immersion of an oriented hyperbolic plane
$\bH^2$ in $M$, or equivalently, $P$ is the image of an oriented geodesic plane of $\bH^3$ under $\pi$.
\end{Def}
In this paper,  geodesic planes and circles are always considered to be oriented.
Note that any  geodesic plane $P\subset M$ is of the form:
$$P=p (gH)\quad\text{for some $g\in G$.}$$
 Therefore the study of $H$-orbits on $\Gamma\ba G$ has a direct implication on the behavior
of geodesic planes in the manifold $\Gamma\ba \bH^3$.

 We set
\begin{equation}\label{ff} F_\La:=\RFPM \cdot H\quad \text{and}\quad \C_\La:=\{C\in \mathcal C: C\cap \La \ne\emptyset\}.\end{equation}
It follows from the compactness of $\La$ that $\mC_\La$ is a closed subset of $\mathcal C=G/H$.
As 
$$F_\La/H=\Gamma\ba \C_\La ,$$
we deduce:
\begin{lem} The set $F_\La$ is a closed $H$-invariant subset of $\Gamma\ba G$.
\end{lem}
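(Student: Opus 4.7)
The plan is straightforward because $H$-invariance is immediate from the definition: $F_\La \cdot H = \RFPM \cdot H \cdot H = \RFPM \cdot H = F_\La$. The content of the lemma is therefore closedness, and the strategy is to use the identification $F_\La / H = \Gamma\ba \mC_\La$ from the excerpt together with the fact that $\mC_\La$ is closed in $\mathcal C = G/H$.

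First I would make the circle description of $F_\La$ explicit. For $g \in G$, as $h$ ranges over $H = \PSL_2(\br)$, the forward endpoint $(gh)^+ = g(h\cdot v_o^+) = g(h\cdot \infty)$ traces out the entire boundary circle $C_g = g(\br\cup\{\infty\}) = \partial P_g$, by the standard transitivity of $\PSL_2(\br)$ on $\br\cup\{\infty\}$. Hence $[g] \in \RFPM \cdot H$ iff some point of $C_g$ lies in $\Lambda$, i.e.\ iff $C_g \cap \Lambda \ne \emptyset$. Under the isomorphism $gH \mapsto C_g$ identifying $G/H$ with $\mathcal C$, this says precisely that $F_\La$ is the preimage of $\Gamma\ba \mC_\La$ under the continuous projection $\pi_H : \Gamma\ba G \to \Gamma\ba G/H$.

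Next I would check that $\mC_\La$ is closed in $\mathcal C$: if $C_n \to C$ with $\xi_n \in C_n \cap \Lambda$, then by compactness of $\Lambda \subset \bS^2$ a subsequence satisfies $\xi_n \to \xi \in \Lambda$, and the convergence of circles forces $\xi \in C$, so $C \in \mC_\La$. Since $\mC_\La$ is $\Gamma$-invariant (as $\Lambda$ is), by the definition of the quotient topology its image $\Gamma\ba \mC_\La$ is closed in $\Gamma\ba \mathcal C$. Pulling back by the continuous map $\pi_H$ gives that $F_\La$ is closed in $\Gamma\ba G$, completing the proof.

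There is no real obstacle here; the lemma is essentially an unpacking of the correspondence $G/H \leftrightarrow \mathcal C$ already set up in the excerpt. The only point deserving explicit mention is the identification of the $H$-orbit endpoints with the full boundary circle $C_g$, which is what links the dynamical object $\RFPM \cdot H$ to the geometric object $\Gamma\ba \mC_\La$.
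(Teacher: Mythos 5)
Your proof is correct and takes essentially the same route as the paper: the paper likewise observes that compactness of $\La$ makes $\mC_\La$ closed in $\mathcal C=G/H$ and deduces the lemma from the identification $F_\La/H=\Gamma\ba \mC_\La$. You have merely made explicit the endpoint-to-circle correspondence $[g]\in \RFPM\cdot H \iff C_g\cap\La\ne\emptyset$ and the quotient-topology step, which the paper leaves implicit.
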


\begin{prop}\label{ah} For $m^{\BMS}$-a.e. $x\in \RFM$,
$$\cl{xH}=F_\La ;$$
in particular, the geodesic plane $p(xH)$ is dense in $M$. \end{prop}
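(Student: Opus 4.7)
The plan is to combine $A$-ergodicity of the BMS measure (from mixing) with a geometric density argument for circles meeting $\La$, exploiting the identification $G/H\simeq\mC$.

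The upper bound $\cl{xH}\subset F_\La$ is immediate: $x\in\RFM\subset\RFPM$ gives $xH\subset \RFPM\cdot H=F_\La$, and the preceding lemma shows $F_\La$ is closed. For the reverse inclusion, I first invoke Theorem \ref{im}: mixing implies ergodicity of the $A$-action on $(\G\ba G,m^{\BMS})$, which is a finite measure with support $\RFM$. A standard Birkhoff-type argument with a countable basis of open sets of positive $m^{\BMS}$-measure then yields $\cl{xA}=\RFM$ for $m^{\BMS}$-a.e.\ $x\in\RFM$. Since $A\subset H$ and $\cl{xH}$ is closed and $H$-invariant, this forces
$$\cl{xH}\supset \cl{\RFM\cdot H},$$
so the proposition reduces to proving
$$\cl{\RFM\cdot H}=F_\La. \qquad (*)$$

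I would prove $(*)$ geometrically. Under $G/H\simeq\mC$, $F_\La/H=\G\ba\mC_\La$ is parametrized by $\G$-orbits of circles meeting $\La$ in at least one point, while $\RFM\cdot H/H$ corresponds to $\G$-orbits of circles meeting $\La$ in at least two points (the two endpoints of $g_nA$ must then lie in $\La$). Because $\G$ is non-elementary and Zariski dense, $\La$ is perfect and is not contained in any circle. Given $C_0\in\mC_\La$ with $C_0\cap\La=\{\xi\}$, I would select a sequence of distinct pairs $(\xi_n^1,\xi_n^2)\in\La\times\La$ tending to $(\xi,\xi)$ and, within the one-parameter pencil of circles through each such pair, choose $C_n$ with $C_n\to C_0$ in $\mC$. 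The availability of such pairs approaching $\xi$ in suitably varied directions is a consequence of Zariski density of $\La$. Each $C_n$ lies in $\mC_\La^{\ge 2}$, and openness of the quotient maps $\G\ba G\to \G\ba G/H$ then lifts this convergence of circles to the convergence of chosen representative frames in $\G\ba G$, producing the required approximation of any $y_0\in F_\La$ by points of $\RFM\cdot H$.

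The ``in particular'' statement follows because every $x\in M$ is the basepoint of some frame whose forward endpoint lies in $\La\neq\emptyset$, so $p(\RFPM)=M$ and
$$M=p(\RFPM)\subset p(F_\La)=p(\cl{xH})\subset \cl{p(xH)}.$$
The main obstacle is the geometric density step $(*)$: bridging ``circles hit $\La$'' and ``circles hit $\La$ in $\ge 2$ points'' requires both the perfectness of $\La$ and its Zariski density (to control the directions in which limit points approach a given limit point), followed by a careful lift of the convergence in the quotient $\G\ba G/H$ to convergence of actual frames in $\G\ba G$. The $A$-ergodicity step and the upper bound are classical and straightforward.
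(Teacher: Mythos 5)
Your first half coincides with the paper's argument: mixing gives ergodicity of the $A$-action on the finite measure $m^{\BMS}$, Birkhoff gives $\cl{xA}=\RFM$ for a.e.\ $x$, the upper bound $\cl{xH}\subset F_\La$ is the closedness of $F_\La$, and everything reduces to $\cl{\RFM\cdot H}=F_\La$. The difference is how that last identity is obtained. The paper gets it in one line from the theorem of Maucourant--Schapira cited as \cite{MS}, namely $\cl{\RFM\cdot U}=\RFPM$ (with $U\subset H$), and then saturates by $H$. You instead propose a direct geometric proof that every circle meeting $\La$ is a limit of circles meeting $\La$ in at least two points, and this is where there is a genuine gap: the sentence ``the availability of such pairs approaching $\xi$ in suitably varied directions is a consequence of Zariski density of $\La$'' is an assertion of exactly the nontrivial content, not a proof of it.

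To see why it is the crux: if $C_0\cap\La=\{\xi\}$, then since $\La$ is closed, any circle $C_n\to C_0$ with $\#(C_n\cap\La)\ge 2$ must meet $\La$ only in points converging to $\xi$, and an elementary computation with pencils of circles shows that the chord directions of those pairs must converge to the tangent direction of $C_0$ at $\xi$. So your construction requires: at \emph{every} $\xi\in\La$ (radial or parabolic), and for \emph{every} direction realized as the tangent at $\xi$ of some circle meeting $\La$ only at $\xi$, there exist pairs of limit points tending to $\xi$ with chord directions tending to that direction. This is false when $\La$ lies in a circle (the Fuchsian case), so Zariski density is certainly needed, but it does not follow from Zariski density as a formality: for instance at a bounded rank-one parabolic fixed point, $\La$ approaches $\xi$ inside a region tangent to a fixed circle, and transverse chord directions can only come from finer features of $\La$ (pairs inside a single small cluster, or between suitably placed clusters), which must be produced by a genuine dynamical argument (in the literature this is done via mixing of the frame flow, density of the transitivity group, or $\Gamma$-minimality on $\La$ together with a renormalization argument, and it is precisely what \cite{MS} supplies). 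So your outline is the same as the paper's except that you replace the citation by a claim that is essentially equivalent to (the $H$-saturated form of) the cited theorem; as written, that step is unproved. The remaining points in your sketch (lifting convergence of circles to convergence of frames via local sections of $G\to G/H$, and $p(F_\La)=M$ for the ``in particular'' statement) are fine.
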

\begin{proof}
We have $\overline{\RFM \cdot U}=\RFPM$ \cite{MS} and hence $\overline{\RFM \cdot H}=F_\La$.
Theorem \ref{im} implies that $m^{\BMS}$ is ergodic, and hence by the Birkhoff ergodic theorem,
for almost all $x$, $xA$ is dense in $\RFM$. Since $A\subset H$,  we deduce
 $$\overline{xH}\supset \overline{\RFM}\cdot H=F_\La.$$
\end{proof}

\begin{figure}\label{ttt5} \begin{center}
 \includegraphics [height=5cm]{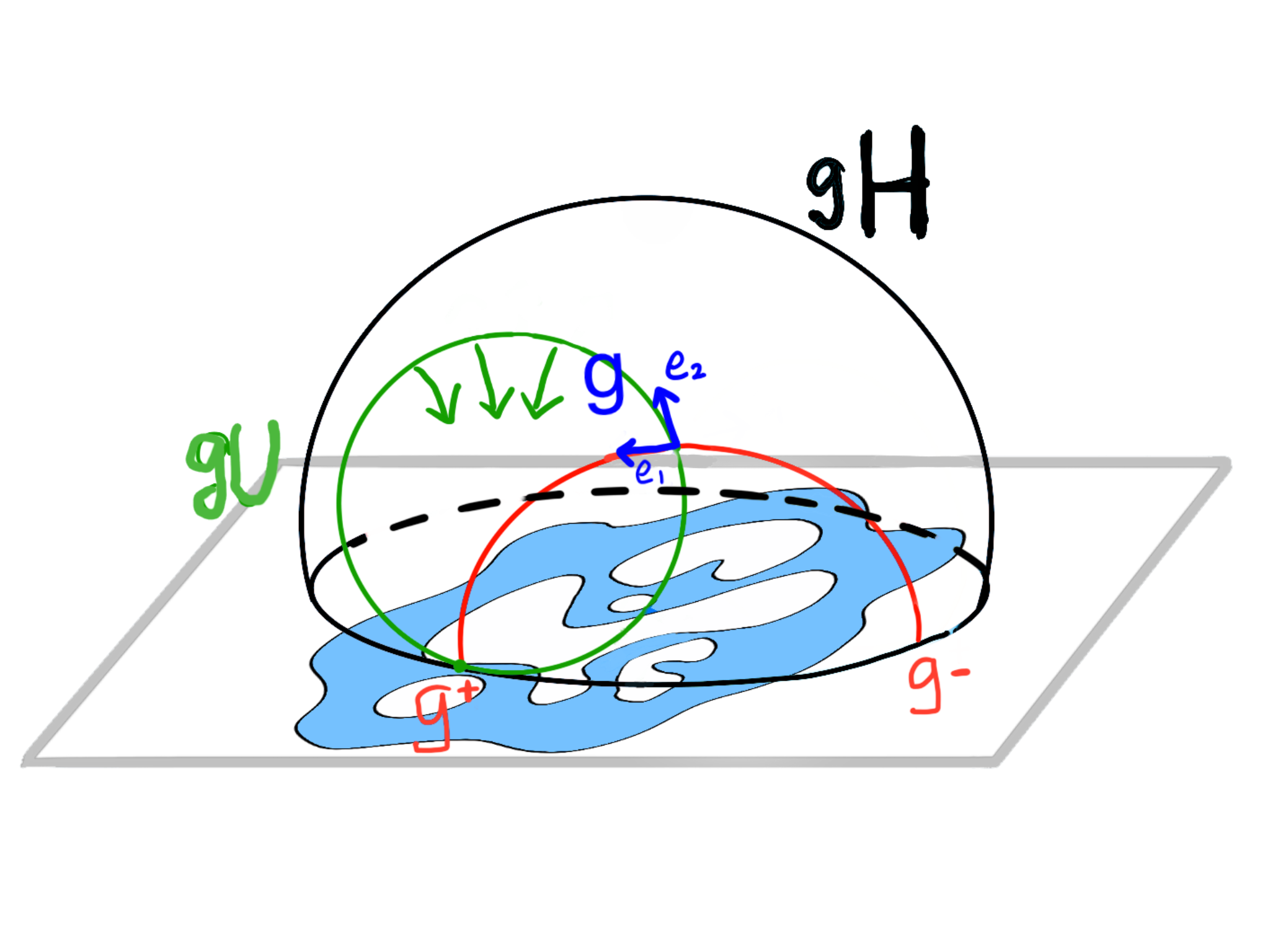}
\end{center}\caption{Orbits under $A$, $U$ and $H$} \end{figure}

 \subsection{Horocycles and almost all $U$-orbits}
 
  A horocycle in $\bH^3$ is a Euclidean circle tangent to $\bS^2$.

\begin{Def}   A  horocycle $\chi$ in $M$ is  an isometrically immersed copy of $\br$ with zero torsion and geodesic curvature $1$, or
equivalently, $\chi$ is the image of a horocycle of $\bH^3$ under $\pi$.
\end{Def}

The right translation action of $U$ on $\Gamma \ba G$ is the horocyclic action: if $g=(e_1, e_2,e_3)$, then $g u_t$ for $t>0$ is the frame given by translation in the direction of $e_2$ by Euclidean distance $t$. 
In fact, any horocycle $\chi\subset M$ is of the form
$$\chi=p(gU) \quad\text{for some $g\in G$.}$$

Note that both $gA$ and $gU$ have their trajectories inside the plane $P_g=\pi(gH)$. In particular, $\pi(gU)$ is a Euclidean circle lying
on $P_g$ tangent  to $\bS^2$ at $g^+$.

We now discuss the almost all results for $U$-orbits in terms of the Burger-Roblin measure.
 It turns out that the size of the critical exponent $\delta$ matters in
this question. The following was proved in joint work with Mohammadi for $\Gamma$ convex cocompact \cite{MO}
and by Maucourant and Schapira \cite{MS}  for geometrically finite groups.
\begin{thm} \label{erg}  If $\delta>1$, $m^{\BR}$ is $U$-ergodic and conservative. 
\end{thm}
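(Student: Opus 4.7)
My plan is to first establish $U$-conservativity of $m^{\BR}$, then to deduce ergodicity from a Hopf-type argument combined with the $N$-ergodicity of $m^{\BR}$ (which follows from Theorem \ref{im2}, since $m^{\BR}$ assigns no mass to any single closed $N$-orbit).

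\textbf{Step 1 (Conservativity).} By the Hopf--Halmos dichotomy, the $U$-action on $(\Gamma\ba G, m^{\BR})$ is either conservative or totally dissipative, so it suffices to rule out the latter. I would show that for any bounded Borel set $\mC \subset \RFPM$ with $m^{\BR}(\mC) > 0$, the sojourn time $\int_0^T \mathbf{1}_\mC(xu_t)\,dt$ diverges as $T \to \infty$ for $m^{\BR}$-a.e.\ $x$. The key input is Sullivan's shadow lemma, which converts horocyclic sojourn times into Patterson--Sullivan mass of shadows on $\bS^2$ and yields an asymptotic of order $T^{\delta - 1}$ for $m^{\BR}$-typical trajectories; this diverges precisely when $\delta > 1$. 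In the presence of cusps, standard Sullivan cusp excursion bounds control the non-compactness of $\RFM$.

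\textbf{Step 2 (Ergodicity).} Let $f \in L^\infty(m^{\BR})$ be $U$-invariant. Using conservativity, Hopf's ratio ergodic theorem represents $f$ as a pointwise ratio limit of horocyclic Birkhoff averages against a strictly positive test function in $L^1(m^{\BR})$. To promote $U$-invariance to $N$-invariance, I decompose $N = UV$, where $V = \{u_{is} : s \in \br\}$ is the transverse one-parameter unipotent subgroup normalized by $A$. The plan is to exploit the contraction $a_{-t}\, v_s\, a_t = v_{e^{-t}s} \to e$ together with the mixing of the $A$-action on $(\Gamma\ba G, m^{\BMS})$ (Theorem \ref{im}) to compare the ratio limits at $x$ and $xv_s$; they should agree $m^{\BR}$-a.e. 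This makes $f$ essentially $N$-invariant, and Theorem \ref{im2} then forces $f$ to be constant.

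\textbf{Main obstacle.} The delicate step is the $V$-comparison in Step 2. The hypothesis $\delta > 1$ is essential here: it guarantees that $\nu$ is sufficiently diffuse transverse to $U$-orbits (so that $\dim_H \Lambda > 1$ and transverse slices of $\Lambda$ carry nontrivial $\nu$-mass), which is exactly what powers the comparison. Converting the two-point mixing of the BMS measure into a pointwise statement about BR ratio averages is the main technical hurdle; in the convex cocompact case this can be handled via quantitative mixing and a shadow-based computation as in \cite{MO}, while the geometrically finite case requires the symbolic/coding framework of \cite{MS} to tame cusp excursions. A secondary point is the reduction to radial limit points, which carry full $m^{\BR}$-mass and allow the comparison to be carried out away from parabolic fixed points.
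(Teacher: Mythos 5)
Note first that the paper does not prove Theorem \ref{erg} at all: it quotes it, attributing the convex cocompact case to \cite{MO} and the geometrically finite case to \cite{MS}. So your proposal can only be measured against those works, and measured that way it is a roadmap that matches their broad strategy (recurrence via Patterson--Sullivan estimates, then a Hopf ratio argument upgrading $U$-invariance to $N$-invariance and invoking Theorem \ref{im2}) but omits the two steps that constitute the actual content of the theorem. In Step 1, the phrase ``Hopf--Halmos dichotomy'' does not by itself give ``conservative or totally dissipative'' for a non-ergodic flow; one must argue that the conservative part of the Hopf decomposition is $N$-invariant (e.g.\ because $V$ centralizes $U$ and preserves $m^{\BR}$) and then use $N$-ergodicity, and since $m^{\BR}$ gives no mass to closed $N$-orbits this requires Theorem \ref{im2} already at this stage. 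More seriously, the claimed a.e.\ sojourn asymptotic of order $T^{\delta-1}$ along a single $U$-line is not a consequence of Sullivan's shadow lemma: the shadow lemma controls the $\nu$-mass of balls, i.e.\ the full two-dimensional transverse direction in $N\simeq\c$, whereas you need a lower bound along Lebesgue-a.e.\ one-dimensional slices. That slicing (second-moment/transversality) estimate is exactly where the hypothesis $\delta>1$ does its work, it is the hard part of the recurrence statements in \cite{MO} and \cite{MS}, and even there what is proved is divergence of the sojourn time, not a two-sided $T^{\delta-1}$ asymptotic.

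In Step 2 the decisive claim --- that the Hopf ratio limits at $x$ and at $xv_s$ agree $m^{\BR}$-a.e.\ --- is precisely the theorem, and mixing of the $A$-action on $(\Gamma\ba G, m^{\BMS})$ (Theorem \ref{im}) does not deliver it: mixing is a statement about integrals of fixed test functions against translates, and converting it into a pointwise comparison of ratio averages along two distinct parallel $U$-orbits is not a formal deduction. The mechanism in \cite{MO} is instead a ``window theorem'': Birkhoff integrals over long $U$-intervals are compared with integrals over expanding boxes in $N$, using the fact that for $\delta>1$ the Patterson--Sullivan measure has sufficiently thick transverse marginals; only after this thickening does $N$-ergodicity (Theorem \ref{im2}) finish the argument. (Also, \cite{MS} handle the geometrically finite case by direct geometric control of cusp excursions rather than a symbolic coding.) You correctly identify this comparison as ``the main obstacle,'' but no argument for it is offered, so as written the proposal is an outline of the known strategy rather than a proof: both the conservativity lower bound and the $V$-invariance upgrade are asserted where they need to be established.
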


\begin{prop} \label{au} Let $\delta >1$ (e.g., $\La$ is connected).
Then for $m^{\BR}$-a.e. $x\in \RFPM$,
$$\cl{xU}=\RF_+M ;$$ In particular, the horocycle $p(xU)$ is dense in $M$.
\end{prop}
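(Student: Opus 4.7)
The plan is to derive this essentially as a short consequence of Theorem \ref{erg}, via the standard principle that a conservative ergodic action of a connected group with respect to a locally finite measure has a dense orbit at almost every point of the topological support. Two preliminary observations are in order. First, $\RFPM$ is closed and $N$-invariant, hence $U$-invariant, so for any starting point $x\in \RFPM$ the whole orbit $xU$ and its closure stay inside $\RFPM$. Second, as noted in the excerpt, $\RFPM$ is precisely the topological support of $m^{\BR}$; in particular every nonempty open subset of $\RFPM$ carries positive $m^{\BR}$-measure.

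The main step is to show that for $m^{\BR}$-a.e. $x\in \RFPM$, the orbit $xU$ meets every nonempty open subset of $\RFPM$. Fix a countable basis $\{B_n\}_{n\ge 1}$ of the topology of $\RFPM$ consisting of nonempty open sets, and set
\[
E_n := \{x\in \RFPM : xU \cap B_n \ne \emptyset\} = \bigcup_{t\in \br} B_n u_{-t}.
\]
Each $E_n$ is $U$-invariant and contains the set $B_n$ of positive $m^{\BR}$-measure. By Theorem \ref{erg}, the action of $U$ on $(\Gamma\ba G, m^{\BR})$ is ergodic (and conservative, which guarantees that the usual zero-or-co-null dichotomy applies to the $\sigma$-finite measure $m^{\BR}$), so each $E_n$ is co-null. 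Therefore the countable intersection $E := \bigcap_n E_n$ is still co-null, and by construction any $x\in E$ has $\overline{xU}\supset \bigcup_n B_n = \RFPM$. Combining with the first observation gives $\overline{xU}=\RFPM$ for $m^{\BR}$-a.e. $x$.

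The geometric consequence follows from the continuity of $p:\Gamma\ba G\to M$ together with the surjectivity $p(\RFPM)=M$: given any $z\in \bH^3$ and any $\xi\in \Lambda$ (nonempty because $\Gamma$ is non-elementary), the frame at $z$ whose forward endpoint is $\xi$ lies in the preimage $p^{-1}(\Gamma z)\cap \RFPM$. Hence
\[
\overline{p(xU)} \supset p(\overline{xU}) = p(\RFPM) = M,
\]
so the horocycle $p(xU)$ is dense in $M$ for $m^{\BR}$-a.e.~$x$.

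There is really no serious obstacle here: the entire content of the proposition is packaged in Theorem \ref{erg}, and the hypothesis $\delta>1$ is used only there. The soft argument above is standard once ergodicity and conservativity of $m^{\BR}$ under $U$ are known; the genuinely delicate point, which lies outside the proof of Proposition \ref{au} itself, is establishing that unique ergodicity-type assertions for the one-parameter group $U$ (as opposed to the larger horospherical group $N$, which would be handled by Theorem \ref{im2}) hold precisely when $\delta>1$.
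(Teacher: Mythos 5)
Your argument is correct, and it reaches the conclusion by a slightly different (and more elementary) route than the paper. The paper's own proof is a one-liner: since $m^{\BR}$ is infinite, Birkhoff does not apply, so one invokes the Hopf ratio ergodic theorem, which is applicable precisely because Theorem \ref{erg} gives both conservativity and ergodicity of the $U$-action on $(\Gamma\ba G, m^{\BR})$; density of a.e.\ orbit in the support $\RFPM$ then follows since a.e.\ orbit spends a positive proportion of time (in the ratio sense) in every positive-measure open set. You instead bypass any ergodic theorem: you saturate each element $B_n$ of a countable basis of $\RFPM$ under $U$, note that the saturation is open (hence measurable), $U$-invariant, and of positive measure because $\RFPM=\operatorname{supp} m^{\BR}$, and conclude by the null/co-null dichotomy that it is co-null; intersecting over $n$ gives density of $\overline{xU}$ in $\RFPM$ for a.e.\ $x$, and the containment $\overline{xU}\subset\RFPM$ plus $p(\RFPM)=M$ finishes the statement. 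What your route buys is economy: only ergodicity (i.e.\ the invariant-set dichotomy) and the identification of the support are used, and conservativity is not actually needed for the purely topological conclusion---your remark that conservativity ``guarantees the dichotomy'' is a harmless imprecision, since the dichotomy is just the definition of ergodicity for a $\sigma$-finite invariant measure. What the paper's route buys is the stronger quantitative information contained in the Hopf ratio theorem (ratio time averages converge), of which density of a.e.\ orbit is merely a corollary. Both proofs place the entire analytic burden on Theorem \ref{erg}, exactly as you observe.
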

\begin{proof}
Since $m^{\BR}$ is an infinite measure, unless $\Gamma$ is a lattice, the Birkhoff ergodic theorem does not apply.
Instead we use the Hopf ratio theorem which applies by Theorem \ref{erg}, and hence the claim follows.
\end{proof}
In \cite{MS}, it was  proved that if $\delta< 1$, $m^{\BR}$ is totally
$U$-dissipative and hence almost all $U$-orbits
are divergent (cf. \cite{Du}).
Whether $m^{\BR}$ is ergodic or not at $\delta=1$ remains an open question.

\subsection{Orbit closure theorem for lattices}
The almost all results on orbit closures in Propositions \ref{ah} and \ref{au} do not describe the topological behavior
of a given individual orbit.
In the lattice case, we have the following remarkable classification of all possible orbit closures, due to Ratner \cite{R} and Shah \cite{Sht}
independently:
\begin{thm} \label{rrr} Let $\Gamma<G$ be a lattice, and $x\in \Gamma\ba G$. 
\begin{enumerate}  
\item The closure $\overline{xH}$ is either $xH$ or $\Gamma\ba G$.
\item The closure $\cl{xU}$  is either $xU$, $xv^{-1} H v$, for some $v\in N$, or $\Gamma\ba G$.
\end{enumerate}
\end{thm}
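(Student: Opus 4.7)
The plan is to deduce both statements from Ratner's topological rigidity theorem (or, equivalently, from Shah's earlier topological argument for $\PSL_2(\c)$, which predates the general Ratner theorem and is based on the polynomial divergence of unipotent flows). The input we take for granted is: if $L\subset G$ is a closed connected subgroup generated by one-parameter unipotent subgroups and $\Gamma$ is a lattice in $G$, then for every $x\in \Gamma\ba G$, there exists a closed connected subgroup $L\subseteq L'\subseteq G$ such that $\overline{xL}=xL'$ and $xL'$ carries an $L'$-invariant probability measure. The entire content of the theorem is then a classification of the intermediate subgroups $L'$, together with the observation that the hypothesis on $L$ is met both for $L=H$ (which is semisimple without compact factors, hence generated by its unipotent one-parameter subgroups) and for $L=U$.

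For part (1), I would argue that no real Lie subalgebra of $\mathfrak g=\mathfrak{sl}_2(\c)$ lies strictly between $\mathfrak h=\mathfrak{sl}_2(\br)$ and $\mathfrak g$. Writing $\mathfrak g=\mathfrak h\oplus i\mathfrak h$, the summand $i\mathfrak h$ is an $\operatorname{Ad}(H)$-invariant complement realizing $\mathfrak g/\mathfrak h$ as the (real) $3$-dimensional adjoint representation of $\mathfrak h$, which is irreducible. Hence any real $\mathfrak h$-submodule of $\mathfrak g$ is either $\mathfrak h$ or $\mathfrak g$, so Ratner's theorem forces $L'\in\{H,G\}$. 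The former gives $\overline{xH}=xH$, the latter $\overline{xH}=\Gamma\ba G$.

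For part (2), the classification of closed connected Lie subgroups $L'$ of $G$ containing $U$ and generated by unipotent one-parameter subgroups is an elementary computation in $\mathfrak{sl}_2(\c)$: fixing a nilpotent $X\in \mathfrak u$, the possibilities are $\mathfrak u=\br X$, the complex horospherical algebra $\mathfrak n=\c X$, any $\br$-form of $\mathfrak{sl}_2$ containing $\mathfrak u$, and $\mathfrak{sl}_2(\c)$ itself. A direct calculation shows that every real form of $\mathfrak{sl}_2$ inside $\mathfrak{sl}_2(\c)$ that contains $\mathfrak u$ is of the form $\operatorname{Ad}(v\inv)\mathfrak h$ for some $v\in N$ (using that the centralizer of $\mathfrak u$ in $G$ acts transitively on such forms). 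Ratner's theorem then produces the listed possibilities, with the case $L'=N$ corresponding to the finitely many rank-two cusps and typically subsumed into the ``closed $v\inv Hv$-orbit'' description after relabeling.

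The main obstacle, of course, is Ratner's theorem itself, which we are treating as a black box; the subgroup classification above is entirely straightforward once that is granted. If one wished to give a self-contained proof in the $\PSL_2(\c)$ setting, the hard step would be to establish two inputs discovered by Margulis and refined by Dani--Margulis and Shah: (i) non-divergence of unipotent orbits (so that $\overline{xU}$ meets a fixed compact set in density one), guaranteeing that $\overline{xU}$ supports a finite invariant measure; and (ii) polynomial sliding, used to upgrade $U$-invariance of the limit measure to $L'$-invariance by finding extra transverse directions of recurrence inside a minimal $U$-invariant set. Once both are in place, the subgroup classification above closes the argument.
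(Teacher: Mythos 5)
You should first note that the paper does not actually prove Theorem \ref{rrr}: it is quoted from Ratner \cite{R} and Shah \cite{Sht}, and the only proof in this vein that the survey writes out is the topological, minimal-set/unipotent-blowup argument for Theorem \ref{pop} (uniform lattices, $H$-orbits). So your strategy --- take Ratner's orbit closure theorem as a black box and classify the intermediate closed connected subgroups --- is the standard deduction and is consistent with how the paper uses the result; it is a different route from the Margulis--Shah topological argument the paper actually illustrates, which avoids measure rigidity but is only carried out there for $H$-orbits and uniform lattices. Your part (1) is fine: $\mathfrak g=\mathfrak h\oplus i\mathfrak h$ with both summands copies of the adjoint representation of $\mathfrak h$, so any subalgebra containing $\mathfrak h$ is $\mathfrak h$ or $\mathfrak g$, and Ratner then gives closed or dense. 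Your identification of the real forms of $\mathfrak{sl}_2$ containing $\mathfrak u$ as exactly the $\operatorname{Ad}(v^{-1})\mathfrak h$, $v\in N$, is also correct.

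The genuine gap is your treatment of the case $L'=N$ in part (2). A compact $N$-orbit is a $2$-torus and cannot be ``subsumed into the closed $xv^{-1}Hv$ description after relabeling'': it is neither a single $U$-orbit, nor a closed orbit of any conjugate of $H$ (wrong dimension), nor all of $\Gamma\ba G$. Moreover this case really occurs: for a non-uniform lattice such as $\Gamma=\PSL_2(\z[i])$, take $x=[m_\theta]$ over the cusp at $\infty$ with $m_\theta=\operatorname{diag}(e^{i\theta},e^{-i\theta})$; then $xN$ is a compact torus and $xU$ traces the line of slope $\tan 2\theta$ in it, so for irrational slope $\overline{xU}=xN$. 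Thus as literally printed the trichotomy in part (2) is incomplete for lattices with cusps (the survey's statement glosses over this), and a correct write-up must either add the compact $N$-orbit as a fourth possibility or restrict to uniform lattices; it cannot be argued away. Two smaller points: your list of candidate subgroups should also dispatch the non-unimodular intermediate groups ($AU$, $AN$, Borel) and the unimodular group $NM_0$ with $M_0=\SO(2)$ --- the former are excluded because a closed orbit carrying a finite invariant measure forces unimodularity, and the latter because $\overline{xU}\subset\overline{xN}$, which is either the compact torus $xN$ or all of $\Gamma\ba G$; these are easy but should be said, since Ratner's theorem by itself does not assert that $L'$ is generated by unipotents.
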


This theorem immediately implies the first part of the following theorem; the rest  follows from the results in the same paper loc.cit.
\begin{thm}
If $M$ has finite volume, the closures of a geodesic plane and a horocycle are properly immersed submanifolds of $M$.
Moreover, 
\begin{enumerate} 
\item any properly immersed geodesic plane has finite area;
\item there are at most countably many properly immersed geodesic planes in $M$;
\item any infinite sequence of properly immersed geodesic planes $P_i$  becomes dense in $M$, i.e,
$\lim_{i\to \infty} P_i= M$ \footnote{For a sequence of closed subsets $Y_i$ of a topological space $X$, we write
$\lim_{i\to\infty}Y_i=Y$ if $\limsup_{i\to \infty}Y_i=\liminf_{i\to \infty} Y_i=Y$ }. 
\end{enumerate}
\end{thm}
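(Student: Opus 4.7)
The plan is to deduce the theorem from Theorem \ref{rrr} together with the measure-rigidity theorems of Ratner and Mozes--Shah. First, write every geodesic plane of $M$ as $P=p(xH)$ and every horocycle as $\chi=p(xU)$; since $p:\Gamma\ba G\to M$ is a fiber bundle with compact fiber, it sends closed, properly immersed subsets to closed, properly immersed subsets. Applying Theorem \ref{rrr} to $xH$ and $xU$ immediately yields the first sentence of the theorem: $\overline{P}$ is either the properly immersed image of a closed $H$-orbit or all of $M$, and similarly $\overline{\chi}$ is either closed or all of $M$.

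For (1), suppose $xH$ is closed. Because $\Gamma\ba G$ has finite Haar volume and $H\cong \PSL_2(\br)$ is generated by unipotent elements and has no nontrivial characters, a standard non-divergence / Borel-density argument (or a direct application of Ratner's measure theorem to the closed orbit) shows that $\Delta_x:=x^{-1}\Gamma x\cap H$ is a lattice in $H$. Hence $xH\cong \Delta_x\ba H$ has finite $H$-invariant volume, so $P=p(xH)$ is a properly immersed geodesic surface of finite hyperbolic area. For (2), let $\{\mu_n\}$ be the normalized $H$-invariant probability measures supported on a sequence of distinct closed $H$-orbits $\{x_nH\}$. By the Mozes--Shah theorem (a consequence of Ratner's measure classification), every weak-$*$ limit of $\mu_n$ is an algebraic probability measure supported on a closed orbit of some connected subgroup $L$ with $H\subseteq L\subseteq G$. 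Since $G=\PSL_2(\c)$ has no connected subgroup strictly between $\PSL_2(\br)$ and $G$, $L$ is either $H$ or $G$; if $L=H$, then eventually $x_nH\subseteq x_\infty H$, forcing $x_nH=x_\infty H$ by dimension and closedness and contradicting distinctness. Thus every subsequential limit is the Haar measure, so the closed-orbit measures form a discrete, hence countable, subset of the (separable) space of probability measures, which gives countability of properly immersed planes.

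The same Mozes--Shah analysis delivers (3), which I expect to be the main obstacle since it requires equidistribution rather than merely orbit-closure classification. Given an infinite sequence of distinct properly immersed planes $P_i=p(x_iH)$, the argument above shows that the associated probability measures $\mu_i$ converge weak-$*$ to the Haar probability measure $\mu_G$ on $\Gamma\ba G$; by a standard Chabauty-topology argument, convergence of algebraic probability measures to $\mu_G$ forces their supports $x_iH$ to converge to $\Gamma\ba G$ in the Hausdorff topology on closed subsets, whence $P_i\to M$. The delicate points are the non-escape of mass (guaranteed by finiteness of $\vol(\Gamma\ba G)$ together with Dani--Margulis non-divergence for unipotent flows) and the verification that Mozes--Shah applies in this setting; both are by now classical consequences of Ratner's theorems.
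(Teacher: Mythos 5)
Your proposal is correct and follows essentially the same route as the paper, which likewise deduces the first assertion directly from Theorem \ref{rrr} and attributes (1)--(3) to the Ratner-theory results you invoke (finite volume of closed orbits, and the Mozes--Shah theorem \cite{MSh}, of which the paper explicitly calls item (3) a topological version). One small correction: for horocycles, Theorem \ref{rrr}(2) allows the third possibility $\overline{xU}=xv^{-1}Hv$ with $v\in N$ nontrivial, in which case $\chi$ itself is not closed but $\overline{\chi}=p(xv^{-1}Hv)$ is still a properly immersed surface parallel to a geodesic plane, so your stated dichotomy should be a trichotomy although the conclusion of the theorem is unaffected.
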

 The density statement (3) above, which is a topological version of Mozes-Shah theorem \cite{MSh},  implies
that every properly immersed geodesic plane $P$ is topologically isolated, in the sense that there exists an open neighborhood of $P$
which does not contain any other properly immersed geodesic plane in its entirety.

\subsection{Topological obstructions to orbit closure theorem}

In this section, we describe a family of quasi-fuchsian manifolds some of whose geodesic planes have
fractal closures, in particular, they have non-integral dimensions. These geodesic planes pass through the interior
of the convex core of $M$ but their boundaries meet the limit set $\Lambda$ only at two points.

These examples can be seen easily for Fuchsian manifolds, and by performing
 a small bending deformation along a simple closed geodesic far away
from our fractal closures of a fixed plane, we will obtain quasi-fuchsian manifolds keeping the fractal closure intact.

\subsubsection{Fuchsian $3$-manifolds} 
\begin{figure} \begin{center}
 \includegraphics [height=4cm]{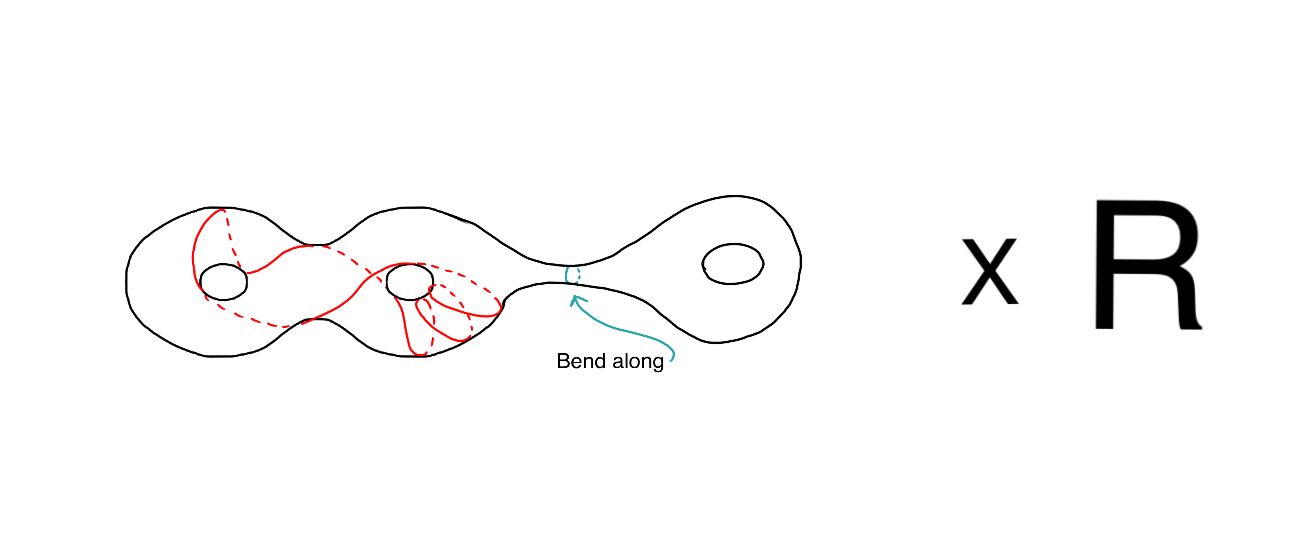}
\end{center}

\caption{Bending deformation}
\end{figure}

Consider a Fuchsian $3$-manifold $M$ which can be expressed
as $$M= S\times \br$$ in cylindrical coordinates where $S$ is a closed hyperbolic surface of genus at least $2$.
Or equivalently, take a torsion-free uniform lattice $\Gamma<\PSL_2(\br)$, and consider $\Gamma$ as a subgroup
of $G$, so that  $M=\Gamma\ba \bH^3=\left( \Gamma\ba \bH^2 \right) \times \br$. We have $\core M=S$.

It is well-known that geodesics on a closed hyperbolic surface $S$ can behave as wild as we wish for, in particular, for any $\beta\ge 1$,
there exists a geodesic whose closure has Hausdorff dimension precisely $\beta$.
\begin{enumerate}
\item The closure of a geodesic plane needs not be a submanifold:
if $\gamma \subset S$ is a geodesic and $P$ is a geodesic plane orthogonal to $S$ with $P\cap S=\gamma$, then
$$\overline{P}\simeq \overline{\gamma}\times \br.$$  Therefore if we take a geodesic $\gamma\subset S$ whose closure $\overline{\gamma}$ is wild, then $\overline P$ is very far from being a submanifold. 

\item There are uncountably many properly immersed geodesic planes  intersecting $\core M$; if $\gamma\subset S$ is a closed geodesic  and $P$ is a geodesic plane with $P\cap S=\gamma$, then
 $P$ is a properly immersed geodesic plane. By varying angles between $P$ and $S$, we obtain a continuous family of such $P$.
\end{enumerate}

We can now use a small bending deformation of $M$ to obtain quasifuchsian manifolds
in which the same phenomenon persists.

\subsubsection{Quasifuchsian hyperbolic $3$-manifolds}

Let $\gamma_0\in \Gamma$ be a primitive hyperbolic element representing a separating simple closed geodesic $\beta$ in $S$.
Without loss of generality, we assume $\gamma_0\in A$, up to conjugation.
If $S_1$ and $S_2$ are components of
$S-\beta$, then each $\Gamma_i:=\pi_1(S_i)$  is a subgroup of $\Gamma$  
and
$\Gamma$ can be presented as the amalgamated free product
$$\Gamma=\Gamma_1*_{\langle \gamma_0\rangle} \Gamma_2 .$$
Setting $m_\theta=\text{diag}(  e^{i\theta}, e^{-i\theta})$, note that $m_\theta$ centralizes $\gamma_0$.
For each non-trivial $m_\theta$, we have
$\Gamma_1\cap m_\theta^{-1} \Gamma_2 m_\theta=\langle \gamma_0\rangle$
and the map which maps $\gamma$ to  $\gamma$ if $\gamma\in \Gamma_1$ and
to $ m_\theta^{-1}\gamma m_\theta$ if $\gamma\in \Gamma_2$ extends to an isomorphism
$\Gamma\to \Gamma_\theta$ where
$$\Gamma_\theta:={\Gamma_1} *_{\langle \gamma_0\rangle} m_\theta^{-1} \Gamma_2 m_\theta .$$

If $\theta$ is sufficiently small, then
\begin{itemize}
\item $\Gamma_\theta$ is a discrete subgroup of $G$;
\item $M_\theta:=\Gamma_\theta\ba \bH^3$
is a quasifuchsian manifold and 
\item there is a path isometric embedding $j_\theta: S\to  \partial \op{core} M_\theta $
such that its image $S_\theta$ is bent with a dihedral angle of $\theta$ along the image of $\beta$ and otherwise totally geodesic.
\end{itemize}

Fix $\e>0$ sufficiently small that $\beta$ has an embedded annular collar neighborhood in $S$ of width $2\e$.
Let $\gamma \subset S_1$ be a geodesic whose closure $\overline \gamma$ is disjoint from a $2\e$-neighborhood
$\cal O (\beta, 2\e)$ of $\beta$. Now if we set
$S_1(\e):=S_1 -\cal O (\beta, 2\e)$, then
there is a unique orientation-preserving isometric immersion
 $$J_\theta: S_1(\e)\times \br \to M_\theta$$ which extends $j_\theta|_{S_1(\e)}$ and sends geodesics normal to $S_1(\e)$ to geodesics normal to
 $j_\theta(S_1(\e))$.
Now, if $\theta$ is small enough (relative to $\e$), then
$$\text{$J_\theta$ is a proper isometric embedding. }$$ This can be proved using the following observation.
Let $\alpha=[a, b_1]\cup [b_1, b_2]\cup \cdots \cup  [b_{n-1}, b_n]\cup [b_n, c]$ be a broken geodesic in $\bH^3$, which is a union of geodesic segments and which bends by angle $0\le \theta<\pi/2$ at each $b_i$'s. Suppose the first and the last segments have length at least $\e>0$
and the rest have length at least $2\e$. Let $P_i$ denote the geodesic plane orthogonal to $[b_i, b_{i+1}]$ at $b_i$.
If $\theta=0$, then the distance among $P_i$'s are at least $\e$.
Now if $\theta$ is small enough so that $\sin (\theta/2)< \tanh \epsilon$, then
the planes $P_i$ remain a positive distance apart,  giving a nested sequence of half-planes in $\bH^3$.
This implies that $J_\theta$ is a proper imbedding.

It now follows that for the plane $P:=\gamma\times \br\subset S_1(\e) \times \br $, its image
$P_\theta:=J_\theta(P)\subset M_\theta $ is an immersed geodesic plane whose closure $\overline P_\theta$ is isometric to $\overline P\simeq \overline \gamma\times \br$.
Therefore by choosing $\gamma$ whose closure is wild, we can obtain a geodesic plane $P_\theta$ of $M_\theta$
with wild closure (cf. \cite{MMO} for more details).

This example demonstrates that  the presence of an essential cylinder in $M$ gives an obstruction to
the topological rigidity of geodesic planes. For the behavior of an individual geodesic plane $P$,
it also indicates that the {\it finite} intersection $\partial P\cap \La$ can be an obstruction.

\section{Unipotent blowup and renormalizations}
The distinguished property of a unipotent flow on the homogeneous space
$\Gamma\ba G$ is the polynomial divergence of nearby points.
Given a sequence $zg_n\in \Gamma\ba G$ where $g_n \to e$ in $G$,
the transversal divergence between two orbits $zg_n U$ and $zU$ can be understood by studying the double coset
$Ug_n U$ in view of the equality:
$$zg_n u_t= z u_s (u_s^{-1} g_n u_t)$$
and the behavior of  rational maps $t\mapsto u_{\alpha_n (t)} g_n u_t$ for certain reparametrizations $\alpha_n:\br \to \br$
so that $\limsup_{n\to \infty} \{ u_{\alpha_n (t)}g_n u_t: t\in \br\} $ contains a non-trivial element of $G-U$.\footnote{If $Q_n$ is a sequence of subsets of $G$, $q\in \limsup_{n\to \infty} Q_n$ if and only if 
every neighborhood of $q$ meets infinitely many $Q_n$,
and $q\in \liminf_{n\to \infty} Q_n$ if and only if every neighborhood of $q$ meets all but finitely many $Q_n$.
If $\limsup_n Q_n=Q_\infty=\liminf Q_n$, then $Q_n$ is said to be convergent and $Q_\infty$ is the limit of $Q_n$ \cite{HY}.}

We denote by $V$ the transversal subgroup 
$$V=\{u_{it}: t\in \br\}$$ to $U$ inside $N$, so that $N=UV$. Note that the normalizer $\op{N}(U)$ of $U$
is equal to $AN$, and the centralizer $\op{C}(U)$ of $U$ is equal to $N$.

\begin{figure}\label{flow} \begin{center}
 \includegraphics [height=6cm]{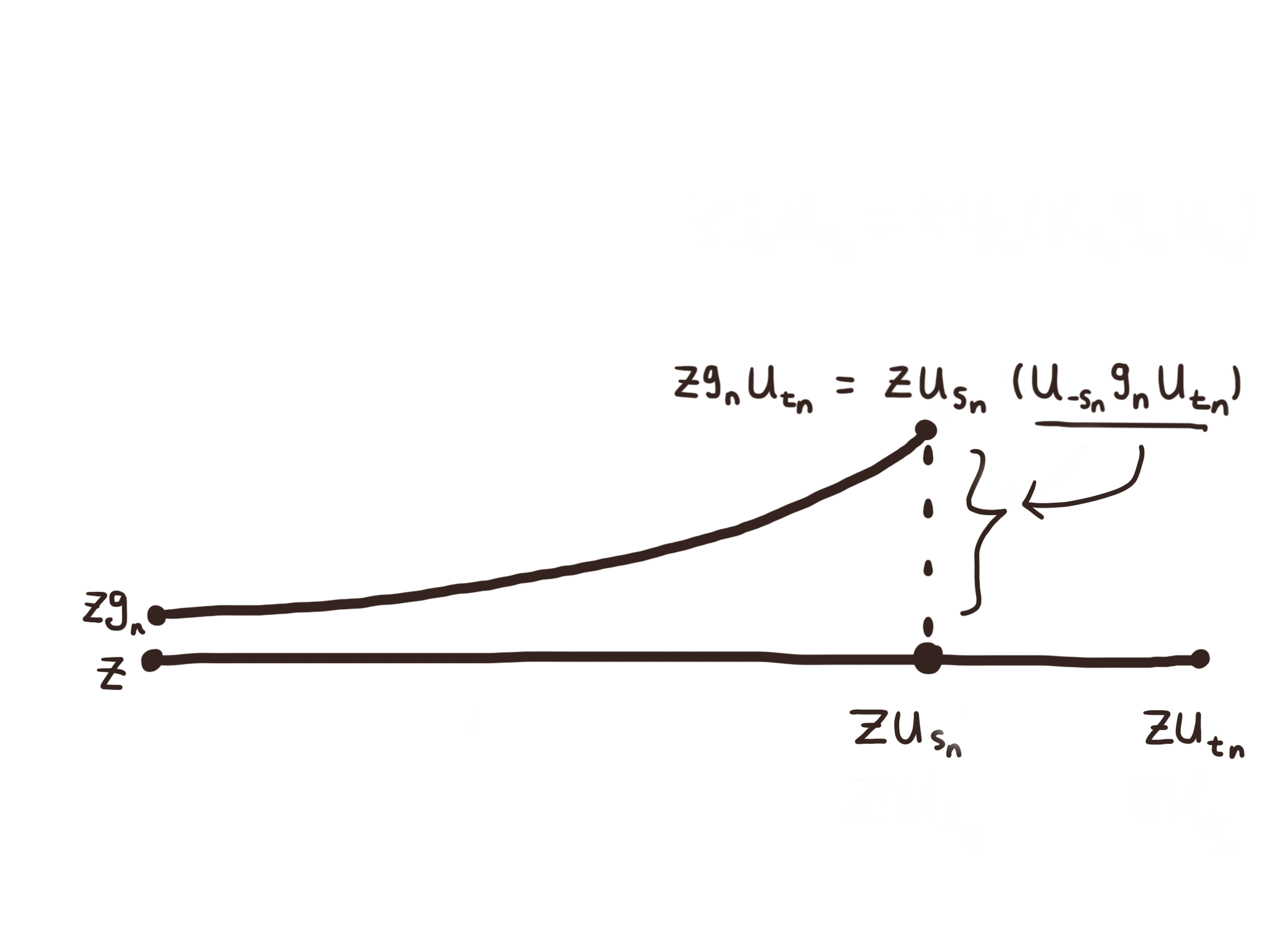}
\end{center}\caption{Divergence of $U$-orbits of two nearby points} \end{figure}

The following unipotent blowup lemma (though stated in the setting of $\SL_3(\br)$) was first observed by Margulis \cite[Lemma 5]{Ma}, in his proof of Oppenheim conjecture.

\begin{lem}\label{ub}
\begin{enumerate}
\item If $g_n\to e$ in $G-AN$, then $\limsup_{n\to\infty} U g_n U$ contains a sequence of elements of $AV-\{e\}$ tending to $e$.
\item If $g_n\to e$ in $G - VH$,  then $\limsup_{n\to\infty} U g_n H$ contains a one-parameter semigroup \footnote{A one-parameter semigroup of $V$ is given by $\{\exp t\xi: t\ge 0\}$ for some non-zero $\xi\in \op{Lie}(V)$}of $V$.
\end{enumerate}
\end{lem}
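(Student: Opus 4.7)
The plan is to exploit the polynomial divergence of the $U$-action by conjugation: for $g_n = \exp(\xi_n)$ with $\xi_n \to 0$ in $\mathfrak g = \mathfrak{sl}_2(\C)$, the identity
\[
u_\sigma g_n u_{\tau-\sigma} = \exp(\mathrm{Ad}(u_\sigma)\xi_n)\, u_\tau
\]
gives a two-parameter family in $U g_n U$, and the operator $\mathrm{Ad}(u_\sigma) = e^{\sigma\,\mathrm{ad}(\mathsf e)}$ is polynomial of degree $\leq 2$. With $\mathsf h = \mathrm{diag}(1,-1)$, $\mathsf e = E_{12}$, $\mathsf f = E_{21}$, direct computation gives
\[
\mathrm{Ad}(u_\sigma)\mathsf f = \mathsf f + \sigma\mathsf h - \sigma^2\mathsf e, \qquad \mathrm{Ad}(u_\sigma)\mathsf h = \mathsf h - 2\sigma\mathsf e,
\]
and analogous formulas for $i\mathsf f, i\mathsf h$, while $\mathsf e, i\mathsf e$ are fixed. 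The idea is to rescale $\sigma$ so that the small element $\xi_n$ gets ``blown up'' in the highest-weight direction ($\mathsf f$ or $i\mathsf f$), producing a bounded, non-trivial limit in the target subgroup.

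For (2), I absorb the $\mathrm{Lie}(H)$-part on the right and assume $g_n = \exp(\xi_n)$ with $\xi_n = \alpha_n(i\mathsf e) + \beta_n(i\mathsf h) + \gamma_n(i\mathsf f) \in i\,\mathrm{Lie}(H)$; the hypothesis $g_n \notin VH$ becomes $(\beta_n, \gamma_n) \neq (0,0)$, since $(\mathrm{Lie}(V) + \mathrm{Lie}(H)) \cap i\,\mathrm{Lie}(H) = \mathrm{Lie}(V) = \R(i\mathsf e)$. Then $u_\sigma g_n u_{-\sigma} = \exp(\mathrm{Ad}(u_\sigma)\xi_n) \in U g_n H$, with $i\mathsf e$-component the quadratic $\alpha_n - 2\sigma\beta_n - \sigma^2\gamma_n$. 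After passing to a subsequence to stabilize relative rates, I choose $\sigma = \sigma_n(s)$ so that the $i\mathsf h$- and $i\mathsf f$-components vanish in the limit while the $i\mathsf e$-component converges to $s \geq 0$: take $\sigma_n(s)^2\gamma_n \to -s$ when $\gamma_n$ dominates, $\sigma_n(s)\beta_n \to -s/2$ when $\gamma_n$ vanishes, and intermediate rates are handled by tuning after further subsequence reduction. Varying $s$ over $[0,\infty)$ gives $\{u_{is}: s \geq 0\} \subset V$, the required one-parameter semigroup.

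For (1), write $\xi_n = \alpha_n\mathsf h + \beta_n(i\mathsf h) + \gamma_n\mathsf e + \delta_n(i\mathsf e) + \epsilon_n\mathsf f + \phi_n(i\mathsf f)$; the complement of $\mathfrak{an} = \R\mathsf h + \C\mathsf e$ in $\mathfrak g$ is $\R(i\mathsf h) + \C\mathsf f$, so $(\beta_n, \epsilon_n, \phi_n) \neq 0$. After pigeon-holing to a subsequence so that the dominant coefficient among these three stabilizes, the appropriate rescaling of $\sigma_n(s)$ (together with a choice of $\tau_n(s)$ to absorb the resulting $\mathsf e$-component, using $AV \cdot U = AN$) makes the transverse $\mathsf f, i\mathsf f, i\mathsf h$-components of $\mathrm{Ad}(u_{\sigma_n})\xi_n$ vanish while the surviving $\mathsf h$- and $i\mathsf e$-components are linear in $s$, producing $\exp(sX) \in AV$ in the limit for some non-zero $X \in \mathrm{Lie}(AV)$ (the direction depending on which of $\epsilon_n, \phi_n, \beta_n$ dominates: $A$-direction when $\epsilon_n$ does, $V$-direction when $\phi_n$ or $\beta_n$ does).

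The main obstacle in both parts is the case analysis and subsequence extractions guaranteeing the linearity of the limit parametrization: the dominant coefficient dictates the scaling, but mixed regimes (where several of $\beta_n, \epsilon_n, \phi_n$ decay at comparable rates) require additional rate ratios like $\beta_n/\sqrt{|\gamma_n|}$, $\beta_n/\epsilon_n$, or $\phi_n/\epsilon_n^2$ to be stabilized, together with a careful choice of $\sigma_n(s)$ as a function of $s$, so that the resulting one-parameter family in $AV$ is an exponential curve rather than a more general rescaled curve. This is the combinatorial heart of Margulis' polynomial divergence argument.
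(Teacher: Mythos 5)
Your overall strategy is the right one and is the same in spirit as the paper's: the paper proves a refined version (Lemma \ref{U}) by studying the polynomial maps $t\mapsto u_t q_n u_{-t}$ (for part (2)--(3)) and $t\mapsto p\,g_nu_t$ in a Chevalley representation with $U=\op{Stab}_G(p)$ (for part (1)), renormalizing and passing to a limit polynomial. Your treatment of (2) is essentially the paper's Lemma \ref{U}(3) written out in explicit $i\,\mathfrak{sl}_2(\br)$ coordinates; the only real difference is that the paper replaces your dominant-coefficient case analysis by the uniform normalization $\lambda_n=\sup\{\lambda:\ \op{Ad}(u_t)\xi_n\in B(0,1)\ \text{for}\ |t|\le\lambda\}$ and an equicontinuity argument, so that the limit is automatically a nonconstant polynomial into the centralizer $\op{Lie}(V)$; since $\op{Lie}(V)$ is one-dimensional, its image contains a half-line and hence a one-parameter semigroup (possibly in the direction opposite to the one you fixed --- your insistence on $s\ge 0$ with $\sigma_n(s)^2\gamma_n\to -s$ silently forces a sign of $\gamma_n$, but this is harmless after passing to a subsequence of constant sign). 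I would recommend adopting that normalization rather than stabilizing the ratios $\beta_n/\sqrt{|\gamma_n|}$ etc.\ by hand; it handles all mixed regimes at once.

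Part (1), however, has a genuine gap in two places. First, the ``absorption'' of the $\mathsf e$-component by a right factor $u_{\tau_n(s)}$ cannot be carried out at the level of exponential coordinates as you describe: in the regime where $\epsilon_n$ dominates and $\sigma_n\sim s/\epsilon_n$, the $\mathsf e$-component of $\op{Ad}(u_{\sigma_n})\xi_n$ is of size $\sigma_n^2\epsilon_n\sim s^2/\epsilon_n\to\infty$, so $\exp(\op{Ad}(u_{\sigma_n})\xi_n)$ diverges in $G$; its ``transverse components'' do not split off multiplicatively from the huge $U$-part, and the element is not in $AN$, so the factorization $AN=AVU$ you invoke does not apply to it. This is exactly the point the paper's proof is designed to handle: it works with $p\,g_nu_t$ where $U=\op{Stab}_G(p)$, so the left $U$-factor is killed, the map is an honest polynomial regardless of any blow-up of group elements, and the $AV$-element is recovered through the local isomorphism $\mathcal L\to p\mathcal L$ with $\mathcal L=VAM N^+$ (equivalently, a genuine product-neighborhood decomposition transverse to $U$). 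Second, your claim that the surviving $\mathsf h$- and $i\mathsf e$-components are linear in $s$, so that the limit family is $\{\exp(sX):s\ge0\}$, fails in mixed regimes: with $\beta_n/\epsilon_n\to b$ and $\phi_n/\epsilon_n^2\to c\ne0$ the limit curve has $A$-part linear and $V$-part quadratic in $s$, and such a curve is not a one-parameter semigroup of $AV$ (one-parameter subgroups of $AV$ not contained in $V$ are of the form $vA_+v^{-1}$ and have exponentially growing $V$-part). Accordingly, the paper's own refined lemma only claims that $AV\cap\limsup_n\mT_ng_nU$ accumulates at $e$ and at $\infty$ (the image of a nonconstant rational curve), and the semigroup statements it actually uses downstream are obtained by a separate argument (the set $\{q\in AV: Yq\subset Y\}$ is a closed subsemigroup, cf.\ the proof of Lemma \ref{YL2}), the semigroup form of Lemma \ref{ub} itself being quoted from Margulis. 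So you must either weaken what you extract from the blow-up to ``a curve through $e$ accumulating at $e$ and $\infty$ in $AV$'' and add the semigroup-extraction step, or supply a genuine argument for the semigroup claim; as written, the linearity assertion is false and the absorption step is unjustified.
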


\subsection{Use of unipotent blowup in the compact $\Gamma\ba G$ case}
In order to demonstrate the significance of this lemma, we present a proof of the following orbit closure theorem,
which uses the notion of $U$-minimal subsets.
A closed $U$-invariant subset $Y\subset \Gamma\ba G$ is called $U$-minimal
if every $U$ orbit in $Y$ is dense in $Y$. By Zorn's lemma, any compact $U$-invariant subset of $\Gamma\ba G$
contains a $U$-minimal subset.
\begin{theorem}\label{pop}
Let $\Gamma <G$ be a uniform  lattice. For any $x\in \Gamma\ba G$, $xH$ is either closed or dense.
\end{theorem}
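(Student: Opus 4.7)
Suppose $xH$ is not closed; I will show $\overline{xH}=\Gamma\ba G$. Put $Z:=\overline{xH}$, which is compact since $\Gamma\ba G$ is. Pick $y\in Z\setminus xH$ and $h_n\in H$ with $xh_n\to y$; choosing local lifts in $G$, write $xh_n=yg_n$ with $g_n\to e$ in $G$, and, since $y\notin xH$, $g_n\notin H$ along a subsequence.

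\textbf{Producing transverse symmetries via polynomial divergence.} The goal is to upgrade the off-$H$ elements $g_n\to e$ into a continuous subgroup preserving $Z$. The basic identity is
$$xh_n u'_n = y g_n u'_n = y u_n^{-1}\cdot (u_n g_n u'_n)\qquad (u_n,u'_n\in U),$$
so one selects $u_n,u'_n$ using the polynomial divergence of $U$-conjugation on $g_n$. If $g_n\notin AN$, Lemma~\ref{ub}(1) produces choices so that $u_n g_n u'_n$ accumulates on a nontrivial one-parameter semigroup $\{v_s\}\subset AV$; if $g_n\in AN\setminus H$, a nontrivial $AV$-direction is visible directly in the coordinates of $g_n$ (its $V$-component is nonzero). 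By compactness of $\Gamma\ba G$, after passing to a subsequence $y u_n^{-1}\to z_0\in Z$ (note $yU\subset yH\subset Z$), and hence $xh_n u'_n\to z_0 v_s\in Z$ for each $s$. Thus $z_0\{v_s\}\subset Z$. Since $A\subset H$ already preserves $Z$, writing $v_s=a_s v'_s$ with $v'_s\in V$ and using the $A$-invariance of $Z$ yields $z_0 v'_s\in Z$, i.e.\ an honest $V$-semigroup of symmetries at $z_0$, which by $A$-renormalization spreads to all of $V$: $z_0 V\subset Z$.

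\textbf{Filling up $Z$ and main obstacle.} Together with $H$-invariance of $Z$, the closed stabilizer of $Z$ contains $H$ and $V$. But $\op{Lie}(H)+\op{Lie}(V)=\mathfrak{sl}_2(\br)+\br\cdot ie$ generates all of $\mathfrak{sl}_2(\c)$ under Lie brackets (e.g.\ $[ie,f]=ih$ and then $[ih,f]=-2if$), so $\overline{\langle H,V\rangle}=G$. Hence $z_0 G\subset Z$, and by transitivity of $G$ on $\Gamma\ba G$, $Z=\Gamma\ba G$, i.e.\ $xH$ is dense. The delicate step is not the blowup itself but the passage from a \emph{single} accumulation $z_0 v_s\in Z$ to \emph{continuous} $V$-invariance: a lone stabilizing element yields only a discrete enlargement. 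One must (i) exploit the full one-parameter semigroup delivered by Lemma~\ref{ub} rather than a single accumulation point, and (ii) arrange, via a recurrence argument, that $z_0$ and its translates $z_0 v_s$ lie in a common $U$-minimal subset of $Z$ so that the extra symmetry genuinely propagates through $Z$. This is where the compactness of $\Gamma\ba G$ is essential, and precisely what breaks down in the infinite-volume setting discussed in the rest of the paper.
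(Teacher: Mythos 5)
Your overall strategy (unipotent blowup plus compactness) is the right family of ideas, but as written the proof has a genuine gap exactly at the point you flag, and flagging it is not the same as closing it. What your argument produces is membership of isolated points: for each $s$ you get \emph{some} limit point $z_0$ (a priori depending on $s$, since the times $u_n,u_n'$ furnished by Lemma \ref{ub} change with $s$) with $z_0v_s\in Z$. Your conclusion, however, needs either genuine set-invariance $ZV\subset Z$ (your Lie-bracket finish requires it: from $z_0V\subset Z$ and $ZH=Z$ you only get $z_0VH\subset Z$, and $VH$ is a $4$-dimensional product set, not dense, so the generation of $\mathfrak{sl}_2(\c)$ by $\op{Lie}(H)+\op{Lie}(V)$ does not bootstrap to $z_0G\subset Z$), or at least one complete $V$-orbit inside $Z$ followed by an appeal to the minimality of $N$-orbits (Theorem \ref{nc}), which is how the paper actually concludes. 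The mechanism that converts a single limit point into an inclusion of sets is relative $U$-minimality: in the paper one works with a $U$-minimal subset $Y\subset X$, and the point of minimality is precisely that one limit $y_1q\in Y$ with $q\in \op{N}(U)$ yields $\overline{y_1qU}=\overline{y_1U}q=Yq\subset Y$. Your proposal postpones minimal sets to a closing remark, so the propagation step is never carried out.

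There is also a concrete failure mode in your handling of the $AV$-semigroup. The semigroup $L$ delivered by Lemma \ref{ub}(1) may be $A_+$ itself or a conjugate $v_0A_+v_0^{-1}$; in the first case your decomposition $v_s=a_sv_s'$ gives $v_s'=e$ and you extract no $V$-information at all, and in general the algebra is off: using right $A$-invariance of $Z$ on $z_0a_sv_s'\in Z$ yields $z_0\,a_sv_s'a_s^{-1}\in Z$, not $z_0v_s'\in Z$, and in the contracting case these displacements stay in a bounded interval of $V$ rather than a semigroup. This is exactly why the paper's proof has the extra Steps 2--4: Step 2 chooses the minimal set $Y$ so that $X-y_0H$ is non-closed at some $y_0\in Y$, Step 3 applies the other half of the blowup lemma (Lemma \ref{ub}(2), for $g_n\in G-VH$) to produce one nontrivial translate $Yv\subset X$ with $v\in V$, and Step 4 combines $YA\subset Y$ with this translate via the fact that $AvA$ contains a one-parameter semigroup of $V$, then passes from a semigroup to a full $V$-orbit by a limiting argument. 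Your sketch has no substitute for these steps (nor for the small but necessary observation that the returns $g_n$ are not in $U$, which uses absence of parabolics in a uniform lattice), so as it stands the proof is incomplete rather than merely differently organized.
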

\begin{proof}
Set $X:=\overline{xH}$. Suppose that $X\ne xH$. By the minimality of the $N$-action on $\Gamma\ba G$ (Corollary \ref{nc}),
it suffices to show that $X$ contains an orbit of $V$.

\noindent{\bf Step 1:}  For any $U$-minimal subset $Y\subset X$,
$$YL= Y\quad\text{ for a one-parameter subgroup $L<AV$.}$$ 
It suffices to show that $Yq_n = Y$ for some sequence $q_n \to e$ in $AV$.
Fix $y_0\in Y$. As $Y$ is $U$-minimal, there exists $t_n \to \infty$ such that $y_0u_{t_n}\to y_0$.
Write $y_0u_{t_n}=y_0g_n$ for $g_n\in G$.
Then $g_n\to e$ in $G-U$, because if $g_n$ belonged to $U$,  the orbit $y_0U$ would be periodic, which is a contradiction
to the assumption that $\Gamma$ is a uniform lattice and hence contains no parabolic elements. 
If $g_n=a_nv_nu_n \in AN=AUV$, then we may take $q_n=a_nv_n$. If $g_n \notin AN$, then by
Lemma \ref{ub}, $\limsup_{n\to \infty} U g_n U$ contains a one-parameter semigroup $L$ of $AV$.
Hence for any $q\in L$, there exist $t_n, s_n\in \br $ such that $q=\lim u_{t_n} g_n u_{s_n}$.
\begin{equation} \label{nes} \text{Since $Y$ is compact, $y_0 u_{-t_n}$ converges to some $y_1\in Y$, }\end{equation}
by passing to a subsequence.
Therefore $y_0 g_n u_{s_n}=y_0 u_{-t_n} (u_{t_n} g_n u_{s_n})$ converges to $y_1 q\in Y$. Since $q\in \op{N}(U)$ and $Y$
is $U$-minimal,
we have $$\cl{y_1qU}=\cl{y_1U} q=Yq=Y.$$
This proves the claim.

\medskip

\noindent{\bf Step 2:}  There exists a $U$-minimal subset $Y\subset X$ 
such that  $X-y_0H$ is not closed for some $y_0\in Y$.

If $xH$ is not locally closed, i.e., $X-xH$ is not closed, then let $Y$ be any $U$-minimal subset  of $X$.
If $Y\subset xH$, then for any $y_0\in Y$, $X-y_0H=X-xH$ is not closed. If $Y\not\subset xH$, then choose $y_0\in Y-xH$.
If $xH$ is locally closed, then let $Y$ be a $U$-minimal subset of $X-xH$. Then $X-y_0H$ is not closed for any $y_0\in Y$.

\medskip

\noindent{\bf Step 3:} For $Y$ from Step (2), we have
$$Yv\subset X\quad\text{ for some non-trivial $v\in V$.}$$ 
By Step (2), we have $y_0g_n \in X$ for some $y_0\in Y$ and a sequence $g_n\to e$ in $G-H$.
If $g_n\in VH$ for some $n$, then the claim follows.
If $g_n\notin VH$ for all $n$, then by Lemma \ref{ub}(2), $\limsup_{n\to \infty} Ug_n H$ contains a non-trivial element $v\in V$.
Since $v=\lim u_{t_n} g_n h_n$ for some $t_n\in \br$ and $h_n\in H$, we deduce $Yv\subset X$ as in Step (1).

\medskip

\noindent{\bf Step 4:} $X$ contains a $V$-orbit.

It suffices to show  that $X$ contains $x_0V_+$ for a one-parameter semigroup $V_+$ of $V$; because
if $v_n\to \infty$ in $V_+$ and $x_0v_n \to x_1$, then 
$$x_1V= x_1\cdot  \limsup_n (v_{n}^{-1}V_+)\subset \overline{x_0V_+} \subset X.$$
Let $Y\subset X$ be a $U$-minimal subset from Step 2. By Step 1, $YL\subset Y$  where
$L$ is either $V$ or $v_0Av_0^{-1}$ for some $v_0\in V$. If $L=V$, this finishes the proof.
If $L=A$, then by Step 3, we get $Yv  =Yv(v^{-1} A v)\subset X$.
Hence we get $X\supset x_0 v^{-1}Av A$ for some $x_0\in X$ and a non-trivial $v\in V$. Since $v^{-1}AvA$ contains a
one-parameter semigroup of $V$, this finishes the proof.
\end{proof}

\begin{figure}
 \begin{center}
    \includegraphics[height=3cm]{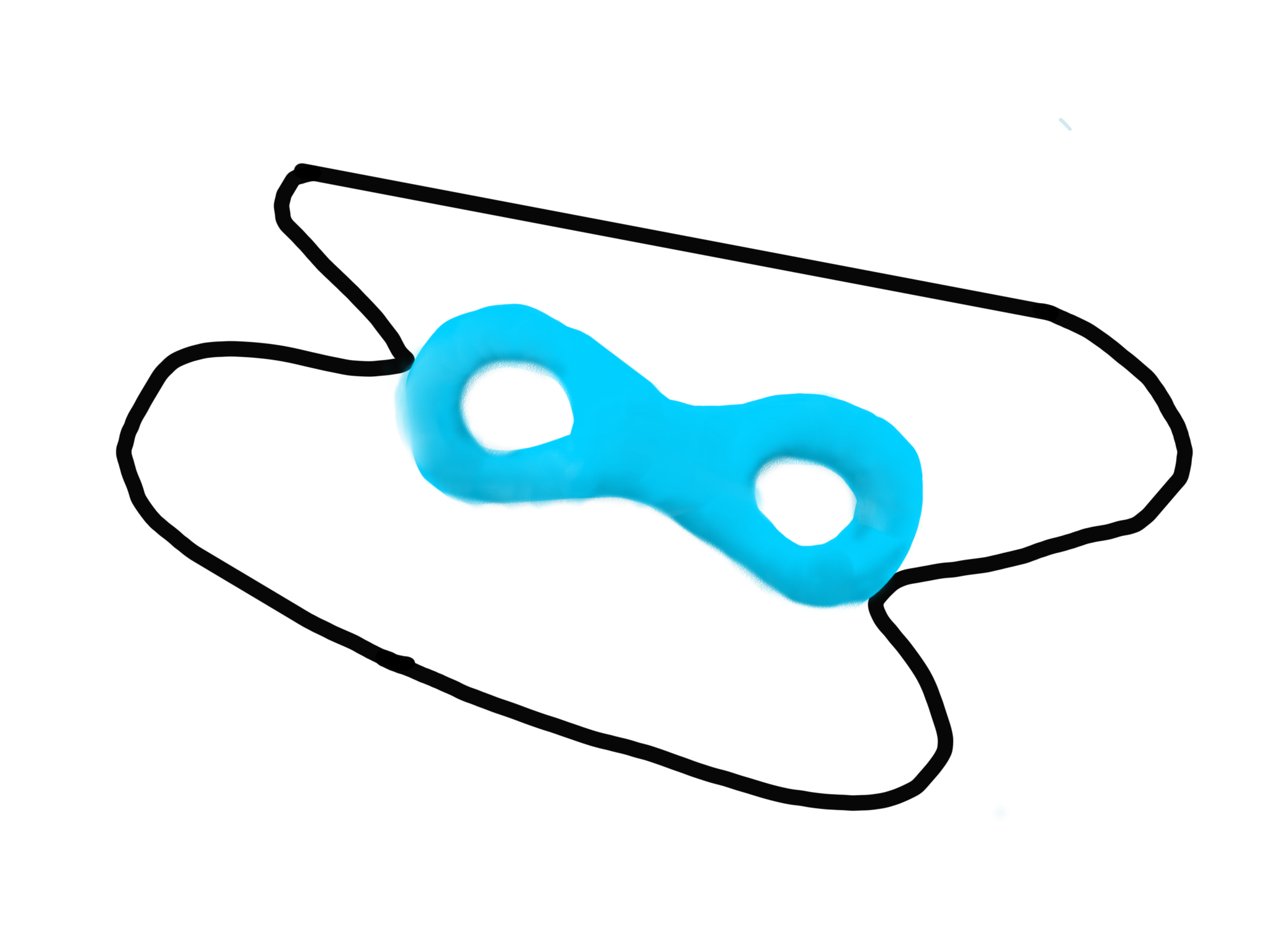} \includegraphics[height=3cm]{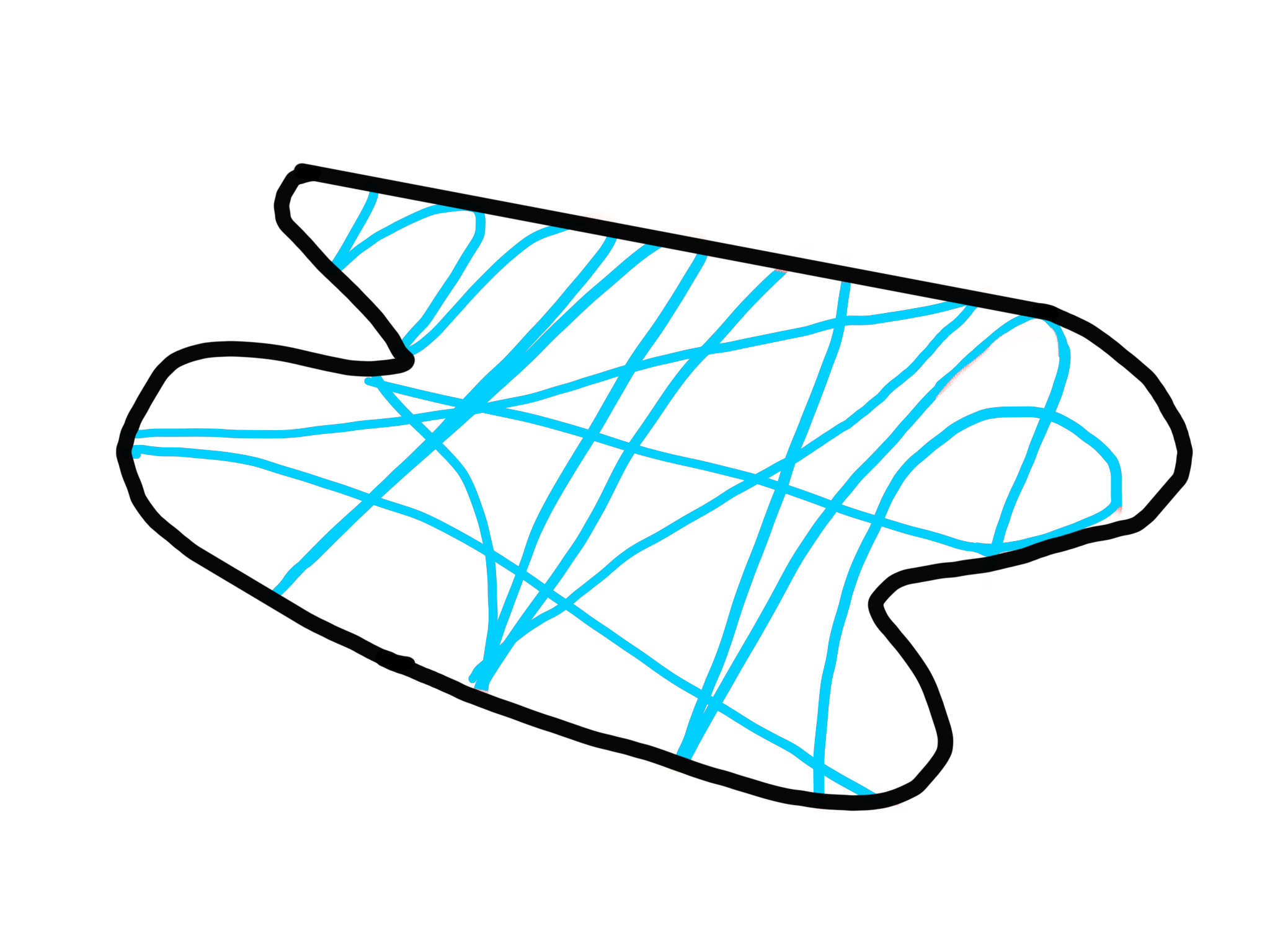}
    \caption{Closed or dense}
 \end{center}
\end{figure}

 We highlight the importance of \eqref{nes} from the above proof:
if  $q$ belongs to the set $ \limsup_{n\to \infty} Ug_nU$ in Lemma \ref{ub}, i.e.,
  $q=\lim_{n\to \infty} u_{t_n} g_n u_{s_n}$ for some $t_n, s_n\in \br$, then
  the size of $t_n$ and $s_n$ are essentially determined by the sequence $g_n \to e$, up to multiplicative constants.
 On the other hand, we need the convergence of the sequence
 $y_0u_{-t_n}$  in order to derive $Yq\subset Y$. That is, if $y_0u_{-t_n}$ diverges, which will be typical when $\Gamma\ba G$ has infinite volume,
Lemma \ref{ub}, whose proof depends on the polynomial property of unipotent action, does not lead anywhere in the study of orbit closure problem.
 
\subsection{Unipotent blowup and renormalizations of the return time} Loosely speaking,  for a given $y_0\in \Gamma\ba G$,
we now would like to understand  the set $$\limsup_{n\to \infty} \mT g_n U$$
 where $\mT$ is the recurrence time of $y_0U$ into a fixed compact subset of $\Gamma\ba G$. 
Most of time, $\limsup \mT g_nU$ may be empty.  In order to make sure that this set is non-trivial enough for our purpose,
we need a certain polynomial $\phi(t)$ (cf. proof of Lemma \ref{U})
 not to vanish  on the renormalized set  $\limsup \lambda_n^{-1} \mT$ where $\lambda_n>0$ is a sequence whose size is dictated by
the speed of convergence of the sequence $g_n\to e$. Since we do not have a control on $g_n$ in general,
the following condition on $\mT$, or more generally on a sequence $\mT_n$,  is necessary for an arbitrary sequence $\lambda_n\to \infty$.
 
 

 \begin{Def} We say that a sequence $\mT_n\subset \br$ has accumulating renormalizations
 if for any sequence $\lambda_n \to \infty$,
 $$\mathsf T_\infty := \limsup\limits_{n\to\infty} \lambda_n^{-1} {\mathsf T}_n$$
accumulates both at $0$ and $\infty$. 
 \end{Def}
That is,  $\mT_\infty$ contains a sequence tending to $0$, as well as a sequence tending to $\infty$.
  We allow a constant sequence $\mT_n$ in this definition.
 

The following lemma is immediate:
 \begin{lem} \label {kappa} If there exists $\kappa>1$ such that each $\mT_n$ is $\kappa$-thick in the sense
 that for all $r>0$, $\mT_n\cap \pm [r, \kappa r]\ne \emptyset$, then
the sequence $\mT_n$ has accumulating renormalizations.  \end{lem}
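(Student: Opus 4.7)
The plan is to show that under the $\kappa$-thickness hypothesis, for every $R > 0$ the set $\mathsf{T}_\infty$ meets the compact interval $[R, \kappa R]$ (and likewise $[-\kappa R, -R]$); once this is established, letting $R \to 0$ and $R \to \infty$ immediately yields sequences in $\mathsf{T}_\infty$ accumulating at $0$ and at $\infty$.

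Fix an arbitrary sequence $\lambda_n \to \infty$ and an arbitrary $R > 0$. The first key step is to rescale the thickness condition: applying the $\kappa$-thickness of $\mathsf{T}_n$ with radius $r = \lambda_n R$, I get, for every $n$, an element $t_n \in \mathsf{T}_n \cap [\lambda_n R,\, \kappa \lambda_n R]$. Dividing by $\lambda_n$ yields $\lambda_n^{-1} t_n \in [R, \kappa R]$ for all $n$. Since $[R, \kappa R]$ is compact, I pass to a subsequence along which $\lambda_n^{-1} t_n$ converges to some $q \in [R, \kappa R]$. Every neighborhood of $q$ then meets $\lambda_n^{-1} \mathsf{T}_n$ for infinitely many $n$, so by the definition of $\limsup$ recalled in the paper, $q \in \mathsf{T}_\infty$.

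To conclude, I choose an increasing sequence $R_k \to \infty$ and, for each $k$, extract a point $q_k \in \mathsf{T}_\infty \cap [R_k, \kappa R_k]$ by the construction above; then $q_k \to \infty$ and $q_k \in \mathsf{T}_\infty$, showing accumulation at $\infty$. Similarly choosing $R_k \to 0^+$ produces $q_k \in \mathsf{T}_\infty \cap [R_k, \kappa R_k]$ with $q_k \to 0$, showing accumulation at $0$. Running the same argument with the intervals $[-\kappa r, -r]$ (which $\mathsf{T}_n$ also meets by the $\pm$ in the thickness hypothesis) handles the negative side, though already the positive side suffices for the stated conclusion.

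There is no serious obstacle here: the whole content is that the thickness condition is scale-invariant, so it survives any renormalization $\lambda_n^{-1}$ and forces $\mathsf{T}_\infty$ to be $\kappa$-thick itself, in particular unbounded and accumulating at $0$. The only minor subtlety to flag is the use of $\limsup$ of sets in the sense recalled in the footnote earlier in the paper, namely that convergence of $\lambda_n^{-1} t_n$ along a subsequence is what places the limit into $\mathsf{T}_\infty$.
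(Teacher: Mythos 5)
Your proof is correct, and it is exactly the scale-invariance argument the paper has in mind: the paper offers no written proof at all (it introduces the lemma with ``The following lemma is immediate''), and your rescaling $r=\lambda_n R$, compactness of $[R,\kappa R]$, and the footnote's definition of $\limsup$ of sets is the intended immediate verification. The only cosmetic point is that the $\pm$ in the thickness hypothesis makes the negative side available but, as you note, the positive side alone already gives accumulation of $\mT_\infty$ at $0$ and $\infty$.
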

 
 We now present a refined version of Lemma \ref{ub}, which will be a main tool in the study of $U$-orbits
 in the infinite volume homogeneous space: via the map $t\mapsto u_t$, we identify $\br \simeq  U$.
 
 We write $\frak g= \frak h^\perp\oplus \frak h$ where $\frak h=\frak{sl}_2(\br)$ is the Lie algebra of $H$ and $\frak h^\perp=i\frak{sl}_2(\br)$; note that
$\frak h^\perp$ is $H$-invariant under conjugation.

\begin{lemma}[Unipotent blowup] \label{lem.UgU}\label{U}
Let $\mT_n\subset U $ be a sequence with  accumulating renormalizations.
\begin{enumerate}
\item For  any $g_n\to e$ in $G-AN$, the subset $AV \cap \big(\limsup_{n\to \infty} \mT_n g_n U\big)$
accumulates at $e$ and $\infty$.

\item For any $g_n\to e$ in $G -VH$,
 the subset  $V\cap\big( \limsup_{n\to \infty} \mT_n g_n H \big)$ accumulates at $e$ and $\infty$.
\item For any $g_n \to e$ in $\exp \frak h^\perp - V$,
  the subset  $V\cap \big(\limsup_{n\to \infty} \{u_t g_n u_{-t}: t\in \mT_n \}\big)$ accumulates at $e$ and $\infty$.

\end{enumerate}
\end{lemma}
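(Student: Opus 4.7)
My plan is to follow the template of the classical Margulis blowup (Lemma~\ref{ub})---an explicit matrix computation exhibiting polynomial divergence of a direction transverse to the relevant subgroup---and to invoke the accumulating-renormalization hypothesis on $\mT_n$ to supply discrete choices $s\in \mT_n$ where the classical proof uses all of $\br$ freely. I would begin with part~(3), which is the cleanest. Write $g_n=\exp(\xi_n)$ with $\xi_n\in \mathfrak{h}^\perp\setminus \mathrm{Lie}(V)$ and $\xi_n\to 0$. Since $\mathfrak{h}^\perp$ is $\mathrm{Ad}(U)$-invariant (as $U\subset H$), one has $u_t g_n u_{-t}=\exp(\mathrm{Ad}(u_t)\xi_n)$, with $\mathrm{Ad}(u_t)$ polynomial of degree $\le 2$ on $\mathfrak{h}^\perp$ and top-degree coefficient landing in $\mathrm{Lie}(V)=\br\cdot iE$. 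In the basis $\{iE,iH,iF\}$ of $A$-weights $(+2,0,-2)$ (where $E,H,F$ are the standard $\mathfrak{sl}_2(\br)$ generators), expand $\xi_n=a_n\,iE + b_n\,iH + c_n\,iF$; the hypothesis gives $(b_n,c_n)\ne 0$. Set $\lambda_n^{-1}:=\max(|b_n|,|c_n|^{1/2})\to 0$ and pass to a subsequence so that $\lambda_n b_n\to \mu$ and $\lambda_n^2 c_n\to \nu$ with $\max(|\mu|,|\nu|^{1/2})=1$; then $\mathrm{Ad}(u_{r\lambda_n})\xi_n\to -(2r\mu + r^2\nu)\,iE\in\mathrm{Lie}(V)$ for each $r\ge 0$. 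The accumulating renormalization of $\mT_n$ supplies $t_n\in\mT_n$ with $\lambda_n^{-1}t_n\to r$ for values of $r$ accumulating at both $0$ and $\infty$, and the corresponding limits of $u_{t_n}g_n u_{-t_n}$ lie in $V$ and accumulate at $e$ (when $r\to 0$) and at $\infty$ (when $r\to\infty$, since $2r\mu+r^2\nu$ is unbounded by the normalization).

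Part~(2) reduces to part~(3) via the local product decomposition $G=\exp(\mathfrak{h}^\perp)\cdot H$ near $e$: writing $g_n=\exp(\eta_n)h_n$ with $\eta_n\in\mathfrak{h}^\perp\setminus \mathrm{Lie}(V)$ and $h_n\in H$, both $\to 0$, the choice $h:=h_n^{-1}u_{-s}\in H$ gives $u_s g_n h=\exp(\mathrm{Ad}(u_s)\eta_n)$, to which the argument of part~(3) applies. For part~(1), which is more intricate since both left- and right-$U$-translations enter, I would use the local splitting $G=\exp(\mathfrak{m})\cdot AN$ near $e$ with $\mathfrak{m}=\br\cdot iH+\c\cdot F$ a real complement of $\mathfrak{a}+\mathfrak{n}$, writing $g_n=\exp(\zeta_n)b_n$ with $\zeta_n\in\mathfrak{m}\setminus\{0\}$, $b_n\in AN$, both $\to e$. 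Then $u_s g_n u_t=\exp(\mathrm{Ad}(u_s)\zeta_n)\cdot u_s b_n u_t$, and by $AN=AV\cdot U$ the factor $u_s b_n u_t$ splits as $(av)'_n\cdot u_{\sigma_n(s,t)}$ with $(av)'_n\in AV$ converging to $e$. Decomposing $\mathrm{Ad}(u_s)\zeta_n$ for $\zeta_n=\alpha'_n\,iH+\gamma_n F+\gamma'_n\,iF$ in the basis $\{H,iH,iE,F,iF,E\}$: the $E$-coefficient $-s^2\gamma_n$ is a $U$-correction absorbed by choosing $t$ so that $u_{\sigma_n(s,t)}$ cancels it (via BCH to leading order); the $H$- and $iE$-coefficients $s\gamma_n$ and $-(2s\alpha'_n+s^2\gamma'_n)$ span $\mathrm{Lie}(AV)$ and supply the non-trivial limit; and the $iH$-coefficient $\alpha'_n+s\gamma'_n$, lying outside $\mathrm{Lie}(AV)$, vanishes under the scaling $\lambda_n^{-1}=\max(|\alpha'_n|,|\gamma_n|,|\gamma'_n|^{1/2})$ because $\lambda_n \gamma'_n\to 0$. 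Accumulating renormalizations on $\mT_n$ then deliver, as in part~(3), the required accumulation of limits at $e$ and $\infty$ in $AV$.

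\textbf{Main obstacle.} The principal technical difficulty is in part~(1): the complement $\mathfrak{m}$ to $\mathfrak{an}$ contains a direction ($iH$) whose $\mathrm{Ad}(u_s)$-image escapes $\mathrm{Lie}(AV)=\br H+\br\,iE$, so the scaling $\lambda_n$ must be arranged to kill the resulting linear-in-$s$ $iH$-contribution while keeping the quadratic-in-$s$ $\mathrm{Lie}(AV)$-contributions bounded and non-trivially varying with $r$; this requires a case analysis on which of $|\alpha'_n|,|\gamma_n|,|\gamma'_n|^{1/2}$ realizes the maximum. One must also make the BCH-based absorption of the $E$-part into the free right factor $u_t$ precise, since $\exp(\mathrm{Ad}(u_s)\zeta_n)$ does not factor cleanly into its $U$- and transverse-$U$-components; the saving grace is that this absorption is only needed to leading order in $\zeta_n$, the higher-order corrections going to zero under the rescaling.
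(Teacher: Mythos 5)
Your parts (2) and (3) are essentially the paper's own argument: the paper likewise reduces (2) to (3) by splitting off the $H$-component of $g_n$, and proves (3) by rescaling the $\mathfrak h^\perp$-valued polynomials $t\mapsto u_tq_nu_{-t}$; its $\lambda_n$ is the escape time from the unit ball rather than your explicit $\max(|b_n|,|c_n|^{1/2})$, and the fact that the limit lies in $\op{Lie}(V)$ is deduced abstractly (the limit polynomial commutes with $U$, and the centralizer of $U$ in $\mathfrak h^\perp$ is $\op{Lie}V$) rather than by your coordinate computation, but these are the same proof. Part (1) is where you genuinely diverge. Following Margulis--Tomanov, the paper takes a Chevalley representation $G\to\GL(W)$ with $U=\op{Stab}_G(p)$, studies the bounded-degree polynomials $t\mapsto pg_nu_t$, rescales by the escape time from $B(p,1)$, and pulls the limiting polynomial back through the regular isomorphism $\mathcal L\to p\mathcal L$ (with $\mathcal L=VAMN^{+}$) to get a nonconstant rational map $\psi:\br\to \op{N}(U)\cap\mathcal L=AV$ with $\psi(0)=e$; it proves the statement with the free $U$ on the left and $\mT_n$ on the right, and then applies the result to $g_n^{-1}$ to switch sides. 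Your route --- explicit complement $\mathfrak m=\br\, iH\oplus\c F$, rescaling dictated by the weight coefficients, and absorption of the divergent $E$-direction into the free right $U$-factor --- is more elementary and does go through in $\SL_2(\c)$: a direct $2\times 2$ computation shows that after choosing the real parameter $t$ to cancel the divergent real part of the upper-right entry, the limits lie in $AV$ and form a nonconstant rational family in $r$, hence accumulate at $e$ and $\infty$ as $r$ ranges over $\mT_\infty$. What the paper's formulation buys is that normalization and convergence take place in a linear representation, so there is no divergent $U$-component to absorb and no case analysis on which coefficient dominates, and the argument transfers verbatim to the higher-dimensional setting of \cite{LO}; what yours buys is complete explicitness. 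One caution: ``absorbed via BCH to leading order'' should not be taken literally --- the group-level limit is a rational map, not the exponential of the limiting transverse Lie-algebra element (for instance its $A$-part is an affine, not exponential, function of $r$) --- so to complete the proof you must track the genuine transverse component of $u_sg_nu_t$ (the matrix entries, or the $\mathcal L$-component in an $\mathcal L\cdot U$ decomposition) rather than the Lie-algebra approximation; once that is done, nonconstancy and the accumulation at $e$ and $\infty$ follow from your normalization exactly as you indicate.
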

\begin{proof} For (1), we will find a sequence $\lambda_n \to \infty$ (depending on $g_n$) and
a rational map $\psi: \br \to AV$ such that for $\mT_\infty:=\limsup_{n\to \infty} \lambda_n^{-1} \mT_n$,
\begin{itemize}\item $\psi(\mT_\infty)\subset \limsup_{n\to \infty} \mT_n g_n U$;
\item
$\psi (\mT_\infty)$ accumulates at $e$ and $\infty$.
\end{itemize}

The construction of $\psi$ follows the arguments of Margulis and Tomanov \cite{MaT}.
 Since $U$ is a real algebraic subgroup of $G$, by Chevalley's theorem, there exists an $\br$-regular representation ${{G}}\to \op{GL}(W)$ with a distinguished point $p\in W$ such that $U=\op{Stab}_G(p)$. Then $p{{G}}$ is locally closed, and
\begin{equation}\label{eq.char1}
\op{N}(U)=\{g\in {{G}} : p gu =p g\text{ for all }u\in U\}.
\end{equation}

Set $\mathcal L:=V AMN^+$ where $N^+$ is the transpose of $N$. Then 
 $U\mathcal L$ is a Zariski dense open subset of $G$ and $p\mathcal L$ is a Zariski open neighborhood of $p$ in the Zariski closure
 of $pG$. We choose a norm on $W$ so that $B(p,1)\cap \overline{pG}\subset p \mathcal L$, where
 $B(p, 1)\subset W$ denotes the closed ball of radius $1$ centered at $p$.

Without loss of generality, we may assume $g_n \in U\mathcal L$ for all $n$.
 For each $n$,
define $\tilde \phi_n : \br \to W$  by 
$$\tilde \phi_n(t)=p g_nu_t,$$ 
which is  a polynomial
of degree uniformly bounded for all $n$. 
Define $\lambda_n\ge 0$ by
$$\lambda_n:=\sup \{\lambda\ge 0: \tilde \phi_n [-\lambda, \lambda] \subset B(p,1)\}.$$ 
As $g_n\not\in\op{N}(U)=AN$, $\tilde \phi_n$ is non-constant, and hence $\lambda_n<\infty$. As $\tilde \phi_n (0)=pg_n \to p$, we have $\lambda_n\to\infty$.
We reparametrize $\tilde \phi_n$ using $\lambda_n$:
$$\phi_n(t):=\tilde \phi_n (\lambda_n t).$$

Then for all $n$, $$\phi_n[-1,1]\subset {B(p,1)} .$$  Therefore
the sequence $\phi_n$ forms an equicontinuous family of polynomials, and hence, after passing to a subsequence,
 $\phi_n$ converges to a  polynomial $$\phi:\br \to \overline{pG}\subset W $$ uniformly on every compact subset of $\br$.
 Note that $\phi$ is non-constant, since $\phi(0)=p$ and $\max \|\phi(\pm 1)\|=1$.
 As the map $\rho: \mathcal L\to p\mathcal L$ defined by $\ell\mapsto p\ell$ is a regular isomorphism, 
 and $p\mathcal L$ is a Zariski open neighborhood
  of $p$ in the Zariski closure of $pG$,
 we now get a rational map
 $\psi:\br \to \mathcal L$ given by
 $$\psi (t)= \rho^{-1} ( \phi(t) ).$$

 If we define $\psi_n(t)$ as the unique $\mathcal L$-component of $g_nu_t$ in the $U\mathcal L$ decomposition, that is,
 $g_nu_t= u_{s_n}\psi_n(t)$ for some $s_n\in \br$,  then
$$\psi(t)=\lim_{n\to \infty} \psi_n(\lambda _n t)$$
where  the convergence is uniform on compact subsets of $\br$.
It is easy to check that
 $\op{Im} \psi \subset \op{N}(U)\cap \mathcal L= AV$ using \eqref{eq.char1}.
 
Set \begin{equation*}
\mathsf T_\infty := \limsup\limits_{n\to\infty} \lambda_{n}^{-1} {\mathsf T}_n.
\end{equation*}

By the hypothesis on $\mT_n$, $\mT_\infty$ accumulates at $0$ and $\infty$.
Since $\psi: \br \to AV$ is a non-constant rational map with $\psi(0)=e$, $\psi(\mT_\infty)$ accumulates at $e$ and $\infty$.

Letting $t\in \mT_\infty$, 
 choose a sequence ${t}_n\in {}{\mathsf T}_n$  such that $\lim_{n\to \infty}\lambda_n^{-1} {t}_n = {t}$.
  Since $\psi_n\circ \lambda_n\to \psi$ uniformly on compact subsets,
\begin{equation*}
\psi({t})=\lim_{n\to\infty} (\psi_n\circ \lambda_n )\left( \lambda_n^{-1}{t_n}\right)=\lim_{n\to \infty} \psi_n({t}_n)
=\lim_{n\to\infty}u_{s_n}g_nu_{{t}_n}.
\end{equation*}
for some sequence $s_n\in \br$.
Hence
$$\psi(\mT_\infty) \subset \limsup_{n\to \infty} Ug_n \mT_n.$$
By applying this argument to $g_n^{-1}$, we may switch the position of $U$ and $\mT_n$, and hence finish the proof of (1). 

To prove (2), 
by modifying $g_n$ using an element of $H$, we may assume that $g_n=\exp q_n\in \exp \mathfrak{h}^\perp -V$. Hence
(2) follows from (3).  We define
a polynomial $\psi_n : \br \to \mathfrak{h}^\perp$ by
\begin{equation*}
\psi_n(t)= u_{t} q_nu_{-t} \quad\text{ for all ${t}\in \br$}.
\end{equation*}
Since  $g_n\notin V$ and hence does not commute with $U$,  $\psi_n $ is a nonconstant polynomial.
Define $$\lambda_n :=\sup \{ \lambda\ge 0:  \psi_n ([-\lambda, \lambda]) \subset B(0, 1)\}$$
where $B(0,1)$ is the closed unit ball around $0$ in $\mathfrak h^\perp$. Then $0<\lambda_n< \infty$ and $\lambda_n\to \infty$.

Now the rescaled polynomials 
 $\phi_n=\psi_n\circ \lambda_n: \br \to \mathfrak{h}^\perp $
 form  an equicontinuous family of polynomials of uniformly bounded degree and
 $\lim_{n\to \infty} \phi_n(0)= 0$. Therefore
$\phi_n$  converges
 to a  non-constant polynomial $$\phi : \br \to \mathfrak{h}^\perp$$ uniformly on compact subsets.

We claim that $\op{Im}(\phi)\subset \op{Lie}(V)$.
For any fixed ${s}, t\in \br$, 
\begin{align*} u_{s} \phi({t}) u_{-s} & =\lim_{n\to \infty} u_{\lambda_n t +s} q_n u_{-\lambda_n{t} -s }\\
&=
\lim_{n\to \infty} u_{\lambda_n ( t + \lambda_n^{-1} s)} q_n  u_{-\lambda_n ({t} +\lambda_n^{-1} s )}
\\&=
\lim_{n\to \infty} u_{\lambda_n t}  q_n  u_{-\lambda_n {t}}
=\phi(t).
\end{align*}

Hence $ \phi({t})$ commutes with $U$. 
Since the centralizer of $U$ in $\mathfrak h^\perp$ is equal to $\op{Lie}V$,  the claim follows.
Define $\psi:\br \to V$ by $\psi(t)=\exp (\phi(t))$, noting that $\exp:
\op{Lie}V\to V$ is an isomorphism.
Setting
\begin{equation*}
\mathsf T_\infty:=\limsup\limits_{n\to\infty}\lambda_n^{-1} \mathsf T_n,
\end{equation*}
we deduce that $\psi(\mT_\infty)$ accumulates at $e$ and $\infty$.
For any ${t}\in \mathsf T_\infty$, we choose ${{t}_n}\in \mathsf T_n$ so that
$t=\lim \lambda_n^{-1}t_n$. Then
$$
\psi( t)=\lim_{n\to\infty} {u_{{t}_n}} g_n {u_{-{t}_n}}.
$$
as $\phi_n (t) \to \phi (t) $ uniformly on compact subsets.
Hence,
$$ \psi(\mT_\infty) \subset V\cap \big( \limsup_{n\to \infty} \{u_t g_n u_{-t}: t\in \mT_n \} \big).$$ This completes the proof of (3). \end{proof}

\subsection{Relative minimal sets and additional invariance}
Let $\Gamma <G$ be a discrete subgroup. 
Let $X\subset \Gamma\ba G$ be a closed $H$-invariant subset with no periodic $U$-orbits\footnote{The case when $X$ contains a periodic $U$-orbit turns out to be more manageable; see \cite[Prop. 4.2]{BO}}.
Let 
$W\subset \Gamma\ba G$ be a compact subset such that $X\cap W\ne \emptyset $. We suppose that for any 
$y\in X\cap W$, 
\be\label{ttt} \text{$\mT(y):=\{t\in \br: yu_t\in W\}$
has accumulating renormalizations}.\ee

Under this hypothesis, we can obtain analogous steps to Step (1) and (3) in the proof of Theorem \ref{pop} for  relative 
$U$-minimal subsets of $X$.
Since $X$ is not compact in general, a $U$-minimal subset of $X$ may not exist. Hence we consider  a relative $U$-minimal subset of $X$ instead.

\begin{Def} A closed $U$-invariant subset $Y\subset X$ is called $U$-minimal with respect to $W$, if $Y\cap W\ne \emptyset$ and
 $yU$ is dense in $Y$ for every $y\in Y\cap W$.\end{Def}
As $W$ is compact, it follows from Zorn's lemma that $X$ always contains a $U$-minimal subset with respect to $W$.

\begin{Lem}  [Translates of $Y$ inside of $Y$]  \label{YL2}
Let $Y\subset X$ be a $U$-minimal subset with respect to $ W$.
Then $$YL\subset Y$$ for some one-parameter semigroup $L<AV$.
\end{Lem}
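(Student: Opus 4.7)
The plan is to adapt Step~1 of the proof of Theorem~\ref{pop} to the non-compact setting, with Lemma~\ref{U}(1) playing the role of the basic unipotent-blowup Lemma~\ref{ub}(1). The only use of compactness of $Y$ in Theorem~\ref{pop} was to extract a convergent subsequence from $\{y_0u_{-t_n^*}\}$; this will be replaced by arranging $-t_n^*\in\mT(y_0)$ so that $y_0u_{-t_n^*}\in W$, and invoking compactness of $W$.

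First I fix $y_0\in Y\cap W$. Since $\overline{y_0U}=Y$ (by $U$-minimality with respect to $W$) and $Y$ contains no periodic $U$-orbit, there exists $T_n\to\infty$ with $y_0u_{T_n}\to y_0$, equivalently $y_0u_{T_n}=y_0g_n$ for some $g_n\to e$ in $G-U$. After passing to a subsequence, either (A) $g_n\in AN$ for all $n$, or (B) $g_n\notin AN$ for all $n$. In Case (A) I decompose $g_n=q_n u_n'$ with $q_n\in AV\setminus\{e\}$ (nontrivial because $g_n\notin U$) and $u_n'\in U$; then $y_0 q_n=y_0 u_{T_n}u_n'^{-1}\in y_0 U\subset Y$, and since $q_n$ normalizes $U$, the identity $\overline{y_0 q_n U}=\overline{y_0 U}\,q_n=Y q_n$ together with $\overline{y_0 q_n U}\subset Y$ yields $Y q_n\subset Y$.

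Case (B) is the substantive step. Here I apply Lemma~\ref{U}(1) with $\mT_n:=-\mT(y_0)$, which inherits accumulating renormalizations from $\mT(y_0)$. This produces nontrivial $q\in AV$ arbitrarily close to $e$ with $q=\lim_{k}u_{t_k^*}g_{n_k}u_{s_k^*}$ for some $t_k^*\in-\mT(y_0)$. By construction $y_0u_{-t_k^*}\in W$, so a further subsequence gives $y_0 u_{-t_k^*}\to y_1\in Y\cap W$. Then
$$y_0 u_{T_{n_k}+s_k^*}=y_0 g_{n_k} u_{s_k^*}=(y_0 u_{-t_k^*})(u_{t_k^*}g_{n_k}u_{s_k^*})\longrightarrow y_1 q,$$
placing $y_1 q$ in $\overline{y_0 U}=Y$. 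Since $y_1\in Y\cap W$ gives $\overline{y_1 U}=Y$ and $q\in\op{N}(U)$, the same manipulation as in Case (A) yields $Y q\subset Y$.

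In either case the closed sub-semigroup $S:=\{q\in AV:Y q\subset Y\}$ contains nontrivial elements converging to $e$; writing $q_k=\exp(X_k)$, passing to a subsequence with $X_k/\norm{X_k}\to X\ne 0$, and observing that $q_k^{\lfloor s/\norm{X_k}\rfloor}\to\exp(sX)$ lies in $S$ for all $s\ge 0$ exhibits the desired one-parameter semigroup $L:=\{\exp(sX):s\ge 0\}\subset S$, so $YL\subset Y$. The main obstacle is Case (B): one must simultaneously control the polynomial divergence of $Ug_n U$ and the recurrence of $y_0 U$ to $W$, which is precisely what Lemma~\ref{U}(1) is designed to achieve under the accumulating-renormalizations hypothesis.
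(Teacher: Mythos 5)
Your argument is essentially the paper's own proof of Lemma \ref{YL2}: the same dichotomy $g_n\in AN$ versus $g_n\notin AN$, the same application of Lemma \ref{U}(1) with the (negated) return-time set $\mT(y_0)$ so that compactness of $W$ substitutes for compactness of $Y$, and the same normalization step $\overline{y_1qU}=\overline{y_1U}\,q=Yq\subset Y$; your closing passage from a sequence $q_k\to e$ in $AV$ with $Yq_k\subset Y$ to a one-parameter semigroup $L<AV$ merely makes explicit a reduction the paper leaves implicit. One small caveat: the recurrence $y_0u_{T_n}\to y_0$ with $T_n\to\infty$ does not follow from $U$-minimality and the absence of periodic $U$-orbits alone (a closed divergent $U$-orbit passing through $W$ would be $U$-minimal with respect to $W$, non-periodic, and non-recurrent); you also need the unboundedness of $\mT(y_0)$ supplied by hypothesis \eqref{ttt}, which is exactly the point of the paper's citation of \cite[Lemma 8.2]{BO} at this step.
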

 \begin{proof} It suffices to find a sequence $q_n\to e$ in $AV$ such that  $Yq_n \subset Y$. 
 
 Fix $y_0\in Y\cap W$. We claim that there exists $g_n\to e$ in $G-U$ such that  $y_0g_n\in  Y\cap W$.
By the minimality assumption on $Y$, there exists $t_n\to \infty$ in $\mT(y_0)$ so that
$y_0u_{t_n}$ converges to $ y_0 \in Y\cap W$ (cf. \cite[Lemma 8.2]{BO}). Hence there exists $g_n \to e$
such that $$y_0u_{t_n}=y_0g_n.$$
Then $g_n\notin U$, because if $g_n$ belonged to $U$,  $y_0U$ would be periodic,  contradicting
 the assumption that $X$ contains no periodic $U$-orbit.
  
 {\noindent{\it Case (1): $g_n\in AN$.}} By modifying $g_n$ with elements of $U$, 
 we may assume that $g_n\in AV$. Since $g_n\in \op{N}(U)$ and $y_0\in Y\cap W$,
 we get $y_0Ug_n= y_0g_n U \subset Y$ and hence $\overline{y_0U} g_n= Yg_n\subset Y$.
 
 {\noindent{\it Case (2): $g_n\notin AN$.}} By Lemma \ref{U}, for any neighborhood $\cal O$ of $e$,
 there exist $t_n \in \mT(y_0)$ and $s_n \in \br$ such that $u_{-t_n}g_nu_{s_n}$ converges to some
 $ q\in (AV-\{e\})\cap \cal O$.
 Since
 $y_0u_{t_n}\in W$ and $W$ is compact,
 $y_0u_{t_n}$ converges to some $y_1\in Y\cap W$, by passing to a subsequence.
 Therefore as $n\to \infty$, $$y_0g_n u_{-s_n}= (y_0 u_{t_n}) (u_{-t_n} g_n u_{s_n}) \to y_1 q \in Y .$$
  As $y_1\in Y\cap W$ and $q\in \op{N}(U)$, it follows $Yq\subset Y$.  Since such $q$ can be found in any neighborhood of $e$, this finishes the proof.

 \end{proof}


\begin{Lem} [One translate of $Y$ inside of $X$] \label{Yv}
 Let $Y\subset X$ be a $U$-minimal subset with respect to $W$ such that $X-y_0H$ is not closed for some $y_0\in Y\cap W$.
   Then  $$Yv\subset  X\quad\text{ for some non-trivial $v\in V .$}$$
\end{Lem}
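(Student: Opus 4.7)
The plan is to produce a point $y_1 \in Y \cap W$ and a nontrivial $v \in V$ with $y_1 v \in X$. This suffices: by the $U$-minimality of $Y$ with respect to $W$ applied at $y_1$, we have $Y = \overline{y_1 U}$; since $V$ commutes with $U$ inside the abelian group $N = UV$, it follows that $Y v = \overline{y_1 U} v = \overline{y_1 v U} \subset X$, the last inclusion using that $y_1 v \in X$, that $X$ is closed, and that $X$ is $U$-invariant.

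To start, I extract a displacement. Because $X - y_0 H$ is not closed and $X$ is $H$-invariant, a sequence in $X - y_0 H$ accumulating on $y_0 H$ yields, after translating by the appropriate $H$-element, a sequence $g_n \to e$ in $G - H$ with $y_0 g_n \in X$. If some $g_n \in VH$, write $g_n = v h$ with $h \in H$ and $v \in V$; note $v \ne e$ because $g_n \notin H$. Then $y_0 v = (y_0 g_n) h^{-1} \in X$ by $H$-invariance, and taking $y_1 = y_0 \in Y \cap W$ finishes the argument. Otherwise, using the local product decomposition $G = \exp \mathfrak{h}^\perp \cdot H$ near $e$, we may right-multiply $g_n$ by an element of $H$ (preserving $y_0 g_n \in X$) and thereby assume $g_n \in \exp \mathfrak{h}^\perp - V$.

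Next comes the unipotent blowup. I apply Lemma \ref{U}(3) with $\mathsf{T}_n = -\mathsf{T}(y_0)$, which has accumulating renormalizations because $\mathsf{T}(y_0)$ does and the property is preserved under $t \mapsto -t$. This produces $t_n \in -\mathsf{T}(y_0)$ and $v_n := u_{t_n} g_n u_{-t_n} \to v$ for some nontrivial $v \in V$ arbitrarily close to $e$. The choice of sign is crucial: it guarantees $-t_n \in \mathsf{T}(y_0)$, hence $y_0 u_{-t_n} \in W$. By compactness of $W$, along a subsequence $y_0 u_{-t_n} \to y_1 \in W$; and $y_1 \in Y$ by $U$-invariance of $Y$, so $y_1 \in Y \cap W$. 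The identity
\[
y_0 g_n u_{-t_n} \;=\; (y_0 u_{-t_n}) \, (u_{t_n} g_n u_{-t_n}) \;=\; (y_0 u_{-t_n}) \, v_n,
\]
together with $y_0 g_n \in X$ (so $y_0 g_n u_{-t_n} \in X$ by $U$-invariance of $X$), yields $y_1 v \in X$ upon taking the limit, completing the argument.

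The main obstacle is exactly the sign subtlety in the last paragraph: the blowup naturally outputs $t_n$'s in the prescribed recurrence set, but what the downstream argument requires is that the \emph{opposite} translate $y_0 u_{-t_n}$ recurs to a compact set. Applying the lemma with $-\mathsf{T}(y_0)$ rather than $\mathsf{T}(y_0)$ is the right move because it flips the role of $t_n$ and $-t_n$, and it is legitimate precisely because accumulating renormalizations is a symmetric condition. Without this observation one is stuck with $y_0 u_{t_n} \in W$ but no control on $y_0 u_{-t_n}$, which is the translate that appears after ``pulling $u_{t_n}$ through'' in the blowup identity.
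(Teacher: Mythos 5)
Your argument is correct and is essentially the paper's proof: you extract $g_n\to e$ in $G-H$ with $y_0g_n\in X$, dispose of the case $g_n\in VH$ by $H$-invariance, and otherwise run the unipotent blowup along the recurrence times of $y_0$ to $W$ to get $y_1v\in X$ with $y_1\in Y\cap W$ and nontrivial $v\in V$, concluding $Yv=\overline{y_1U}\,v=\overline{y_1vU}\subset X$ by $U$-minimality exactly as in the paper. The only cosmetic differences are that you unfold Lemma~\ref{U}(2) into its reduction to $\exp\mathfrak h^\perp-V$ followed by Lemma~\ref{U}(3), and that you make the sign issue explicit (applying the blowup to $-\mT(y_0)$, legitimate since accumulating renormalizations is symmetric), which the paper handles implicitly by writing $u_{t_n}^{-1}g_nh_n$ with $t_n\in\mT(y_0)$.
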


\begin{proof}
By the hypothesis, there exists $g_n\to e$ in $G-H$ such that
 $y_0g_n\in X$.

If $g_n\in VH$ for some $n$, the claim is immediate as $X$ is $H$-invariant.
If $g_n\notin VH$ for all $n$, 
by Lemma \ref{U}, there exist  $t_n\in \mT(y_0)$ and $h_n\in H$
so that $u_{t_n}^{-1} g_n h_n $ converges to some non-trivial $v\in V$.
Since $y_0u_{t_n}$ belongs to the compact subset $W$, by
 passing to a subsequence $y_0u_{t_n}$ converges to some $y_1\in Y\cap W$.
Hence  $y_0g_n h_n =y_0u_{t_n} (u_{t_n}^{-1}g_nh_n)$ converges to $y_1 v$. By the minimality of $Y$ with respect to $W$, we get $Yv\subset Y$, as desired.
\end{proof}

For a subset $I\subset \br$, we write $V_{I}=\{u_{it}: t\in  I\}$.
When the conditions for Lemmas \ref{YL2} and \ref{Yv} are met, we can deduce that $X$ contains some interval of an $V$-orbit:

\begin{lem}\label{qqq} Let $X$ be a closed $H$-invariant subset of $\Gamma\ba G$ containing a compact $A$-invariant subset $W$.
Let $Y\subset X$ be a $U$-minimal subset with respect to $W$.
Suppose 
\begin{enumerate}
\item
$YL\subset Y$ for some one-parameter semigroup $L<AV$; 
\item $ Yv\subset X$ for some non-trivial $v\in V$.
\end{enumerate}
Then 
$X$ contains $x_0V_I$ for some $x_0\in W$ and an interval $0\in I$.
\end{lem}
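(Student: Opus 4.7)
The plan is to reduce the lemma to a case analysis on the semigroup $L<AV$ via its Lie algebra generator. Writing $L=\{\exp(t\xi):t\ge 0\}$ with $\xi=c\alpha+d\beta$, where $\alpha\in\op{Lie}(A)$ and $\beta\in\op{Lie}(V)$ are generators satisfying $[\alpha,\beta]=\beta$, the commutation relation gives a decomposition $\exp(t\xi)=v(t)a(t)$ with $a(t)=a_{ct}$ and $v(t)=u_{if(t)}$, where $f(t)=(d/c)(e^{ct}-1)$ if $c\ne 0$ and $f(t)=dt$ if $c=0$. Fix $y_0\in Y\cap W$ throughout.

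If $d\ne 0$, only hypothesis (1) is needed. Since $y_0\ell_t=y_0v(t)a(t)\in Y\subset X$ for every $t\ge 0$, the $A$-invariance of $X$ (which follows from $A<H$) yields $y_0v(t)\in X$. The function $f$ is non-constant with $f(0)=0$, so the image $I:=f([0,\infty))$ is a non-degenerate interval with $0$ as an endpoint, and $x_0:=y_0\in W$ satisfies $x_0V_I\subset X$.

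If $d=0$, then $L=\{a_{cs}:s\ge 0\}$ and hypothesis (2) must be invoked. Writing $v=u_{ir}$ with $r\ne 0$ and using the identity $a_su_{ir}a_{-s}=u_{ire^s}$, combining (1), (2), and the $A$-invariance of $X$ yields $y_0u_{ire^{cs}}\in X$ for all $s\ge 0$. In the contracting case $c<0$, the set $\{re^{cs}:s\ge 0\}$ accumulates at $0$, and closedness of $X$ gives $y_0V_{[0,r]}\subset X$ (or $V_{[r,0]}$ when $r<0$), finishing the proof with $x_0=y_0$. The main obstacle is the expanding case $c>0$: here $\{re^{cs}\}$ sits entirely on one side of $0$, so a further limiting argument is needed.

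To handle the expanding case, I use the $A$-translates $x_s:=y_0a_{cs}$, which by (1) and the $A$-invariance of $W$ lie in $Y\cap W$. Conjugation gives $x_sV_{[re^{-cs},\infty)}\subset X$ (taking $r>0$ for definiteness), with left endpoint $re^{-cs}\to 0^+$ as $s\to\infty$. By compactness of $W$, extract a subsequential limit $x_\infty\in W$; for any fixed $\tau>0$, $x_{s_n}u_{i\tau}\in X$ for all sufficiently large $n$, so by closedness of $X$, $x_\infty u_{i\tau}\in X$, hence $x_\infty V_{[0,\infty)}\subset X$. The conclusion then holds with $x_0:=x_\infty\in W$ and $I:=[0,\infty)$. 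The essential ingredient in this final step is the interplay between the $A$-recurrence of $y_0$ inside the compact $A$-invariant $W$ and the closedness of $X$, which allows the $V$-interval to be accumulated all the way down to the identity.
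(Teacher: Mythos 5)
Your proof is correct. The decomposition $\exp(t\xi)=v(t)a(t)$ with $f(t)=(d/c)(e^{ct}-1)$ (resp.\ $f(t)=dt$) is an explicit form of the trichotomy the paper uses for one-parameter semigroups of $AV$: $c=0,d\ne0$ is the paper's Case (a) ($L=V_+$), $c\ne0,d\ne0$ is Case (b) ($L=v_0A_+v_0^{-1}$ with $v_0\ne e$), and $d=0$ is Case (c) ($L=A_+$); in the first two your mechanism --- strip off the $A$-component on the right using $A<H$ and the $H$-invariance of $X$ --- is exactly the paper's. The genuine divergence is in the pure-$A$ case: the paper first upgrades $YA_+\subset Y$ to $YA\subset Y$ by a limiting argument that invokes the $U$-minimality of $Y$ with respect to $W$ (``since $\overline{y_1U}=Y$, we get $Y\supset YA$'') and then extracts $YV_+\subset X$ from $YAvA\subset X$ at a fixed basepoint, whereas you keep only the semigroup $A_+$, slide the basepoint $x_s=y_0a_{cs}$ along it inside the compact $A$-invariant set $W$, and pass to a subsequential limit $x_\infty\in W$ to drive the left endpoint $re^{-cs}$ of the $V$-interval to $0$. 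What your variant buys is that the $U$-invariance and minimality of $Y$ are never used beyond $Y\cap W\ne\emptyset$, so you actually prove a slightly more general statement (it suffices that one point of $W$ satisfies the two translation hypotheses), at the harmless cost of changing the basepoint --- harmless because the lemma only asks for some $x_0\in W$. Two cosmetic remarks: in the contracting subcase the closedness of $X$ is not needed for the endpoint $0$, since $y_0\in X$ already; and your intervals have $0$ as an endpoint rather than an interior point, which is exactly what the paper's own proof produces and what Lemma \ref{fs} requires.
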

\begin{proof} 
Any one-parameter semigroup $L<AV$ is either
a one-parameter semigroup $V_+<V$ or $ v_0A_+v_0^{-1}$ for some $v_0\in V$ and a
one-parameter semigroup $A_+<A$.
 
   \medskip

 {\noindent{\it Case}} (a). 
If $L=V_+$, we are done. 
   \medskip

 {\noindent{\it Case}} (b).  If $L=v_0A_+v_0^{-1}$ for a non-trivial $v_0\in V$,
then $$X\supset Y(v_0A_+v_0^{-1})A .$$
Since $v_0A_+v_0^{-1}A$ contains $V_I$ for some interval $0\in I$, the claim follows.

  \medskip

 {\noindent{\it Case}} (c).  If $L=A_+$,
 we first note that $YA\subset Y$; take any sequence $a_n\to \infty$ in $A_+$, and $y_0\in Y\cap W$.
 Then $y_0a_n \in Y\cap W$ converges to some $y_1\in Y\cap W$. Now $\limsup_{n\to \infty} a_{n}^{-1} A_+=A$.
 Therefore $Y\supset y_1A$. Since $\overline{y_1U}=Y$, we get $Y\supset YA$.
Since $AvA$ contains a semigroup $V_+$ of $V$, we deduce  $$X\supset Yv A \supset YA v A\supset YV_+.$$
\end{proof}

In the next section, we discuss the significance of the conclusion that
$X$ contains a segment $x_0V_I$, depending on the relative
location of $x_0$ to  $\partial \core M$.

\section{Interior frames and boundary frames}
Let $\Gamma<G$ be a Zariski dense geometrically finite group, and let $M=\Gamma\ba \bH^3$.
When $M$ has infinite volume, its convex core has a non-empty boundary, which makes the dynamical behavior of a frame
under geometric flows
different depending on its relative position to $\partial \core M$.

 Recall the notation $F_\La$ from \eqref{ff}. We denote by $F^*$ the interior of $F_\La$, and $\partial F$ the boundary of $F_\La$.
We explain that in order to show that a given closed $H$-invariant subset $X\subset F_\La$ with no periodic $U$-orbits is equal to $F_\La$, 
it suffices to show that $X$ contains $x_0V_I$ for some $x_0\in F^*\cap \RFPM$ and an interval $0\in I$ (Lemma \ref{fs}). It is important
to get $x_0\in F^*$, as the similar statement is not true if $x_0\in \partial F$.
For example, in the rigid acylindrical case, if $x_0\in \partial F$, then
$x_0H V_+ H$ is a closed $H$-invariant subset of $\partial F$
for a certain semigroup $V_+<V$ (cf. Theorem \ref{ha} below), and hence if $V_I$ belongs to $V_+$, we cannot use $x_0V_I$ to obtain useful information on $X\cap F^*$.
 

\subsection{Interior frames} In this section, we assume that $\Lambda$ is connected.
Under this hypothesis, the closed $H$-invariant set
$F_\La=\RFPM \cdot H$ has non-empty interior which can be described as follows:
\begin{align*}F^*&=\{[g]\in \Gamma\ba G: \pi(P_g) \cap M^*\ne \emptyset \}\\
&=\bigcup \{xH \subset \Gamma\ba G:{p(xH)} \cap  M^* \ne \emptyset\} \end{align*}
where  $M^*$ denotes the interior of $\core M$.

The condition $\pi(P_g)\cap M^*\ne \emptyset$ is equivalent to the condition that the circle $C_g=\partial P_g$
separates the limit set $\La$, that is, both components of $\bS^2-C_g$ intersects $\La$ non-trivially.
If we set
$$\mC^*:=\{C\in \mC: \text{$C$ separates $\La$}\},$$ 
we have $$F^* /H=\Gamma\ba  \mC^* .$$

We observe that the connectedness of $\La$ implies the following two equivalent statements:
\begin{enumerate}
\item For any $C\in \mC^*$, $\# C\cap \La \ge 2$;
\item $F^*\cap \RFPM \subset \RFM \cdot U$
\end{enumerate}

By the openness of $F^*$ and (2) above, for any $x\in F^*\cap \RFPM$, there exists a neighborhood $\cal O$ of $e$ in $G$
such that \be\label{op} x\cal O \cap \RFPM\subset \RFM \cdot U .\ee


Thanks to this stability, we have the following lemma:
\begin{Lem} \label{fs} Let $X\subset F_\La$ be a closed $H$-invariant subset intersecting $\RFM$ and
 with no periodic $U$-orbits.
If $X$ contains $x_0V_I$ for some $x_0\in F^*\cap \RF_+M$ and an interval $0\in I$, then
 $$X= F_\Lambda.$$
\end{Lem}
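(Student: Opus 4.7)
The plan is to show $\RFPM \subset X$, from which $X = F_\Lambda$ is immediate: $X$ is closed and $H$-invariant, and $F_\Lambda = \RFPM \cdot H$. To achieve this, I will produce a point $z^* \in X \cap \RFM$ whose full $N$-orbit lies in $X$ and whose forward endpoint is radial; Theorem \ref{nc} then gives $\overline{z^* N} = \RFPM$ and we are done.

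First I would re-anchor the given $V$-segment inside $\RFM$. Since $x_0 \in F^* \cap \RFPM$, the containment \eqref{op} yields $x_0 \in \RFM \cdot U$, so write $x_0 = z u$ with $z \in \RFM$ and $u \in U$. Because $N = UV$ is abelian, $x_0 V_I = z V_I u$; and since $u \in U \subset H$ while $X$ is $H$-invariant, we obtain $z V_I \subset X u^{-1} = X$. Combined with the $U$-invariance of $X$ and the commutativity of $U$ and $V$, this upgrades to
\[
z V_I U = z U V_I \subset X,
\]
an open piece of the $N$-orbit of $z$.

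Next I would stretch this $N$-segment to a full $N$-orbit using the $A$-action. The conjugation identity $a_t^{-1} V_I U a_t = V_{I e^{t}} U_{e^{t}}$ gives, for every $t<0$,
\[
z a_t \cdot V_{I e^{-t}} U_{e^{-t}} \subset X,
\]
and the $N$-interval exhausts all of $N$ as $t \to -\infty$. If $z a_{-t_n}$ converges along some $t_n \to +\infty$ to a point $z^*$, then closedness of $X$ gives $z^* N \subset X$. Such backward recurrence is automatic when the backward endpoint $z^-$ is a radial limit point; if not, I would replace $z$ by a point $z u_s$ on the same horocycle (still in $X$, since $z u_s \in zU \subset X$ and $z u_s V_I = z V_I u_s \subset X$) whose backward endpoint $(z u_s)^{-} = g(s)$ is a radial limit point, where $g$ is a lift of $z$. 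This is possible because the circle $C_z=\partial P_z$ meets $\Lambda$ in a nontrivial set, thanks to $x_0 \in F^*$, and within $C_z\cap\Lambda$ one can find a radial point since parabolic limit points are countable (by geometric finiteness) while radial ones are dense in $\Lambda$. Re-running the re-anchoring at this new base point produces the desired $z^*$ with $z^* N \subset X$.

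Finally, because $X$ contains no periodic $U$-orbit by hypothesis, the forward endpoint $(z^*)^+ = z^+$ cannot be a full-rank parabolic fixed point and is therefore radial; Theorem \ref{nc} then gives $\overline{z^* N} = \RFPM$, so $\RFPM \subset X$, and $H$-invariance upgrades this to $X \supset \RFPM \cdot H = F_\Lambda$, as required. The hard part is the stretching step: the $z$ produced by re-anchoring lies in $\RFM$, but its backward geodesic orbit need not recur, and the maneuver of sliding along $zU \subset X$ to obtain a radial backward endpoint is where the $F^*$-hypothesis on $x_0$ (ensuring $C_z\cap\Lambda$ is ample) and the countability of parabolic limit points (from geometric finiteness of $\Gamma$) are essential.
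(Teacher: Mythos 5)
Your overall route is the same as the paper's: re-anchor the segment at a point of $\RFM$ using $F^*\cap\RFPM\subset \RFM\cdot U$, expand the transversal segment by the geodesic flow at a recurrent base point, and finish with Theorem \ref{nc} plus $H$-invariance. But the stretching step has a genuine gap. The hypothesis only gives $0\in I$, and (as in the paper's reduction) one may as well take $I=[0,s]$, so that $0$ is an \emph{endpoint} of $I$. Then $zV_IU$ is a closed strip in the $N$-orbit, and its dilates under conjugation by $a_t$ exhaust only the closed half $V_{[0,\infty)}U$ of $N$, not all of $N$; your limit point therefore satisfies only $z^*V_{[0,\infty)}U\subset X$, and $z^*N\subset X$ does not follow, so Theorem \ref{nc} cannot be invoked. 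The paper inserts exactly the step you are missing: using \eqref{op} it re-anchors at $x_0v_\e$ with $0<\e<s$, so that the new base point $x_1\in x_0v_\e U\cap\RFM$ carries the \emph{two-sided} segment $x_1V_{[-\e,\,s-\e]}\subset X$, and only then does the dilation limit produce a full $V$-orbit (hence a full $N$-orbit after multiplying by $U\subset H$). Without this recentering you would need a further limiting argument along $V_{[0,\infty)}$, with its own recurrence justification, which you do not supply.

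The recurrence step is also not justified as written. To extract a convergent subsequence of $za_{t_n}$ you need the relevant endpoint of the base point to be a radial limit point, and you propose to arrange this by sliding along $zU$ and invoking density of radial points in $\La$ together with countability of parabolic points. But density in $\La$ says nothing about the circular slice $C_z\cap\La$: for a separating circle with $\La$ connected this slice may consist of finitely many points, all parabolic, so the existence of a radial point on it is not established by your argument. Likewise, your closing claim that the absence of periodic $U$-orbits rules out full-rank parabolic fixed points ``and therefore $(z^*)^+$ is radial'' is false as stated (rank-one parabolic points are neither full-rank parabolic nor radial), and the identification $(z^*)^+=z^+$ does not survive passing to limits in $\Gamma\ba G$, since the limit involves unknown elements of $\Gamma$. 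The paper's mechanism is different: the radiality of the anchor point's endpoint is deduced directly from the standing hypothesis that $X$ contains no periodic $U$-orbit (and in the convex cocompact situations where the lemma is applied, every limit point is radial), not from a density or countability argument on the slice. Repairing your proof thus requires both the $v_\e$-recentering and the paper's use of the no-periodic-$U$-orbit hypothesis at the anchor point.
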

\begin{proof} It suffices to find $z_0V$ inside $X$ for some $z_0\in \RFM$ by Theorem \ref{nc}.
  Without loss of generality, we may assume $I=[0,s]$ for some $s>0$. We write $v_t:=u_{it}$.
Since $x_0\in F^*\cap \RF_+M$, there exists $0<\e<s$ such that $x_0 v_\e\in X \cap \RFM \cdot U$ by \eqref{op}.
Hence there exists $x_1\in x_0 v_\e U\cap \RFM\cap X$; so
$x_1 v_\e^{-1} V_I= x_1V_{[-\e, s-\e]}\subset X$.
Since $X$ has no periodic $U$-orbit, $x_1^+$ is a radial limit point of $\Lambda$, and hence there exists $t_n\to +\infty$ such that
$x_1a_{t_n}$ converges to some  $ z_0\in \RFM$.
Since $$\limsup_{n\to \infty} a_{t_n}^{-1} V_{[-\e, s-\e]} a_{t_n} =V$$
and $x_1  V_{[-\e, s-\e]}a_{t_n}= x_1a_{t_n} (a_{t_n}^{-1} V_{[-\e, s-\e]}a_{t_n}) \subset X$,
we obtain $z_0 V\subset X$ as desired.
\end{proof}

\subsection{Boundary frames} The geometric structure
of the boundary $\partial F=F_\La -F^*$ plays an important (rather decisive) role in the rigidity study.
For instance, unless $xH$ is bounded, $xH$ is expected to accumulate on $\partial F$. In the most dramatic situation,
all the accumulation of $xH$ may fall into the boundary $\partial F$ so that $\overline{xH}\subset xH\cup \partial F$. Unless
we have some analysis on what  possible closed $H$-invariant subsets of $\partial F$ are, there isn't too much more
we can say on such situation.

A geodesic plane $P\subset \bH^3$ is called a supporting plane if
it intersects $\hull (\La)$ and one component of $\bH^3-P$ is disjoint from $\hull (\La)$, or equivalently,
the circle $C=\partial P$ is a supporting circle in the sense that $\#C\cap \La \ge 2$ and $C$ does not separate $\La$.

For $C\in \cal C$, we denote by $\Gamma^C$  the stabilizer of $C$ in $\Gamma$.
The theory of bending laminations yields:
\begin{thm}\cite[Theorem 5.1]{MMO1} \label{mmo}
For any supporting circle $C\in \mC$,
\begin{enumerate}
\item $\Gamma^C$ is a finitely generated Fuchsian group;
\item there exists a finite subset
$\La_0\subset C\cap \Lambda$ such that
$$C\cap \La=\La(\Gamma^C)\cup \Gamma^C \La_0$$
where $ \La(\Gamma^C)$ denotes the limit set of $\Gamma^C$.
\end{enumerate}
\end{thm}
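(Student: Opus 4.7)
The plan is to exploit the geometric finiteness of $\Gamma$ together with the description of a supporting plane as a flat piece of the pleated boundary $\partial \core M$. Let $P\subset \bH^3$ be the oriented geodesic plane with $\partial P=C$, and set $K := P\cap \hull(\Lambda)$. Because $C$ is supporting, $K$ is a non-empty closed convex subset of $P$, and one side of $P$ is disjoint from $\hull(\Lambda)$. The stabilizer $\Gamma^C$ coincides with the setwise stabilizer of $P$ in $\Gamma$; via the identification $P\simeq \bH^2$ it acts as a discrete subgroup of $\PSL_2(\br)$, that is, a Fuchsian group, and it preserves $K$.

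To prove (1), I would show that $\Gamma^C\ba K$ has finite area. The projection $\pi(K)\subset M$ is a flat (totally geodesic) piece of $\partial \core M$: indeed, $K\subset \hull(\Lambda)$ sits inside $\core M$, while the open half-space on one side of $P$ missing $\hull(\Lambda)$ ensures that $\pi(K)$ lies on the boundary. Since $\Gamma$ is geometrically finite, a unit neighborhood of $\core M$ has finite volume, so $\partial \core M$ has finite area, and hence $\Gamma^C\ba K$ has finite area. Since $K$ contains the convex hull in $P$ of $\Lambda(\Gamma^C)$, the Nielsen core of $\Gamma^C$ has finite area, and so $\Gamma^C$ is a geometrically finite Fuchsian group; in particular, it is finitely generated.

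For (2), the inclusion $\Lambda(\Gamma^C)\subset C\cap\Lambda$ is immediate since $\Gamma^C\subset \Gamma$ stabilizes $C$. For the reverse inclusion, take $\xi\in C\cap\Lambda$ and consider the geodesic ray $\rho\subset P$ from a basepoint $o\in K$ toward $\xi$. Since $K$ is the intersection of $P$ with the convex set $\hull(\Lambda)$, its boundary $\partial_P K$ in $P$ is a union of geodesics, namely the bending edges where $P$ meets the other flat faces of $\hull(\Lambda)$. By convexity, either $\rho$ stays in $K$ forever, or it exits $K$ by crossing some component of $\partial_P K$. In the first case, the image of $\rho$ in the finite-area surface $\Gamma^C\ba K$ either accumulates on its thick part (so $\xi$ is a radial limit point of $\Gamma^C$) or eventually enters a cusp (so $\xi$ is a parabolic fixed point of $\Gamma^C$); either way $\xi\in\Lambda(\Gamma^C)$. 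In the second case, $\xi$ is an endpoint on $C$ of one of the bending geodesics comprising $\partial_P K$.

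Finally, since the finite-area hyperbolic surface $\Gamma^C\ba K$ has only finitely many boundary components, the geodesic components of $\partial_P K$ split into finitely many $\Gamma^C$-orbits. Taking $\Lambda_0$ to be the set of endpoints on $C$ of one representative from each such orbit yields a finite subset of $C\cap\Lambda$, and the case analysis gives $C\cap\Lambda=\Lambda(\Gamma^C)\cup \Gamma^C\Lambda_0$. The main obstacle I anticipate is rigorously identifying $\pi(K)$ as a flat piece of $\partial\core M$ and verifying that the covering $K\to \pi(K)$ has $\Gamma^C$ as its full group of deck transformations, rather than some larger subgroup: this is what guarantees both the finite area of $\Gamma^C\ba K$ and the finiteness of the set of $\Gamma^C$-orbits of bending geodesics. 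Once this is in place, the structural statements follow from the standard theory of pleated surfaces and their bending laminations in the geometrically finite setting.
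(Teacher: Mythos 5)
The survey does not actually prove this statement: it is quoted from \cite[Theorem 5.1]{MMO1} with the remark that it follows from the theory of bending laminations, so your proposal can only be compared with that circle of ideas, and in outline you are on the intended track. Your part (1) is essentially correct: since $C$ is supporting, $K:=P\cap\hull(\La)$ lies in $\partial\hull(\La)$, its image lies in $\partial\core M$, and geometric finiteness gives $\op{area}(\partial \core M)<\infty$. The deck-group point you flag as the main obstacle is in fact a one-line verification: if $\gamma\in\Gamma$ and $\gamma(\op{int}K)\cap\op{int}K\neq\emptyset$, then $\gamma P\cap P$ is two-dimensional, so $\gamma P=P$ and hence $\gamma\in\Gamma^C$; thus $\Gamma^C\ba \op{int}K\to M$ is injective and $\op{area}(\Gamma^C\ba K)<\infty$, which gives a finite-area Nielsen core and hence (1). (The degenerate case where $K$ has empty interior, i.e.\ $\#\,C\cap\La=2$ and $\Gamma^C$ is elementary, should be noted separately, but it is trivial.)

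Part (2), however, has a genuine gap. Every $\xi\in C\cap\La$ is an ideal endpoint of $K$ (it is joined inside $K$ to any other point of $C\cap\La$ by a geodesic of $P$ with both endpoints in $\La$), and $K$ is closed and convex, so the ray from $o\in K$ toward $\xi$ \emph{never} exits $K$: your second case is vacuous and cannot be the source of $\La_0$. Worse, in your first case the dichotomy ``recurs to the thick part or enters a cusp'' is incomplete for a finite-area surface with geodesic boundary: the projected ray can also escape through a \emph{boundary spike} of $\Gamma^C\ba K$, i.e.\ converge to an ideal endpoint of one of the bending geodesics in the boundary of $K$ in $P$, and such a point need not lie in $\La(\Gamma^C)$. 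A standard example is a face of $\partial\hull(\La)$ that is an ideal triangle with trivial stabilizer (which occurs when the bending lamination has irrational leaves): there $\La(\Gamma^C)=\emptyset$ while $C\cap\La$ consists of the three ideal vertices, so your case analysis as written would force the false conclusion $C\cap\La=\La(\Gamma^C)$. The repair stays inside your framework: since $\op{area}(\Gamma^C\ba K)<\infty$ (double along the geodesic boundary to see finite topology), the quotient consists of a compact part, finitely many cusps, and finitely many spikes; a ray toward $\xi$ either recurs (so $\xi$ is a radial limit point of $\Gamma^C$), eventually stays in a cusp (so $\xi$ is a parabolic fixed point of $\Gamma^C$), or eventually stays in a spike, in which case $\xi$ lies in the $\Gamma^C$-orbit of one of the finitely many spike tips, i.e.\ ideal endpoints of the boundary geodesics of $K$. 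Taking $\La_0$ to be this finite set of tips yields $C\cap\La=\La(\Gamma^C)\cup\Gamma^C\La_0$.
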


\begin{Def} We call $x\in \partial F$ a boundary frame, and  call $x=[g]\in \partial F$ a {\it thick} boundary frame 
if there exists  a supporting circle $C$ with  non-elementary stabilizer $\Gamma^C$ such that
 $C_g=C$ or $C_g$ is tangent to $C$ at  $g^+\in \Lambda(\Gamma^C)$.
\end{Def}

\begin{thm}\label{ddd1}  If $x\in \partial F$ is a thick boundary frame such that $xU$ is not closed, then
$\overline{xU}\supset x vA v^{-1}$ for some $v\in V$. If $x\in \RFM$ in addition, then $\overline{xU}\supset x A $.
\end{thm}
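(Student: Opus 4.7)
The plan is to reduce to horocycle dynamics on the closed $H$-orbit associated to the supporting circle $C$, and then apply classical results for geometrically finite Fuchsian groups.

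\textbf{Step 1: The closed $H$-orbit over $C$.} By Theorem \ref{mmo}, $\Gamma^C$ is a non-elementary, finitely generated (hence geometrically finite) Fuchsian subgroup of $G$, and
\[
C\cap \Lambda \;=\; \Lambda(\Gamma^C)\,\cup\, \Gamma^C \Lambda_0
\]
for a finite set $\Lambda_0$. For any $g_C \in G$ with $C_{g_C} = C$, the orbit $Y_C := [g_C]H$ is closed in $\Gamma \ba G$ and is $\Gamma^C$-equivariantly identified with the frame bundle $\Gamma^C \ba H$ over the Fuchsian surface $\Gamma^C \ba \bH^2$.

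\textbf{Step 2: Reduction from the tangent case to the coincident case.} Since $N = UV \cong \c$ is abelian, $U$ and $V$ commute, so for any $v\in V$,
\[
\overline{xU}\cdot v \;=\; \overline{xvU} \;=\; \overline{(xv)U}.
\]
When $C_g$ is tangent to $C$ at $g^+$, the right $V$-translates of $g$ sweep out precisely the pencil of circles through $g^+$ tangent to $C_g$ there, so there is a unique $v\in V$ with $C_{gv}=C$; set $y := xv \in Y_C$. Then $\overline{xU} = \overline{yU}\cdot v^{-1}$, and it suffices to prove $\overline{yU}\supset yA$, which yields $\overline{xU}\supset x v A v^{-1}$. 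When $C_g = C$ already, we take $v = e$ and $y = x$, so the same conclusion gives $\overline{xU}\supset xA$; this is precisely the $x \in \RFM$ strengthening.

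\textbf{Step 3: Horocycle flow on $Y_C$.} On $Y_C\cong \Gamma^C\ba H$, the right $U$-action is the horocyclic flow of the geometrically finite Fuchsian group $\Gamma^C$. Non-closedness of $yU$ in $\Gamma\ba G$ descends to non-closedness in $Y_C$, ruling out the possibility that $y^+$ is a parabolic fixed point of $\Gamma^C$. If $y^+ \in \Lambda(\Gamma^C)$, so that $y^+$ is a radial limit point for $\Gamma^C$, the classical Dani--Smillie horocycle dichotomy for geometrically finite Fuchsian surfaces forces $\overline{yU}$ to contain the entire renormalized frame bundle for $\Gamma^C$, and in particular $yA$.

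\textbf{Main obstacle.} The delicate subcase is $y^+ \in \Gamma^C \Lambda_0$: here the horocycle $yU$ escapes every compact subset of $Y_C$, yet the hypothesis $y \in F_\Lambda$ allows $yU$ to recur in $\RFPM$ outside $Y_C$. My strategy here is to feed a sequence of return times of $yU$ to a compact subset of $\RFM$ meeting $\overline{yU}$ into the unipotent blowup Lemma \ref{U}(3), extracting a nontrivial $V$-element inside the translation group of $\overline{yU}$; combining this with $A$-recurrence of $y$ (coming from the radiality of $y^+$ in the ambient $\Lambda$) and the identity $A v A\supset V_I$ for some interval $0\in I$ already exploited in Lemma \ref{qqq}, one manufactures an $A$-orbit inside $\overline{yU}$. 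This step---controlling the orbit after it leaves $Y_C$---is the technical heart of the argument, and it is where the exceptional finite set $\Lambda_0$ from Theorem \ref{mmo}(2), hence the thickness hypothesis on the boundary frame, plays a decisive role.
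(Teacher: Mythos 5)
Your Steps 1--2 and the radial case of Step 3 are essentially the paper's own proof: reduce by the unique $v\in V$ to a frame $y=xv$ with $C_{gv}=C$, use that $U$ and $V$ commute, rule out the periodic alternative by the non-closedness of $xU$, and invoke the horocycle dichotomy for the finitely generated Fuchsian group $\Gamma^C$ to get $\overline{yU}\supset yA$, hence $\overline{xU}\supset xvAv^{-1}$. (Two small points there: the relevant reference is Dalb\'o's theorem for general non-elementary Fuchsian groups, since $\Gamma^C$ need not have finite covolume, and what that theorem gives is that $\overline{yU}$ contains every frame in the $H$-orbit whose forward endpoint lies in $\Lambda(\Gamma^C)$ --- which contains $yA$ because $(ya_t)^+=y^+$ --- not the ``renormalized frame bundle of $\Gamma^C$'', which would require $y^-\in\Lambda(\Gamma^C)$.)

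The genuine gap is your ``main obstacle'' paragraph, which is exactly where you stop proving and start strategizing --- and the case you are strategizing about does not occur. By the definition of a thick boundary frame, the coincidence or tangency point satisfies $g^+\in\Lambda(\Gamma^C)$; this is precisely what the thickness hypothesis buys and how the paper uses it, so the subcase $y^+\in\Gamma^C\Lambda_0\setminus\Lambda(\Gamma^C)$ is excluded by hypothesis rather than handled by extra dynamics. Moreover, the dynamics you describe for that phantom subcase is incoherent: since $U\subset H$ and $y$ lies on the $H$-orbit $Y_C$, the whole orbit $yU$ stays in $Y_C$ forever, so there is no ``recurrence in $\RFPM$ outside $Y_C$'' to exploit, and no sequence $g_n\to e$ in $\exp\frak h^\perp-V$ is available to feed into Lemma \ref{U}(3), because displacements between points of a single $H$-orbit lie in $H$. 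Even granting one nontrivial $v\in V$ in the closure, the $AvA\supset V_+$ manipulation of Lemma \ref{qqq} requires an $A$-invariant ambient set, and $\overline{yU}$ is only $U$-invariant; ``$A$-recurrence of $y$'' does not substitute for that, so the sketch does not assemble into $yA\subset\overline{yU}$. Since you designate this as the technical heart and leave it as a plan, the proof is incomplete exactly where you place the difficulty, while the actual argument is already finished at the end of your Step 3. A secondary omission: the $x\in\RFM$ strengthening requires an argument that $g^-\in\Lambda$ forces $C_g=C$ (so $v=e$); you treat the coincident case but never show that $x\in\RFM$ puts you in it. (Also, the closedness of $Y_C$ asserted in Step 1 is not needed and not justified: continuity of $\Gamma^C\ba H\to\Gamma\ba G$ transfers the containment, and a periodic orbit upstairs has compact, hence closed, image downstairs, which is all the exclusion of the parabolic case requires.)
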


\begin{proof} Choose $g\in G$ so that $[g]=x$. By the hypothesis on $x$,
there exists a supporting circle $C$ with $\Gamma^C$ non-elementary and $g^+\in \Lambda(\Gamma^C)$.
The circle $C_g$ is equal to $C$ or tangent to $C$ at $g^+$. It follows that there exists $v\in V$ such that
$C=C_{gv}$.
By Theorem \ref{mmo}, the stabilizer $\Gamma^C$ is finitely generated and non-elementary.
It now follows from a theorem  Dalbo \cite{Da}  that $xvU$ is either periodic (if $g^+=(gv)^+$ is a parabolic fixed point of $\Gamma^C$)
or $\overline{xvU}$ contains $xvH\cap \RFPM\supset xvA$. Since $v$ commutes with $U$, the first claim follows.
If $x\in \RFM$ in addition, then  $C_g$ must be equal to $C$, and hence $v=e$.
\end{proof}

\begin{lem}\label{bd} Let $X\subset F_\La$ be a closed $H$-invariant subset intersecting $\RFM$ and
 with no periodic $U$-orbits.
If $X\cap F^*$ contains $zv_0$ for some thick boundary frame $z\in \partial F\cap \RFM$ and  $v_0\in V-\{e\}$,
then $$X=F_\La.$$
\end{lem}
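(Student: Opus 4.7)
The plan is to apply Lemma~\ref{fs} at the point $x_0 := zv_0$. For this, I need to produce a $V$-segment $x_0 V_I \subset X$ with $0 \in I$, and I will do so by first showing $\overline{zU} \supset zA$ via Theorem~\ref{ddd1}, and then transporting this fact under the $H$-invariance of $X$ together with the normalization relation between $A$ and $V$ to obtain a one-sided $V$-orbit through $z$ lying in $X$.

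First I would verify the hypothesis of Theorem~\ref{ddd1}. Since $U$ and $V$ both lie in the abelian group $N$, they commute, so $zv_0 u_t = zu_t v_0$ for every $t$; hence $zU$ is periodic if and only if $zv_0U$ is. The assumption that $X$ contains no periodic $U$-orbit, together with $zv_0 \in X$, excludes the latter. Since $z$ is a thick boundary frame lying in $\RFM$, Theorem~\ref{ddd1} -- whose proof via Dalb\`o's dichotomy leaves only the two alternatives that $zU$ is periodic or $\overline{zU} \supset zA$ -- now gives $\overline{zU} \supset zA$. Consequently, using that $X$ is closed and $H$-invariant and that $V$ commutes with $U$,
\begin{equation*}
X \;\supset\; \overline{zv_0U} \;=\; \overline{zU}\,v_0 \;\supset\; zAv_0.
\end{equation*}

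Next, write $v_0 = u_{is_0}$ with $s_0 \ne 0$. A direct computation gives $a_t u_{is} a_t^{-1} = u_{ise^t}$, and since $a_t^{-1} \in A \subset H$, the $H$-invariance of $X$ then yields, for every $t \in \br$,
\begin{equation*}
X \;\ni\; za_t v_0 a_t^{-1} \;=\; z\,u_{is_0 e^t}.
\end{equation*}
Thus $X$ contains the half orbit $\{z u_{is} : s/s_0 > 0\}$. Observe that $x_0 := zv_0$ lies in $F^*$ by hypothesis and in $\RFPM$ since $(zv_0)^+ = z^+ \in \Lambda$; moreover, reparametrizing the above half orbit by $s \mapsto s-s_0$ exhibits an interval $I$ with $0 \in I$ for which $x_0 V_I \subset X$. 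Lemma~\ref{fs} then delivers $X = F_\Lambda$.

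The only delicate point is the first step, where Theorem~\ref{ddd1} is stated in terms of $zU$ being \emph{not closed} rather than not periodic. This is mild: the abelianness of $N$ identifies periodicity of $zU$ with that of $zv_0U$, and the Dalb\`o/Fuchsian structure underlying Theorem~\ref{ddd1} ensures that for a thick boundary frame in $\RFM$ the two conditions ``not periodic'' and ``not closed'' coincide.
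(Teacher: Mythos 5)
Your proposal is correct and follows essentially the same route as the paper: reduce to Lemma \ref{fs} at $x_0=zv_0$, invoke Theorem \ref{ddd1} to get $\overline{zU}\supset zA$, and then use the $A$-conjugation of $v_0$ (the paper phrases this as $\overline{zAv_0A}\supset zV_+$) to produce the required $V$-interval through $zv_0$. Your extra care in checking the hypothesis of Theorem \ref{ddd1} (periodicity of $zU$ versus of $zv_0U$, and "not periodic" sufficing in view of Dalb\'o's dichotomy) is a sound refinement of a point the paper leaves implicit, not a different argument.
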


\begin{proof} By Lemma \ref{fs}, it suffices to find $x_0V_I$ inside $X$ for some $x_0\in F^*$ and an interval $0\in I$.
By Theorem \ref{ddd1}, we have $\overline{z U}\supset z A$.
Therefore $$X\supset \overline{z v_0 UA }= \overline{zUv_0 A }\supset\overline{ z A v_0 A } \supset zV_+$$
where $V_+$ is the one-parameter semigroup contained in $V\cap Av_0A$.
Since  $zv_0\in zV_+\cap F^*\ne \emptyset$, and $F^*$ is open,
$zv_0V_I\subset zV_+ \cap F^*$ for some interval $0\in I$, as desired.
\end{proof}

\section{Rigid acylindrical groups and circular slices of $\La$}

Let $\Gamma <G$ be a rigid acylindrical Kleinian group, and 
$M:=\Gamma\ba \bH^3$  the associated hyperbolic $3$-manifold. 
We assume $\op{Vol}(M)=\infty$.

\subsection{Boundary frames for rigid acylindrical groups}
In this case, we have a complete understanding of the orbit closures in the boundary $\partial F$;
which makes it possible to give a complete classification for {\it all} orbit closures in $F$.

When $\Gamma$ is rigid acylindrical, {\it every} supporting circle $C$ is contained in the limit set, so that $C\cap \La=C$.
It follows that $\Gamma^C$ is a uniform lattice of $G^C$ and the orbit $xH=p(gH)$ is compact whenever $C_g$ is a supporting circle.
This implies:
\begin{thm}\cite{MMO} \label{ha} Let $\Gamma$ be rigid acylindrical, and let $x\in \partial F$ be a boundary frame.
\begin{enumerate}
\item If $x\in \RFPM$, then
$$\overline{xU}=x v H v^{-1} \quad\text{for some $v\in V$.}$$
\item If $x\in \RFM$,
then $xH$ is compact.
\item If $x\in \RFPM -\RFM$,
 there exist a one-parameter semigroup $V_+$ of $V$ and a boundary frame $x_0\in \partial F$ with $x_0H$ compact such that
$$\overline{xH}= x_0HV_+H.$$
\end{enumerate}
\end{thm}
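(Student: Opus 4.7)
\textbf{Proof plan for Theorem \ref{ha}.}

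My approach rests on first giving a geometric classification of boundary frames using the rigid acylindrical structure. For $x=[g]\in\partial F$, the circle $C_g=\partial P_g$ does not separate $\Lambda$, so one component of $\bS^2-C_g$ is disjoint from $\Lambda$ and hence lies inside a single $B_i$ (since the closures of the $B_i$ are mutually disjoint). Comparing the two closed disks, $C_g$ either equals $\partial B_i$ or is internally tangent to $\partial B_i$ at one point. Using that in the rigid case every supporting circle is some $\partial B_i\subset\Lambda$, I read off: if $x\in\RFM$, then $\{g^+,g^-\}\subset C_g\cap\Lambda$ forces $C_g=\partial B_i$; if $x\in\RFPM-\RFM$, then $C_g\cap\Lambda$ is the single tangency point $g^+$, so $C_g$ is tangent to $C:=\partial B_i$ at $g^+\in C=\Lambda(\Gamma^C)$. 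In particular, every boundary frame in $\RFPM$ is a thick boundary frame, and $\Gamma^C$ is a uniform lattice in $G^C\simeq H$.

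For (2), when $x\in\RFM$, the above gives $C_g=\partial B_i$ a supporting circle. Identifying $xH=\Gamma gH$ with $(g^{-1}\Gamma g\cap H)\backslash H$ and noting that $g^{-1}\Gamma^{C_g}g$ is a uniform lattice in $H$, I conclude that $xH$ is compact. For (1), if $x\in\RFM$ this combined with Hedlund's minimality theorem (applied to the compact hyperbolic surface $\Gamma^{C_g}\backslash H/M_0$, which has no cusps) yields $\overline{xU}=xH$, matching the statement with $v=e$. If $x\in\RFPM-\RFM$, I choose the unique $v\in V$ with $C_{gv}=C$: since the family $\{C_{gv}:v\in V\}$ consists of all circles through $g^+$ tangent to $C_g$ there, and $C$ belongs to this family by the tangency at $g^+$, such $v$ exists and is unique. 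Then $x_0:=xv\in\RFM\cap\partial F$, so by (2) the orbit $x_0H$ is compact and Hedlund gives $\overline{x_0U}=x_0H$. Because $V$ commutes with $U$,
$$\overline{xU}=\overline{x_0v^{-1}U}=\overline{x_0Uv^{-1}}=\overline{x_0U}\cdot v^{-1}=x_0Hv^{-1}=xvHv^{-1},$$
finishing (1).

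For (3), keeping $x_0=xv\in\RFM\cap\partial F$ with $x_0H$ compact, I prove both inclusions in $\overline{xH}=x_0HV_+H$ for the half-ray $V_+<V$ determined by $v$. For the lower bound, (1) gives $\overline{xH}\supset\overline{xU}\cdot H=x_0Hv^{-1}H$. Conjugation $a_tv^{-1}a_{-t}=u_{-is_0e^{t}}$ (with $v=u_{is_0}$) traces out all of the half-ray $V_{-\op{sgn}(s_0)}$ as $t$ varies over $\br$, and $a_tv^{-1}a_{-t}\to e$ in the appropriate direction; since $a_t\in H$, this yields both $x_0V_+\subset\overline{xH}$ and $x_0\in\overline{xH}$, hence $x_0HV_+H\subset\overline{xH}$ by $H$-invariance. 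For the upper bound I argue that $x_0HV_+H$ is closed and contains $xH$: containment follows from $xH=x_0v^{-1}H\subset x_0HV_+H$, while closedness and the identification of all limits uses the geometric fact that any sequence $\gamma_ngh_n$ converging in $\Gamma\backslash G$ corresponds to $\Gamma$-translates of the tangent-plane configuration (plane $P_g$ internally tangent to the supporting plane $P$ at $g^+$) accumulating in a controlled way; convex cocompactness together with the disjoint-closure structure of the $B_i$ forces any limit plane to either lie in a translate of the supporting family $x_0H$ or to remain tangent to it along the $V_+$-direction.

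The main obstacle is the upper bound in (3): showing that $x_0HV_+H$ is closed and that no other limits of $xH$ arise. This is where the fact that $\Gamma$ is rigid acylindrical (every supporting circle lies in $\Lambda$, and the components $B_i$ are round disks with pairwise disjoint closures) is essential, since it rules out accumulations on other tangent configurations that could occur in the non-rigid case. The tangency relation must be preserved under the limit, and the $V_+$-direction arises as the one-sided normal perturbation from the supporting plane into the complementary disk $B_i$, which is the only direction consistent with staying in $\partial F$.
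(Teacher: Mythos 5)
Your reduction of (1) and (2) follows the same route the paper indicates: in the rigid case a non-separating circle meeting $\La$ is either some $\partial B_i$ (a supporting circle contained in $\La$, so $\Gamma^{C_g}$ is a uniform lattice in $G^{C_g}$ and $xH$ is compact) or internally tangent to some $\partial B_i$ at the single point $C_g\cap\La=\{g^+\}$; the unique $v\in V$ with $C_{gv}=\partial B_i$, Hedlund's minimality on the compact orbit $x_0H=xvH$, and the commutation of $V$ with $U$ then give (1). This is exactly the mechanism of Theorem \ref{ddd1} (with Hedlund in place of Dalbo, legitimately, since $\Gamma^{C}$ is cocompact), and your lower bound $x_0HV_+H\subset \overline{xH}$ in (3), via $a_tv^{-1}a_{-t}$ sweeping the open ray and tending to $e$, is correct.

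The gap is the upper bound $\overline{xH}\subset x_0HV_+H$ in (3), which is the only part of the statement with content beyond (1)--(2), and there your text is a restatement of the desired conclusion ("forces any limit plane to either lie in a translate of the supporting family or to remain tangent to it along the $V_+$-direction") rather than an argument. A complete proof has to supply, concretely: (i) $\overline{xH}\subset F_\La$ (closedness of $F_\La$), so every limit circle still meets $\La$; (ii) every circle arising from $\Gamma gH$ is contained in a closed complementary disk $\gamma\overline{B_i}$, and if $\gamma_ngh_n\to g'$ in $G$ then $C_{\gamma_n g}\to C_{g'}$ is nondegenerate, so the containing disks $\gamma_nB_i$ have diameters bounded below; since only finitely many components of $\bS^2-\La$ have diameter $\ge \e$, they are eventually a single disk $B_j\in\Gamma B_i$, whence $C_{g'}\subset\overline{B_j}$ meets $\La$ and is equal or internally tangent to $\partial B_j$ --- this is where the disjoint-closure/rigid structure actually enters and where one sees that limits cannot land in $F^*$, on the wrong side of $\partial B_j$, or with reversed orientation; (iii) a check that every frame lying over such a limit configuration really lies in $x_0HV_+H$, i.e.\ that $\{C_{g_0hu_{-is_0r}}:h\in H,\ r\ge 0\}$ exhausts the circles equal to or internally tangent to $\partial B_i$ with the induced orientation, which is what closedness of $x_0HV_+H$ amounts to. Without (i)--(iii) the set $x_0HV_+H$ has not been shown to be closed and the dichotomy of limits has not been established; note the survey itself only cites \cite{MMO} for this theorem, so these steps are precisely what a self-contained proof must add.
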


\begin{figure}\label{sc}  \begin{center}
 \includegraphics [height=6cm]{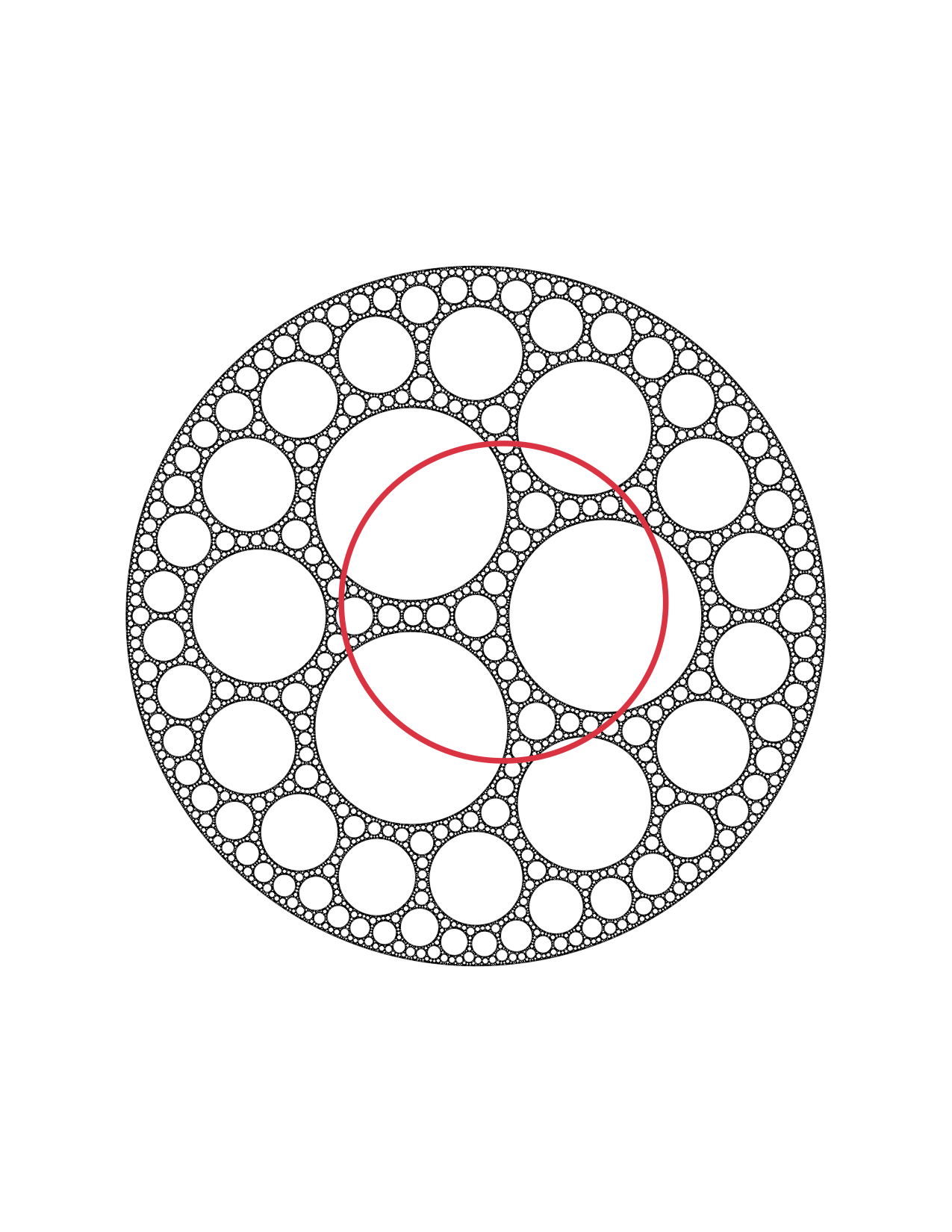}
\caption{Circular slice of $\La$}
\end{center}\end{figure}

\subsection{Circular slices of $\La$}

%
 Circular slices of the limit set $\La$ control the recurrence time of $U$-orbits into the compact subset $\RFM$.
For $x\in \RFM$, set
$$\mT(x):=\{t\in \br: xu_t\in \RFM\} .$$

If $x=[g]$, then $(gu_t)^+=g^+\in C_g\cap \La$ and hence
$$\text{$t\in \mT(x)$ if and only if $(gu_t)^-\in C_g\cap \Lambda$.}$$

We will use the following geometric fact for a rigid acylindrical manifold $M$: if we write $S^2-\Lambda=\bigcup B_i$ where $B_i$'s  are round open disks, then
\begin{equation}\label{sys} \inf_{i\ne j} d(\op{hull} (B_i), \op{hull} (B_j))\ge \e_0\end{equation}
where $2\e_0$ is the systol of  the double of the convex core of $M$.
This follows because a geodesic in $\bH^3$ which realizes the distance $d(\op{hull} (B_i), \op{hull} (B_j))$ is
  either a closed geodesic  in $M$ or the half of a closed geodesic in the  double of $\op{core}(M)$.

\begin{Prop}\label{locally} Let  $\Gamma$ be rigid acylindrical.
There exists $\kappa>1$ 
such that  for all $x\in \RFM$,
$\mT(x)$ is $\kappa$-thick.
In particular, for any sequence $x_i\in \RFM$, $\mT(x_i)$ has accumulating renormalizations.

\end{Prop}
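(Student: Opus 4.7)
My plan is to reformulate the thickness question as a statement about the relative positions of hyperbolic caps in the geodesic plane $P_g = \pi(gH)$, and then extract $\kappa$ from the systole bound \eqref{sys}. Fix $x = [g] \in \RFM$ and work in the upper half-plane model of $P_g$ normalized so that $g^+ = \infty$ and $g^- = 0$. Since $u_t$ acts on $\bS^2$ by $z \mapsto z + t$ fixing $\infty$, the parametrization takes the form $(gu_t)^- = t$ on $C_g \setminus \{g^+\} = \bbr$, so $\mT(x) = \Lambda \cap C_g \subset \bbr$ and $0 \in \mT(x)$. Its complement is a disjoint union of open gap intervals $J_i = (a_i, b_i)$, one for each connected component $C_g \cap B_i$, and each such arc is the ideal boundary of a convex cap $K_i = P_g \cap \hull(B_i)$, bounded inside $P_g$ by the semicircular geodesic from $a_i$ to $b_i$. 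Because $P_g$ is totally geodesic, \eqref{sys} yields $d_{P_g}(K_i, K_j) \geq \epsilon_0$ for all $i \ne j$.

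The next step is to isolate when $\kappa$-thickness fails. Since $0 \in \mT(x)$, no single gap can contain both $[r, \kappa r]$ and $[-\kappa r, -r]$; thus thickness fails at scale $r$ iff there are two distinct gaps $G_+ = (a_+, b_+)$ with $0 \le a_+ < r$, $b_+ > \kappa r$, and $G_- = (\alpha_-, \beta_-)$ with $\alpha_- < -\kappa r$, $-r < \beta_- \le 0$. The classical cross-ratio formula for the hyperbolic distance between two disjoint geodesics with endpoints $\alpha_- < \beta_- < a_+ < b_+$ reads
\[
\tanh^{2}\!\bigl(d(K_+,K_-)/2\bigr) \;=\; \frac{(a_+ - \beta_-)(b_+ - \alpha_-)}{(a_+ - \alpha_-)(b_+ - \beta_-)}.
\]
A short monotonicity computation shows that, on the straddling region, this quantity is strictly increasing in $a_+$ and in $-\beta_-$ and strictly decreasing in $-\alpha_-$ and in $b_+$, so its supremum over all realizations of the straddle is $4\kappa/(1+\kappa)^2$, attained in the symmetric extreme limit $(a_+, \beta_-, \alpha_-, b_+) \to (r, -r, -\kappa r, \kappa r)$.

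Combining with the systole bound forces any straddling configuration to satisfy $\tanh^{2}(\epsilon_0/2) \le 4\kappa/(1+\kappa)^2$. Solving this inequality in $\kappa$ via the half-angle identity yields the critical threshold $\coth^2(\epsilon_0/4)$, so any $\kappa > \coth^2(\epsilon_0/4)$ eliminates straddling at every scale $r > 0$ and every $x \in \RFM$, yielding uniform $\kappa$-thickness; the accumulation of renormalizations for any sequence $x_i \in \RFM$ then follows from Lemma \ref{kappa}. The principal technical point is the monotonicity analysis of $f(E, B, A, C) = (E + B)(C + A)/((E + A)(C + B))$: one must verify the signs of its partial derivatives on the straddling region and treat the boundary degeneracies $a_+ = 0$ or $\beta_- = 0$ by continuity. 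These degeneracies cannot occur simultaneously, since $\overline{B_{i_+}} \cap \overline{B_{i_-}} = \emptyset$ prevents $g^-$ from lying on two distinct boundary circles $\partial B_i$.
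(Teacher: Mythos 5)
Your argument is correct and is essentially the paper's: after the same normalization $\mT(x)=\br\cap\Lambda$ with $0,\infty\in\Lambda$, failure of $\kappa$-thickness at scale $r$ forces $[r,\kappa r]$ and $[-\kappa r,-r]$ into two distinct complementary disks (by convexity of the $B_i$ and $0\in\Lambda$), and the bound \eqref{sys} on $d(\hull(B_i),\hull(B_j))$ is then contradicted once $\kappa$ is large enough. The only real difference is in implementation: the paper defines $\kappa$ implicitly by $d_{\bH^2}(\hull(-\kappa,-1),\hull(1,\kappa))=\e_0/2$ and concludes via the dilation $z\mapsto tz$ and monotonicity of hulls under inclusion, while you compute the same distance explicitly through the cross-ratio identity plus a four-variable monotonicity check, which yields an explicit (and slightly sharper) admissible value $\kappa>\coth^2(\e_0/4)$.
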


\begin{proof}
For $\e_0>0$ given by \eqref{sys}, consider the upper-half plane model of $\bH^2=\{(x_1,0, y):x_1\in \br, y>0\}$.
For $a<b$, $\hull_{\bH^2}(a, b)\subset \bH^2$ denotes the convex hull of the interval  connecting $(a,0,0)$ and $(b, 0,0)$.
Define  $\kappa>1$ by the equation
$$d_{\bH^2}(\hull (-\kappa, -1), \hull (1, \kappa))=\e_0/2 ;$$
since $\lim_{s\to \infty} d_{\bH^2}(\hull (-s, -1), \hull (1, s))= 0$, such $\kappa>1$ exists.

Since $z\mapsto tz$ is a hyperbolic isometry in $\bH^2$ for any $t>0$, we have
$$d_{\bH^2}(\hull (-\kappa  t, -t), \hull (t, \kappa t))=\e_0/2 .$$

 We now show that $\mT(x)$ is $\kappa$-thick for $x\in \RFM$.  It suffices to show the claim for $x= [g]$ where
 $g=(e_1, e_2, e_3)$ is based at $(0,0,1)$ with $e_2$ in the direction of the positive real axis and
 $g^+=\infty, g^-=0$.
 Note that $gu_t\in \RFM$ if and only if $t=(gu_t)^-\in \Lambda$ and hence
 $$\mT(x)=\br \cap \Lambda .$$
 
 Suppose that $\mT(x)$ is not $\kappa$-thick. Then for some $t>0$,
 $\mT(x)$ does not intersect $[-\kappa t, -t]\cup [t, \kappa t]$, that is, $[-\kappa t, -t]\cup [t, \kappa t]\subset \bigcup B_i$.
 Since $B_i$'s are convex and $0\in \Lambda$, there exist $i\ne j$ such that
 $[-\kappa t, -t]\subset B_i$ and  $[t, \kappa t]\subset B_j$.
 Hence $$d(\hull (-\kappa t, -t), \hull (t, \kappa t))=\e_0/2 \ge  d(\hull(B_i), \hull (B_j))\ge \e_0 ,$$
yielding contradiction.
The second claim follows from Lemma \ref{kappa}.
\end{proof}

\subsection{Closed or dense dichotomy for $H$-orbits}
In Theorem \ref{ha}, we have described all possible orbit closures for $H$ and $U$-action inside $\partial F$.
It remains to consider orbits of $x\in F^*$. 

\begin{thm}\cite{MMO}  \label{cc2} For any $x\in F^*$,
  ${xH}$ is either closed or dense in  $F_\La$.
\end{thm}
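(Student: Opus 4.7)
The plan is to prove the dichotomy by showing that if $X:=\overline{xH}\supsetneq xH$, then $X=F_\Lambda$. Because $\Gamma$ is rigid acylindrical, hence convex cocompact, $\RFM$ is compact and $\Gamma\ba G$ has no periodic $U$-orbits. Replacing $x$ by a point of $xH\cap \RFM$ (non-empty since $x\in F_\Lambda$), I may assume $x\in F^*\cap \RFM$. Set $W:=\RFM$; Proposition \ref{locally} then guarantees accumulating renormalizations of $\mT(y)$ for every $y\in \RFM$, so the refined unipotent blowup Lemma \ref{U} is available throughout.

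Zorn's lemma selects a $U$-minimal subset $Y\subset X$ with $Y\cap W\ne\emptyset$, and Lemma \ref{YL2} yields a one-parameter semigroup $L<AV$ with $YL\subset Y$. From $X\ne xH$, a case analysis identical to the one in the proof of Theorem \ref{pop} supplies a point $y_0\in Y\cap W$ for which $X-y_0H$ is not closed; Lemma \ref{Yv} then produces $Yv\subset X$ for some non-trivial $v\in V$. Feeding both pieces of invariance into Lemma \ref{qqq} delivers $x_0V_I\subset X$ for some $x_0\in \RFM$ and some interval $0\in I\subset\br$.

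If $x_0\in F^*$, Lemma \ref{fs} directly gives $X=F_\Lambda$. Otherwise $x_0\in \partial F\cap \RFM$; Theorem \ref{ha}(2) makes $x_0H$ a compact $H$-orbit, so $C_{x_0}=\partial B_{j_0}$ is a supporting circle whose stabilizer is a uniform (hence non-elementary) Fuchsian lattice, so $x_0$ is a thick boundary frame. By Lemma \ref{bd} it then suffices to exhibit a single $v_0\in V-\{e\}$ with $x_0v_0\in F^*$, i.e., with $C_{x_0v_0}$ separating $\Lambda$. Geometrically the circles $C_{x_0v_s}$ are all tangent to $\partial B_{j_0}$ at $x_0^+$: on one side of $s=0$ they lie inside $\bar B_{j_0}$ (so $x_0v_s\in \partial F$), while on the other side they cross into the $\Lambda$-rich complement (so $x_0v_s\in F^*$).

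The main obstacle is that the interval $I$ produced by Lemma \ref{qqq} is intrinsically one-sided in each of its three cases, so the $V$-segment in $X$ may sit entirely on the ``wrong'' non-separating side of $x_0$, and Lemma \ref{bd} is not directly accessible. The plan to circumvent this is to exploit the additional invariance $\overline{x_0U}\supset x_0A$ afforded by Theorem \ref{ddd1}: under the normalization $a_sv_ra_s^{-1}=v_{re^s}$, pushing $x_0V_I$ by $a_{-s}$ with $s\to\infty$ widens $I$ to a semi-infinite arc; a subsequential compactness argument inside the compact set $\RFM$ then furnishes a new thick boundary frame, at which the correct side of $V$ becomes accessible, after which Lemma \ref{bd} closes the argument.
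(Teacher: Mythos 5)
Your overall scheme—running Lemmas \ref{YL2}, \ref{Yv}, \ref{qqq} with the compact set $W=\RFM$ and then repairing the outcome at the boundary—breaks down exactly at the point you flag, and the proposed repair does not work. With $W=\RFM$, Lemma \ref{qqq} only gives $x_0V_I\subset X$ for some $x_0\in\RFM$ which may lie in $\partial F$, with $I$ one-sided; and pushing by $a_{-s}$ cannot change the side: conjugation by $a_{s}$ sends $u_{ir}$ to $u_{ire^{s}}$, so it rescales the $V$-parameter without changing its sign, and subsequential limits of $x_0a_{-s_n}$ inside the compact boundary orbit $x_0H$ (Theorem \ref{ha}(2)) preserve the co-orientation of the frame relative to the supporting circle (for every $h\in H$ and $r>0$, $C_{x_0hu_{ir}}$ lies on the same side of the boundary circle). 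Hence the semi-infinite arc $y_0V_+$ you obtain in the limit lies on the same side as the original segment. If that side is the one where the tangent circles lie inside the complementary disk, then every point of $y_0V_+-\{y_0\}$ is in $\partial F$ (those circles meet $\La$ only at the tangency point), so Lemma \ref{bd} is never applicable; the paper warns about precisely this: $x_0HV_+H$ is a closed $H$-invariant subset of $\partial F$ (Theorem \ref{ha}(3)), so a wrong-sided $V$-segment at a boundary frame gives no information on $X\cap F^*$. Your phrase ``a new thick boundary frame, at which the correct side of $V$ becomes accessible'' is exactly the missing idea—no mechanism is given that produces a point of $X$ on the separating side.

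The paper's proof avoids this trap by a different choice of $W$: it takes $W:=X\cap F^*\cap\RFM$, so the base point produced by Lemma \ref{qqq} is automatically in $F^*$ and Lemma \ref{fs} finishes (here one uses that $\mT(y)=\{t:yu_t\in W\}$ for $y\in W$, since $X\cap F^*$ is $U$-invariant, so Proposition \ref{locally} still supplies accumulating renormalizations). The price is that this $W$ need not be compact, and that case is handled by a separate argument: points of $W$ accumulate on some $z\in\partial F\cap\RFM$, one writes $x_n=zg_n$ and applies Lemma \ref{U}(2) along return times $t_n\in\mT(x_n)$, using compactness of the boundary orbit $zH$, so that the blowup limit $z_0v$ (with $v\in V-\{e\}$, $z_0\in zH$) lies in $\RFM$. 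Membership in $\RFM$ is what forces the correct side: a circle tangent to the boundary circle at $z_0^+$ whose backward endpoint also lies in $\La$ must meet $\La$ beyond the tangency point, hence separates $\La$, so $z_0v\in F^*$ and Lemma \ref{bd} applies. Your argument has no analogue of this side-forcing step, and without it the dichotomy is not established. (A smaller gap: with $W=\RFM$, in the case where $xH$ is locally closed you also need $(X-xH)\cap\RFM\ne\emptyset$ before a relative $U$-minimal set in $X-xH$ exists; this uses Theorem \ref{ha}(3), not just the argument of Theorem \ref{pop}.)
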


\begin{proof} Set $X:=\cl{xH}$, and assume that $X\ne xH$.
We then need to show $X=F_\La$.
Since $F^*\cap \RFPM \subset \RFM \cdot U$, and $xH\subset F^*\cap \RFPM\cdot  H$,
 we may assume without loss of generality that
$x=[g]\in \RFM$.

Set $W:=X\cap F^*\cap \RFM$.

\noindent{\bf Case 1: $W$ is not compact}.
In this case, there exists $x_n\in W$ converging
to some $z\in \partial F\cap \RFM$. 
Write $x_n= zg_n$ with $g_n \to e$ in $G-H$.

Suppose that $g_n =h_nv_n \in HV$ for some $n$.
Since $ zh_n \in zH\subset \partial F\cap \RFM$ and $(zh_n) v_n\in F^*\cap \RFM$,
 the claim follows from Lemma \ref{bd}.

Now suppose that $g_n \notin HV$ for all $n$.
By Lemma \ref{U}, there exist  $t_n \in \mT(x_n) $ and $h_n\in H$ such that
$h_ng_nu_{t_n}$ converges to some $v\in V-\{e\}$. Since $zH$ is compact, $zh_n^{-1}$ converges to some $z_0\in \partial F\cap \RFM$
by passing to a subsequence.
Hence, as $n\to \infty$, $$x_n u_{t_n}= zh_n^{-1} (h_n  g_n u_{t_n}) \to z_0 v .$$
Since $z_0\in \partial F\cap \RFM$ and $z_0v\in \RFM$, we get  $z_0v\in F^*$; hence the claim follows by Lemma \ref{bd}.

\noindent{\bf Case 2: $W$ is compact}.
It follows from the definition of $W$ that for any $x\in  W$, $$\mT(x)=\{t:xu_t\in W\}.$$

\medskip

 We claim that $X$ contains a $U$-minimal subset $Y$ with respect to $W$
such that $X-y_0H$ is not closed for some $y_0\in Y\cap W$.
 We divide our proof into two cases:
   
   \medskip
   {\noindent{\it Case}} (a).  Suppose that $xH$ is not locally closed, i.e., $X-xH$ is not closed. In this case, any $U$-minimal subset
   $Y\subset X$ with respect to $W$ works.
First,  if $Y\cap W \subset xH$, then choose any $y_0\in Y\cap W$. Observe that $\oxH-y_0H=\oxH-xH$ is not closed, which implies the claim.
 If $Y\cap W \not\subset xH$,
  choose $y_0\in (Y\cap W)-xH$. Then $\oxH-y_0H$ contains $xH$, and hence cannot be closed.

   \medskip

  {\noindent{\it Case}} (b). 
Suppose that $xH$ is locally closed, and $X-xH$  intersects $W$ non-trivially.
 Therefore  $X-xH$ contains  a $U$-minimal subset $Y$ with respect to $W$.
Then any $y_0\in Y\cap W$ has the desired  property; since $y_0\in X-xH$, there exists $h_n\in H$
such that $xh_n \to y$. If we write $xh_n=yg_n$, then  $g_n\to e$ in $G-H$, since $y\notin xH$.

\medskip

By Lemmas \ref{YL2}, \ref{Yv} and \ref{qqq}, $X$ contains $x_0V_I$ for some $x_0\in W$ and for an interval $0\in I$; since $x_0\in F^*$,
this finishes the proof by
 Lemma \ref{fs}. \end{proof}

\subsection{Topological rigidity of geodesic planes} 
In (\cite{MMO}, \cite{MMO2}), the following theorem was also obtained:

\begin{thm}\label{mt3}\label{rs} Let $M$ be a rigid acylindrical hyperbolic $3$-manifold.
Then
\begin{enumerate}
\item any geodesic plane $P$ intersecting $\core M$  is either properly immersed or dense;
\item  the fundamental group of a properly immersed $P$  intersecting $\core M$ 
is a non-elementary Fuchsian subgroup;
\item there are at most countably many properly immersed geodesic planes in $M$ intersecting $\core M$;
\item any infinite sequence of geodesic planes $P_i$ intersecting $\core M$  becomes dense in $M$, i.e,
$\lim P_i= M$.
\end{enumerate}
\end{thm}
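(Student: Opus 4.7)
My plan is to translate each assertion into a statement about $H$-orbit closures on $\Gamma\ba G$ via the dictionary $P=p(xH)$, and then appeal to Theorems~\ref{cc2} and~\ref{ha}, together with Theorem~\ref{mmo} for the supporting case. For (1), given a plane $P$ meeting $\core M$ I lift to an $H$-orbit $xH\subset F_\Lambda$. If $P$ meets the interior $M^*$ of $\core M$ I arrange $x\in F^*$, and Theorem~\ref{cc2} directly yields the dichotomy $\overline{xH}=xH$ (so $P$ is properly immersed) or $\overline{xH}=F_\Lambda$; in the latter case I would verify $p(F_\Lambda)=M$ so that $P$ is dense in $M$. If $P$ meets $\core M$ only along $\partial\core M$, then $C_g$ is a supporting circle; in the rigid acylindrical case every supporting circle is contained in $\Lambda$, so any frame $x=[g]\in\partial F$ automatically lies in $\RFM$, and Theorem~\ref{ha}(2) gives $xH$ compact and hence $P$ properly immersed.

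For (2), with $P$ properly immersed, the fundamental group of $P$ is $\Delta:=g^{-1}\Gamma g\cap H$ under the identification $H\simeq \mathrm{Isom}^+(\bH^2)$, and is automatically Fuchsian. For non-elementarity, in the supporting case Theorem~\ref{mmo} realizes $\Delta$ as a uniform lattice of the conjugate of $\PSL_2(\br)$ stabilizing $C_g$. In the separating case I can choose $x\in F^*\cap \RFM$ using $F^*\cap \RF_+M\subset \RFM\cdot U$; then the fact that $xH\cap p^{-1}(\core M)$ is a non-empty compact subset of the closed orbit $xH$ forces $\Delta$ to act cocompactly on $\operatorname{hull}(C_g\cap\Lambda)\subset \bH^2$, whence $\Delta$ is non-elementary. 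Countability in (3) then follows because each such $\Delta$ is a finitely generated subgroup of the countable group $\Gamma$ up to $\Gamma$-conjugation, and finitely generated subgroups of a countable group form a countable set.

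For (4), given distinct properly immersed planes $P_i$ meeting $\core M$, I choose frames $x_i\in x_iH\cap \RFM$ and extract a convergent subsequence $x_{i_k}\to x_\infty$ using compactness of $\RFM$. The limit orbit $\overline{x_\infty H}$ is $H$-invariant and closed, so by (1) it is either closed or equal to $F_\Lambda$. In the closed case, a Mozes--Shah style rigidity for closed $H$-orbits (together with countability from (3) to rule out infinitely many distinct nearby closed orbits) forces $x_{i_k}H=x_\infty H$ eventually, contradicting distinctness of the $P_i$; hence $\overline{x_\infty H}=F_\Lambda$ and its projection is dense in $M$. Since the same extraction applies to every subsequence of $(P_i)$, no neighborhood of any $y\in M$ can avoid all but finitely many $P_i$, yielding $\lim P_i=M$ in the sense of the footnote. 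The hardest step will be the dense direction of (1): checking that the dynamical conclusion $\overline{xH}=F_\Lambda$ translates to the geometric density $\overline{P}=M$, which reduces to verifying $p(F_\Lambda)=M$, and the Mozes--Shah style rigidity in (4) for infinite-volume $\Gamma\ba G$, which is not given by Ratner-style results and must be extracted from the structure of closed $H$-orbits in the rigid acylindrical setting.
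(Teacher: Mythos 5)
Your reduction of the theorem to the dynamical statements is the right one (the survey itself does not reprove this result but quotes \cite{MMO}, \cite{MMO2}; the intended derivation is exactly via Theorems \ref{cc2} and \ref{ha}), and parts (1) and most of (2) go through as you describe. But two of your steps are not yet proofs. In (2), ``$\Delta$ acts cocompactly on $\op{hull}(C_g\cap\La)$, whence $\Delta$ is non-elementary'' is not a valid implication by itself: an infinite cyclic hyperbolic group acts cocompactly on its axis, so you must first rule out $\#(C_g\cap\La)=2$. This does hold here, but only because of the rigid acylindrical geometry of $\La$: if a circle $C$ separated $\La$ and met it in exactly two points, both complementary arcs of $C$ would lie in a single complementary disk $B_i$ (distinct $B_i$'s have disjoint closures), forcing $C\subset \overline{B_i}$ with $C\cap\partial B_i$ two points, which is impossible for a round circle inside a closed round disk unless $C=\partial B_i$. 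This is exactly where the carpet structure enters and it must be said. In (3), the inference ``countably many finitely generated subgroups $\Rightarrow$ countably many planes'' fails as stated: the Fuchsian example in the survey exhibits uncountably many properly immersed planes whose stabilizers are elementary (and range over countably many conjugacy classes). What rescues the count is precisely the non-elementarity from (2): since $\La(\Delta)$ has at least three points it determines the circle $C_g$, so a properly immersed plane meeting $\core M$ is determined by its stabilizer, and only then does countability of the set of finitely generated subgroups of $\Gamma$ give (3). (Finite generation of $\Delta$ also needs the cocompact action on the hull, not just discreteness.)

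The genuine gap is (4). Your plan is to extract $x_{i_k}\to x_\infty$ and, when $\overline{x_\infty H}$ is closed, to invoke a ``Mozes--Shah style rigidity'' forcing $x_{i_k}H=x_\infty H$ eventually. No such isolation principle is available, and it is incompatible with the statement you are proving: whenever $M$ admits infinitely many distinct planes meeting $\core M$, part (4) itself says they accumulate on every point of $F_\La$, in particular on closed $H$-orbits (for instance on the compact boundary orbits); the only isolation valid in this setting, as in the finite-volume case, is that no \emph{entire} closed plane lies in a small neighborhood of another. Moreover the $x_{i_k}H$ need not even be closed orbits, so a rigidity statement about closed orbits would not apply to them. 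The correct route does not case on whether $x_\infty H$ is closed: after discarding the finitely many planes meeting only $\partial\core M$, distinct planes give distinct orbits, so writing $x_{i_k}=x_\infty g_k$ with $g_k\to e$ one has $g_k\notin H$ for all but at most one $k$; one then runs the proof of Theorem \ref{cc2} (Proposition \ref{mmm}) with $X$ equal to the closure of $\bigcup_k x_{i_k}H$, using the transverse displacements $g_k$ to feed Lemmas \ref{YL2}, \ref{Yv}, \ref{qqq}, and Lemma \ref{bd} together with Theorem \ref{ha} when $x_\infty\in\partial F$, and concluding $X\supset F_\La$ by Lemma \ref{fs}. Since $X$ is the closure of the union of the orbits, this gives density of $\bigcup_k P_{i_k}$, and applying it to the subsequence of planes missing a fixed open set yields $\lim P_i=M$. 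Without rebuilding this argument, your proof of (4) does not close.
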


\begin{Rmk}\label{noo} \rm 
\begin{enumerate}
\item There exists a closed arithmetic hyperbolic $3$-manifold $\Gamma\ba \bH^3$
without any properly immersed geodesic plane, as shown by Maclachlan-Reid \cite{MRR}. 

\item  When $M$ has finite volume and has at least one properly immersed geodesic plane, then
 $M$ is arithmetic if and only if there are infinitely many properly immersed geodesic planes (\cite{MM}, \cite{Fi}).
 
\item 
 A natural question is whether a rigid acylindrical hyperbolic $3$-manifold
 $M$ necessarily covers an arithmetic hyperbolic $3$-manifold if there exists infinitely many properly immersed (unbounded) geodesic planes intersecting
 its core. The reason for the word ``unbounded" in the parenthesis is that
 in any geometrically finite hyperbolic $3$-manifold of infinite volume, there can be only finitely many bounded geodesic planes (\cite{MMO}, \cite{BO}). In view of the proofs given in (\cite{MM}, \cite{Fi}), the measure-theoretic equidistribution of infinitely many closed
 $H$-orbits needs to be understood first.
\end{enumerate}
\end{Rmk}

\subsection{Classification of $U$-orbit closures}
In the rigid acylindrical case, the complete classification of the $U$-orbit closures inside $\partial F$ given in Theorem \ref{ha}
 can be extended to
the whole space $\RFPM$:
\begin{thm} \label{mc} \cite{MMO2} 
For any $x\in \RF_+M$,
$$\overline{xU}=xL\cap \RF_+M$$
where  $L$ is either $v^{-1} Hv $ for some $v\in N$, or $G$.
\end{thm}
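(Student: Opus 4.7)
The plan is to combine the $H$-orbit closure dichotomy (Theorem \ref{cc2}) with Dalbo's theorem on horocycle flows in geometrically finite hyperbolic surfaces, and to upgrade $H$-density to $U$-density via the unipotent blowup together with the $U$-ergodicity of the Burger--Roblin measure. First I would reduce to the case $x\in\RFM$: since $\RFPM=\RFM\cdot N$ (any element of $\RFPM$ can be adjusted by an $N$-translate to bring its backward endpoint into $\La$), write $x=yn$ with $y\in\RFM$ and $n=uv\in UV=N$. Because $U$ and $V$ commute, $xU=yUv$, so $\overline{xU}=\overline{yU}\,v$; if the theorem holds for $y$ with $\overline{yU}=yL_0\cap\RFPM$ and $L_0\in\{H,G\}$, then $\overline{xU}=x(v^{-1}L_0v)\cap\RFPM$, giving $L=v^{-1}Hv$ with $v\in V\subset N$ (resp.\ $L=G$). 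This accounts for the form $L=v^{-1}Hv$ with $v\in N$ appearing in the statement.

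For $x\in\RFM$, I would apply Theorem \ref{cc2} (when $x\in F^*$) together with Theorem \ref{ha}(2) (when $x\in\partial F\cap\RFM$, where $xH$ is compact) to obtain the dichotomy: either $xH$ is closed, or $\overline{xH}=F_\La$. In the closed case, $\overline{xU}\subset xH\cap\RFPM$; by Theorem \ref{mt3}(2) (or Theorem \ref{ha}(2)) the stabilizer $\Gamma^{C_x}$ is a non-elementary finitely generated Fuchsian group, and since convex cocompactness forces $x^+\in\Lambda(\Gamma^{C_x})$ to be radial and hence not parabolic, Dalbo's theorem on horocycle closures in the Fuchsian setting (invoked in Theorem \ref{ddd1}) yields $\overline{xU}=xH\cap\RFPM$, giving $L=H$.

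In the main case $\overline{xH}=F_\La$ I claim $\overline{xU}=\RFPM$. Set $X=\overline{xU}$ and $W=X\cap\RFM$, non-empty and compact. By Proposition \ref{locally}, every return time $\mT(y)$ for $y\in W$ is $\kappa$-thick with accumulating renormalizations, and $X$ contains no periodic $U$-orbits. Choose a $U$-minimal subset $Y\subset X$ with respect to $W$; Lemma \ref{YL2} gives a one-parameter semigroup in $AV$ acting on $Y$ on the right. Using the density $\overline{xH}=F_\La$, approximate a chosen $y_0\in Y\cap W$ by $xh_n$ with $h_n\in H$; decomposing $h_n=u_nh'_n$ with $u_n\in U$ via the Iwasawa decomposition of $H$ gives $\tilde y_n:=xu_n\in X$ and $\tilde y_n h'_n\to y_0$. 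Combining the $\kappa$-thick $U$-recurrence of $\tilde y_n$ with the unipotent blowup (Lemma \ref{U}) applied to $h'_n$ should extract $v\in V\setminus\{e\}$ with $y_0 v\in X$, upgraded by $U$-minimality of $Y$ to $Yv\subset X$. Lemma \ref{qqq} then produces $x_0 V_I\subset X$ for some $x_0\in W$ and interval $0\in I$; by $U$-invariance, $X\supset x_0 V_I U$, an open subset of the $N$-orbit $x_0 N$ with positive Burger--Roblin measure since $x_0^\pm\in\La$. Because $\La$ is connected in the rigid acylindrical setting, $\delta>1$ and $m^{\BR}$ is $U$-ergodic by Theorem \ref{erg}; the closed $U$-invariant subset $X$ of positive BR measure must then be $m^{\BR}$-conull in $\RFPM$, and closedness together with $\supp(m^{\BR})=\RFPM$ forces $X=\RFPM$.

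The main obstacle is producing the transverse $V$-translate $y_0 v\in X$ in the dense case. In the proof of Theorem \ref{cc2}, Lemma \ref{Yv} exploited $H$-invariance of $X$ to obtain $g_n\to e$ in $G-H$ with $y_0 g_n\in X$ for free; here $X$ is only $U$-invariant, so the approximating sequence $xh_n\in xH$ lies in $\overline{XH}=F_\La$ rather than in $X$. One must combine the $U$-recurrence of $\tilde y_n=xu_n$ inside $W$ with the polynomial divergence controlled by Lemma \ref{U} to ensure the extracted $V$-direction genuinely lands in $X$; this substitution for Lemma \ref{Yv} is the technical heart of the proof.
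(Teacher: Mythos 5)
Your overall strategy has a fatal structural flaw: the dichotomy you run on (``$xH$ closed $\Rightarrow L=H$'' versus ``$\overline{xH}=F_\La \Rightarrow \overline{xU}=\RFPM$'') is not the dichotomy that governs $U$-orbit closures, and the second implication is false. The correct dichotomy, and the one the paper uses, is membership in the singular set $\mS(U)=\bigcup zHV\cap\RFPM$ (union over closed $H$-orbits) versus the generic set $\mG(U)$. Concretely, let $zH$ be a compact $H$-orbit (a supporting plane, which exists in the rigid acylindrical case) and take $x=zhv$ with $v\in V-\{e\}$ chosen so that the circle $C_x$, tangent to the supporting circle $C_z$ at $(zh)^+$, separates $\La$ and $x^-\in\La$; then $x\in F^*\cap\RFM$, and since the horocycle flow on the compact orbit $zH$ is minimal (Hedlund), $\overline{xU}=\overline{zhU}v=zHv=x(v^{-1}Hv)\cap\RFPM\subsetneq\RFPM$, even though $xH$ is typically dense in $F_\La$. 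This simultaneously kills your reduction step (points of $\RFM$ can genuinely require a non-trivial conjugating $v$, so you cannot assume $L_0\in\{H,G\}$ for $x\in\RFM$) and your main case. It also explains why the ``technical heart'' you flag cannot be repaired in the generality you state it: you propose to extract a transverse $V$-element by applying Lemma \ref{U}(2) to the elements $h'_n\in H$, but that lemma requires $g_n\to e$ in $G-VH$, and elements of $H$ never blow up out of $H$. In the paper's proof of Proposition \ref{quote}, the needed transversality $g_n\notin HV$ comes precisely from the hypothesis $x\in\mG(U)$ together with $y\in\mS(U)$ (after first proving, via Lemma \ref{YL2}, Theorem \ref{cc2} and the existence of compact $H$-orbits, that $\overline{xU}$ meets $\mS(U)$ at all) --- exactly the genericity input your outline discards.

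Two further steps are also broken. Your treatment of the ``dense'' case ends with a Burger--Roblin ergodicity argument, but $x_0V_IU$ lies in a single $N$-leaf, and a single $N$-orbit has $m^{\BR}$-measure zero (the BR measure is $N$-ergodic and not supported on one leaf), so you cannot conclude $m^{\BR}(X)>0$; the paper's endgame is purely topological: push the $V$-segment (or the $V$-translates $Yv_n$ with $v_n\to\infty$) through the $A$-flow using radial recurrence to obtain a full $V$-orbit, hence an $N$-orbit, and conclude with the density of $N$-orbits (Theorem \ref{nc}). Also note that in the paper's argument the approximation times are arranged, via Lemma \ref{yy}, so that $xu_{s_n}\in\RFM$, which is what makes the $\kappa$-thick return times of Proposition \ref{locally} available; your outline invokes thickness for the points $\tilde y_n=xu_n$ without ensuring they return to $\RFM$. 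In short: the closed-$H$-orbit case of your proposal is fine, but the generic case must be restructured around the singular/generic dichotomy rather than the closed/dense $H$-orbit dichotomy.
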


There are two main features of  a rigid acylindrical group which our proof is based on.
 The first  property is
that $$\text{ there exists a  compact $H$-orbit in $\RFPM$, }$$
namely those $[g]H$ whose corresponding plane $P_g$ is a supporting plane.
This very important feature of $M$ is a crucial ingredient of our proof. In particular,
the following {\it singular} set  is non-empty:
$$\mS(U)=  \bigcup  zHV  \cap \RFPM .$$
where the union is taken over all closed $H$ orbits $zH$.

 We set $$\mG(U):=\RFPM -\mS(U)$$ and call it the generic set.
 Note that $$\mG(U)\subset F^*.$$

The second property is the following control on the pre-limiting behavior of $\RFM$-points, whose proof is based
on the totally geodesic nature of $\partial \core M$.

\begin{lem}\label{yy} \cite[Lemma 4.2]{MMO2} If $x_n \in F^*$ converges to some $y\in \RFM$,
then there exists a sequence $x_n'\in x_n U\cap \RFM$ converging to $y$, or
converging to some boundary frame $y'\in \partial F\cap \RFM$.
\end{lem}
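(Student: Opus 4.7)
Plan. Write $x_n = [g_n]$ and $y = [h]$ with $g_n \to h$. Since $y \in \RFM$ gives $h^\pm \in \La$, and since the conclusion requires $x_n U \cap \RFM \ne \emptyset$, I work in the natural setting $x_n \in F^* \cap \RFPM$, so $g_n^+ \in \La$ and $C_{g_n}$ separates $\La$; in particular the set $\mathsf T(x_n) := \{t \in \br : (g_n u_t)^- \in \La\}$ is non-empty. The strategy is to pick $t_n \in \mathsf T(x_n)$ of minimal absolute value, set $x_n' := x_n u_{t_n} \in \RFM$, pass to a convergent subsequence using compactness of $\RFM$ (valid since $\Gamma$ is convex cocompact), and exploit the rigid acylindrical structure $\bS^2 - \La = \bigsqcup_i B_i$ (a disjoint union of round disks with pairwise disjoint closures and $\op{diam}(B_i) \to 0$) to control the limit.

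By compactness, a subsequence of $x_n'$ converges to some $z \in \RFM$. First I rule out $|t_n| \to \infty$: otherwise $(g_n u_{t_n})^- \to g_n^+ \to h^+$ while $(x_n')^+ = g_n^+ \to h^+$ as well, forcing $z^+ = z^-$, which is impossible for a frame. Hence $t_n \to t_\infty \in \br$ and $z = y u_{t_\infty}$. If $g_n^- \in \La$ I take $t_n = 0$ and $z = y$; otherwise $g_n^- \in B_{i_n}$ and, passing to a subsequence, either $i_n \to \infty$ (so $\op{diam}(B_{i_n}) \to 0$ forces the arc of $C_{g_n}$ lying in $B_{i_n}$ to shrink, whence $t_n \to 0$) or $i_n = i_0$ is eventually constant, with $g_n^- \to h^- \in \partial B_{i_0}$.

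In this last case I split according to the relative position of $C_h$ and $\partial B_{i_0}$ at $h^-$: (i) a transversal crossing at $h^-$ gives a nearby exit of $C_{g_n}$ from $B_{i_0}$ and thus $t_n \to 0$; (ii) tangency of $C_h$ with $\partial B_{i_0}$ from outside $B_{i_0}$ forces the dip of $C_{g_n}$ into $B_{i_0}$ containing $g_n^-$ to shrink in arc length, so again $t_n \to 0$; (iii) tangency from inside with $C_h \ne \partial B_{i_0}$ is ruled out, since it would imply $C_h \cap \La = \{h^-\}$, contradicting $h^+ \in C_h \cap \La$ with $h^+ \ne h^-$; and (iv) $C_h = \partial B_{i_0}$ puts $y$ in $\partial F$, so $z = y u_{t_\infty} \in y H \cap \RFM \subset \partial F \cap \RFM$, giving the second alternative of the lemma. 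The principal obstacle is making the tangential case (ii) quantitative, and this is exactly where the rigid acylindrical hypothesis (the disjointness of the closures of the disks $B_i$ and the positive systole of the double of $\core M$, the same input that underlies Proposition~\ref{locally}) plays its decisive role.
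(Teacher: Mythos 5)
Your restriction to $x_n\in F^*\cap\RFPM$ is a fair reading of the statement (otherwise $x_nU\cap\RFM$ can be empty), and your cases (i)--(iii) are essentially fine; note, though, that the survey does not prove this lemma but quotes it from \cite{MMO2}, where the proof rests on the totally geodesic boundary of $\core M$. The genuine gap in your argument is the very first step, where you ``rule out'' $|t_n|\to\infty$. From $(g_nu_{t_n})^-\to h^+$ and $(g_nu_{t_n})^+=g_n^+\to h^+$ you may only conclude that the \emph{lifts} $g_nu_{t_n}$ leave every compact subset of $G$; this in no way constrains the limit $z$ of $x_n'=x_nu_{t_n}$ in $\Gamma\ba G$, which exists anyway by compactness of $\RFM$ and is attained along different representatives $\gamma_n g_nu_{t_n}$. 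No frame with $z^+=z^-$ is ever produced, so no contradiction is reached.

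Moreover, $|t_n|\to\infty$ really does occur, and precisely in your case (iv), which is the case the lemma is actually about. Take $C_h=\partial B_{i_0}$ and let $C_{g_n}\to\partial B_{i_0}$ be circles meeting $\partial B_{i_0}$ only at two points near $h^+$ (one of which can be taken as $g_n^+\in\La$), dipping into $B_{i_0}$ along the entire arc through $g_n^-$; such $C_{g_n}$ still separate $\La$ (any complementary lens must meet $\La$ because the closures of the $B_i$ are disjoint), so $x_n\in F^*\cap\RFPM$ and $x_n\to y\in\RFM$. Here \emph{every} $t$ with $(g_nu_t)^-\in\La$ satisfies $|t|\to\infty$, so your deduction ``$t_n\to t_\infty$, $z=yu_{t_\infty}$'' and with it your treatment of case (iv) collapse; yet this is exactly the configuration responsible for the second alternative $y'\in\partial F\cap\RFM$. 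What is missing is an argument that for a suitable choice of $t_n\in\mT(x_n)$ (e.g.\ the exit points of $C_{g_n}$ from $B_{i_0}$) the limits of $x_nu_{t_n}$ in $\Gamma\ba G$ lie in $\partial F\cap\RFM$; this requires controlling how the planes $P_{g_n}$ fellow-travel the totally geodesic boundary plane over $B_{i_0}$, which is the geometric input used in \cite{MMO2} and absent from your proposal. Finally, your worry about case (ii) is misplaced: since $C_h\cap\overline{B_{i_0}}=\{h^-\}$, a soft compactness argument localizes the component of $C_{g_n}\cap\overline{B_{i_0}}$ through $g_n^-$ near $h^-$, with no systole estimate needed; the only hard case is (iv).
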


For $x\in \mS(U)$, Theorem \ref{mc} follows from a theorem of Hedlund \cite{He}
and Dalbo \cite{Da} on the minimality of horocyclic action on the Fuchsian case. 

\begin{prop} \cite{MMO2}\label{quote}
If $x\in \mG(U)$, then
 $$\cl{xU}=\RFPM.$$
\end{prop}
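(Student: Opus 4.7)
My plan is to reduce the proposition to showing that $X := \cl{xU}$ is $N$-invariant. Granted this, $X$ is closed and contains the $N$-orbit of any of its points, and by Theorem~\ref{nc} (every limit point is radial in the convex cocompact case) this $N$-orbit has closure $\RFPM$, giving $X = \RFPM$.

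\textbf{Preliminary reductions.} Since $N = UV$ is abelian and $HU = H$, the set $\mS(U) = \bigcup zHV \cap \RFPM$ is $U$-invariant, hence so is $\mG(U)$. Combined with $\mG(U) \subset F^* \cap \RFPM \subset \RFM \cdot U$, I may replace $x$ by a $U$-translate lying in $\RFM \cap \mG(U)$. Theorem~\ref{cc2} then forces $\cl{xH} = F_\La$: the closed-orbit alternative would make $xH$ a compact $H$-orbit (Theorem~\ref{ha}(2), using $x \in \RFM$), whence $x \in xHV \cap \RFPM = \mS(U)$, contradicting genericity.

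\textbf{Producing a $V$-translate inside $X$.} Since $\RFM$ is compact in the convex cocompact case, $W := X \cap \RFM$ is non-empty compact; by Proposition~\ref{locally} each $\mT(y)$ for $y \in W$ is $\kappa$-thick, hence accumulating. Pick a $U$-minimal $Y \subset X$ with respect to $W$ and apply Lemma~\ref{YL2} to obtain $YL \subset Y$ for a one-parameter semigroup $L < AV$. To invoke Lemma~\ref{Yv}, I need $X - y_0H$ non-closed at some $y_0 \in Y \cap W$. The density $\cl{xH} = F_\La$ supplies $h_n \in H$ with $xh_n \to y_0$; using Lemma~\ref{yy} I reparametrise by $U$-elements so the sequence stays in $W$, and since $X \subsetneq F_\La$ (else $X$ would exit $\RFPM$) the residual drift has a $G - H$ component. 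Lemma~\ref{Yv} then produces $v_0 \in V \setminus \{e\}$ with $Yv_0 \subset X$, and Lemma~\ref{qqq} yields $x_0 V_I \subset X$ for some $x_0 \in W$ and interval $0 \in I$.

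\textbf{From $V_I$ to full $V$, and conclusion.} To upgrade $x_0 V_I \subset X$ to a full $V$-orbit inside $X$, I exploit the conjugation $a_t v_s a_{-t} = v_{e^t s}$ and the radial recurrence of $x_0 \in \RFM$: along a sequence $t_n \to \infty$ with $x_0 a_{-t_n}$ convergent in $\RFM$ to some $z$, the intervals $V_{e^{t_n} I}$ exhaust $V$, and combining with $U$-invariance of $X$ and Lemma~\ref{yy} to re-enter $\RFM$, one extracts a limiting $V$-orbit $zV \subset X$ based at a point of $\RFM$. Then $X$ is $N$-invariant, so $X \supset \cl{zN} = \RFPM$ by Theorem~\ref{nc}. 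The hardest point is producing the $G - H$-transverse approximation required by Lemma~\ref{Yv}: $\cl{xH} = F_\La$ is an $H$-directed statement, and the bridge to transverse recurrence is Lemma~\ref{yy}, whose proof rests essentially on the totally geodesic boundary of $\op{core} M$; without the rigid acylindrical hypothesis the argument collapses, consistent with the wild-closure quasifuchsian examples discussed earlier.
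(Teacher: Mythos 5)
There is a genuine gap, and it sits at the center of your argument: the lemmas you chain together after Lemma \ref{YL2} all require the ambient closed set to be $H$-invariant (or at least $A$-invariant), and $X=\cl{xU}$ is neither. Lemma \ref{Yv} is stated for a closed $H$-invariant set $X$ and its proof uses that invariance essentially (both to dispose of the case $g_n\in VH$ and to conclude $y_0g_nh_n\in X$ after multiplying on the right by $h_n\in H$); Lemma \ref{qqq} needs $XA\subset X$ to pass from $Yv\subset X$ to $YvA\subset X$; and your final renormalization step, pushing $x_0V_I$ by $a_{-t_n}$ inside $X$, again needs $A$-invariance of $X$. None of this is available for a $U$-orbit closure. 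A second confusion compounds the first: you feed the density $\cl{xH}=F_\La$ into the hypothesis of Lemma \ref{Yv}, but the approximating points $xh_n$ lie in $xH$, not in $X=\cl{xU}$, so they say nothing about whether $X-y_0H$ fails to be closed; Lemma \ref{yy} moves points along $U$-orbits back into $\RFM$, but it cannot convert $H$-translates of $x$ into elements of $\cl{xU}$.

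The paper's proof avoids both problems by relocating the needed invariance. It first shows $\cl{xU}\cap\mS(U)\ne\emptyset$: if $X$ meets $\partial F$ this follows from Theorem \ref{ha}(1); if $X\subset F^*$, Lemma \ref{YL2} gives $YL\subset Y$ for a semigroup $L<AV$, the case $L<V$ already yields a full $V$-orbit, and in the case $L=A_+$ one obtains $y_0A\subset Y$ and then either $y_0\in\mS(U)$ or $\cl{y_0H}=F_\La$ contains a compact $H$-orbit $zH$, which $\cl{y_0AU}$ must meet since $\cl{y_0AU}M_0=\cl{y_0H}$. Consequently, after replacing $x$ by $xv$, the set $X$ contains the entire piece $Y:=yH\cap\RFPM$ of a closed $H$-orbit; this $Y$ is $U$-minimal and, crucially, $A$-invariant. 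Writing $xu_{s_n}=yg_n$ with $xu_{s_n}\in\RFM$ (via Lemma \ref{yy}), genericity of $x$ forces $g_n\notin HV$, so Lemma \ref{U} along the $\kappa$-thick return times of Proposition \ref{locally} produces arbitrarily large $v\in V$ with $Yv\subset X$ and $Yv\cap\RFM\ne\emptyset$; the final step gets a full $V$-orbit $x_0V\subset X$ from $\limsup_n v_n^{-1}Av_n\supset V$ applied to the $A$-invariant set $Y$, not to $X$, and concludes $X=\RFPM$. This transfer of the role of $A$-invariance from the orbit closure $X$ to the homogeneous subset $Y$ coming from the singular set is precisely the idea your proposal is missing, and without it the sequence of steps you outline does not go through.
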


\begin{proof} 

Setting $X:=\cl{xU}$, we first claim that
\begin{equation}\label{co} X\cap \mS(U)\ne \emptyset.\end{equation}

If $X\cap \partial F\ne \emptyset$, the claim follows from Theorem \ref{ha}(1).
Hence we assume that $X\subset F^*$. Let $Y$ be a $U$-minimal subset of $X$ with respect to $\RFM$.
By Lemma \ref{YL2}, $YL\subset Y$ where $L<AV$ is a one-parameter semigroup.
If $L$ is a semigroup of $V$, say, $V_+$, then take a sequence $v_n \to \infty$ in $V_+$. Since $YV^+\subset Y\subset F^*\subset \RFM \cdot U$, up to passing to a subsequence, there exists $y_n\in Y$ such that
$y_nv_n $ converges to an $\RFM$-point, say $y_0$. 
Then $$y_0 V=\lim_{n\to \infty} (y_nv_n) \cdot \limsup_{n} ( v_n^{-1}V_+)\subset  \overline{YV_+}\subset Y.$$
Hence $Y=X=\RFPM$, proving the claim.
If $L=vA_+v^{-1}$ for some semigroup $A_+$ of $A$, since $\mS(U)$ is $V$-invariant, we may assume that $L=A_+$.
Take a sequence $a_n \to \infty$ in $A_+$.  Then for any $y\in Y$,
$ya_n$  converges to an $\RFM$-point, say $y_0\in Y$, by passing to a subsequence.
So $$y_0A=
\lim_{n\to \infty} (ya_n) \cdot \limsup_{n} ( a_n^{-1}A_+)\subset  \overline{YA_+}\subset Y.$$
On the other hand, either $y_0\in \mS(U)$ or $\cl{y_0H}=F_\La$ (Theorem \ref{cc2}).
In the latter case, $\cl{y_0H}$ contains a compact $H$-orbit $zH$.
Since $\cl{y_0AU} M_0 =\overline{y_0H}$,  it follows that  $\cl{y_0AU} \cap zH\ne \emptyset$, proving the claim \eqref{co}.

Therefore $X$ contains $\overline{yU}=yvH v^{-1} \cap \RFPM$ for some $y\in \mS(U)$.
Without loss of generality, we may assume $X\supset yH \cap \RFPM$ by replacing $x$ with $xv$.
Set $Y:=yH\cap \RFPM$, which is a $U$-minimal subset.
There exists $s_n\in \br $ such that $y=\lim_{n\to \infty} xu_{s_n}$.
In view of Lemma \ref{yy}, we may assume that $xu_{s_n}\in \RFM$ for all $n$. Write $xu_{s_n}=yg_n$ for some
sequence $g_n\to e$ in $G$.
Since $y\in \mS(U)$ and $x\in \mG(U)$, it follows that $g_n\notin HV$ for all $n$.
Hence by Lemma \ref{U}, there exist $t_n \in \mT(yg_n)$ and $h_n\in H$ such that
$h_n g_n u_{t_n}$ converges to some $v\in V$; moreover $v$ can be taken arbitrarily large.
By passing to a subsequence, $yg_n u_{t_n}$ converges to some $y_0\in \RFM$, and hence $yh_n^{-1}$ converges
to $y_1:=y_0v^{-1}\in yH\cap \RFPM=Y$.
Therefore $X\supset y_0=y_1 v$ and hence $X\supset Y v$. As $y_1v\in \RFM$, $Yv\cap \RFM\ne \emptyset$.
As $v$ can be taken arbitrarily large, there exists a sequence $v_n\to \infty$ in $V$ such that $X$ contains $Yv_n$.
Choose $y_n\in Y$ so that $y_nv_n\in \RFM$ converges to some $x_0\in \RFM$, by passing to a subsequence.
Since $Y$ is $A$-invariant and $\limsup_{n\to \infty} v_n^{-1} A v_n \supset V$, we deduce
$$X\supset \lim_{n\to \infty} (y_nv_n)\cdot \limsup_{n\to \infty} (v_n^{-1} Av_n) \supset x_0 V.$$
Therefore $X=\RFPM$.
\end{proof}

As an immediate corollary, we deduce:
\begin{cor} \cite{MMO2}  \label{hf} Let $M$ be a rigid acylindrical hyperbolic $3$-manifold.
Then the closure of any horocycle is either a properly immersed surface, parallel to a geodesic plane,
or equal to $M$.
\end{cor}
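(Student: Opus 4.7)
The plan is to derive the corollary as a direct projection of Theorem~\ref{mc} from the frame bundle $\Gamma\ba G$ down to $M$. Any horocycle in $M$ has the form $\chi = p(xU)$ for some $x = [g] \in \Gamma\ba G$. Since $p$ is the quotient by the right action of the compact group $K$, it is proper and closed, so $\overline{\chi} = p(\overline{xU})$, and the classification of $\overline{xU}$ transfers at once to a classification of $\overline{\chi}$.

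First consider $x \notin \RF_+M$: then $g^+ \notin \Lambda$, the orbit map $u \mapsto xu$ is proper, and $xU$ is already closed in $\Gamma\ba G$, so $\chi$ is a properly immersed submanifold of $M$. Now assume $x \in \RF_+M$. Theorem~\ref{mc} gives $\overline{xU} = xL \cap \RF_+M$ with $L = G$ or $L = v^{-1}Hv$ for some $v \in N$. In the case $L = G$, $\overline{xU} = \RF_+M$; since $\Lambda \neq \emptyset$ and at every basepoint of $\bH^3$ the group $K$ acts transitively on the sphere of tangent directions, there is always a frame whose forward endpoint lies in $\Lambda$, so $p(\RF_+M) = M$ and hence $\overline{\chi} = M$.

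In the case $L = v^{-1}Hv$, set $y = [gv^{-1}]$, so that $xL = yHv$. The lift of $p(yHv)$ to $\bH^3$ is $gv^{-1}(H \cdot q)$ with $q := v\cdot o$; since the $H$-orbit of $q \in \bH^3$ is either the standard geodesic plane $\bH^2$ itself (when $q \in \bH^2$, i.e.\ $v \in U$) or else the 2-surface equidistant from $\bH^2$ at distance $d(o, q) > 0$ (when $v \in N \setminus U$), the projection $p(yHv)$ is a properly immersed surface in $M$ which is either a geodesic plane or parallel to one, and $\overline{\chi} \subset p(yHv)$.

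The main technical point, which I expect to be the chief obstacle, is upgrading this inclusion to an equality, so that the horocycle closure fills out the whole parallel surface. I would address this by noting that $xL \to p(xL)$ is a circle bundle whose fibre over each point parametrises the possible forward endpoints along the corresponding boundary circle, and then combining rigid acylindricality (which forces each such boundary circle to meet $\Lambda$ abundantly, by the same structural input used in Theorem~\ref{ha}) with Hedlund--Dal'bo minimality for the horocyclic flow on the intrinsically hyperbolic surface $p(yHv)$. This produces a frame with forward endpoint in $\Lambda$ above every point of $p(yHv)$, yielding the desired equality and following the template of the orbit-closure arguments used in Theorem~\ref{cc2}.
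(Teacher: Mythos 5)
Your proposal is correct and takes essentially the paper's route: the paper deduces the corollary immediately from Theorem \ref{mc} by pushing $\overline{xU}=xL\cap \RF_+M$ down to $M$ under the proper, closed projection $p$, exactly as you do. The ``chief obstacle'' you flag is in fact automatic: over each basepoint of $p(xL)$ the circle of frames of $xL$ lying above it has forward endpoints sweeping out the whole boundary circle $C_{gv^{-1}}$, which meets $\Lambda$ simply because $g^+\in\Lambda$, so $p(xL\cap\RF_+M)=p(xL)$ without any appeal to Hedlund--Dal'bo minimality or to acylindricality at that step.
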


\subsection{Measure rigidity?}
If there exists a closed  orbit $x H $ for $x\in \RF_+M$,
then the stabilizer of $x$ in $H$ is a non-elementary convex cocompact fuchsian subgroup
and there exists a unique $U$-invariant ergodic measure supported on $xH\cap \RFPM$, called
the  Burger-Roblin measure $m^{\BR}_{xH}$ on $xH$.

\medskip

\noindent{\bf Question:}  Let $M$ be a rigid acylindrical hyperbolic $3$-manifold.
 Is any  locally finite $U$-ergodic measure on $\RF_+M$ either $m^{\BR}$ or $m^{\BR}_{xH}$
for some closed $H$ orbit $xH$, up to a translation by the centralizer of $U$?

\medskip

Theorem \ref{mc} implies the positive answer to this question at least in terms of the support of the measure:
 the support of any locally finite
$U$-ergodic measure on $\RF_+M$ is either $\RF_+M$ or $\RF_+M\cap xHv$
for some closed orbit $xH$ and $v\in N$.

\section{Geometrically finite acylindrical hyperbolic $3$-manifolds}
Let $\Gamma<G$ be a Zariski dense geometrically finite group, and let $M=\Gamma\ba \bH^3$.
We assume $\op{Vol}(M)=\infty$.
In the rigid acylindrical case, we were able to give a complete classfication of all possible
closures of a geodesic plane in $M$; this is largely due to
the {\it rigid} structure of the boundary of $\core M$. In particular, the intersection of a geodesic plane
and the convex core of $M$ is either closed or dense in $\core M$.

 In general, the convex core of $M$ is not such a natural ambient space to study the topological behavior of a geodesic plane, because
 of its non-homogeneity property.
Instead, its interior, which we denote by $M^*$, is a better space to work with;
first of all, $M^*$ is a hyperbolic $3$-manifold with no boundary (although incomplete), which is
diffeomorphic to $M$, and a geodesic plane $P$ which does not intersect $M^*$ cannot come arbitrarily close to $M^*$,
as $\cl P$ must be contained in the ends $M-M^*$.

\begin{Def} A geodesic plane $P^*$ in $M^*$ is defined to be a non-empty intersection $P\cap M^*$
for a geodesic plane $P$ of $M$.\end{Def}
Let $P=\pi(\tilde P)$ for a geodesic plane $\tilde P\subset \bH^3$,
and set $S=\op{Stab}_\Gamma(\tilde P) \ba \tilde P$.
Then the natural map $f: S\to P\subset M$ is an immersion (which is generically injective),
$S^*:=f^{-1}(M^*)$ is a non-empty convex subsurface of $S$ with $\pi_1(S)=\pi_1(S^*)$
and $P^*$ is given as the image of the restriction of $f$ to $S^*$.
The group $\pi_1(S^*)$ will be referred to as the fundamental group of $P^*$.
We note that a geodesic plane $P^*$ is always connected as $P^*$ is covered by the convex subset
$\tilde P \cap \op{Interior} (\text{hull } \Lambda)$.

\subsection{Rigidity of geodesic planes in $M^*$}
An analogous  topological rigidity of planes to Theorem \ref{rs}  continues to hold inside $M^*$, provided
$M$ is a geometrically finite {\it acylindrical} hyperbolic $3$-manifold.

The following rigidity theorem was proved jointly with McMullen and Mohammadi for convex cocompact cases
in \cite{MMO2}, and extended to geometrically finite cases jointly with Benoist \cite{BO}:
\begin{thm}Let $M$ be a geometrically finite acylindrical hyperbolic $3$-manifold.
Then geodesic planes in $M^*$ are topologically rigid in the following sense:
\begin{enumerate}
\item any geodesic plane $P^*$ in $M^*$  is either properly immersed or dense;
\item  the fundamental group of a properly immersed $P^*$  
is a non-elementary geometrically finite Fuchsian subgroup;
\item there are at most countably many properly immersed geodesic planes in $M^*$;
\item any infinite sequence of geodesic planes $P_i^*$  becomes dense in $M^*$, i.e,
$\lim P_i^*= M^*$.
\end{enumerate}
\end{thm}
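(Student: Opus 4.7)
The plan is to translate to the frame bundle and adapt the strategy used for the rigid acylindrical case (Theorem \ref{cc2} and its consequences \ref{rs}). A geodesic plane $P^*$ in $M^*$ corresponds to an $H$-orbit $xH$ with $x\in F^*$, and the key dichotomy (1) amounts to: for every $x\in F^*$, either $xH\cap F^*$ is closed in $F^*$ or else $\overline{xH}\supseteq F^*$ (equivalently $\overline{xH}=F_\Lambda$). So I would first prove this dichotomy, and then derive (2)--(4) as consequences.

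For the dichotomy, I would imitate the proof of Theorem \ref{cc2}, setting $X=\overline{xH}$ and $W=X\cap F^*\cap \RFM_{\text{thick}}$, where the $\text{thick}$ subscript truncates all rank-one and rank-two cusps at a fixed depth. The case split runs as in \ref{cc2}: if $W$ is non-compact, a sequence in $W$ converges to some $z\in\partial F\cap\RFM$, write $x_n=zg_n$, and apply the unipotent blowup lemma (Lemma \ref{U}) to produce either a direct $HV$-approach to a boundary frame or a $V$-translate of $z$ inside $F^*$, after which Lemma \ref{bd} (combined with Theorem \ref{mmo} and a Dalbo-type horocycle closure statement) concludes. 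If $W$ is compact, I would run the relative $U$-minimal set machinery (Lemmas \ref{YL2}, \ref{Yv}, \ref{qqq}) to produce $x_0V_I\subset X$ with $x_0\in W$, then invoke Lemma \ref{fs} to upgrade to $X=F_\Lambda$.

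The main obstacle I anticipate is the analogue of Proposition \ref{locally}: the return-time set $\mT(x)$ must have accumulating renormalizations for $x\in\RFM\cap F^*$, in order to feed Lemma \ref{U} with an $x$-independent input. In the rigid acylindrical case, $\kappa$-thickness of $\mT(x)$ follows from the systole lower bound on distances between hulls of complementary disks. In the geometrically finite acylindrical case, two new difficulties appear. First, the complementary disks of the Sierpinski carpet $\bS^2-\Lambda$ have diameters tending to $0$, so one must replace the global systole by a scale-localized version that depends only on the complementary disks meeting a bounded neighborhood of $g^+$ at the relevant scale; this should still give uniform $\kappa$-thickness of $\mT(x)$ on compacta of $\RFM\cap F^*$ and, more importantly, on each thick part, which is enough to run the argument. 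Second, one must prevent $xu_t$-trajectories from spending too much time deep in cusps: I would use an avoidance theorem for horocyclic excursions (as in the higher-dimensional work \cite{LO}) to ensure that the compactified return-time set retains accumulating renormalizations; when a trajectory does head into a cusp, a short $A$-translation returns it to the thick part without destroying the orbit closure analysis. Once $\kappa$-thickness is secured on the thick part, the boundary-frame analysis (Theorem \ref{ddd1}) extends because Theorem \ref{mmo} continues to identify $\Gamma^C$ as a finitely generated Fuchsian group, possibly with cusps, and Dalbo's horocycle theorem in the geometrically finite Fuchsian setting still yields $\overline{xvU}\supset xvA$ for non-parabolic $g^+$.

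Granting the dichotomy (1), statement (2) follows because a properly immersed $P^*$ corresponds to a closed $H$-orbit $xH\cap\RFPM$ in $F^*$; its fundamental group $\op{Stab}_\Gamma(\tilde P)$ is non-elementary (otherwise $P^*$ could not meet $M^*$ properly by the contrapositive of (1)) and inherits geometric finiteness from $\Gamma$ through the compact-core/convex-core correspondence for the closed $H$-orbit. Statement (3) is immediate: each such closed orbit determines a conjugacy class of finitely generated subgroups of the countable group $\Gamma$. For (4), I would argue by contradiction: if a sequence $P_i^*$ of properly immersed planes does not become dense in $M^*$, there is an open $U\subset M^*$ avoided by infinitely many $P_i^*$, and passing to a subsequential limit of the corresponding closed $H$-orbits in $\Gamma\ba G$ yields a non-trivial $H$-invariant closed subset $Y\subset F_\Lambda$ avoiding the lift of $U$; by (1) applied to a generic $y\in Y\cap F^*$, $Y$ is either a single properly immersed plane or all of $F_\Lambda$, either of which contradicts the infiniteness (the first via (3)-style isolation, the second via the avoidance). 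This is a topological Mozes-Shah-style argument, and the isolation input needed is the fact that each properly immersed $P^*$ has an open neighborhood in $M^*$ containing no other properly immersed plane entirely, which follows from (1) and (3).
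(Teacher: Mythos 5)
There is a genuine gap, and it sits exactly where you flagged the ``main obstacle'': the claim that a scale-localized version of the systole bound still gives uniform $\kappa$-thickness of $\mT(x)=\{t: xu_t\in \RFM\}$ on the thick part of $\RFM\cap F^*$. That argument is special to the \emph{rigid} case: there the complementary components $B_i$ of $\Lambda$ are round disks and $\inf_{i\ne j}d(\hull(B_i),\hull(B_j))>0$, which is what makes every circular slice $C_g\cap\Lambda$ thick. For a general geometrically finite acylindrical manifold (a quasiconformal deformation of a rigid one) the components are only Jordan domains, the hull-separation input has no analogue, and thickness of returns to $\RFM$ simply fails for some frames; no localization of the systole rescues it. The paper's actual route (Proposition \ref{mmm} together with Theorem \ref{con}) is to abandon $\RFM$ as the recurrence set and replace it by the smaller closed $A$-invariant set $\mathcal R_\e=\{[g]: C_g\cap\Lambda \text{ contains a compact set of modulus}\ge\e\text{ containing }g^{\pm}\}$, defined via conformal modulus of circular slices. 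One then proves (i) $\mathcal R_\e$ is closed, (ii) returns to $\mathcal R_\e$ are $\kappa(\e)$-thick, and, crucially, (iii) $F^*\subset \mathcal R_\e H$ for some $\e>0$. Step (iii) is the heart of the matter: it is proved by bridge arguments and monotonicity of moduli, using the quantitative acylindricality of $\Lambda$ --- it is a Sierpinski carpet with $\inf_{i\ne j}\op{mod}(\bS^2-(\overline B_i\cup\overline B_j))>0$, or, in the cusped case, a quotient of such a carpet with the modulus bound holding between maximal trees of components. Nothing in your proposal supplies (iii) or a substitute for it, and without it the relative minimal-set machinery (Lemmas \ref{YL2}, \ref{Yv}, \ref{qqq}) has no compact set $W$ along which Lemma \ref{U} can be fed accumulating renormalizations.

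A secondary consequence of the same gap appears in your non-compact case: when a sequence in $W$ degenerates to a boundary frame $z\in\partial F$, you need $z$ to be a \emph{thick} boundary frame (non-elementary $\Gamma^{C}$) before Theorem \ref{ddd1} and Lemma \ref{bd} apply; in the paper this is exactly what membership $z\in\mathcal R_\e$ guarantees, whereas a limit in $\partial F\cap\RFM$ alone may have elementary supporting-circle stabilizer, since by Theorem \ref{mmo} the slice $C\cap\Lambda$ can be $\Lambda(\Gamma^C)\cup\Gamma^C\Lambda_0$ with $\Gamma^C$ elementary. Your handling of cusps (truncation plus an avoidance-type statement) and your deduction of (2)--(4) from the dichotomy are in the right spirit of \cite{BO} and of Theorem \ref{mt5}, but the modulus-defined renormalization set and the inclusion $F^*\subset\mathcal R_\e H$ are the essential new ingredients of the paper's proof and are missing from your outline.
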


This theorem is deduced from following results on $H$-orbits in $F^*$:
\begin{thm}\label{mt5} (\cite{MMO2}, \cite{BO})
Let $M$ be a geometrically finite acylindrical hyperbolic $3$-manifold.
Then
\begin{enumerate}
\item any $H$-orbit in $F^*$  is either closed or dense;
\item if $xH$ is closed in $F^*$, then $\op{Stab}_H(x)$ is Zariski dense in $H$;
\item there are at most countably many closed $H$-orbits in $F^*$;
\item any infinite sequence of closed $H$-orbits $x_iH$  becomes dense in $F^*$, i.e,
$\lim x_iH= F^*$.
\end{enumerate}
\end{thm}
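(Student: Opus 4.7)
The strategy is to extend the topological approach used for rigid acylindrical groups (Theorems \ref{cc2} and \ref{rs}) to the geometrically finite acylindrical setting. Part (1) is the central assertion, and (2)--(4) follow from (1) together with additional structural input. For (1), given $x \in F^*$ with $X := \overline{xH} \neq xH$, the goal is to show $X \supset F^*$. The blueprint of Theorem \ref{cc2} applies step by step: produce a compact $W \subset \RFM \cap F^*$ such that $X \cap W \neq \emptyset$ and the return-time set $\{t \in \br : yu_t \in W\}$ has accumulating renormalizations for each $y \in X \cap W$; find a $U$-minimal subset $Y \subset X$ with respect to $W$ and some $y_0 \in Y \cap W$ with $X - y_0 H$ not closed; apply Lemmas \ref{YL2}, \ref{Yv}, and \ref{qqq} to deduce $x_0 V_I \subset X$ for some $x_0 \in W$ and an interval $0 \in I$; and invoke Lemma \ref{fs}. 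A classification of boundary frames analogous to Theorem \ref{ha} is also needed; it generalizes since supporting circles $C$ in the geometrically finite acylindrical case still have geometrically finite Fuchsian stabilizers $\Gamma^C$, so the Hedlund--Dalbo classification of horocyclic orbit closures in Fuchsian quotients carries over.

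The principal obstacle is that $\RFM$ is non-compact in the presence of cusps, so Proposition \ref{locally} and the choice $W = \RFM$ from Theorem \ref{cc2} both fail. The key new ingredient is an \emph{avoidance theorem} for cusps: one produces a compact $W \subset \RFM \cap F^*$, obtained by pulling back (to the frame bundle) the intersection of a compact piece of $\core M$ with the complement of small horoball neighborhoods of all cusps, such that for every $y \in X \cap W$ whose $U$-orbit is not asymptotic to a cusp, the return-time set $\{t : yu_t \in W\}$ is $\kappa$-thick with a uniform $\kappa > 1$. The proof of this avoidance theorem exploits the acylindricality of $M$ to constrain how the geodesic plane containing $y$ can interact with cusps, together with quantitative horoball geometry for rank-one and rank-two cusps. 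Once $W$ with $\kappa$-thick return times is in hand, the remainder of the proof of (1) proceeds as in Theorem \ref{cc2}.

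For (2), suppose $xH$ is closed in $F^*$ but $\Gamma_x := \op{Stab}_H(x)$ is not Zariski dense in $H$; then $\Gamma_x$ is elementary, i.e., trivial, infinite cyclic generated by a hyperbolic element, or infinite cyclic generated by a parabolic element. In each case the internal horocyclic or geodesic dynamics on $xH$, combined with part (1), forces $\overline{xH}$ to be strictly larger than $xH$, contradicting closedness in $F^*$; hence $\Gamma_x$ is Zariski dense, and a posteriori a non-elementary geometrically finite Fuchsian subgroup. For (3), each closed $H$-orbit $xH \subset F^*$ with $x = [g]$ gives rise to a Zariski dense finitely generated subgroup $g^{-1}\Gamma g \cap H$ of $\Gamma$; since $\Gamma$ is countable and each such subgroup has finitely many generators, only countably many such orbits arise. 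For (4), given an infinite sequence of distinct closed $H$-orbits $x_i H \subset F^*$, let $Y$ be a Chabauty limit of a subsequence of $\overline{\bigcup_i x_i H}$; then $Y$ is closed and $H$-invariant, and because the $x_i H$ are distinct and closed, $Y$ contains an $H$-orbit not equal to any $x_i H$. By (1), this orbit is either closed or dense in $F^*$. A Mozes--Shah type rigidity, leveraging (2) so that each $x_i H$ carries a canonical algebraic structure, rules out accumulation of distinct closed orbits on a single closed orbit, forcing $Y \supset F^*$ and thus $\lim x_i H = F^*$.

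The main technical obstacle is the avoidance theorem for cusps underlying (1); it is the one step that does not transfer directly from the rigid acylindrical or convex cocompact arguments, since it requires quantitative control of $U$-orbits in cusp regions of geometrically finite acylindrical manifolds. The density statement (4) also demands a careful Mozes--Shah type argument, but this is largely a consequence of the orbit closure dichotomy in (1) together with the Zariski density of stabilizers in (2), which together pin down enough algebraic rigidity for the Chabauty limit argument to go through.
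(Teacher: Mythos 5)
There is a genuine gap in your treatment of part (1), and it comes from misidentifying the principal obstacle. You locate the difficulty in the non-compactness of $\RFM$ caused by cusps and propose to fix it with a cusp-avoidance construction: a compact $W\subset \RFM\cap F^*$ obtained by deleting horoball neighborhoods, with $\kappa$-thick return times. But the real difficulty is already present in the convex cocompact (non-rigid) acylindrical case, where $\RFM$ \emph{is} compact and there are no cusps at all: the proof of Proposition \ref{locally} uses in an essential way that the complementary components $B_i$ of $\La$ are round disks whose hulls are uniformly separated (the totally geodesic boundary of the core), and for a general acylindrical group the $B_i$ are merely Jordan disks, so the return-time set $\{t: xu_t\in\RFM\}$, which in coordinates is the circular slice $C_g\cap\La$, need not be $\kappa$-thick for any $\kappa$ --- no matter how far the orbit stays from cusps. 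Deleting horoballs does not create thickness, and your sketch gives no reason why \emph{every} $H$-orbit in $F^*$ should even meet your $W$ along a set of times with accumulating renormalizations. The paper's mechanism is different: one defines the closed $A$-invariant set $\R_\e\subset\RFM$ of frames $[g]$ whose slice $C_g\cap\La$ contains a compact set of conformal modulus $\ge\e$ containing $g^{\pm}$, proves the return times to $\R_\e$ are $\kappa(\e)$-thick, and --- this is the genuinely new step (Theorem \ref{con}) --- proves the inclusion $F^*\subset \R_\e H$ by bridge arguments using the fact that the limit set of an acylindrical manifold is a Sierpinski carpet of positive modulus, $\inf_{i\ne j}\op{mod}\bigl(\bS^2-(\cl B_i\cup\cl B_j)\bigr)>0$; cusps are then handled not by horoball avoidance but by the ``quotient of a Sierpinski carpet of positive modulus'' statement for maximal trees of components, together with a modification of Proposition \ref{mmm} accounting for closed horoballs. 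Your proposal contains none of this modulus machinery, which is exactly what makes acylindricality enter the argument.

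A secondary inaccuracy: you assert that the boundary classification of Theorem \ref{ha} ``generalizes'' because supporting circles still have geometrically finite Fuchsian stabilizers. In the non-rigid case supporting circles are no longer contained in $\La$, boundary $H$-orbits are \emph{not} classified (the paper states this is open even for $U$-orbits), and closedness of the supporting-plane orbits fails; what the argument actually uses is only the thick boundary frame statement (Theorems \ref{mmo} and \ref{ddd1}) fed into Lemma \ref{bd}, and the reason the limiting boundary frame is thick is precisely that it lies in the good set $\R$. Your sketches of (2)--(4) are plausible in outline (and (3) is essentially the standard countability argument), but since (1) is the load-bearing input and your route to (1) does not work as stated, the whole proposal needs the modulus-based construction of $\R_\e$ and the inclusion $F^*\subset\R_\e H$ before it can be carried out.
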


\subsection{Closed or dense dichotomy for acylindrical groups}\label{s:dis}
In this section, we discuss the proof of the following
closed or dense dichotomy:

\begin{thm}\label{can}
Let $M$ be a geometrically finite acylindrical hyperbolic $3$-manifold.
Then any $H$-orbit in $F^*$ is either closed or dense.
\end{thm}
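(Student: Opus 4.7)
The plan is to imitate the rigid acylindrical proof of Theorem \ref{cc2}, with new technical work required to handle cusps. Set $X:=\overline{xH}$ and assume $X\neq xH$; we want to show $X=F_\Lambda$. Since $M$ is acylindrical its limit set $\Lambda$ is connected (it is either a Sierpinski carpet or $\bS^2$), so $\delta>1$ and $F^*\cap\RF_+M\subset\RFM\cdot U$, which allows the reduction to $x\in\RFM$. If $X$ contains a periodic $U$-orbit---now possible because $\Gamma$ has parabolics---the argument is handled separately via the simpler route of \cite[Prop.~4.2]{BO} alluded to in the footnote preceding Lemma \ref{YL2}; otherwise we assume that $X$ contains no periodic $U$-orbit and transplant the rigid argument.

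The first technical step is to produce a compact set $W\subset F^*\cap\RFM$ in the thick part of $\Gamma\ba G$ such that for each $y\in W$ the return time $\mT(y)=\{t:yu_t\in W\}$ has accumulating renormalizations. In the rigid case $W=\RFM$ works thanks to the global $\kappa$-thickness argument of Proposition \ref{locally}, which rests on the systole of the double of $\op{core} M$. For a geometrically finite acylindrical $M$ the supporting circles still have the structural description of Theorem \ref{mmo}, so the same $\kappa$-thickness persists away from the cusps; combining this with Dani--Margulis non-divergence for $U$-trajectories should yield a compact exhaustion of $F^*\cap\RFM$ by such sets, enough to feed into the unipotent blowup Lemma \ref{U}.

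Once $W$ is fixed, I would split according to whether $W':=X\cap W$ is compact. If $W'$ is non-compact, a sequence in $W'$ either accumulates on a boundary frame $z\in\partial F\cap\RFM$ in the thick part---in which case Theorem \ref{mmo} forces $z$ to be a thick boundary frame and Lemma \ref{bd} concludes $X=F_\Lambda$---or escapes into a cusp, in which case the plan is to use polynomial divergence of $U$ near the cusp together with Lemma \ref{U} and the no-periodic-$U$-orbit hypothesis to still extract a nontrivial $V$-translate of $X$ landing in $F^*$, again invoking Lemma \ref{bd}. If $W'$ is compact, the rigid case argument transfers verbatim: choose a $U$-minimal subset $Y\subset X$ relative to $W'$ with $X-y_0H$ non-closed for some $y_0\in Y\cap W'$ (using the local-closedness dichotomy on $xH$), apply Lemmas \ref{YL2} and \ref{Yv} to obtain $YL\subset Y$ for some one-parameter semigroup $L<AV$ and $Yv\subset X$ for some nontrivial $v\in V$, apply Lemma \ref{qqq} to produce $x_0V_I\subset X$ with $x_0\in W'\subset F^*$ and $0\in I$, and finally invoke Lemma \ref{fs} to conclude $X=F_\Lambda$.

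The hard part will be the cuspidal analysis: constructing $W$ and controlling what happens when orbits in $X$ drift into cusps. What is needed is a quantitative avoidance statement saying that $U$-orbits in $F^*$ near a parabolic fixed point either are periodic or return to the thick part frequently enough for the renormalization machinery to fire---together with a uniform version allowing one to detect non-trivial $V$-directions in $\limsup_n \mT_n g_n U$ even when the initial sequence sits near a cusp. This is the genuine new technical input of \cite{BO} that was not needed in the rigid setting treated in \cite{MMO,MMO2}, and without it neither the construction of $W$ nor the control over boundary-frame limits in Case 1 is available.
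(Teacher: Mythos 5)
Your skeleton is the right one---the paper proves exactly such a relativized version of Theorem \ref{cc2} (its Proposition \ref{mmm}, with the same Case 1/Case 2 split and the same chain of Lemmas \ref{YL2}, \ref{Yv}, \ref{qqq}, \ref{bd}, \ref{fs})---but there is a genuine gap in the step you flag as ``the first technical step,'' and you have misidentified where the new difficulty lies. It is not the cusps: the problem already appears for convex cocompact (non-rigid) acylindrical manifolds, where $\RFM$ is compact and there are no parabolic elements at all. The $\kappa$-thickness of $\mT(x)=\{t: xu_t\in\RFM\}$ proved in Proposition \ref{locally} uses in an essential way that for a \emph{rigid} acylindrical group the components of $\bS^2-\La$ are round disks whose hulls are uniformly separated by the systole of the double of $\core M$ (inequality \eqref{sys}). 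For a general acylindrical group the complementary components are fractal Jordan domains, no analogue of \eqref{sys} holds, and circular slices $C_g\cap\La$ need not give thick return times; so your claim that ``the same $\kappa$-thickness persists away from the cusps,'' with Theorem \ref{mmo} as justification, does not stand (Theorem \ref{mmo} describes supporting circles, not the separating circles $C_g$ with $[g]\in F^*$), and Dani--Margulis non-divergence is not available as a substitute in this infinite-volume setting.

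What the paper actually supplies at this point is a different construction: a closed $A$-invariant set $\mathcal R_\e\subset\RFM$ consisting of frames $[g]$ such that $C_g\cap\La$ contains a compact set of conformal modulus $\ge\e$ through $g^{\pm}$. One then proves (i) $\mathcal R_\e$ is closed, (ii) return times to $\mathcal R_\e$ are $\kappa(\e)$-thick, and---this is the real new input---(iii) $F^*\subset\mathcal R_\e H$ for some $\e>0$, so that every $H$-orbit in $F^*$ can be based at a point of $\mathcal R_\e$. Item (iii) is proved by the ``bridge'' arguments of \cite{MMO2} together with the fact that $\La$ is a Sierpinski carpet of positive modulus (in the cusped case, a quotient of one, which is where the closed-horoball modifications enter). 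Note also that even this does not give what you asked for, namely a compact exhaustion of $F^*\cap\RFM$ by sets with thick return times: points of $F^*\cap\RFM$ outside $\mathcal R_\e H$-translates need not recur thickly anywhere, and the correct statement is only that each orbit \emph{meets} the good set. Without the modulus construction and the inclusion $F^*\subset\mathcal R_\e H$, neither your set $W$ nor the Case 1 boundary analysis can be carried out, so the proposal as written does not close.
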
 

Indeed, the proof of Theorem \ref{cc2} for the rigid acylindrical case can be modified to prove the following proposition.

\begin{prop} [Main proposition] \label{mmm} Let $\Gamma$ be a Zariski dense convex cocompact subgroup of $G$
with connected limit set.
Let $\mathcal R$ be a closed $A$-invariant subset of $\RFM$
satisfying that for any $x\in \mathcal R$,
$\mT(x):=\{t:xu_t\in \R\}$ has accumulating renormalizations.
Then for any $x\in \mathcal R\cap F^*$, $xH$ is either  locally closed  or dense in $F$.
When $xH$ is locally closed, it is closed in $ \mathcal R H \cap F^*$.
\end{prop}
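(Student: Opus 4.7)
The plan is to adapt Case 2 of the argument for Theorem \ref{cc2} to this setting, with $\mathcal R$ playing the role of $\RFM$. In the rigid acylindrical case, Proposition \ref{locally} supplied the accumulating renormalizations property for recurrence times into $\RFM$; here that property is built into the hypothesis on $\mathcal R$. Since $\Gamma$ is convex cocompact, it has no parabolic elements, so $X := \overline{xH}$ contains no periodic $U$-orbits and the framework of Lemmas \ref{YL2}, \ref{Yv}, and \ref{qqq} is available.

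I would first reduce the proposition to the implication: if $xH$ is not locally closed, then $X = F_\Lambda$. The ``closed in $\mathcal R H \cap F^*$'' half of the locally closed case follows from this dichotomy by a short companion argument: a hypothetical limit point of $xH$ in $\mathcal R H \cap F^*$ outside $xH$ can, via an $H$-translate, be moved into $\mathcal R \cap F^*$, and the dichotomy applied to that point, with the density alternative forcing $\overline{xH} = F_\Lambda$; the transverse ``thickness'' of $F_\Lambda$ coming from $\delta > 1$ then rules out $xH$ being locally closed in $F_\Lambda$. So suppose $xH$ is not locally closed, and set $W := X \cap \mathcal R$; this is compact (closed in the compact set $\mathcal R \subset \RFM$) and $A$-invariant, and for any $y \in W$ the $H$-invariance of $X$ gives $\{t : yu_t \in W\} = \mT(y)$, which has accumulating renormalizations by hypothesis. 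Following the proof of Theorem \ref{cc2}, I would choose a $U$-minimal subset $Y \subset X$ with respect to $W$ such that $X - y_0 H$ is not closed for some $y_0 \in Y \cap W$ (using the standard case split on whether $Y \cap W \subset xH$, together with the non-closure of $X - xH$). Lemmas \ref{YL2} and \ref{Yv} then supply $YL \subset Y$ for a one-parameter semigroup $L < AV$ and $Yv \subset X$ for a non-trivial $v \in V$, and Lemma \ref{qqq} produces $x_0 \in W$ and an interval $0 \in I$ with $x_0 V_I \subset X$. An application of Lemma \ref{fs} then finishes the proof, provided $x_0 \in F^* \cap \RFPM$.

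The main obstacle will be ensuring $x_0 \in F^*$. The inclusion $x_0 \in \mathcal R \subset \RFM \subset \RFPM$ is automatic, but $x_0 \in F^*$ is not. In the rigid acylindrical case this was enforced by the choice $W := X \cap \RFM \cap F^*$, with possible failures of compactness of that $W$ handled via the thick boundary frame structure of $\partial F$ (Theorem \ref{ddd1}); neither remedy is available here. I would circumvent this by selecting the $U$-minimal set $Y$ inside $\overline{xU}$: since $F^*$ is $H$-invariant and $x \in F^* \cap W$, we have $xU \subset F^* \cap X$ and hence $x \in \overline{xU} \cap W \cap F^*$, so any $U$-minimal $Y \subset \overline{xU}$ with respect to $W$ will satisfy $Y \cap W \cap F^* \neq \emptyset$ by openness of $F^*$. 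Tracking the $F^*$ condition through the successive applications of Lemmas \ref{YL2}, \ref{Yv}, and \ref{qqq} should allow one to extract $x_0 \in W \cap F^*$, at which point Lemma \ref{fs} gives $X = F_\Lambda$ as desired.
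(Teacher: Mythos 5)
The decisive step of your argument is unjustified, and it is exactly the point where the paper has to work hardest. You take $W:=X\cap\mathcal R$ (dropping the $F^*$ constraint) so that $W$ is automatically compact, and then you must produce the segment $x_0V_I\subset X$ at a point $x_0\in F^*$. Your fix is the claim that, because $x\in \overline{xU}\cap W\cap F^*$ and $F^*$ is open, \emph{any} $U$-minimal subset $Y\subset\overline{xU}$ with respect to $W$ satisfies $Y\cap W\cap F^*\neq\emptyset$. This does not follow: a minimal set with respect to $W$ need not contain $x$, and openness of $F^*$ places no constraint on $Y$ at all. In general $\overline{xU}$ accumulates on $\partial F$, and a $U$-minimal subset with respect to $W$ can lie entirely in $\partial F\cap\mathcal R$ (for instance, inside $zH\cap\RFPM$ for a boundary orbit $zH$, which is $U$-minimal by Hedlund--Dalbo); then $Y\cap W\subset\partial F$ and your choice of $x_0$ in $F^*$ is impossible. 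Once $x_0\in\partial F$, the segment $x_0V_I$ is useless: as the paper points out just before Lemma \ref{fs}, sets of the form $x_0HV_+H\subset\partial F$ are closed and $H$-invariant, so Lemma \ref{fs} cannot be invoked and no information about $X\cap F^*$ is gained. (The secondary "tracking" step is actually harmless: if one \emph{did} know $Y\cap W\cap F^*\neq\emptyset$, the conclusions of Lemmas \ref{YL2}, \ref{Yv}, \ref{qqq} hold at every point of $Y\cap W$, so one could simply pick $x_0$ there. The gap is precisely the nonemptiness claim.)

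The paper avoids this by making the opposite trade: it sets $W:=X\cap\mathcal R\cap F^*$, so that any relative minimal set automatically meets $F^*$, and then it must confront the possibility that this $W$ is \emph{not} compact, i.e.\ that orbits of $W$ escape to $\partial F$. That case is handled by a separate boundary analysis which your proposal omits entirely: one produces $zv\in X$ with $z\in\partial F\cap\mathcal R$ and $v\in V$ nontrivial via the unipotent blowup along $\mT$-times, uses the hypothesis $z\in\mathcal R$ (accumulating renormalizations forces $C_g\cap\Lambda$ to be large, hence $\Gamma^{C_g}$ non-elementary by Theorem \ref{mmo}) to conclude that $z$ is a \emph{thick} boundary frame, and then applies Theorem \ref{ddd1} and Lemma \ref{bd} to get $X=F_\La$. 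Some such mechanism for boundary frames is unavoidable in the general convex cocompact setting, where Theorem \ref{ha} is not available; your proof never engages with it, so the argument as written does not close. (A smaller point: your reduction of the "closed in $\mathcal R H\cap F^*$" clause via "transverse thickness from $\delta>1$" is vaguer than needed; in the paper this clause comes for free from running the same argument under the hypothesis $(X-xH)\cap W\neq\emptyset$, using the $H$-invariance of $F^*$ and of $\mathcal R H$.)
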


\begin{proof}  

Let $X:=\overline{xH}$ for $x\in \mathcal R\cap F^*$.
Set $W:=X\cap \R\cap F^*$.
 Suppose
that either $xH$ is not locally closed or $(X-xH)\cap W\ne \emptyset$. We claim that $X=F_\La$.

\noindent{\bf Case 1:  $W$ is not compact}.
By repeating verbatim the proof of Theorem \ref{cc2},
we obtain $zv\in X\cap \R$ for some $z\in \partial F \cap \R$ and non-trivial $v\in V$.
As $z=[g]\in \R$, $\Gamma^{C_g}$ is non-elementary and hence
 $z$ is a {\it thick} boundary frame. Since $zv\in F^*$,  the claim follows from Lemma \ref{bd}.

\noindent{\bf Case 2:  $W$ is compact}.
By repeating verbatim the proof of Theorem \ref{cc2},
we show that  $X$ contains a $U$-minimal subset $Y$ with respect to $W$
 such that $X-y_0H$ is not closed for some $y_0\in Y\cap W$.
 Hence by applying Lemmas \ref{YL2}, \ref{Yv}, \ref{qqq} and \ref{fs},
 we get $X=F_{\Lambda}$.
 \end{proof}

When $\Gamma$ is rigid acylindrical, note that $\mathcal R=\RFM$ satisfies the hypothesis of Proposition \ref{mmm}.
In view of this proposition, Theorem \ref{can} for a convex cocompact case now follows from the following theorem, and
a general geometrically finite case can  be proved by an appropriately modified version, taking account of closed horoballs, which
is responsible for the non-compactness of $\RFM$.
\begin{thm}\label{con}
Let $M$ be a geometrically finite acylindrical hyperbolic $3$-manifold.
Then there exists a closed $A$-invariant subset $\R\subset \RFM$ such  that for any $x\in \mathcal R$,
$\mT(x):=\{t:xu_t\in \R\}$ accumulating renormalizations. Moreover $F^*\subset \R H$.
\end{thm}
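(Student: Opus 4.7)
The plan is to take $\R$ to be the $A$-saturation of the set of $\RFM$-frames based in a compact thick subset of $M^*$, and to derive uniform $\kappa$-thickness of the return times $\mT(x)$ from the acylindricity of $M$.

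First, I would fix $\e>0$ smaller than a Margulis constant and set
\[
K := \{p \in M : d(p, \partial \core M) \ge \e \text{ and $p$ lies outside fixed Margulis cusp neighborhoods}\},
\]
a compact subset of $M^*$ by geometric finiteness. Define
\[
\R := \cl{\{x \in \RFM : \pi(xA) \cap K \ne \emptyset\}},
\]
which is visibly closed and $A$-invariant. For the coverage $F^* \subset \R H$, given $x=[g]\in F^*$ the plane $P_g$ meets $M^*$ and the separating circle $C_g$ makes $C_g \cap \La$ infinite with $P_g \cap \hull(\La)$ a nondegenerate convex subsurface; one then produces a geodesic in $P_g$ with both endpoints in $C_g \cap \La$ passing through $K \cap P_g$, giving $h \in H$ with $xh \in \R$.

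For the thickness, by Lemma \ref{kappa} it suffices to find $\kappa>1$ such that $\mT(x)$ is $\kappa$-thick for every $x\in \R$. Using $A$-invariance we may reduce to $\pi(x)\in K$; placing $g^+=\infty, g^-=0$ in the upper half space model gives $\mT(x)\supseteq \La\cap\br$ as in the proof of Proposition \ref{locally}. That argument carries over once we have a uniform bound $\e_1=\e_1(\e)>0$ on $d(\hull(B_i),\hull(B_j))$ for any two distinct round disks $B_i,B_j$ of $\bS^2\setminus\La$ whose hulls both enter an $\e/2$-neighborhood of $K$: for $\kappa$ small enough, failure of $\kappa$-thickness at scale $r$ would place the intervals $[-\kappa r,-r]$ and $[r,\kappa r]$ into two such disks in a configuration violating the bound.

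The main obstacle is establishing this uniform separation $\e_1(\e)$. In the rigid case the bound is provided globally by the systole \eqref{sys} of the doubled convex core; in the non-rigid acylindrical setting, however, distinct hulls can be tangent --- necessarily along the bending lamination of $\partial\core M$. Acylindricity (the absence of essential cylinders in $\core M$) is the key tool preventing such tangencies from propagating into the interior: any cylinder realizing a near-tangency between two disk hulls must be boundary-parallel, so the near-tangent region is confined to a uniform neighborhood of $\partial\core M$. Making this qualitative picture quantitative is the technical heart of the construction, and cusps contribute an analogous difficulty, handled via the explicit parabolic structure inside Margulis horoballs --- which makes $\kappa$-thickness automatic there and justifies their removal from $K$.
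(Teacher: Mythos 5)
Your construction differs from the paper's, and it has two genuine gaps, both located exactly at the points you either flag as ``the technical heart'' or dispatch in one sentence. First, the thickness claim. With your choice of $\R$, membership of $xu_t$ in $\R$ is \emph{not} the condition $(gu_t)^-\in \La$: it additionally requires the geodesic determined by $xu_t$ --- which changes with $t$, fanning out from the common forward endpoint $g^+$ --- to actually reach the fixed compact set $K$ (or to be a limit of such; note also that the closure you take adds frames whose geodesics never meet $K$, so the reduction to $\pi(x)\in K$ is not automatic). Hence the asserted inclusion $\mT(x)\supseteq \La\cap\br$ is false: for $t$ large compared to the height at which the geodesic of $x$ enters $K$, the geodesic from $t$ to $\infty$ may stay within $\e$ of $\partial \core M$ or inside cusp neighborhoods for its entire length (for instance when $t$ lies on or near a supporting circle through regions shadowing the boundary), so such $t$ need not lie in $\mT(x)$ at all. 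Moreover the separation input you want is precisely what fails outside the rigid case: the components of $\bS^2-\La$ are Jordan domains, not round disks, so the convexity step in Proposition \ref{locally} (forcing $[-\kappa t,-t]$ and $[t,\kappa t]$ into two \emph{distinct} components) is unavailable, their closures can touch when there are cusps, and the bound \eqref{sys} came from doubling the core along its totally geodesic boundary, which does not exist for a non-rigid acylindrical manifold. You acknowledge this and leave it unproved, but it is the entire content of the theorem, not a technical remainder.

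Second, the coverage claim $F^*\subset \R H$ for a fixed $\e$ is not justified: a circle $C_g$ can separate $\La$ while cutting off only a very small piece of it, so $P_g$ meets $M^*$ but may penetrate the core arbitrarily shallowly, and nothing guarantees it carries a complete geodesic with both endpoints in $C_g\cap\La$ that reaches your fixed compact $K$. This is exactly the hard step. The paper's route avoids both problems by working conformally rather than metrically: it defines $\R_\e$ as the set of $[g]\in\RFM$ such that $C_g\cap\La$ contains a compact set of conformal modulus at least $\e$ containing $g^{\pm}$. Membership along the $U$-orbit is then read off from the single circular slice $C_g\cap\La$, which does not change with $t$, and $\kappa(\e)$-thickness of the return times is \cite[Prop.~4.3]{MMO2}; closedness of $\R_\e$ is a Hausdorff-limit argument on the compact sets; and the inclusion $F^*\subset\R_\e H$ --- the genuinely acylindrical input --- is proved by the bridge argument and monotonicity of moduli, using that $\La$ is a Sierpinski carpet of positive modulus (a quotient of one in the cusped case), see \cite{MMO2}, \cite{BO}. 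To salvage your metric version you would in effect have to reprove these modulus statements in terms of hyperbolic depth, which is harder, not easier.
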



We use the notion of a conformal modulus in order to find a closed subset $\mathcal R$ satisfying the hypothesis
of Theorem \ref{con}. An annulus $\mathcal A\subset \mathbb S^2$ is an open region whose compliment consists of two components.
If neither component is a single point, $\mathcal A$ is 
conformally equivalent to a unique round annulus
of the form $\{z\in \c: 1<|z|<R\}$. The modulus $\op{mod}(\mathcal A)$ is then defined to be $\log R$.
If $P$ is a compact set of a circle $C$ such that its complement
$C-P=\bigcup I_i$ is a union of at least two intervals with disjoint closures, we define
the modulus of $P$
as $$\mod P:= \inf_{i\ne j} \mod (I_i, I_j)$$
where  $\mod (I_i, I_j):= \mod \big( \mathbb S^2- (\cl I_i \cup \cl I_j)\big)$.

For $\e>0$, define $\mathcal R_\e\subset \RFM$ as the following subset:
$$ \mathcal R_\e:=\{[g]: C_g\cap \La \text{ contains a compact set of modulus $\ge \e$ containing
$g^{\pm}$}\} .$$ 

\begin{lem} For $\e>0$, the set $\mathcal R_\e$ is  closed. \end{lem}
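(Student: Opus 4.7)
The plan is to establish closedness sequentially via a Hausdorff-limit argument on the compact sets witnessing the modulus bound. Suppose $[g_n]\in \mathcal R_\e$ and $[g_n]\to [g]$ in $\G\backslash G$. Since the defining condition of $\mathcal R_\e$ is $\G$-invariant, we may choose lifts so that $g_n\to g$ in $G$; consequently $g_n^\pm\to g^\pm$ in $\bS^2$ and the circles $C_{g_n}$ converge to $C_g$ in the Hausdorff metric. For each $n$, select a compact set $P_n\subset C_{g_n}\cap \La$ with $\mod P_n\ge \e$ and $g_n^\pm\in P_n$. By the compactness of the hyperspace of closed subsets of $\bS^2$ in the Hausdorff metric, after passing to a subsequence we may assume $P_n\to P$ for some compact $P\subset \bS^2$.

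The easy properties of $P$ are checked directly: $P\subset C_g$ (since $P_n\subset C_{g_n}$ and $C_{g_n}\to C_g$), $P\subset \La$ (since $\La$ is closed and $P_n\subset \La$), and $g^\pm\in P$ (since $g_n^\pm\in P_n$ with $g_n^\pm\to g^\pm$). In particular $P$ contains the two distinct points $g^\pm$.

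The crux is to verify $\mod P\ge \e$. Fix any two distinct complementary arcs $I=(a,b)$ and $I'=(a',b')$ of $C_g-P$ with disjoint closures, and fix $\delta>0$. The closed subarcs obtained by trimming $\delta$ from each endpoint, say $I_\delta\subset I$ and $I'_\delta\subset I'$, have positive distance from the compact set $P$. By Hausdorff convergence $P_n\to P$, for all $n$ sufficiently large these subarcs (transported inside $C_{g_n}$ via the convergence $C_{g_n}\to C_g$) are disjoint from $P_n$, hence contained in well-defined complementary arcs $I_n^\delta, I_n^{\prime\delta}$ of $C_{g_n}-P_n$ whose closures are disjoint. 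The hypothesis $\mod P_n\ge \e$ then yields $\mod(I_n^\delta, I_n^{\prime\delta})\ge \e$. As $n\to\infty$, the arcs $I_n^\delta, I_n^{\prime\delta}$ Hausdorff-converge to $I_\delta, I'_\delta$, and by the continuity of the modulus of a pair of disjoint arcs under Hausdorff convergence of their closures, $\mod(I_\delta, I'_\delta)\ge \e$. Letting $\delta\to 0$ and applying continuity once more gives $\mod(I,I')\ge \e$. Since $(I,I')$ was arbitrary, $\mod P\ge \e$, so $[g]\in \mathcal R_\e$.

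The only nontrivial ingredient is the continuity of the conformal modulus of a pair of disjoint arcs in $\bS^2$ under Hausdorff convergence of the arcs; I will invoke this as a standard fact from conformal analysis (it follows, e.g., from Carath\'eodory kernel convergence applied to the uniformizing maps of the complementary annular regions). The main delicate step, and the place to be careful, is the passage from the ``rigid'' modulus hypothesis on $P_n$ to a modulus lower bound for arbitrary complementary pairs of the limit $P$; the double limit $n\to\infty$ followed by $\delta\to 0$ is what makes the continuity argument work in spite of possible mismatch between complementary intervals of $C_{g_n}-P_n$ and of $C_g-P$.
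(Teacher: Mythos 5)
Your overall strategy is the same as the paper's: pass to lifts, extract a Hausdorff limit $P$ of the witnesses $P_n$, check $P\subset C_g\cap\La$ and $g^{\pm}\in P$, and transfer the modulus bound to the limit. The gap is in the crux step. It is not true in general that the complementary arcs $I_n^{\delta},I_n^{\prime\delta}$ of $C_{g_n}-P_n$ Hausdorff-converge to the trimmed arcs $I_\delta, I'_\delta$: these arcs merely \emph{contain} the transported trimmed arcs, their endpoints lie in $P_n$, and any subsequential limit is an arc whose endpoints lie in $P$, whose interior is disjoint from $P$ (a point of $P$ in the interior would be approximated by points of $P_n$ lying inside $I_n^{\delta}$, contradicting $I_n^{\delta}\cap P_n=\emptyset$), and which contains $I_\delta$; such an arc is forced to be the whole component $I$ (resp.\ $I'$), not $I_\delta$. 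So the continuity-of-modulus step is applied to the wrong limit, and as written the proof rests on a false convergence claim. The conclusion survives, but by a different mechanism: either identify the limits correctly as $I$ and $I'$ and apply continuity once, which makes the $\delta$-trimming and the $\delta\to0$ limit unnecessary and is essentially the paper's argument (the paper approximates the endpoints $a_i,b_i\in P_\infty$ of the limiting components by points $a_{i,n},b_{i,n}\in P_n$ and shows the arcs $I_{i,n}=(a_{i,n},b_{i,n})$ are components of $C_{g_n}-P_n$ converging to $I_i$); or invoke monotonicity of the modulus, since the transported $I_\delta, I'_\delta$ sit inside $I_n^{\delta}, I_n^{\prime\delta}$, so that $\op{mod}(I_n^{\delta},I_n^{\prime\delta})\ge\e$ already bounds the modulus of the transported pair, and only the transport itself needs to be passed to the limit.

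Two further points are asserted without argument. First, you need the two trimmed arcs to lie in \emph{distinct} components of $C_{g_n}-P_n$; otherwise $\op{mod}P_n\ge\e$ says nothing about that pair. This is true, but it requires noting that the endpoints $a,b,a',b'$ of $I,I'$ belong to $P$, so Hausdorff convergence supplies points of $P_n$ near them which, for large $n$, lie outside the transported trimmed arcs and separate them on $C_{g_n}$; this separation is exactly what the paper's endpoint-approximation takes care of. Second, to conclude $[g]\in\mathcal R_\e$ you must know that $\op{mod}P$ is defined at all, i.e.\ that the complementary intervals of $P$ in $C_g$ have pairwise disjoint closures (and that there are at least two of them); you only consider pairs that already have disjoint closures. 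The paper derives the disjointness of closures for \emph{every} pair $I_i,I_j$ from the uniform bound $\op{mod}(I_{i,n},I_{j,n})\ge\e$, and your argument needs the analogous step.
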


\begin{proof} Suppose that $g_n\in \mathcal R_\e$
converges to some $g\in \RFM$. We need to show $g\in \mathcal R_\e$.
Let $P_n\subset C_{g_n}\cap \Lambda$ be a compact set of modulus
$\ge \e$ containing $g_n^{\pm}$. 
Since the set of all closed subsets of $\mathbb S^2$ is a compact space in the Hausdorff topology on closed subsets,
we may assume $P_n$ converges to some $P_\infty$, by passing to a subsequence. This means
that $P_\infty=\limsup_n P_n=\liminf_{n} P_n$ \cite{HY}.

Write $C_g-P_\infty=\bigcup_{i\in I} I_i$ as the disjoint union of connected components.
As $g^{\pm}\in P_\infty$, $|I|\ge 2$. Let $i\ne j\in I$, and write $I_i=(a_i, b_i)$ and $I_j=(a_j, b_j)$.
There exist $a_{i,n}, b_{i,n}, a_{j, n}, b_{j,n}\in P_n$ converging to
$a_i, b_i, a_j$ and $b_j$ respectively. Set $I_{i,n}$ and $I_{j, n}$ to be the intervals $(a_{i,n}, b_{i,n})$
and $(a_{j,n}, b_{j, n})$ respectively. Since $I_{i,n}\to I_i$, and $I_{j,n}\to I_j$, 
 $I_{i,n}\cup I_{j, n} \subset C_{g_n}-P_n$ for all large $n$. Since $a_{i,n}, b_{i, n}\in P_n$, $I_{i,n}$ is a connected component of 
$C_{g_n}-P_n$. Similarly, 
$I_{j,n}$ is a connected component of 
$C_{g_n}-P_n$.
Since $\mod (I_{i,n}, I_{j, n})\ge \e$ for all $n$, it follows that $I_i$ and $I_j$ have disjoint closures
and $\mod (I_i, I_j)\ge \e$. This shows that $P_\infty$ is a compact subset of $C_g\cap \Lambda$ of modulus at least $\e$
containing $g^{\pm}$. Therefore $g\in \mathcal R_\e$.
\end{proof}

There exists $\kappa=\kappa (\e)>1$ such that for any $x\in \mathcal R_\e$,
$\mT(x):=\{t:xu_t\in \R_\e\}$ is $\kappa$-thick (see \cite[Prop. 4.3]{MMO2}); hence $\mathcal R_\e$ satisfies the hypothesis of Theorem \ref{con}.

 In general, $\mathcal R_\e$ may be empty!
However for geometrically finite acylindrical manifolds,  there exists $\e>0$ such that 
\be \label{f} F^*\subset \mathcal R_\e H
\ee (\cite{MMO2}, \cite{BO}); hence Theorem \ref{can} follows. The inclusion \eqref{f}  is proved using {\it bridge} arguments devised
in \cite{MMO2},
and the monotonicity  of conformal moduli,
 based on the property that for a convex cocompact acylindrical manifold $M$, $\Lambda$
is a Sierpinski carpet of positive modulus, that is,
$$\inf_{i\ne j} \text{mod}(\bS^2- (\overline B_i\cup \overline B_j))>0 $$ 
where $B_i$'s are components of $\bS^2-\La$ (see \cite{MMO2} for details).

When $M$ has cusps, the closures of some components of $\bS^2-\La$ may meet each other, and hence $\Lambda$ is not even a Sierpinski carpet in general.
 Nevertheless, under the assumption that $M$ is a geometrically finite acylindrical manifold,
 $\Lambda$ is still a {\it quotient of a Sierpinski carpet of positive modulus}, in the sense that
  we can present $\bS^2-\La$ as the disjoint union $\bigcup_\ell T_\ell$ where $T_\ell$'s are maximal {\it trees of components of $\bS^2-\Lambda$}
  so that 
  $$\inf_{\ell\ne k}\;\; \text{mod}(\bS^2- (\overline T_\ell \cup \overline T_k))>0.$$

\begin{figure}\label{acp5}  \begin{center}
 \includegraphics [height=4cm]{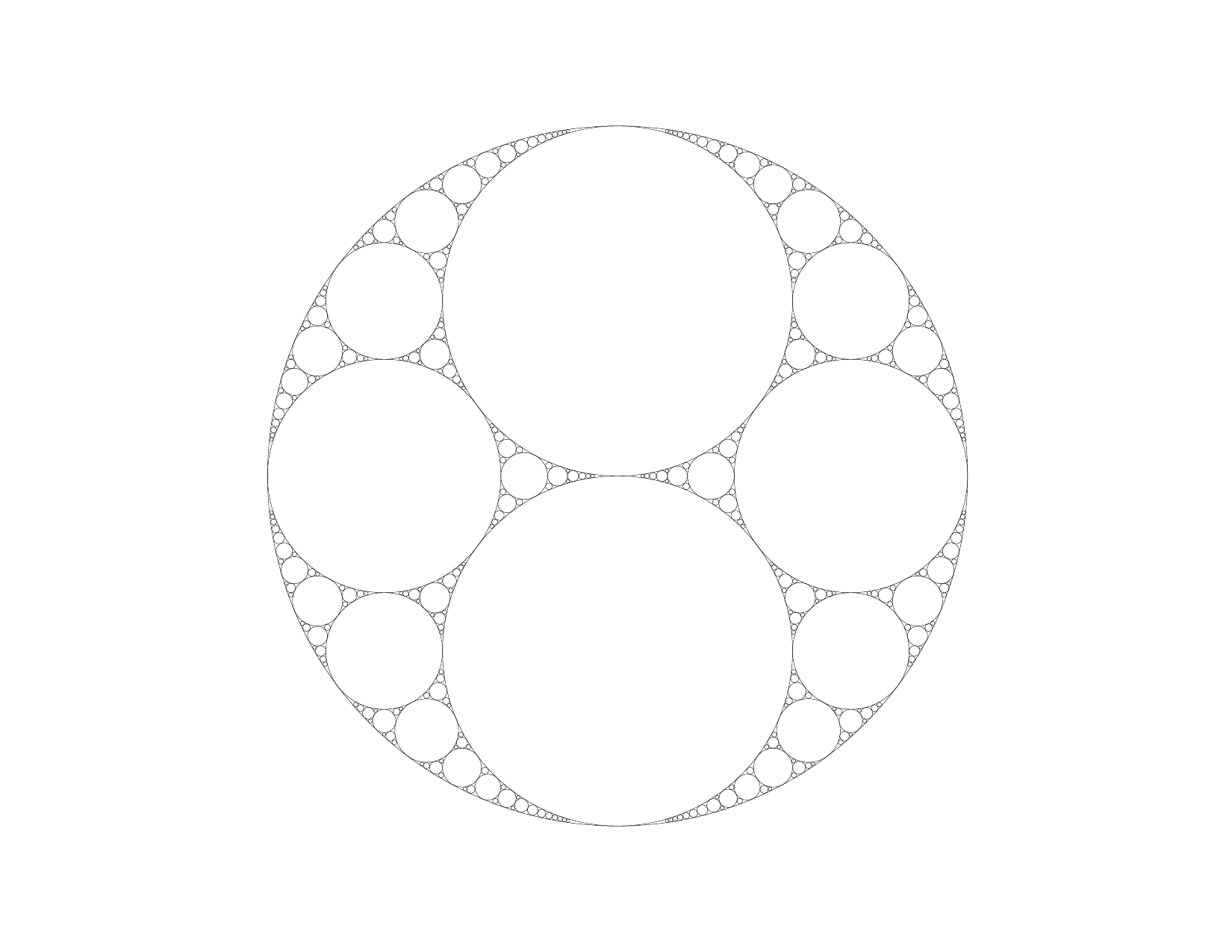}
\caption{Apollonian gasket}
\end{center}\end{figure}

\medskip
\noindent{\bf Question:} Let $\Gamma$ be a Zariski dense geometrically finite subgroup of $G$ with a connected limit set.
Let $C\in \mathcal C^*$. If $C\cap \La$ contains a Cantor set,
\begin{equation*} \label{q} \text{is $\Gamma C$ either discrete or dense in $\mC^*$?}\end{equation*}
If $C\cap \Lambda$ contains a Cantor set of positive modulus, this question has been answered affirmatively in \cite{BO}.


One particular case of interest is when $\La$ is the Apollonian gasket. The correspdoning
geometrically finite hyperbolic $3$-manifold  is not acylindrical in this case, because its compact core is a handle body of genus $2$, and hence it is not boundary incompressible; this can also be seen from the property that the Apollonian gasket contains 
a loop  of three consecutively tangent disks.

\medskip
\noindent{\bf Question:}  Can we classify all possible closures of $U$-orbits in a geometrically finite acylindrical group?
In order to answer this question, we first need to classify all possible $H$-orbit closures in $\partial F$, which is unsettled yet.

\section{Unipotent flows in higher dimensional hyperbolic manifolds}
Let $\bH^d$ denote the $d$-dimensional hyperbolic space for $d\ge 2$ with $\partial (\bH^d)=\bS^{d-1}$, and let
$G:=\SO^\circ (d,1)$, which is the isometry group $\op{Isom}^+(\bH^d)$.
Any complete hyperbolic $d$-manifold is given as the quotient $M=\Gamma\ba \bH^d$  for a torsion-free discrete subgroup $\Gamma <G$
(also called a Kleinian group).
The limit set of $\Gamma$ and the convex core of $M$ are defined just like the dimension $3$ case.  
As we have seen in the dimension $3$ case, the geometry  and topology
 of hyperbolic manifolds becomes relevant in the study of unipotent flows in hyperbolic manifolds of infinite volume,  unlike in the finite volume case. Those hyperbolic $3$-manifolds
in which we have a complete understanding of the topological behavior of unipotent flows are  rigid acylindrical hyperbolic $3$-manifolds.
\subsection{Convex cocompact hyperbolic manifolds with Fuchsian ends}   The higher dimensional analogues of  rigid acylindrical hyperbolic $3$-manifolds are as follows:
 \begin{Def} A convex cocompact hyperbolic $d$-manifold $M$ is said to have Fuchsian ends
 if  the convex core of $ M$ has non-empty interior and has  totally geodesic boundary.
 \end{Def}
The term {\it Fuchsian ends} reflects the fact that each component of the boundary of $\core M$
is a $(d-1)$-dimensional closed hyperbolic manifold, and each component of the complement $M-\core (M)$  is diffeomorphic 
to the product $S\times (0,\infty)$ for some closed hyperbolic $(d-1)$-manifold $S$. 
For $d=2$, any convex cocompact hyperbolic surface has Fuchsian ends. For $d=3$, these are precisely rigid acylindrical hyperbolic $3$-manifolds.

\begin{figure}\label{end1}  \begin{center}
 \includegraphics [height=4cm]{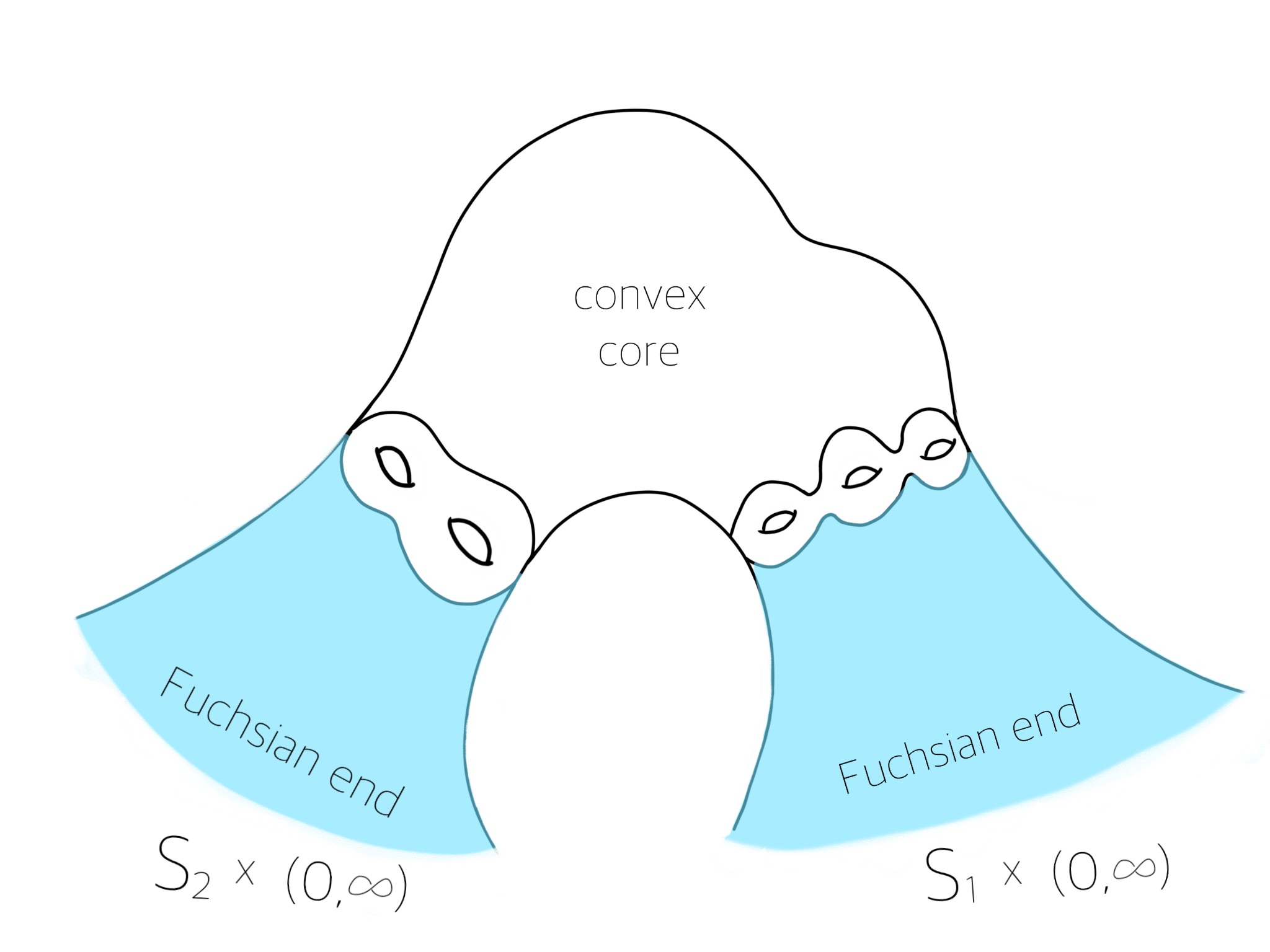}
 \end{center} \caption{Convex cocompact manifolds with Fuchsian ends.} \end{figure}

Convex cocompact hyperbolic manifolds with non-empty Fuchsian ends are constructed from closed hyperbolic manifolds
as follows.
 Begin with a closed hyperbolic $d$-manifold $N_0=\Gamma_0\ba \bH^d$ with a fixed collection of finitely many, mutually disjoint,
 properly embedded totally geodesic hypersurfaces. Cut $N_0$ along those hypersurfaces and perform the metric completion
  to obtain a compact hyperbolic manifold $W$ with totally geodesic boundary hypersurfaces.   Then $\Gamma:=\pi_1(W)$ injects to $\Gamma_0=\pi_1(N_0)$, and
 $M:=\Gamma\ba \bH^d$ is a convex cocompact hyperbolic manifold with Fuchsian ends.

Unlike  $d=3$ case, 
Kerckhoff and Storm showed that if $d\ge 4$,
 a convex cocompact
hyperbolic manifold $M=\Gamma\ba \bH^d$ with Fuchsian ends does not allow any non-trivial local deformation, in the sense that
  the representation of $\Gamma$
into $G$ is infinitesimally rigid
\cite{KS}.


\subsection{Orbit closure of unipotent flows are relatively homogeneous} 
 We let $A=\{a_t\}$ be the one parameter subgroup
 of  semisimple elements of $G$ which give the frame flow, and let $N\simeq \br^{d-1}$ denote the contracting horospherical subgroup.
 We have a compact $A$-invariant subset $\RFM=\{x\in \Gamma\ba G: \text{$xA$ is bounded}\}$.
 
 The following presents a generalization of Theorems \ref{cc2} and \ref{mc} to any dimension:
  
 \begin{thm} \label{oc}\cite{LO} Let $d\ge 2$ and $M$ be a convex cocompact hyperbolic $d$-manifold with Fuchsian ends. 
 Let $U$ be any connected closed subgroup of $G$ 
 generated by unipotent elements.
Suppose that $U$ is normalized by $A$.
Then the closure of any $U$-orbit is relatively homogeneous in $\RFM$, in the sense that  for any $x\in \RFM$,
$$\overline{xU}\cap \RFM =xL\cap \RFM $$
for  a connected closed reductive  subgroup $U<L<G$ such that $xL$ is closed.
\end{thm}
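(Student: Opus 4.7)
My plan is to extend the closed-or-dense dichotomy of Proposition~\ref{mmm} and the $U$-orbit classification of Theorem~\ref{mc} from $d=3$ to general $d$, by inducting on the dimension of a candidate intermediate subgroup. Fix $x\in \RFM$ and set $X:=\overline{xU}\cap \RFM$. The task is to produce a connected closed reductive subgroup $U\subset L\subset G$ such that $xL$ is closed and $X=xL\cap \RFM$; the natural candidate for $L$ is the stabilizer of $X$ in $G$ modulo minor corrections, but for the induction it is cleaner to build $L$ step by step as the group of additional invariances that we extract.

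The first ingredient is a thickness statement generalizing Proposition~\ref{locally}. Writing $\bS^{d-1}-\Lambda=\bigsqcup_i B_i$ as a disjoint union of round open balls, the Fuchsian-ends hypothesis yields $\inf_{i\ne j} d(\op{hull}(B_i),\op{hull}(B_j))\ge \e_0$, where $2\e_0$ is the systole of the double of $\core M$, for the realizing geodesic is either a closed geodesic of $M$ or a halved closed geodesic of the doubled manifold. Applied inside each $2$-plane containing the base geodesic of a point $x\in \RFM$, this shows that $\mT(x):=\{t\in\br: xu_t\in \RFM\}$ is $\kappa$-thick for a uniform $\kappa>1$, hence has accumulating renormalizations. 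The second ingredient is a higher-dimensional unipotent blowup: for any connected unipotent $U$ normalized by $A$, and any $g_n\to e$ with $g_n\notin \op{N}_G(U)$, a Chevalley-style polynomial reparametrization (as in the proof of Lemma~\ref{U}, carried out via an appropriate $G$-representation with $U$-invariant vector) produces nontrivial limits of $u_{s_n}g_nu_{t_n}$ in $\op{N}_G(U)\setminus U$, with $\{t_n\}$ free to range over any sequence having accumulating renormalizations. Feeding $\mT(x)$ into this blowup drives the analogs of Lemmas~\ref{YL2}, \ref{Yv} and \ref{qqq}: from a relative $U$-minimal $Y\subset X$ one produces a strictly larger closed subgroup $U\subsetneq U'\subset G$ with $YU'\subset Y$. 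One then replaces $U$ by $U'$ and iterates; since $\dim U'$ is bounded by $\dim G$, the procedure terminates at the desired $L$.

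The main obstacle, and the genuinely new ingredient in dimensions $d\ge 4$, is controlling what happens when $X$ accumulates on a closed orbit $yL'\subset \Gamma\ba G$ of a proper intermediate subgroup $U\subset L'\subsetneq L$. Unlike the rigid acylindrical $3$-manifold case, where the only singular orbits are the compact $H$-orbits corresponding to totally geodesic boundary components (handled by Theorem~\ref{ha}), in higher dimensions there is a rich countable family of intermediate closed orbits on which $X$ might be trapped, and the inductive construction can stall at some $L'\ne L$. To exclude this, one needs the \emph{avoidance theorem} (Theorem~\ref{hard}): $U$-trajectories in $\RFM$ avoid small neighborhoods of any fixed singular closed orbit with definite frequency. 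The avoidance theorem must be proved jointly with an equidistribution statement (Theorem~\ref{mainth}) for sequences of closed $L'$-orbits converging to the Burger--Roblin measure restricted to their algebraic span; the two statements are coupled and established by a joint induction on $\dim L'$. Once avoidance is in hand, the iteration of the previous paragraph terminates with $U'=L$. Reductivity of $L$ follows because $L$ is normalized by $A$: any nontrivial unipotent radical would, via the $A$-action and the thickness of $\mT$, force strictly more invariance directions and contradict the minimality of $L$; closedness of $xL$ is a consequence of the closed-orbit structure produced by the induction.
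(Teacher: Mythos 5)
Your outline assembles the same ingredients as the paper's account of \cite{LO}: $\kappa$-thick return times to $\RFM$ coming from the Fuchsian-ends geometry (the analogue of Proposition \ref{locally}), unipotent blowup fed by sequences with accumulating renormalizations (Lemma \ref{U}), relative $U$-minimal sets as in Lemmas \ref{YL2}, \ref{Yv}, \ref{qqq}, and avoidance of the singular set; so in spirit this is the paper's route. However, your organization of the hard part misstates what is actually proved and where the difficulty sits. In the paper, the avoidance theorem (Theorem \ref{hard}) is a standalone result, established via the graded intersections of the singular tubes $\Gamma X(H,U)$ and a combinatorial inductive search; it is not proved jointly with an equidistribution statement by induction on $\dim L'$. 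The joint induction is instead among the three statements of Theorem \ref{mainth}, indexed by the codimension of $U$ in $N$ (resp.\ in $\widehat L\cap N$): one simultaneously classifies $\overline{xH(U)}$, classifies $\overline{xU}$ and $\overline{xAU}$ inside closed orbits of intermediate groups, and proves that infinite sequences of closed orbits $x_iL_i$ become dense in the ambient closed orbit. Moreover the ``equidistribution'' ingredient you invoke is described as convergence to the Burger--Roblin measure; what is needed and proved is the purely topological density statement of Theorem \ref{mainth}(3), and a BR-measure equidistribution of closed orbits is not available here, so your argument should not be made to rest on it.

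The place where your ``replace $U$ by $U'$ and iterate until the dimension stabilizes'' scheme would stall is exactly the issue the paper highlights: Theorem \ref{hard} only gives accumulating renormalizations for return times avoiding a single neighborhood $\cal O_n$ of $E_n$, not $\bigcup_{j\le n}\cal O_j$, so one cannot conclude that limit points along those times are generic (as one can in the lattice case, cf.\ the proof of Proposition \ref{in} using \eqref{zzz}); statement (3) of Theorem \ref{mainth} is inserted into the induction precisely to repair this, and your sketch should acknowledge that the termination of the iteration at a closed orbit of a reductive $L\in\cal L_U$, with equality $\overline{xU}\cap\RFM=xL\cap\RFM$, is what the triple induction delivers rather than a consequence of a dimension count. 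You also omit the geometric input used in the step $(2)_{m+1}$ to guarantee that the relative singular set $\mS(U,x_0\widehat L)$ is nonempty, namely that any properly immersed geodesic $k$-plane of $M$ is either compact or contains a compact geodesic $(k-1)$-plane.
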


When $M$ has finite volume, this is a special case of Ratner's orbit closure theorem \cite{R}.
This particular case was also proved by Shah by topological methods \cite{Sh0}.

Theorem \ref{oc} and its refinements made in \cite{LO}  yield  the analogous topological rigidity of geodesic planes and horocycles.
A geodesic $k$-plane of $M$ is the image of a totally geodesic immersion $f:\bH^k\to M$.

\begin{thm}\label{mt6} \cite{LO}
Let $M$ be a convex cocompact hyperbolic $d$-manifold with Fuchsian ends.
Then for any $2\le k\le d-1$,
\begin{enumerate}
\item the closure of any  geodesic $k$-plane  intersecting $\core M$
is a properly immersed geodesic $m$-plane for some $k\le m\le d$;
\item a properly immersed geodesic $k$-plane  is a convex cocompact (immersed) hyperbolic $k$-manifold with Fuchsian ends;
\item there are at most countably many maximal properly immersed geodesic planes intersecting $\core M$;
\item any infinite sequence of maximal properly geodesic planes intersecting $\core M$ becomes dense in $M$.
\end{enumerate}
\end{thm}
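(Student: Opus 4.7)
The plan is to translate each of the four assertions into a statement about $H_k$-orbit closures in the frame bundle $\Gamma\ba G$, where $H_k:=\SO^\circ(k,1)$ is viewed as the stabilizer of a fixed totally geodesic copy of $\bH^k\subset \bH^d$, and then apply Theorem \ref{oc}. Under the correspondence $gH_k\mapsto \pi(g\bH^k)$, oriented geodesic $k$-planes of $M$ meeting $\core M$ are in bijection with $H_k$-orbits $xH_k\subset \Gamma\ba G$ having $xH_k\cap \RFM\ne \emptyset$. For (1), pick $x\in \RFM$ representing a given geodesic $k$-plane $P$. The subgroup $H_k$ is generated by unipotents and normalized by $A$, so Theorem \ref{oc} gives a connected closed reductive $H_k<L<G$ with $xL$ closed and $\overline{xH_k}\cap \RFM = xL\cap \RFM$. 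By the classification of connected reductive subgroups of $\SO^\circ(d,1)$ containing a standard $\SO^\circ(k,1)$, together with the Zariski density of the stabilizer forced by the fact that the plane meets $\core M$, up to conjugation $L=H_m:=\SO^\circ(m,1)$ for some $k\le m\le d$, so $p(xL)$ is the desired properly immersed geodesic $m$-plane.

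For (2), a properly immersed $P=p(xH_k)$ meeting $\core M$ has fundamental group $\Delta:=x^{-1}\Gamma x\cap H_k$, and the identification $P=\Delta\ba\bH^k$ realizes $\core P=\Delta\ba\hull \Lambda(\Delta)$ as a compact subset of $\core M$, so $\Delta$ is convex cocompact. The Fuchsian-ends property of $M$ says that its totally geodesic boundary hyperplanes lift to a $\Gamma$-invariant family of geodesic hyperplanes of $\bH^d$, and the non-degenerate intersections of this family with $\bH^k$ provide the totally geodesic boundary of $\core P$. For (3), each maximal properly immersed geodesic plane meeting $\core M$ is encoded by a closed $H_m$-orbit through some $x\in \RFM$, hence by the countable datum of a conjugate subgroup $x^{-1}\Gamma x\cap H_m$ inside the countable group $\Gamma$, yielding countability.

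Step (4) is the main obstacle. Given an infinite sequence $(P_i)$ of distinct maximal properly immersed geodesic planes meeting $\core M$, pass to a subsequence so that all $P_i$ share a common dimension $k$ and the associated frames $x_i\in \RFM$ converge to some $x_\infty\in \RFM$. One then seeks to show $\overline{\bigcup_i x_iH_k}\cap \RFM=\RFM$, which, combined with the $H_k$-invariance of the closure, will imply $\lim_i P_i=M$. Applying Theorem \ref{oc} to $x_\infty H_k$ produces a connected reductive $H_k<L<G$ with $x_\infty L$ closed, and the goal is to force $L=G$. Ruling out the case $H_k\subsetneq L\subsetneq G$ is the crux: one must exclude the scenario in which infinitely many maximal closed $H_k$-orbits concentrate on a proper closed $H_m$-orbit with $k<m<d$. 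This is precisely where the higher-dimensional avoidance theorem and the inductive equidistribution statement (Theorems \ref{hard} and \ref{mainth}) enter: avoidance provides a quantitative estimate preventing such concentration, and induction on $m$ processes the entire tower $H_k<H_{k+1}<\cdots<H_d=G$ of candidate intermediate subgroups. Managing this inductive cascade of avoidance estimates is the principal technical difficulty, and it has no analogue in the three-dimensional rigid acylindrical case, where no proper reductive subgroup sits strictly between $H=\SO^\circ(2,1)$ and $G=\SO^\circ(3,1)$.
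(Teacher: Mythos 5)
Your overall route is the one the paper itself takes: Theorem \ref{mt6} is obtained in the survey precisely by translating geodesic $k$-planes meeting $\core M$ into orbits of $H_k=\SO^\circ(k,1)$ in $\Gamma\ba G$ and quoting Theorem \ref{oc} together with its refinements in \cite{LO}, above all Theorem \ref{mainth} (whose part (3) is the equidistribution statement driving (4)) and the avoidance Theorem \ref{hard}. Your treatment of (1) and (3) is consistent with this, up to the harmless point that the subgroup $L$ produced by Theorem \ref{mainth} has the form $H(\widehat U)C$ with $C$ a compact centralizing factor, which disappears under the base-point projection.

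Two steps, however, would not survive as written. In (4), your primary mechanism --- apply Theorem \ref{oc} to the limiting orbit $x_\infty H_k$ and ``force $L=G$'' --- cannot work: nothing prevents $x_\infty$ from lying on a proper closed orbit $x_\infty L_0$ (closed $H_k$-orbits can perfectly well accumulate on an intermediate closed orbit), and then $\overline{x_\infty H_k}$ is proper regardless of any maximality of the $P_i$. The actual mechanism is of a different kind: one applies Theorem \ref{mainth}(3) to the sequence of closed orbits $x_iL_i$ themselves, using maximality of the planes only to verify the non-trapping hypothesis that no infinite subsequence lies in a set $y_0L_0D$ with $\op{dim}L_0<\op{dim}G$ and $D\subset\op{N}(U)$ compact; the avoidance theorem is an ingredient inside the induction proving Theorem \ref{mainth}, not a separate estimate one can invoke directly on your sequence. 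Note also that density of $\bigcup_i x_iH_k$ is weaker than $\lim_i P_i=M$ in the sense of the paper's footnote (one needs $\liminf=\limsup=M$), so the argument must be run on arbitrary subsequences. In (2), the one-line justification of Fuchsian ends is insufficient: convex cocompactness of $\Delta$ together with the existence of the boundary hyperplanes of $\core M$ does not formally imply that the boundary of $\hull\Lambda(\Delta)$ inside the plane lies on those hyperplanes. One must show that every component of $\bS^{k-1}\setminus\Lambda(\Delta)$ is a round ball, i.e., compare $\Lambda(\Delta)$ with $\Lambda\cap\bS^{k-1}$; this hereditary property is a genuine step established in \cite{LO} (for $k=2$ it is vacuous, which is why it may look free), and it is exactly what the survey's final Proposition relies on.
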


\begin{figure} \label{limit} \begin{center}
 \includegraphics [height=4cm]{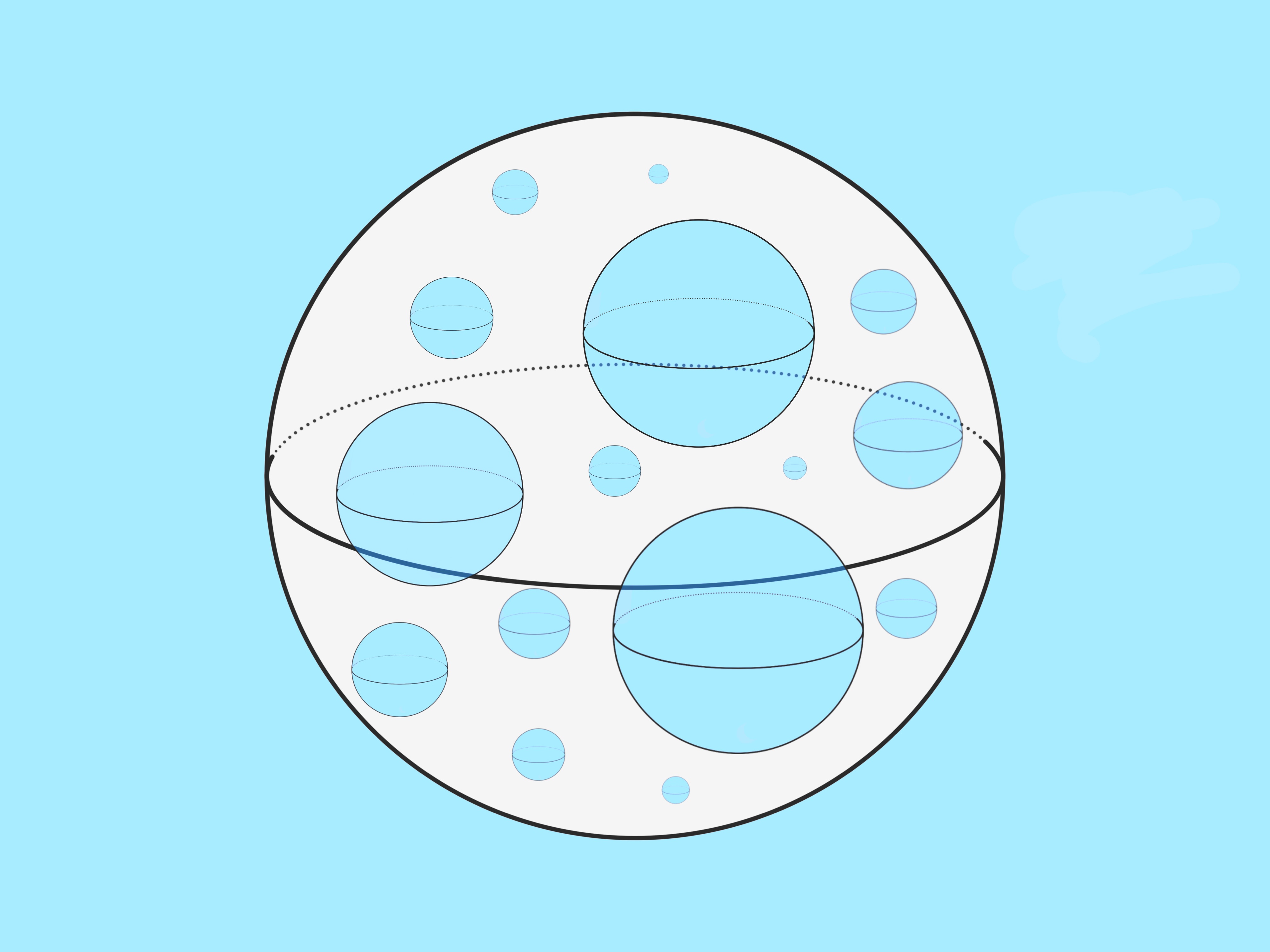}
\end{center} \caption{Limit set of a convex cocompact hyperbolic $4$-manifold with Fuchsian boundary}
 \end{figure}

A  $k$-horosphere in $\bH^d$ is a Euclidean sphere of dimension $k$ which is tangent to a point in $\mathbb S^{d-1}$. A $k$-horosphere in $M$ is simply the image of a $k$-horosphere in $\bH^d$ under the covering map $\bH^d\to M=\Gamma\ba \bH^d$.

\begin{thm} \label{geo-intro3} \cite{LO} Let $\chi$ be a $k$-horosphere of $M$ for $k\ge 1$.
Then either
\begin{enumerate}
\item $\chi$ is properly immersed; or
\item
$\cl{\chi}$ is
 a properly immersed $m$-dimensional submanifold, parallel to a convex cocompact geodesic $m$-plane of $M$ with Fuchsian ends for some $m\ge k+1$.
\end{enumerate}
\end{thm}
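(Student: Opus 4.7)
\emph{Lift to the frame bundle and prepare to invoke Theorem \ref{oc}.} The $k$-horosphere $\chi$ lifts to an orbit $\chi=p(xU)$ where $x=[g]\in \Gamma\backslash G$ and $U$ is, after conjugation, a $k$-dimensional connected closed subgroup of the contracting horospherical subgroup $N\cong \br^{d-1}$. Since $A$ acts on $\mathrm{Lie}(N)$ by the uniform scalar $e^{t}$, every connected subgroup of $N$ is normalized by $A$; in particular $U$ meets the hypothesis of Theorem \ref{oc}.

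\emph{Dichotomy based on $g^{+}$.} If $g^{+}\notin \Lambda$, then $\pi(gN)$ is a Euclidean sphere based outside the limit set and the map $n\mapsto xn$ is a proper embedding. Hence $xN$—and \emph{a fortiori} $xU$—is properly immersed, giving conclusion (1). From here on suppose $g^{+}\in \Lambda$. Because $M$ is convex cocompact, every limit point is radial, so there exist $t_{n}\to+\infty$ and $z\in \RFM$ with $xa_{t_{n}}\to z$.

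\emph{Apply the orbit closure theorem and identify the plane.} By Theorem \ref{oc} applied to $z\in \RFM$, there is a connected closed reductive subgroup $U<L<G$ with $\overline{zU}\cap \RFM=zL\cap \RFM$ and $zL$ closed. Since $U$ is a $k$-dimensional abelian unipotent subgroup, the smallest such $L$ is a conjugate of $\SO^{\circ}(k+1,1)$, and in general $L$ is a conjugate of $\SO^{\circ}(m,1)$ with $U$ lying in its horospherical radical; this forces $m\ge k+1$. Closedness of $zL$ together with Theorem \ref{mt6}(2) then identifies $P:=p(zL)$ as a convex cocompact geodesic $m$-plane of $M$ with Fuchsian ends.

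\emph{Transfer back to $x$ and conclude.} Because $A$ normalizes $U$, one has $xU a_{t_{n}}=xa_{t_{n}}U\to zU$, so $\overline{xU}$ accumulates on $zU$. Combining this with a blow-up/shadowing argument in the spirit of Lemma \ref{lem.UgU} and with the contraction $a_{-t}Na_{t}\to \{e\}$, one pins down $\overline{xU}$ to an orbit of the horospherical subgroup of $L$ (a subgroup containing $U$). Projecting to $M$ yields $\overline{\chi}=p(\overline{xU})$ as a properly immersed submanifold sitting inside $P$ and parallel to it in the horospherical sense described by the theorem; the properness of the immersion is inherited from the closedness of $zL$ and from the convex cocompact Fuchsian structure on $P$.

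\emph{Main obstacle.} The hardest step is the last one: Theorem \ref{oc} only controls the intersection $\overline{zU}\cap \RFM$ for $z\in \RFM$, whereas the statement demands a global description of $\overline{xU}$ in $\Gamma\backslash G$ for $x$ that need not lie in $\RFM$ at all. One direction—that $\overline{xU}$ is contained in the expected horospherical orbit—uses the polynomial divergence of $U$-orbits and the $A$-contraction of $N$. The reverse inclusion, ruling out extra accumulation outside the $L$-orbit, is the truly delicate point: it requires exploiting the totally geodesic boundary of $\core M$ so that orbits cannot escape into the Fuchsian ends in an uncontrolled way, an ingredient genuinely specific to the convex cocompact Fuchsian-end setting.
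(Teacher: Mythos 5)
First, note that the paper does not prove Theorem \ref{geo-intro3} itself: it is quoted from \cite{LO}, and the intended deduction there is from the refined classification Theorem \ref{mainth}(2)(a) (applied with $\widehat L=G$, so that it covers every $x\in\RF_+M$ and gives the \emph{full} closure $\cl{xU}=xvH(\widehat U)Cv^{-1}\cap\RF_+M$ for some $U<\widehat U<N$ and $v\in N$), not from Theorem \ref{oc} alone. Your opening steps are fine: any connected closed $U<N$ is normalized by $A$, and if $g^+\notin\Lambda$ then $xN$, hence $xU$, is proper, giving case (1). The genuine gap is in your transfer step. From $xUa_{t_n}=xa_{t_n}U\to zU$ you conclude that $\cl{xU}$ accumulates on $zU$; this is a non sequitur, because the sets $xUa_{t_n}$ are $A$-translates of $xU$, not subsets of it. What you actually obtain is $zU\subset\cl{xUA}$, i.e.\ information about $\cl{xAU}$ (the object of Theorem \ref{mainth}(2)(b)), not about $\cl{xU}$. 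Moreover, Theorem \ref{oc} only describes $\cl{zU}\cap\RFM$ for base points $z\in\RFM$, whereas the horosphere statement requires the closure of $xU$ in all of $\RF_+M$ for an $x$ which typically lies outside $\RFM$; the conjugating element $v\in N$ (the passage from $\cal L_U$ to $\cal Q_U$) is precisely what produces the word ``parallel'' in the conclusion, and it is invisible in your argument. Relatedly, your final description of $\cl\chi$ as ``sitting inside $P$'' is off: the closure is an $m$-dimensional submanifold at (in general positive) constant distance from an $m$-plane, as in Corollary \ref{hf}, so it cannot lie inside the plane.

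Your ``main obstacle'' paragraph correctly locates the difficulty, but invoking ``a blow-up/shadowing argument in the spirit of Lemma \ref{lem.UgU}'' together with the contraction $a_{-t}Na_t\to\{e\}$ does not close it: pinning down $\cl{xU}$ for $x\in\RF_+M$ is exactly the content of Theorem \ref{mainth}(2)(a), whose proof in \cite{LO} requires the avoidance theorem (Theorem \ref{hard}), the structure of the singular set, and the full induction scheme of Section 8; it cannot be recovered by a soft limiting argument from the $\RFM$-relative statement of Theorem \ref{oc}. The correct short deduction is: either $x\notin\RF_+M$, and $\chi$ is properly immersed; or $x\in\RF_+M$, and Theorem \ref{mainth}(2)(a) with $\widehat L=G$ gives $\cl{xU}=xvH(\widehat U)Cv^{-1}\cap\RF_+M$ with $\dim\widehat U\ge k$, whose projection to $M$ is a properly immersed $m$-dimensional submanifold, $m=\dim\widehat U+1\ge k+1$, parallel to the geodesic $m$-plane $p(xvH(\widehat U))$, which is convex cocompact with Fuchsian ends by Theorem \ref{mt6}(2). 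As written, your proposal replaces the essential input by an unproved claim, so it does not constitute a proof.
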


\subsection{Avoidance of singular set}\label{Sing}
An important ingredient of the proof of Theorem \ref{oc} which  appears newly for $d\ge 4$ is the avoidance of the singular set
along the recurrence time of unipotent flows to $\RFM$.

Let $U=\{u_t\}$ be a one parameter unipotent subgroup of $N$. Extending the definition given by Dani-Margulis \cite{DM}
to the infinite volume setting, we define the singular set $\mS(U)$
as
 \be\label{sss} \mathscr{S}(U):= \bigcup xL  \cap \RFPM \ee
where $\RFPM=\RFM \cdot N$, and the union is taken over all
 closed orbits $xL$ of proper connected closed subgroups $L$ of $G$
containing $U$.  Its complement in $\RFPM$
 is denoted by $\mathscr{G}(U)$, and called the set of {\it generic} elements of
$U$.

 The structure of $\mS(U)$ as the countable union of {\it singular tubes} is an important property which plays
crucial roles in both measure theoretic and topological aspects of the study of unipotent flows.
Let  $\mH$ denote the collection of all proper connected closed subgroups $H$ of $G$ containing a unipotent element such that
$\Gamma\ba \Gamma H$ is closed and $H\cap \Gamma$ is Zariski dense in $H$. For each $H\in \mH$, we define the singular tube:
 $$X(H, U):=\{g\in G: gUg^{-1}\subset H\} .$$

We have the following:
\begin{enumerate}
\item  $\mH$ is {\it countable};
\item $X(H_1, U)\cap g X(H_2, U)= X(H_1\cap gH_2g^{-1}, U)$ for any $g\in G$;
\item If $H_1, H_2\in \mH$ with $X(H_1\cap H_2, U)\ne \emptyset$, there exists
a closed subgroup $H_0\subset H_1\cap H_2 $ such that $H_0\in \mH$.
\end{enumerate}

In parictular $\mS(U)$ can be expressed as the union of countable singular tubes:
$$\mS(U)= \bigcup_{H\in \mH}\Gamma\ba \Gamma X(H, U) \cap \RFPM.$$


\begin{Rmk}\label{eee} \rm
If $\Gamma<G=\PSL_2(\c)$ is a uniform lattice, and $U$ is the one-parameter
subgroup as in \eqref{uuu}, then  $H\in \mH$ if and only if $H=g^{-1}\PSL_2(\br)g$  for $g\in G$ such
that $\Gamma $ intersects $g^{-1}\PSL_2(\br)g$ as a uniform lattice.
It follows that if $H_1, H_2\in \mH$ and $X(H_1, U)\cap X(H_2, U)\ne \emptyset$, then $H_1=H_2$.
 
 
We note that $\mH$ and hence $\mS(U)$ may be empty in general; see Remark \ref{noo}(1). \end{Rmk}

When the singular set $\mS(U)$ is  non-empty,
it is very far from being closed in $\RFPM$; in fact, it is dense, which is an a posteriori fact.
Hence presenting a compact subset of $\mS(U)$ requires some care, and we will be
using the following family of compact subsets $\mS(U)$ in order to discuss the recurrence of $U$-flows relative
to the singular set $\mS(U)$.
We define $\cal E=\cal E_{U}$ to be the collection of all subsets of $\mS(U)$ which are
of the form
$$\bigcup \Gamma\ba \Gamma H_i D_i \cap \RFM$$
where $H_i\in \mH$ is a finite collection, and $D_i$ is a compact subset of $X(H_i, U)$.

The following theorem was obtained by Dani and Margulis \cite{DM} and independently by Shah
\cite{Sh1} using the linearization methods, which translates the study of unipotent flows on $\Gamma\ba G$
to the study of vector-valued polynomial maps via linear representations.
\begin{thm}[Avoidance theorem for lattice case] \cite{DM}\label{ooo}
Let $\Gamma< G$ be a uniform lattice, and let $U<G$ be a one-parameter unipotent subgroup.
Then for any $\e>0$, there exists
 a sequence of compact subsets $E_1\subset E_2\subset \cdots $ in $\cal E$ such that $\mS(U)=\bigcup_{n\ge 1} E_n$
 which satisfies the following:
Let $x_j$ be a sequence converging to $x\in \mathscr{G}(U)$. For each $n\ge 1$,
 there exist a neighborhood $\cal O_n$
of $E_n$  and $j_n\ge 1$
 such that for all $j\ge j_n$ and for all  $T>0$,
\begin{equation}\label{dmg}
 \ell \{t\in [0,T] : x_ju_t\in \bigcup_{i\le n} \cal O_i\} \le \e T \end{equation}
 where $\ell$ denotes the Lebesgue measure.
\end{thm}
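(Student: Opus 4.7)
The plan is to follow the linearization strategy of Dani--Margulis. For each $H \in \mH$, Chevalley's theorem furnishes a finite-dimensional real $G$-representation $\rho_H \colon G \to \GL(V_H)$ together with a vector $p_H \in V_H$ whose stabilizer is a finite-index subgroup of $\op{N}(H)$, so that the singular tube admits the description
\[
X(H,U) \;=\; \{\,g \in G : U \text{ fixes } g \cdot p_H\,\}
\]
up to minor corrections handled by replacing $p_H$ with a suitable exterior-power vector. The required exhaustion is built by enumerating $\mH = \{H_1, H_2, \ldots\}$ (property (1) of $\mH$) and letting $E_n$ be a nested finite union $\bigcup_{i \le n} \Gamma\ba \Gamma H_i D_i^{(n)} \cap \RFM$ with $D_i^{(n)} \subset X(H_i, U)$ compact and increasing to $X(H_i,U)$. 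Property (3) of $\mH$ ensures the various tubes can be treated as essentially disjoint on a sufficiently fine scale.

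The main dynamical tool is the $(C,\alpha)$-good property of polynomials. For any $g \in G$ and $H \in \mH$ the orbit map $t \mapsto \rho_H(g u_t) p_H$ is a polynomial $\br \to V_H$ of degree bounded uniformly in terms of $\dim V_H$ and the nilpotency of $U$. A standard estimate of Dani--Margulis yields constants $C,\alpha > 0$ such that for every polynomial $q$ of this bounded degree, every interval $J \subset \br$, and every $\delta > 0$,
\[
\ell\bigl\{\, t \in J : \|q(t)\| \le \delta \sup_{J}\|q\| \,\bigr\} \;\le\; C \delta^\alpha \,\ell(J).
\]
Genericity of $x = \Gamma g_0$ means $g_0 \notin \Gamma \cdot X(H_i, U)$ for every $i$, so none of the polynomials $t \mapsto \rho_{H_i}(g_0 u_t) p_{H_i}$ is $U$-invariant; this prevents the orbit from being trapped in a small neighborhood of $E_n$ for a substantial fraction of any interval $[0,T]$. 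Convergence $x_j \to x$ is handled by noting that the coefficients of $t \mapsto \rho_{H_i}(g_j u_t) p_{H_i}$ converge to those of the $x$-polynomial, so for large $j$ the same supremum and nondegeneracy bounds persist. To pass from a single tube to the union $\bigcup_{i \le n} \cal O_i$, one inductively shrinks the neighborhoods $\cal O_n$ so that the good-polynomial bound contributes at most $\varepsilon \cdot 2^{-n}$; a geometric series then yields the total bound $\varepsilon T$ with constants independent of $T$.

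The technical heart, and the main obstacle, is the uniformity in $j$: each neighborhood $\cal O_n$ must be chosen before the sequence $x_j$ is seen, yet still control $x_j u_t$ uniformly for all $j \ge j_n$ and all $T>0$. This forces a qualitative-to-quantitative transition, converting the bare algebraic fact that $g_0 \cdot p_{H_i}$ is not $U$-invariant into effective lower bounds on the supremum of the perturbed orbit polynomials over $[0,T]$ that survive small perturbations of $g_0$ and, crucially, hold uniformly in $T$. This is precisely where the Dani--Margulis linearization machinery does its work, turning the genericity condition $x \in \mathscr{G}(U)$ into the quantitative avoidance bound \eqref{dmg} for the singular set.
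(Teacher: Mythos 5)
Your overall route --- linearization via a Chevalley representation for each $H\in\mH$ together with polynomial slow divergence --- is the same one the paper sketches around Proposition \ref{ww}, but two essential ingredients are missing or misidentified, and they are exactly where the content of Theorem \ref{ooo} lies. First, the estimate you invoke, $\ell\{t\in J:\|q(t)\|\le\delta\sup_J\|q\|\}\le C\delta^\alpha\ell(J)$, controls sublevel sets of the norm of the orbit polynomial, whereas the set to be avoided is a neighborhood of a compact piece $pD$ of the affine subvariety $pX(H,U)$; an orbit can remain close to this tube while $\|q(t)\|$ is large, so the $(C,\alpha)$-good bound does not by itself bound the relevant time set. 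What the argument actually needs is the relative comparison statement quoted in the paper as Lemma \ref{po} (\cite[Prop. 4.2]{DM}): the time spent in a small neighborhood $\Phi$ of a compact $C\subset\cal A$ is at most $\e$ times the time spent in a neighborhood $\Phi'$ of a larger compact $C'\subset\cal A$, provided the point lies outside $\Phi'$ --- and the genericity $x\in\mG(U)$, together with $x_j\to x$, is used precisely to secure that hypothesis for all $j\ge j_n$.

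Second, and more seriously, you never explain how to pass from a single polynomial to the quotient $\Gamma\ba G$: the condition $x_ju_t\in\cal O_n$ unfolds into countably many conditions $qg_ju_t\in p\cal O$, one for each $q\in p\Gamma$, and the final bound $\e T$ is obtained by summing $\ell(J_q\cap[0,T])\le\e\,\ell(I_q\cap[0,T])$ over $q$, which is legitimate only if the sets $I_q$ are pairwise disjoint or their overlaps are controlled. Your claim that property (3) of $\mH$ makes the tubes ``essentially disjoint on a sufficiently fine scale'' is not correct: $\mS(U)$ is dense, and the translates $\gamma X(H,U)$ do intersect one another in general. Disjointness of the $I_q$ holds only in the special situation \eqref{self} exploited in Proposition \ref{ww} for $\PSL_2(\c)$; in general one needs the Dani--Margulis induction on $\op{dim}H$, using that intersections of translated tubes are essentially tubes $X(H_0,U)$ for smaller $H_0\in\mH$, and this overlap analysis is the technical heart that your last paragraph explicitly defers to the cited machinery rather than supplies. (A smaller point: in the statement the neighborhoods $\cal O_n$ and indices $j_n$ are allowed to depend on the given sequence $x_j\to x$ --- only the exhaustion $E_n$ is fixed in advance --- so the uniformity difficulty is not quite the one you describe.)
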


If we set
$$\mT_{n} := \{t\in \br: x_{j_n}u_t\notin \bigcup_{i\le n} \cal O_i\},$$
 then   for any sequence $\lambda_n\to \infty$,
$\limsup \lambda_n^{-1} \mT_{n}$ accumulates at $0$ and $\infty$; and hence
 the sequence $\mT_{n}$ has accumulating renormalizations.

When $xL$ is a closed orbit of a connected closed subgroup of $L$ containing $U$, 
the relative singular subset $\mS(U, xL)$ {\it of} $ xL\cap \RFPM$ is defined similarly by replacing $\mH$
by its subcollection of proper connected closed subgroups of $L$, and $\mG(U, xL)$ is defined as its complement inside
$xL\cap \RFPM$. And Theorem \ref{ooo} applies in the same way to $\mG(U, xL)$ with the ambient space $\G\ba G$ replaced by $xL$.

In order to explain some ideas of the proof of Theorem \ref{ooo}, we will discuss the following
(somewhat deceptively)  simple case when $G=\PSL_2(\c)$ and $\Gamma$ is a uniform lattice. 
Let $U=\{u_t\}$ be  as in \eqref{uuu}.
\begin{prop}\label{ww} Let $E\in\mathcal E_U $.
If $x\in \mG(U)$, then $xU$ spends most of its time outside a neighborhood of $E$,
more precisely,  for any $\e>0$, we can find a neighborhood $E\subset \cal O$ 
such that for all $T>0$,
\be\label{most} \ell \{t\in [0, T] : xu_t \in \cal O\} \le \e  T.\ee
\end{prop}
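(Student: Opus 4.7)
The plan is to apply the linearization method of Dani and Margulis \cite{DM} together with their polynomial divergence lemma. Write $E=\bigcup_{i=1}^N\Gamma\ba\Gamma H_iD_i\cap\RFM$ with $H_i\in\mH$ and $D_i\subset X(H_i,U)$ compact. For each $i$, I would invoke Chevalley's theorem to produce an $\br$-representation $\rho_i:G\to\op{GL}(V_i)$ together with a distinguished vector $p_i\in V_i$ whose stabilizer in $G$ equals $H_i$ (after replacing $H_i$ by a finite-index subgroup of its normalizer if necessary) and for which $\Gamma\cdot p_i$ is discrete in $V_i$. The compact set $K_i:=D_i\cdot p_i$ then lies in the $U$-fixed subspace $V_i^U$, and the condition $\Gamma g\in\Gamma\ba\Gamma H_iD_i$ translates into $\gamma^{-1}gp_i\in K_i$ for some $\gamma\in\Gamma$.

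Fix a lift $g_0$ of $x$. For each $\gamma\in\Gamma$ and each $i$, the map $t\mapsto\phi_{\gamma,i}(t):=\rho_i(\gamma^{-1}g_0u_t)p_i$ is a polynomial in $t$ of degree bounded by $\deg\rho_i$. For $\delta>0$, let $\mathcal{O}_\delta$ be the neighborhood of $E$ consisting of those $\Gamma g$ such that $d(\gamma^{-1}gp_i,K_i)<\delta$ for some $i$ and some $\gamma\in\Gamma$. Suppose, for contradiction, that no such $\mathcal{O}_\delta$ satisfies \eqref{most}. Then there exist $\delta_n\downarrow 0$ and $T_n>0$ with $\ell\{t\in[0,T_n]:xu_t\in\mathcal{O}_{\delta_n}\}>\varepsilon T_n$. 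If $\{T_n\}$ stays bounded, then $xu_{[0,\sup T_n]}$ is a compact arc in $\Gamma\ba G$ disjoint from $E$ (since $\mS(U)$ is $U$-invariant and $x\in\mG(U)$), hence disjoint from $\mathcal{O}_{\delta_n}$ for $n$ large, contradicting the measure bound. Therefore $T_n\to\infty$.

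Pigeonholing over the finitely many $i$, and using the discreteness of $\Gamma\cdot p_i$ to bound the number of $\gamma\in\Gamma$ for which $\gamma^{-1}g_0u_tp_i$ can lie in a prescribed bounded region of $V_i$, I would extract an index $i$ and an element $\gamma_n\in\Gamma$ so that $d(\phi_{\gamma_n,i}(t),K_i)<\delta_n$ on a subset of $[0,T_n]$ of measure at least $c\varepsilon T_n$, with $c>0$ absolute. The Dani-Margulis polynomial divergence lemma, applied to the component of $\phi_{\gamma_n,i}$ transverse to $V_i^U$, then forces $\phi_{\gamma_n,i}(t)$ to remain within $C\delta_n$ of $K_i$ for every $t\in[0,T_n]$. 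Since $T_n\to\infty$ and $\delta_n\to 0$, a diagonal argument using the discreteness of $\Gamma p_i$ stabilizes $\gamma_n$ to some $\gamma\in\Gamma$ along a subsequence, and the polynomial $\phi_{\gamma,i}$ is then bounded on all of $[0,\infty)$, hence constant. This forces $\gamma^{-1}g_0\in H_iD_i$, giving $x\in\mS(U)$ and contradicting $x\in\mG(U)$.

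The main obstacle will be the combinatorial step just described, namely transferring the raw measure hypothesis (in which $\gamma$ may vary with $t$) into a uniform statement about a single polynomial trajectory $\phi_{\gamma,i}$. In the $\PSL_2(\c)$-lattice setting this is greatly simplified by Remark~\ref{eee}, which tells us that the singular tubes $X(H_i,U)$ associated to distinct $H_i$ are pairwise disjoint; one may therefore handle each $H_i$ separately, and the reduction to a single $\gamma$ boils down to the finiteness of $\Gamma p_i\cap B_R(0)$ for each $R>0$, which is immediate from the discreteness of $\Gamma\cdot p_i$ in $V_i$.
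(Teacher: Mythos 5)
Your setup (Chevalley linearization and the polynomial behavior of the orbit in the representation space) matches the paper's, but the combinatorial core of your argument has a genuine gap, and it is exactly the step you flag as the main obstacle. You pass from the hypothesis ``for a set of $t\in[0,T_n]$ of measure $>\e T_n$ there is \emph{some} $\gamma\in\Gamma$ (depending on $t$) putting the orbit point $\delta_n$-close to $K_i$'' to the existence of a \emph{single} $\gamma_n$ working on a subset of measure $\ge c\e T_n$, justified only by the finiteness of $\Gamma p_i\cap B_R(0)$. That finiteness bounds, for each fixed $t$, how many $\gamma$ can be relevant; it does not bound the number of distinct $\gamma$ occurring as $t$ ranges over $[0,T_n]$ with $T_n\to\infty$. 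A posteriori $xU$ is dense, so the orbit segment comes close to infinitely many distinct translates $\gamma X(H_i,U)$, each possibly for time $o(T_n)$, and a pigeonhole over an unbounded collection of classes cannot produce a fixed proportion $c>0$. This is precisely the difficulty the Dani--Margulis linearization is designed to circumvent, and the paper resolves it without ever selecting a dominant $\gamma$: Lemma \ref{po} gives, for \emph{every} $q=p\gamma$ separately, the ratio estimate $\ell(J_q\cap[0,T])\le\e\,\ell(I_q\cap[0,T])$, where $I_q$ records the time spent in a larger window around a larger compact set $pD'$ chosen so that $x\notin[e]\op{N}(H)\cal O'$; one then sums over $\gamma$, using the non-self-intersection of the singular tube \eqref{self} to make the windows $I_q$ pairwise disjoint \eqref{dis}, so that $\sum_q\ell(I_q\cap[0,T])\le T$. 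Your proposal has no substitute for this per-$\gamma$ relative estimate plus disjointness, so as written it does not prove the proposition.

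Two smaller points, both fixable but worth noting. First, your linearization is on the wrong side: with $\op{Stab}_G(p_i)=H_i$ acting on the left, $\gamma^{-1}g\in H_iD_i$ gives $\rho_i(\gamma^{-1}g)p_i=\rho_i(h)(d\,p_i)$, which ranges over the non-compact set $\rho_i(H_i)K_i$, so ``$\gamma^{-1}gp_i\in K_i$'' is not the correct translation; the group element must be placed so that the $H_i$-factor is absorbed by the stabilizer, as in the paper's condition $p\gamma gu_t\in p\,\cal O$ (and one should stabilize $\op{N}(H_i)$ rather than $H_i$, cf.\ \eqref{eq.char1}). Second, even granting a single $\gamma$, applying the divergence lemma only to the component transverse to $V_i^U$ controls just that component, while your boundedness/constancy conclusion needs the full vector; this can be repaired by running the polynomial estimate on the distance to the compact set $K_i$ itself, but it needs to be stated. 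The decisive defect remains the single-$\gamma$ extraction.
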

\begin{proof} Since $xU$ is dense in $\Gamma \ba G$ a posteriori, $xu_t$ will go into any neighborhood of $E$
 for an infinite sequence of $t$'s, but that the proportion of such $t$ is very small is the content of Proposition \ref{ww}.
 In view of Remark \ref{eee}, we may assume that $E$ is of the form $\Gamma\ba \Gamma \op{N}(H) D$
where  $H=\PSL_2(\br)$, and $D\subset V$ is a compact subset; note  $X(H, U)=\op{N}(H) V$, and $\op{N}(H)$
is generated by $H$ and $\text{diag}(i, -i)$.

 As remarked before, we prove this proposition using the linear representation and
 the polynomial-like behavior of unipotent  action. 
 As $\op{N}(H)$ is the group of real points of a connected
  reductive algebraic subgroup, there exists  an $\br$-regular
 representation $\rho:G\to \op{GL}(W)$ with a distinguished point $p\in W$ such that $\op{N} (H)=\op{Stab}(p)$ and
 $pG$ is Zariski closed. The set $pX(H, U)= pV$ is a real algebraic subvariety. \footnote{We can explicitly take $\rho$  and $p$ as follows. Consider the Adjoint representation of $G$ on its Lie algebra $\frak g$.
We then let $\rho$ be the induced representation on  the wedge product space
  $\wedge^3 \frak g$ and set  $p=w_1\wedge w_2\wedge w_3$ where $w_1,w_2, w_3$ is a basis of $\frak h$.}

Note that for $x=[g]$, 
the following are equivalent:
\begin{enumerate}
\item $xu_t\in [e] \op{N}(H) \cal O$;
\item $p\gamma g u_t \in p \cal O$ for some $\gamma\in \Gamma$.
\end{enumerate}

Therefore, we now try to find a neighborhood $p\cal O$ of $pD$ so
that  the set $$\{t\in [0, T]: xu_t\in [e]\op{N}(H) \cal O\}\subset \bigcup_{q\in p\Gamma} \{t\in [0,T]: q g u_t\in p\cal O\} $$
is an $\e$-proportion of $T$.
Each set $\{t\in [0,T]: q g u_t\in p\cal O \}$ can be controlled by the following lemma,
which is proved using the property that
the map $t\mapsto \|qgu_t\|^2$ is a  polynomial of degree uniformly bounded for all $q\in p \Gamma$, and
polynomial maps of bounded degree have uniformly {\it slow} divergence. 

\begin{lem}\label{po}\cite[Prop. 4.2]{DM}
Let $\cal A\subset W$ be an algebraic variety. Then for any compact subset $C\subset\cal A$ and any $\e>0$, there exists
a compact subset $C'\subset \cal A$ such that the following holds: for any neighborhood $\Phi'$ of $C'$ in $W$, there exists a neighbhorhood
$\Phi$ of $C$ of $W$ such that for any $q\in W-\Phi'$ and any $T>0$,
$$\ell \{t\in [0,T]: qu_t\in \Phi\}\le \e \cdot \ell \{t\in [0,T]: qu_t\in \Phi '\} .$$
\end{lem}

Applying this lemma to $\cal A=pV$, and $C=pD$, we get a compact subset $C'=pD'$ for $D'\subset V$.
Since $x\notin [e]\op{N}(H) D'$, we can find a neighborhood $\cal O'$ so that $x\notin [e]\op{N}(H)\cal O'$.
Fix a neighborhood $\Phi'$ of $C'$, so that $\Phi'\cap pG\subset p\cal O'$.
We then get a neighborhood $\Phi$ of $C$ such that if $\cal O$ is a neighborhood of $D$ such that $p\cal O\subset \Phi$,
then \be\label{bob} \ell (J_q\cap  [0, T] ) \le \e \cdot  \ell (I_q\cap  [0, T] )\ee
where $J_q:=\{ t\in \br : q gu_t \in p\cal O\}$ and $I_q:=\{ t\in \br : q gu_t \in p\cal O'\}$.

We now claim that in the case at hand, we can find a neighborhood $\cal O'$ of $D'$ so that
all $I_q$'s are  mutually disjoint:
 \be\label{dis} \text{ if $q_1\ne  q_2$ in $p\Gamma$, then $I_{q_1}\cap I_{q_2}=\emptyset$}.\ee
  Using  \eqref{bob}, this would finish the proof, since  
$$\ell \{t\in [0, T] : xu_t \in [e]\op{N}(H)\cal O \} \le \sum_{q\in p\Gamma}
\ell (J_q\cap  [0, T] ) \le \e \cdot \sum_{q\in p\Gamma}   \ell (I_q\cap  [0, T] ) \le \e T.$$

To prove \eqref{dis}, we now observe the special feature of this example, namely, {\it{no}}
singular tube $\Gamma\ba \Gamma
X(H, U)$ has {\it self-intersection}, meaning that 
\be\label{self} \text{$X(H, U)\cap \gamma X(H, U)
=\emptyset$ if $\gamma\in \Gamma -\op{N}(H)$.}\ee
If non-empty,  by Remark \ref{eee}, we must have
 $H\cap \gamma H\gamma^{-1}=H$, implying that $\gamma\in \op{N}(H)$.
Now if $t\in I_{p\gamma_1}\cap I_{p\gamma_2}$, then $gu_t\in \gamma_1^{-1} H V\cap \gamma_2^{-1} HV$
 and hence $\gamma_1\gamma_2^{-1}\in \op{N}(H)$. So $p\gamma_1= p\gamma_2$, proving \eqref{dis}.
 \end{proof}

\medskip

In the higher dimensional case, we cannot avoid self-intersections of $ \Gamma X(H, U)$; so $I_q$'s are not pairwise disjoint, which means a more careful
study of the nature of the self-intersection is required. 
Thanks to the countability of $\mH$, 
 an inductive argument on the dimension of $H\in \mH$ is used to take care of the
issue, using the fact that
the intersections among $\gamma X(H, U)$, $\gamma\in \Gamma$ are essentially of the form $X(H_0,U)$ for a proper connected  closed subgroup
$H_0$ of $H$ contained in $\mH$ (see \cite{DM} for details).

\medskip

In order to
 illustrate the role of Theorem \ref{ooo} in the study of orbit closures, we prove the following sample case:
  let $G=\SO^\circ(4,1)$, $H=\SO^\circ (2,1)$ and $L=\SO^\circ (3,1)$;
the subgroups $H$ and $L$ are chosen so that $A<H<L$ and $H\cap N$ is a one-parameter unipotent subgroup. The centralizer 
$\op{C}(H)$ of $H$ is $\SO(2)$. We set $H'=H \op{C}(H)$.
\begin{prop} \label{in} Let $\Gamma<G $ be a uniform lattice. Let $X=\overline{xH'}$ for some $x\in \Gamma\ba G$.
If $X$ contains a closed orbit $zL$ properly, then $X=\Gamma\ba G$.
\end{prop}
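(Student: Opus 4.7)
The plan is to use the strict containment $zL\subsetneq X$ to exhibit $H'$-orbits approaching $zL$ transversally to $L$, then combine a unipotent blowup with the avoidance theorem applied inside the closed orbit $zL$ to produce a new element of $X$ genuinely transverse to $L$, and finally invoke the maximality of $L$ as a closed connected subgroup of $G$ to force $X=\Gamma\ba G$.

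First, because $X=\overline{xH'}$ strictly contains $zL$, there exist $h_n\in H'$ with $xh_n\to z_0\in zL$ and $xh_n\notin zL$. Write $xh_n=z_0g_n$, so $g_n\to e$ in $G\ba L$. Using the $\op{Ad}(L)$-invariant splitting $\mathfrak g=\mathfrak l\oplus\mathfrak l^\perp$, decompose $g_n=l_n\cdot g_n'$ with $l_n\in L$ near $e$ and $g_n'\in \exp(\mathfrak l^\perp)$ near $e$, and replace $z_0$ by $z_0 l_n\in zL$ (still converging to the original $z_0$) to assume $g_n\in \exp(\mathfrak l^\perp)$. Next apply the avoidance theorem (Theorem~\ref{ooo}) to the $U$-action on the compact orbit $zL\cong\Gamma_L\ba L$, where $\Gamma_L=\op{Stab}_L(z)$ is a uniform lattice in $L$ and $U=H\cap N\subset L$: choosing $z_0\in \mG(U,zL)$ and exhausting $\mS(U,zL)$ by compact tubes with neighborhoods $\mathcal O_n$ as in the theorem, I obtain return times $\mT_n:=\{t:z_0u_t\notin \mathcal O_n\}$ of density at least $1-\varepsilon$ and therefore (Lemma~\ref{kappa}) with accumulating renormalizations.

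Now apply the higher-dimensional analogue of Lemma~\ref{U}(3) to the polynomial map $t\mapsto u_tg_nu_{-t}\in \exp(\mathfrak l^\perp)$: after rescaling by the appropriate $\lambda_n\to\infty$, it converges uniformly on compacta to a non-constant polynomial $\phi:\br\to V^L$, where $V^L\subset \exp(\mathfrak l^\perp)$ denotes the $U$-centralizer. For $t_n\in \mT_n$ with $\lambda_n^{-1}t_n\to t\in \mT_\infty$, and passing to a subsequence so that $z_0u_{t_n}\to z_0^\#\in zL$ by compactness,
\[
xh_nu_{t_n}=z_0u_{t_n}\cdot \bigl(u_{t_n}^{-1}g_nu_{t_n}\bigr)\longrightarrow z_0^\#\phi(t)^{-1}\in X,
\]
since $u_{t_n}\in U\subset H'$ and $X$ is closed; moreover the accumulating-renormalization property lets $\phi(t)$ range over a set accumulating at $e$ and $\infty$ in $V^L$.

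To conclude, closed orbits of proper closed connected subgroups of $G$ containing $H$ form a countable family, so for a generic $v=\phi(t)^{-1}\in V^L\setminus\{e\}$ the point $y:=z_0^\# v\in X$ lies on no such orbit. Ratner's orbit closure theorem applied to the $H$-action (valid as $H$ is generated by unipotents and $\Gamma$ is a uniform lattice) yields $\overline{yH}=yM$ for a closed connected $M\supset H$; since $L$ is maximal connected in $G=\SO^\circ(4,1)$, the only options are $M\in\{H,\,H',\,L^c,\,G\}$ where $L^c$ ranges over conjugates of $L$ containing $H$, and the genericity of $y$ excludes every proper choice, forcing $M=G$. Hence $\Gamma\ba G=\overline{yH}\subset X$, proving the proposition. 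The main obstacle is the unipotent blowup step: identifying $V^L$ correctly as the $U$-centralizer in $\exp(\mathfrak l^\perp)$, verifying that the avoidance theorem applied inside the compact orbit $zL$ yields rescaled return times accumulating throughout $\br$ despite $zL$ being uniformly $U$-recurrent, and ensuring the blown-up element is genuinely $L$-transverse rather than hiding in an intermediate subgroup that would only modestly enlarge $X$.
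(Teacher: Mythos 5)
Your first half follows the paper's route (a transverse sequence at a point of $zL$, avoidance inside the compact orbit, unipotent blowup into the transverse direction), but the endgame has a genuine gap. You finish by applying Ratner's orbit closure theorem at $y=z_0^\#\phi(t)^{-1}$ and asserting that for a ``generic'' $v$ the point $y$ lies on no closed orbit of a proper connected subgroup containing $H$, because such orbits form a countable family. Countability of the family does not make the union small along your curve: the bad set is a countable union of \emph{positive-dimensional} closed sets, e.g.\ closed orbits $z'L'$ of conjugates $L'$ of $L$ containing $H$, and the particular conjugate $L'=c^{-1}Lc$ with $c\in \op{C}(H)$ a quarter turn contains both $H$ and the transverse unipotent $V$. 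So if $z_0^\#$ happens to lie on a closed $L'$-orbit, then $z_0^\# v$ is non-generic for \emph{every} $v\in V$ and Ratner only returns $\cl{yH}\subset z'L'$; moreover $z_0^\#$ varies with $t$ and with the subsequence, so you cannot fix it and then vary $v$ freely. The only leverage against this is knowing that the limit point $z_0^\#$ is itself generic in $zL$, i.e.\ $z_0^\#\in \mG(U,zL)$, and your construction does not deliver that: you defined $\mT_n$ by avoiding only the single neighborhood $\cal O_n$ (and at the unperturbed base point $z_0$, rather than at the perturbed points to which Theorem \ref{ooo} applies), whereas concluding $z_0^\#\notin E_i$ for every $i$ requires the return times to avoid the fixed union $\bigcup_{i\le n}\cal O_i$, exactly as in \eqref{mmm2}; this is the entire point of \eqref{zzz} in the paper, and is what your appeal to genericity silently presupposes.

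There are two further degeneracies you do not address, and the paper's conclusion is structured precisely to avoid your final step. First, you never rule out that the transverse parts $g_n'$ centralize $U$: the $U$-fixed part of $\exp \frak l^\perp$ is two-dimensional, containing the $\op{C}(H)$-direction as well as $\op{Lie}V$, and if $g_n'$ lies in it (for instance when $xH'$ near $zL$ sits inside $zL\op{C}(H)$) the conjugation polynomial is constant and no blowup occurs. The paper handles this by taking $g_n\to e$ in $G-L\op{C}(H)$ and then choosing $U$ among $H\cap N$, $H\cap N^+$ so that no $r_n$ normalizes it; the blowup then lands in $V$ itself (the image of $\op{ad}(\frak u)$ intersected with the fixed space), not in the full $U$-centralizer of $\exp\frak l^\perp$ as you state. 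Second, ``choosing $z_0\in\mG(U,zL)$'' occurs after $z_0$ has already been produced as a limit; the paper instead fixes $z$ in advance with $\cl{zU_\ell}=zL$ (Moore ergodicity) and uses that $X$ accumulates transversally at this chosen point. Finally, the paper never needs genericity of $y$ in all of $\Gamma\ba G$: from $z_0\in\mG(U,zL)$ it gets $\cl{z_0U}=zL$, hence $X\supset zLv$ with $v\in V$ arbitrarily large, and then $A$-invariance together with the renormalization $\limsup_n v_n^{-1}Av_n$ produces an entire $N$-orbit in $X$, so $X=\Gamma\ba G$ by density of $N$-orbits. Repairing your argument along these lines essentially reproduces the paper's proof.
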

A geometric consequence of this proposition is as follows:
let $M$ be a closed hyperbolic $4$-manifold, and let $P\subset M$ be a geodesic $2$-plane.
If $\cl P$ contains a properly immersed geodesic $3$-plane $P'$, then the closure $\cl P$ is either $P'$ or $M$.

\begin{proof} Let $U_1=H\cap N$ and $U_2=H\cap N^+$ where $N^+$ is the expanding horospherical subgroup of $G$. Then
the subgroups  $U_1$ and $ U_2$ generate $H$, and the  intersection of the normalizers of $U_1$ and $U_2$ is equal to $A\op{C}(H)$.
Since $zL$ is compact, each $U_\ell$ acts ergodically on $zL$ by Moore's ergodicity theorem. Therefore we may
choose $z$ so that $zU_\ell$ is dense in $zL$ for each $\ell=1,2$.

It suffices to show $X$ contains either $N$ or $N^+$-orbit. 
Since $zL$ is a proper subset of $ X$, there exists $g_n\to e$ in $G-L \op{C} (H)$ such that $x_n=zg_n\in X$.
As $L$ is reductive, the Lie algebra of $G$ decomposes into $\op{Ad}(\frak l)$-invariant subspaces
$\frak l\oplus \frak l^\perp$ with $\frak l$ the Lie algebra of $L$. Hence we write $g_n=\ell_n r_n$
with $\ell_n \in L$ and $r_n \in \exp \frak l^\perp -\op{C}(H)$.
As $g_n\notin \op{C}(H)$, there exists $1\le \ell \le 2$ such that no $r_n$ belongs to the normalizer of $U_\ell$, by passing to a subsequence.
We set $U=U_\ell$.  Without loss of generality we assume $U=H\cap N$; otherwise replace $N$ by $N^+$ in the argument
below.

Note that $\overline{zU}=zL$, in particular, $z$ is a generic point: $z\in \mathcal G(U, zL)=zL-\mS(U, zL)$.
We replace the sequence $z\ell_n$ by $z\ell_{j_n}$ with $j_n$ given by Theorem \ref{ooo}.

Define \be \label{mmm2} \mT_n:= \{t\in \br  : z\ell_n u_t\notin \bigcup_{i\le n} \cal O_i\} .\ee
By Theorem \ref{ooo} applied to $zL=z\SO^\circ (3,1)$,  $\mT_n$ has accumulating renormalizations.

Now by a similar argument as in the proof of Lemma \ref{U}(3),
we can show that $$\limsup \{u_t r_n u_{-t}: t\in \mT_n \}$$
accumulates at $0$ and $\infty$ in $V$ where $V$ is the one-dimensional unipotent subgroup $(L\cap N) V= N$.
In particular, there exists $v\in V$ of arbitrarily large size such that
$v= \lim u_{-t_n} r_n u_{t_n} $ for some $t_n \in \mT_n$.

Note that $z\ell_n u_{t_n} $ is contained in the compact subset $ zL- \bigcup_{i\le n} \cal O_i$. Since $\bigcup_i \cal O_i$
is a neighborhood of $\mS(U, zL)$, $z\ell_n u_{t_n}$ 
 converges to some \be\label{zzz} z_0\in \mG(U, zL).\ee
Therefore $$zg_n u_{t_n}= z \ell_n u_{t_n} (u_{-t_n} r_n u_{t_n}) \to z_0 v.$$

Since $z_0\in \mG(U, zL)$, by Proposition \ref{quote}, we have
$$X\supset \cl{z_0v U}= \cl{z_0U} v=zL v.$$

As $v$ can be taken arbitrarily large, we get a sequence $v_n\to \infty$ in $V$ such that $X\supset zL v_n$.
Using the $A$-invariance of $X$, we get $X\supset z L(Av_nA)\supset z(L\cap N) V_+$ for some one-parameter semigroup $V_+$ of $V$.
Since $X\supset zv_n (L\cap N) v_n^{-1}V_+$, and $\limsup v_n^{-1} V_+=V$, $X$ contains an $N$ orbit, finishing the proof.
\end{proof}

Roughly speaking, if $H$ is a connected closed subgroup of $G$ generated by unipotent elements,
the proof of the theorem that $\cl{xH}$ is homogeneous uses an inductive argument on the codimension of $H\cap N$ in $N$
and involves repeating the following two steps:
\begin{enumerate}

\item Find a closed orbit $zL$ inside $\cl{xH}$ for some connected reductive subgroup $L<G$.

\item If $\cl{xH}\ne zL$, then enlarge $zL$, i.e., find a closed orbit $zL'$ inside $\cl{xH}$ with $\op{dim} (L'\cap N)>
\op{dim}(L\cap N)$.
\end{enumerate}

The proof of Proposition \ref{in} is a special sample case of the step (2), demonstrating
the importance of getting accumulating renormalizations for
the sequence of return time avoiding the exhausting sequence of compact subsets of the singular set.

The following version of the avoidance theorem in \cite{LO} is a key ingredient in the proof of Theorem \ref{oc}:

\begin{thm}[Avoidance theorem] \label{hard} Let $M=\Gamma\ba \bH^d$ be a convex cocompact hyperbolic manifold with Fuchsian ends.
Let $U<N$ be a one-parameter unipotent subgroup.
There exists
 a sequence of compact subsets  $E_1\subset E_2\subset \cdots $ in $\cal E$ such that
 $\mathscr S(U)\cap \RFM=\bigcup_{n\ge 1} E_n$  which satisfies the following:
Let $x_j\in \RFM$ be a sequence converging to $x\in \mathscr{G}(U)$. For each $n\ge 1$,
 there exist a neighborhood $\cal O_n$
of $E_n$  and $j_n\ge 1$
 such that for all $j\ge j_n$,
\begin{equation}\label{dmg2}
 \mT^\diamond(x_j):= \{t\in \br : x_ju_t\in \RFM - \cal O_n\}  \end{equation}
 has accumulating renormalizations. \end{thm}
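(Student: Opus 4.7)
The plan is to combine two ingredients: a higher-dimensional analogue of the $\kappa$-thickness of $\RFM$-return times (Proposition \ref{locally}), and an infinite-volume adaptation of the Dani--Margulis linearization method behind Theorem \ref{ooo}. The exhaustion $E_n$ is constructed first. Using the countability of $\mH$ from Section \ref{Sing}, enumerate $\mH=\{H^{(i)}\}_{i\ge 1}$ and for each $i$ fix a compact exhaustion $D^{(i)}_1\subset D^{(i)}_2\subset\cdots$ of $X(H^{(i)},U)$; set
$$E_n:=\bigcup_{i\le n}\Gamma\ba \Gamma H^{(i)} D^{(i)}_n\cap \RFM \;\in \cal E.$$
Since every singular tube is an ascending union of such pieces, $\bigcup_n E_n=\mS(U)\cap \RFM$. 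It thus suffices, given $n$ and the sequence $x_j\to x\in \mG(U)$, to produce a neighborhood $\cal O_n\supset E_n$ and an index $j_n$ so that $\mT^\diamond(x_j)$ is accumulatingly renormalizable for all $j\ge j_n$.

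The first ingredient is $\kappa$-thickness of $\mT_{\RFM}(x_j):=\{t\in\br:x_ju_t\in\RFM\}$. For a convex cocompact manifold with Fuchsian ends, write $\bS^{d-1}-\La=\bigcup B_i$ as a disjoint union of round open balls. Because the double of $\core M$ along its totally geodesic boundary is a closed hyperbolic $d$-manifold, any geodesic realizing $d(\op{hull}(B_i),\op{hull}(B_j))$ is (or is half of) a closed geodesic of the double; hence
$$\inf_{i\ne j} d\bigl(\op{hull}(B_i),\op{hull}(B_j)\bigr)\;\ge\;\e_0>0$$
where $2\e_0$ is the systole of the double. Repeating the hyperbolic-plane computation of Proposition \ref{locally} in a two-dimensional slice transverse to $U$ yields a uniform $\kappa>1$ such that $\mT_{\RFM}(x)$ is $\kappa$-thick for every $x\in \RFM$; by Lemma \ref{kappa} any such set has accumulating renormalizations.

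The second ingredient is the linearization machinery. For each $H^{(i)}\in\mH$, fix an $\br$-regular representation $\rho_i:G\to \op{GL}(W_i)$ with a distinguished point $p_i$ such that $\op{N}(H^{(i)})=\op{Stab}(p_i)$ and $p_i G$ is Zariski closed; then $p_i X(H^{(i)},U)$ is a real algebraic subvariety. The maps $t\mapsto p_i\gamma g u_t$ are polynomials of degree bounded uniformly in $\gamma$ and $i\le n$, so the $(C,\alpha)$-good property of Lemma \ref{po} provides, for each compact $C_i\subset p_i X(H^{(i)},U)$ and each $\eta>0$, a larger compact $C_i'$ and nested neighborhoods $\Phi_i\supset C_i$, $\Phi_i'\supset C_i'$ with
$$\ell\{t\in I:\, p_i\gamma gu_t\in\Phi_i\}\;\le\;\eta\cdot\ell\{t\in I:\, p_i\gamma gu_t\in\Phi_i'\}$$
on every interval $I\subset\br$. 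Self-intersections of the tubes (which, unlike in the $\SL_2(\c)$ lattice case of Remark \ref{eee}, are unavoidable here) are handled inductively on $\dim H$ using property (3) of Section \ref{Sing}: the intersection of two translates of a tube is contained in a tube for a smaller subgroup of $\mH$, already absorbed at a previous stage. Applying this to each $H^{(i)}$ with $i\le n$ and taking $\cal O_n$ to be the union of the corresponding $\Phi_i$-preimages inside $\RFM$, the genericity of the limit $x\in \mG(U)$ guarantees that $x\notin \bigcup_{i\le n}\Gamma\ba\Gamma \op{N}(H^{(i)})\Phi_i'$, so for all sufficiently large $j$ no $x_j$ lies in this bigger neighborhood either.

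The main obstacle, and the point that is genuinely new compared to the lattice setting, is upgrading the Lebesgue-measure estimate to a \emph{thickness} statement. The Dani--Margulis bound only says that the Lebesgue measure of $\{t\in I:x_ju_t\in\cal O_n\}$ is a small fraction of $\ell(I)$, whereas we need $\mT^\diamond(x_j)=\mT_{\RFM}(x_j)\setminus\{t:x_ju_t\in\cal O_n\}$ to intersect every dyadic-type window $\pm[r,\kappa' r]$. The resolution, carried out in \cite{LO}, is to run the linearization relative to a compact neighborhood $K\subset\mG(U)$ containing $x$ and all $x_j$ for $j\ge j_n$, and to compare with $\ell(\mT_{\RFM}(x_j)\cap I)$ rather than $\ell(I)$; combined with the $\kappa$-thickness of $\mT_{\RFM}(x_j)$, choosing $\eta$ small enough (depending only on $\kappa$) forces $\mT^\diamond(x_j)\cap\pm[r,\kappa' r]\ne\emptyset$ for every $r>0$ and some slightly larger $\kappa'>1$. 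Lemma \ref{kappa} then delivers accumulating renormalizations, completing the proof.
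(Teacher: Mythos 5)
The scaffolding of your proposal is reasonable: the exhaustion $E_n$ by finitely many tube pieces and the uniform $\kappa$-thickness of the full return set $\mT_{\RFM}(x)=\{t\in\br: xu_t\in\RFM\}$ (via the systole of the double of $\core M$, exactly as in Proposition \ref{locally}) are genuine ingredients of the argument in \cite{LO}. The gap is in the final bridge, which is precisely the new content of the theorem. You propose to run the Dani--Margulis linearization to get a Lebesgue-measure bound and then ``compare with $\ell(\mT_{\RFM}(x_j)\cap I)$ rather than $\ell(I)$.'' But for a convex cocompact non-lattice $\Gamma$ the set $\mT_{\RFM}(x_j)$ is a fractal slice of $\La$ and typically has Lebesgue measure zero, so this comparison is vacuous: a bound of the form $\ell\{t\in I: x_ju_t\in\cal O_n\}\le \e\,\ell(I)$ (or $\le \e\,\ell(\mT_{\RFM}(x_j)\cap I)=0$) says nothing about whether the complement of the bad set \emph{inside} the measure-zero set $\mT_{\RFM}(x_j)$ meets a prescribed window $\pm[r,\kappa' r]$. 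There is no known ``friendly'' measure on these return times for which polynomial maps enjoy a $(C,\alpha)$-good property; this is exactly the obstruction the survey flags, and it is why the proof in \cite{LO} abandons the measure-theoretic scheme of Theorem \ref{ooo} altogether and instead handpicks return times, combining the $\kappa$-thickness of $\mT_{\RFM}$ with a detailed analysis of the graded intersections among the tubes $\Gamma X(H,U)$, $H\in\mH$, and a combinatorial inductive search. Your attribution of the Lebesgue-comparison fix to \cite{LO} therefore misdescribes that argument, and the fix itself does not work.

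A second, related misstep: you dispose of intersections of tubes ``inductively on $\dim H$, already absorbed at a previous stage,'' as in the lattice case. The survey states explicitly that an induction on the dimension of $H$ is not available here; one must control not only self-intersections of a single $\Gamma X(H,U)$ but also intersections among distinct tubes for different $H\in\mH$, and the absorption trick is incompatible with return times that are selected pointwise rather than estimated in measure. (Relatedly, note the paper's caveat that one cannot even get accumulating renormalizations after removing $\bigcup_{i\le n}\cal O_i$ simultaneously; the conclusion is only for the single neighborhood $\cal O_n$, and the loss is compensated later by the equidistribution step $(3)_m$ of Theorem \ref{mainth}.) So while your outline correctly names the two inputs, the step joining them --- the actual theorem --- is not supplied, and the specific route you propose would fail.
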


Note that in the lattice case, one can use the Lebesgue measure $\ell$ to understand the return time
away from the neighborhoods $\cal O_n$ to prove Theorem \ref{ooo}, as was done in \cite{DM}
(also see the proof of Proposition \ref{ww}). In the case at hand,
the relevant return time  is a subset of $\{t\in \br: x_n u_t\in \RFM\}$ on which it is not clear if there exists any {\it friendly} measure.
This makes the proof of Theorem \ref{hard} very delicate, as we have to examine each return time to $\RFM$
and  handpick the time outside $\cal O_n$. First of all,
we cannot reduce a general case to the case $E\subset \Gamma\ba \Gamma X(H, U)$ for a single $H\in\mH$.
 This means that  not only do we need to understand the {\it self-intersections}
of $\Gamma X(H, U)$, but we also have to control intersections among  different $\Gamma X(H,U)$'s in $\mS(U)$, $H\in \mH$.

We cannot also use an inductive argument on the dimension of $H$.
When $G=\SO(3,1)$, there are no intersections among closed orbits
in $\mS(U)$ and the proof is much simpler in this case.
In general, our arguments are
based on the $k$-thick recurrence time to $\RFM$, a much more
careful analysis on the graded intersections of among $\Gamma X(H,U)$'s, $H\in \mH$,  and a combinatorial inductive search argument. 
We prove  that there exists $\kappa >1$, depending only on $\Gamma$ such that
$ \mT^\diamond(x_n)$ is $\kappa$-thick in the sense that for any $r>0$,
$$\mT^\diamond(x_n)\cap \pm [r, \kappa r]\ne \emptyset.$$
We remark that unlike the lattice case, we are not able to prove that
$ \{t\in \br : x_nu_t\in \RFM - \bigcup_{j\le n}\cal O_j\}  $ has accumulating renormalizations.
This causes an issue in carrying out a similar proof as in Proposition \ref{in}, as we cannot conclude
the limit of $x_n u_{t_n}$ for $t_n\in \mT^\diamond(x_n)$ belongs to a generic set as in \eqref{zzz}.

Fortunately, we were able to devise an inductive argument (in the proof of Theorem \ref{mainth} below) which involves an extra step of proving equidistribution
of translates of maximal closed orbits and overcome this difficulty. 



\subsection{Induction} For a connected closed subgroup $U<N$,
we denote by $H(U)$
the smallest  closed simple Lie subgroup of $G$
which contains both $U$ and $A$. If $U\simeq \br^{k}$, then $H(U)\simeq \SO^\circ(k+1,1)$.
A connected closed subgroup of $G$ generated by  one-parameter unipotent subgroups is, up to conjugation, of the form
 $U<N$ or $H(U)$ for some $U<N$.

We set $F_{H(U)}:=\RFPM \cdot H(U)$, which is a closed subset.  
We define the following collection of closed connected subgroups of $G$:
\begin{equation*}
\mathcal L_U:=\left\{L=H(\widehat U) C: 
\begin{array}{c}
\text{for some $z\in \RFPM$, $zL $ is closed in $\Gamma\ba G$  }\\  \text{ and $\op{Stab}_L(z)$ is Zariski
dense in $L$}
\end{array}
\right\}.
\end{equation*}
 where $U <\widehat U<N$ and $C$ is a closed subgroup of the centralizer of $H(\widehat U)$.
We also define: $$\cal{Q}_U:=\{vLv^{-1} :  L\in\cal{L}_U\text{ and } v\in N\}.$$

Theorem \ref{oc} follows from the following:
\begin{theorem} \cite{LO} \label{thm.H'UMIN}\label{mainth}
Let $M=\Gamma\ba \bH^d$ be a convex cocompact hyperbolic manifold with Fuchsian ends.
\begin{enumerate}
\item
For any $x\in \RFM$,
\begin{equation*}
\cl{xH(U)}=xL\cap F_{H(U)}
\end{equation*}
where  $xL$ is a closed orbit of some $L\in \mathcal L_{U}$.
\item Let $x_0\widehat L$ be a  closed orbit for some $\widehat L\in\cal{L}_U$ and $x_0\in \RFM$.
\begin{enumerate} 
\item  For any $x\in x_0\widehat L\cap \RFPM$, 
 \begin{equation*}
\cl{xU}=x  L\cap\op{RF}_+M
\end{equation*}
where  $x L$ is a closed orbit of  some $L\in \cal{Q}_U$.
\item  For any $x\in x_0\widehat L\cap \RFM$,
 \begin{equation*}
\cl{xAU}= xL\cap\op{RF}_+M
\end{equation*}
where  $x L$ is a closed orbit of  some $L\in \cal{L}_U$.\end{enumerate}

\item Let $x_0\widehat L$ be a  closed orbit for some $\widehat L\in\cal{L}_U$ and $x_0\in \RFM$. Let $x_iL_i 
\subset x_0\widehat L$ be a sequence of  closed orbits intersecting $\RFM$  where $x_i\in\RFPM$, $L_i\in\cal{Q}_U$. 
Assume that  no infinite subsequence of $x_iL_i$ is contained in a subset
of the form $y_0L_0D$ where  $y_0L_0$ is a closed orbit of $L_0\in \cal L_U$ with $\op{dim}L_0<\op{dim} \hat L$  and $D\subset \op{N}(U)$ is a compact subset.
Then 
$$\lim\limits_{i\to\infty}\text{ }( x_i L_i \cap\op{RF}_+M)=x_0\widehat L\cap \op{RF}_+M.$$

\end{enumerate}
\end{theorem}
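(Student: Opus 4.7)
I would pass to a subsequence so that the closed subsets $Z_i := x_iL_i\cap\RFPM$ converge in the Chabauty topology on closed subsets of $\Gamma\backslash G$ to a closed set $Y \subset x_0\widehat L\cap\RFPM$. Only the reverse inclusion $Y\supset x_0\widehat L\cap\RFPM$ requires proof.

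For each $i$, choose a $U$-generic point $y_i\in x_iL_i\cap\RFM$, meaning $\overline{y_iU}= x_iL_i\cap\RFPM$. Such a $y_i$ exists because Theorem \ref{mainth}(2)(a) applied inside the closed orbit $x_iL_i$ shows that the set of $U$-generic points is comeager in $x_iL_i\cap\RFPM$, and $x_iL_i\cap\RFM$ is a non-empty compact set. Since $\RFM$ is compact ($M$ being convex cocompact), after extracting a further subsequence $y_i\to y\in\RFM\cap x_0\widehat L$. Then
\begin{equation*}
\overline{yU}\;\subset\;\liminf_i \overline{y_iU}\;=\;\liminf_i Z_i\;\subset\;Y.
\end{equation*}
Applying Theorem \ref{mainth}(2)(a) to $y$ inside the ambient closed orbit $x_0\widehat L$ gives $\overline{yU}=yL'\cap\RFPM$ for some $L'\in\cal Q_U$ with $L'\subseteq \widehat L$ and $yL'$ closed.

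The whole game is then to show $L'=\widehat L$. Suppose for contradiction $\dim L'<\dim\widehat L$ and write $L'=vL_0v^{-1}$ with $L_0\in\cal L_U$ and $v\in N\subset\op{N}(U)$; setting $z:=yv$, the translate $zL_0=yL'v$ is a closed orbit in $x_0\widehat L$ of strictly smaller dimension. The hypothesis of (3) forbids any infinite subsequence of $x_iL_i$ being contained in $zL_0\cdot D$ for a compact $D\subset\op{N}(U)$, so exhibiting exactly such a containment along the working subsequence for some fixed compact $D$ yields the required contradiction, forcing $L'=\widehat L$ and hence $Y\supset yL'\cap\RFPM=x_0\widehat L\cap\RFPM$.

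The hard part, and the genuine new obstacle compared to the lattice case illustrated by Proposition \ref{in}, is producing this compact $D$. The strategy is a linearization in the Dani--Margulis tradition: realize $L_0$ as the stabilizer of a vector $p$ in a real-algebraic representation $G\to \GL(W)$, so that the singular tube $X(L_0,U)$ is algebraically cut out, and express $y_i=yg_i$ with $g_i\to e$ in $G-\op{N}(U)L_0$. The convergence $\overline{y_iU}\to\overline{yU}\subset yL'$ then translates in linear coordinates to the statement that the polynomial trajectories $t\mapsto p\gamma_i g_i u_t$ remain close to $p\cdot \op{N}(U)$ along the recurrence time of $y_iU$ to $\RFM$; pinning down $g_i$ into a compact $\op{N}(U)$-neighborhood of $L_0$ modulo $L_i$-motion is what yields the sought compact $D$. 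In the lattice setting one would integrate against Lebesgue measure on $[0,T]$ and invoke the polynomial slow-divergence lemma of Dani--Margulis directly; here the relevant recurrence time $\mT^\diamond(y_i)=\{t:y_iu_t\in \RFM\setminus\cal O_n\}$ carries no such friendly measure, and one must instead exploit the $\kappa$-thick, accumulating-renormalization structure of $\mT^\diamond(y_i)$ supplied by the avoidance theorem (Theorem \ref{hard}) in tandem with the unipotent blowup Lemma \ref{U} to extract the required compactness combinatorially. Executing this linearization under the weaker $\kappa$-thickness substitute for positive Lebesgue density is where the bulk of the technical work lies.
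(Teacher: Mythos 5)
There is a genuine gap, on two levels. First, the statement you are asked to prove is the whole of Theorem \ref{mainth}, and the paper's argument is a simultaneous induction: $(1)_{m+1}$ is deduced from $(2)_m$ and $(3)_m$, then $(2)_{m+1}$ from $(1)_{m+1},(2)_m,(3)_m$, and finally $(3)_{m+1}$ from $(1)_{m+1},(2)_{m+1},(3)_m$, where the subscript records the codimension of $U$ in $N$ (resp.\ in $\widehat L\cap N$). Your proposal proves nothing about (1) or (2); it invokes (2)(a) both inside each $x_iL_i$ and inside the ambient orbit $x_0\widehat L$, so as a proof of the full theorem it is circular unless you set up exactly this induction and check that each invocation of (2)(a) occurs at a strictly controlled codimension. (A smaller but real point in the same direction: the existence of a $U$-generic point $y_i$ in $x_iL_i\cap\RFM$ --- not merely in $x_iL_i\cap\RFPM$ --- is asserted via a Baire-category remark, but the singular set $\mS(U,x_iL_i)$ could a priori swallow $x_iL_i\cap\RFM$; in the paper the needed non-triviality of the singular/compact-orbit structure rests on the geometric proposition that a properly immersed geodesic $k$-plane of a manifold with Fuchsian ends is compact or contains a compact $(k-1)$-plane.)

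Second, even granting (1) and (2), the core of (3) is precisely the step you leave unexecuted: showing that if the limit $y$ of your generic points satisfies $\overline{yU}=yL'\cap\RFPM$ with $\dim L'<\dim\widehat L$, then infinitely many of the orbits $x_iL_i$ are trapped in a single set $zL_0D$ with $D\subset\op{N}(U)$ compact. You correctly identify this as the crux and sketch a Dani--Margulis linearization along the recurrence times to $\RFM$, but you do not carry it out, and this is exactly where the infinite-volume difficulties live: Theorem \ref{hard} only provides $\kappa$-thick return times avoiding one $\cal O_n$ at a time (not $\bigcup_{j\le n}\cal O_j$), there is no Lebesgue-type measure on $\{t: x_iu_t\in\RFM\}$ to integrate the polynomial slow-divergence lemma against, and one must control not only self-intersections of a single tube $\Gamma X(H,U)$ but graded intersections among different tubes for $H\in\mH$, via the combinatorial inductive search the paper alludes to. Indeed the very reason statement (3) appears in the induction at all is that this weaker avoidance theorem blocks the direct argument of Proposition \ref{in}; asserting that the compact $D$ can be "extracted combinatorially" restates the problem rather than solving it. Until that step is actually executed (or replaced by the equidistribution mechanism of \cite{LO}), the proposal is an outline of the known strategy, not a proof.
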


We prove (1) by induction on the co-dimension of $U$ in $N$, and (2) and (3)
by induction on the co-dimension of $U$ in $\widehat L \cap N$.
Let us say $(1)_m$ holds, if $(1)$ is true for all $U$ satisfying $\op{co-dim}_N(U)\leq m$.
We will say $(2)_m$ holds, if $(2)$ is true for all $U$ and $\widehat{L}$ satisfying $\op{co-dim}_{\widehat L \cap N}(U)\leq m$ and similarly for $(3)_m$.

We  then deduce $(1)_{m+1}$ from $(2)_m$ and $(3)_m$, and  $(2)_{m+1}$
 from $(1)_{m+1}$, $(2)_m$, and  $(3)_m$  and finally 
 $(3)_{m+1}$ from  $(1)_{m+1}$, $(2)_{m+1}$ and $(3)_m$. In proving Theorem \ref{oc} for lattice case, we don't need
 $(3)_m$ in the induction proof. In the case at hand, $(3)_m$ is needed since we could not obtain a stronger version
 of Theorem \ref{hard} with $\cal O_n$ replaced by $\cup_{j\le n} \cal O_j$.

We remark that in the step of proving $(2)_{m+1}$ the following geometric feature of convex cocompact hyperbolic manifolds
$M$ of Fuchsian ends is used to insure that $\mS(U, x_0\widehat L)\ne \emptyset$.

\begin{prop} For any $2\le k\le d$, any properly immersed geodesic $k$-plane of $M$  is either compact or contains a compact geodesic $(k-1)$-plane.
\end{prop}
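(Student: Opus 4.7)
The plan is to read the statement off the intrinsic geometry of $P$, which by Theorem~\ref{mt6}(2) (in the range $2\le k\le d-1$) or directly by the hypothesis on $M$ (when $k=d$) is itself a convex cocompact hyperbolic $k$-manifold with Fuchsian ends. First I would write $P=f(S)$, where $f\colon S\to M$ is the associated totally geodesic immersion and $S=\op{Stab}_\Gamma(\widetilde P)\backslash\widetilde P$ is the complete hyperbolic $k$-manifold covering $P$. The Fuchsian-ends hypothesis on $S$ then gives that $\core S$ is a compact hyperbolic $k$-manifold whose boundary $\partial\,\core S$ (possibly empty) is a disjoint union of \emph{closed} totally geodesic $(k-1)$-submanifolds of $S$.

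Next I would split into two cases according to whether $\partial\,\core S$ is empty. If it is, then $S=\core S$ is itself a closed hyperbolic $k$-manifold, and consequently $P=f(S)$ is compact, giving the first alternative of the proposition. Otherwise, I would choose any component $T\subset\partial\,\core S$; since $T$ is a closed totally geodesic $(k-1)$-submanifold of $S$ and $f$ is totally geodesic, the composition $f|_T\colon T\to M$ is a totally geodesic immersion of a closed hyperbolic $(k-1)$-manifold. Lifting $T$ to its universal cover $\bH^{k-1}$ exhibits $f|_T$ as a totally geodesic immersion $\bH^{k-1}\to M$ in the sense of Section~9, and the compactness of $T$ makes $f(T)\subset P$ a compact geodesic $(k-1)$-plane of $M$, yielding the second alternative.

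I do not foresee any genuine obstacle. The single point requiring care is that the higher-dimensional Fuchsian-ends structure on $S$ really does force each component of $\partial\,\core S$ to be a \emph{closed} $(k-1)$-manifold, rather than merely a complete or finite-volume one; this is precisely what is built into the definition of ``Fuchsian ends'' recalled at the beginning of Section~9, where each boundary component of $\core M$ is required to be a closed hyperbolic $(d-1)$-manifold, and the same description applies verbatim in dimension $k$. Once that observation is in place, the proposition is an immediate consequence of Theorem~\ref{mt6}(2).
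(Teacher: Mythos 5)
Your proposal is correct and follows essentially the same route as the paper: both arguments invoke the hereditary property of Theorem \ref{mt6}(2) (for $k=d$, the hypothesis on $M$ itself) that a properly immersed geodesic $k$-plane is a convex cocompact hyperbolic $k$-manifold with Fuchsian ends, and then observe that either the ends are empty, so the plane is compact, or else a boundary component of the convex core yields a compact totally geodesic $(k-1)$-plane. Your write-up merely makes explicit the covering $S\to P$ and the closedness of the boundary components, which is exactly what the paper's one-line proof is relying on.
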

This proposition follows from the hereditary property that any properly immersed geodesic $k$-plane $P$ of $M$
is a convex cocompact hyperbolic $k$-manifold of Fuchsian ends; hence either $P$ is compact (when $P$ has empty ends)
or the boundary of $\core P$ provides a co-dimension one compact geodesic plane.

\end{document}